 \newtheorem{thm}{Theorem}[section]
 \newtheorem{cor}[thm]{Corollary}
 \newtheorem{lemma}[thm]{Lemma}
 \newtheorem{prop}[thm]{Proposition}
 \theoremstyle{definition}
  \newtheorem{exas}[thm]{Examples}
 \theoremstyle{remark}
 \newtheorem{rem}[thm]{Remark}
 \newtheorem{rems}[thm]{Remarks}
\newtheorem{remark}[thm]{Remark}
\numberwithin{equation}{section}
\def\R{\mathbb{R}}
\def\C{\mathbb{C}}
\def\N{\mathbb{N}}
\def\Z{\mathbb{Z}}
\def\a{\alpha}
\def\b{\beta}
\def\ep{\varepsilon}
\def\l{\lambda}
\def\B{L}
\def\Re#1{\operatorname{Re}#1}
\def\Sect#1{\operatorname{Sect}#1}
\def\Bes{\mathcal{B}}
\def\Bov{\mathcal{E}}
\def\Bq{{\Bes_0}}
\def\fti{\mathcal{F}^{-1}}
\def\LT{\mathcal{LM}}
\def\lt{\mathcal{L}}
\def\ssp{\mathcal{G}}
\def\RR{\mathrm{R}}
\def\supp{\operatorname{supp}}
\def\sp{\operatorname{sp}}
\begin{document}

\title[A Besov algebra calculus]
{A Besov algebra calculus for generators of operator semigroups and related norm-estimates}

\author{Charles Batty}
\address{St. John's College\\
University of Oxford\\
Oxford OX1 3JP, UK
}

\email{charles.batty@sjc.ox.ac.uk}

\author{Alexander Gomilko}
\address{Faculty of Mathematics and Computer Science\\
Nicolas Copernicus University\\
Chopin Street 12/18\\
87-100 Toru\'n, Poland
}

\email{alex@gomilko.com}

\author{Yuri Tomilov}
\address{
Institute of Mathematics\\
Polish Academy of Sciences\\
\'Sniadeckich 8\\
00-956 Warsaw, Poland
}

\email{ytomilov@impan.pl}

\begin{abstract}
We construct a new bounded functional calculus for the generators of bounded semigroups on Hilbert spaces
and generators of bounded holomorphic semigroups on Banach spaces.
The calculus is a natural (and strict) extension of the classical Hille-Phillips functional calculus,
and it is compatible with the other well-known functional calculi.
It satisfies the standard properties of functional calculi,
provides a unified and direct approach to a number of norm-estimates in the literature,
and allows improvements of some of them.
\end{abstract}

\subjclass[2010]{Primary 47A60, Secondary 30H25 46E15 47D03}

\keywords
{Besov algebra, functional calculus, semigroup generator, sectorial operator}

\thanks{This work was partially supported financially by a Leverhulme Trust Visiting Research Professorship and an NCN grant UMO-2017/27/B/ST1/00078, and inspirationally by the ambience of the Lamb \& Flag, Oxford.  \\ We are very grateful to two referees for helpful comments on the first version of this paper, in particular those relating to Subsections 4.4 and 4.5.}

\date\today

\maketitle
\tableofcontents

\section{Introduction}

Given a linear operator $A$ on a Banach space $X$, a fundamental matter in operator theory is to define
a functional calculus for $A$ and to get reasonable norm-estimates for functions of $A$.
A rich enough functional calculus for $A$ yields various spectral decomposition properties
and leads to a detailed spectral theory. One well-known instance of that is the
functional calculus for normal operators on Hilbert spaces.

A common procedure to create a functional calculus for an unbounded operator $A$ is to define a function algebra $\mathcal A$ on the spectrum $\sigma (A)$ of $A$ and a homomorphism from $\mathcal A$ into the space of bounded linear operators on $X$ with suitable continuity properties and norm-estimates.  One instance of this is the classical Riesz-Dunford calculus given by  Cauchy's reproducing formula
\begin{equation}\label{cauchy}
\mathcal A \ni f \,\, \mapsto \,\, f(A):=\frac{1}{2\pi i}\int_\Gamma f(\lambda)(\lambda -A)^{-1}\, d\lambda,
\end{equation}
where $\mathcal A$ can be chosen to be the algebra $\operatorname{Hol}\,(\mathcal O)$ of  functions
holomorphic in a neighbourhood $\mathcal O$ of $\sigma(A)\cup\{\infty\}$, and $\Gamma \subset \mathcal O$ is an appropriate contour.  Unfortunately, for unbounded $A$ \eqref{cauchy} produces bounded operators $f(A)$
only in very restricted settings, and thus \eqref{cauchy} is only a starting point for the construction of an extended functional calculus, for example, for sectorial or similar classes of operators $A$.  Here sharp norm-estimates are hardly available.

The major problem in defining a functional calculus as in \eqref{cauchy} (or similar contexts) is that one has to start from appropriate norm bounds for the resolvent of $A$, thus having some information about at least one function  of $A$ in advance.   A very natural and useful class of unbounded operators leading to a holomorphic functional calculus via \eqref{cauchy} is the class of sectorial operators $A$ on Banach spaces having their spectrum in $\Sigma_\theta \cup \{0\}$ where $\theta \in (0,\pi)$ and $\Sigma_\theta :=\{\lambda \in \mathbb C\setminus \{0\}: |\arg(\lambda)|<\theta\}$,  and allowing a linear resolvent estimate $\|(\lambda-A)^{-1}\| \le C/|\lambda|$ for  $\lambda \in \mathbb C \setminus \Sigma_{\theta}\cup\{0\}$.
A homomorphism from the algebra $\mathcal A_\sigma$ of holomorphic functions on $\Sigma_\sigma, \sigma >\theta$, decaying sufficiently fast at zero and at infinity,
\begin{equation}\label{regular}
\mathcal A_\sigma:=\left \{ f:\Sigma_\sigma \to \mathbb C\,\,\, \text{holomorphic and}\, |f(\lambda)|\le C \frac{|\lambda|^{\ep}}{1+|\lambda|^{2\ep}}\right \}
\end{equation}
(where $\ep, C >0$ may  depend on $f$), is only the starting point here. The holomorphic functional calculus for $A$ is constructed as a mapping taking values in the closed operators on $X$ and extending the  homomorphism from \eqref{cauchy} in a canonical way.
The so-called holomorphic (or sometimes McIntosh) functional calculus for sectorial operators became an indispensable tool
in applications of operator theory to PDEs  and harmonic analysis. For more information, we refer to \cite{KuWe}, \cite{HaaseB} and \cite{Denk}.

If $-A$ generates a $C_0$-semigroup $(e^{-tA})_{t \ge 0}$ on $X$, then
different reproducing formulas with  the semigroup
can be used. One of them leads to the classical Hille-Phillips (or HP-) functional calculus from the 1950s, see e.g.\ \cite[Chapter XV]{HP}. The calculus is given by a bounded homomorphism  from the space of bounded Borel measures $M(\R_+)$ into $\B(X)$:
 \begin{equation}\label{laplace}
M(\R_+) \ni \mu \,\, \mapsto \,\, \mathcal L \mu (A)= \int_{\R_+} e^{-tA} \,d\mu(t),
\end{equation}
so \eqref{laplace} is an operator counterpart of the Laplace transform $\mathcal L \mu$ of $\mu \in
M(\R_+)$.

Despite its very direct nature, the HP-calculus has proved to be
crucial in many areas of analysis, including
probability theory, approximation theory, theory of Banach algebras, spectral theory, etc.
It is a base for so-called transference principles and their applications in harmonic analysis and operator theory.
While classical aspects of the transference techniques can be found in \cite{Coifman}, its modern treatment and applications to semigroup
theory are contained in \cite{Haase} (see also \cite{Merdy}).

In most approaches to functional calculi one starts with a reproducing formula for sufficiently regular functions, such as \eqref{regular}, and then extends it via a regularisation procedure to include
more singular functions of interest, for example $z^\alpha$ and $\log z$. The reproducing formula determines the function algebra for the extended functional calculus, and thus it is basic for the calculus construction.  Using the regularisation idea, the HP-calculus was extended
by Balakrishnan \cite{Balakrishnan}, while the holomorphic functional calculus for unbounded sectorial operators
has been developed by McIntosh (preceded by pioneering work of Bade), and then further developed and extended to other geometries (strip-type, half-plane type, etc) by a number of mathematicians.

Other functional calculi include the Hirsch calculus where a reproducing formula arises as the Stieltjes-type representation formula for complete Bernstein functions; the Bochner-Phillips calculus, where following Bochner's representation formula for Bernstein functions, a measure $\mu$ in \eqref{laplace} is replaced
by  a convolution semigroup of measures $(\mu_t)_{t \ge0}$, and a Bernstein function $f(A)$ of $A$ arises as the negative generator of a strongly continuous semigroup $(\int_{0}^{\infty} e^{-sA} \, d\mu_t(s))_{t \ge 0}$; and the Davies--Dyn'kin (or Helffer--Sj\"ostrand) calculus based on the Cauchy-Green reproducing formula and thus even allowing non-holomorphic functions.  See \cite[Chapter 13]{Schill}, and also \cite{GT15} and \cite{BGTMZ}, for more on the Hirsch and Bochner-Phillips calculi, and \cite{Davies} and \cite{ASS} for the third calculus.

Having defined $f(A)$ in any calculus, it is natural and useful  to look for sharp norm-estimates for $f(A)$ in terms of $f$.   Note that the HP-calculus produces a norm-estimate in terms of the representing measure $\mu$ rather than $\mathcal L \mu$, and this can hardly be optimal apart from a few very special cases.
The question for which $A$ the holomorphic calculus is bounded  in the sense that $\|f(A)\|\le C_A \|f\|_\infty$ for all $f \in \mathcal A_\sigma$, with $C_A$ depending only on $A$, is of major importance, since the uniform estimate allows one to avoid a drawback of ``a priori'' estimates in reproducing formulas.
The boundedness of $H^\infty$-calculus is linked to square function estimates on Hilbert and Banach spaces,
and thus eventually to hard problems in harmonic analysis involving boundedness of singular integrals.
We refer to \cite{Denk}, \cite{Haase} and \cite{KuWe}  for more details and pertinent comments.

While the boundedness of $H^\infty$-calculus is very useful, for example in the study of maximal regularity properties in PDE theory, it imposes stringent conditions on $A$
that are hard to verify in the abstract context.
(Nevertheless it is well known that many concrete differential operators do have that property.)  This remark extends to the other calculi, including the HP-calculus where the uniform estimate $\|\mathcal L \mu (A)\|\le C_A \|\mathcal \mu\|$ allows one to deal with norm bounds for functions
of $A$ as if $A$ generates a unitary $C_0$-group.
Examples of such a situation are generators of $C_0$-contraction semigroups,
where the uniform estimate for $\lt\mu(A)$ arises from a unitary dilation.
This suggests an important task of identifying classes of functions (or even single functions),
and classes of semigroup generators, such that the corresponding functional calculus satisfies operator norm-estimates which are weaker than those given by the $H^\infty$-norm, but as close as possible to the $H^\infty$-norm.  The problem of obtaining sharp norm-estimates for functions of semigroup generators has been addressed in a series of recent papers  \cite{Haase},  \cite{HRoz},  \cite{Sch1}, \cite{V1}, and \cite{Zwart}.

Ideally, the functional calculus should be defined for substantial classes of operators and functions,
take values in the algebra of bounded operators, and provide sharp enough estimates for the operator norms.
This paper offers a new functional calculus that fits this description to a large extent.
More precisely, let $-A$ be  the generator of a bounded $C_0$-semigroup on a Hilbert space $X$
or  the generator of a bounded holomorphic semigroup on a Banach space $X$.
Then for all $x \in X$ and $x^* \in X^*$ its weak resolvent $\langle (\cdot + A)^{-1}x,  x^*\rangle$ belongs to the space $\mathcal E$ given by
\begin{equation}
\mathcal E:= \left\{g \in \operatorname{Hol}(\mathbb C_+): \|g\|_{\Bov_0} :=\sup_{\a >0} \a\int_{\mathbb R} |g'(\a+i\b)|\, d\b < \infty \right\}.
\end{equation}
Note that $\|\cdot\|_{\Bov_0}$ is a seminorm on $\Bov$ vanishing on the constant functions.
As observed in \cite{V1}, a Banach algebra $\mathcal B$ can be defined as
\begin{equation}  \label{bdef0}
\mathcal B:= \left\{f \in \operatorname{Hol}\, (\mathbb C_+): \|f\|_\Bq:=\int_{0}^{\infty} \sup_{\b \in \R }|f'(\a+i\b)| \, d\a <\infty \right \},
\end{equation}
with the norm $\|f\|_{\Bes}:=\|f\|_\infty+ \|f\|_\Bq$.  Then  $\mathcal E$ can be paired  with $\mathcal B$ via a duality $\langle\cdot, \cdot \rangle_{\Bes}$ given by
\begin{equation}
\langle g, f \rangle_{\Bes}: = \int_{0}^{\infty} \a \int_{-\infty}^{\infty} g'(\a-i\b) f'(\a+i\b)\, d\b\,d\a, \qquad g \in \Bov, f \in \Bes.
\end{equation}

The Laplace transforms of measures $\mu \in M(\R_+)$ belong to $\Bes$, and one has a  reproducing formula of Cauchy type
\begin{equation}\label{lapmeas}
\mathcal L \mu (z)=\mathcal{L}\mu(\infty)+\frac{2}{\pi}\langle \left (\cdot+z)^{-1}, \mathcal L \mu \right \rangle_{\mathcal B}.
\end{equation}
Consequently, for $A$ as above, we define a mapping $\Phi_A : \mathcal B \to  L(X,X^{**})$, $\Phi_A(f)=f(A)$,
as a $w^*$-integral:
\begin{align}\label{bcalculus}
\langle f(A)x,x^*\rangle=f(\infty)+ \frac{2}{\pi}\left \langle \langle (\cdot +A)^{-1}x, x^* \rangle , f \right \rangle_{\mathcal B}
\end{align}
for all  $x \in X$ and $x^* \in X^*$.
However it is not immediately clear that it yields a functional calculus for $A$, since
   $\mathcal L(M(\R_+))$ is not dense in $\mathcal B$.
 The duality $\langle\cdot, \cdot \rangle_{\mathcal B}$ is only partial, since
 the spaces $\mathcal E$ and $\mathcal B$ have rather complicated structures;  see Section \ref{besov} for  details of the spaces.
To make our observations rigorous and to establish functional calculus properties of $\Phi_A$,
we prove several intermediate statements of independent interest leading to the following theorem which summarises the main results of this paper (see Section \ref{b-fc}).

\begin{thm}\label{homomor_intro}
Let $-A$ be either the generator of a bounded $C_0$-semigroup on a Hilbert space $X$
or of a (sectorially) bounded holomorphic $C_0$-semigroup on a Banach space $X$.  Then the following hold.
\begin{enumerate}[\rm a)]
\item
The formula \eqref{bcalculus} defines a bounded algebra homomorphism
\[
\Phi_A: \Bes \to  L (X),\qquad \Phi_A (f):=f(A),
\]
and
\begin{equation}\label{b_estim}
\|f(A)\|\le C_A \|f\|_{\mathcal B}
\end{equation}
for a constant $C_A$ depending only on $A$.

\item The $\Bes$-calculus defined in {\rm  a)} (strictly) extends the HP-calculus, and it is compatible with
the holomorphic functional calculi for sectorial and half-plane type operators.

\item The spectral inclusion (spectral mapping, in the case of bounded holomorphic semigroups) theorem and a Convergence Lemma hold for $\Phi_A$.
\end{enumerate}
\end{thm}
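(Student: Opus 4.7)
The entire theorem rests on the single analytic input that for $A$ as in the hypothesis, the weak resolvent $g_{x,x^*}(z):=\langle (z+A)^{-1}x,x^*\rangle$ lies in $\Bov$ with a uniform bound
\[
\|g_{x,x^*}\|_{\Bov_0}\le C_A\|x\|\,\|x^*\|, \qquad x\in X,\ x^*\in X^*.
\]
Granted this, the pairing in \eqref{bcalculus} defines, for each $f\in\Bes$ and $x\in X$, a bounded linear functional on $X^*$ of norm at most $C_A\|f\|_{\Bes}\|x\|$, yielding an element $f(A)x\in X^{**}$ and the estimate \eqref{b_estim}. In the Hilbert case I would obtain the weak resolvent bound by writing $g_{x,x^*}'(\a+i\b)=-\langle(\a+i\b+A)^{-2}x,x^*\rangle$ and recognising $(\a+i\b+A)^{-2}x$ as the Fourier--Laplace transform in $\b$ of $t\mapsto t e^{-\a t}e^{-tA}x$; Plancherel in $\b$, combined with the Cauchy--Schwarz inequality and the semigroup bound $\|e^{-tA}\|\le M$, produces the required $L^1(d\b)$-estimate uniformly in $\a>0$. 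In the holomorphic case the bound follows from the sectorial resolvent estimate $\|(z+A)^{-1}\|\le C/|z|$ on a sector strictly containing the right half-plane, via a direct contour argument.

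For the remainder of part (a), to upgrade $\Phi_A(f)$ from $L(X,X^{**})$ to $L(X)$, I would approximate an arbitrary $f\in\Bes$ by a sequence $f_n\in\mathcal L(M(\R_+))$ (obtained, for instance, by suitable mollifications of $f$ in the half-plane together with horizontal shifts $f(\cdot+1/n)$). For each such $f_n$, the formula \eqref{bcalculus} reduces via the reproducing formula \eqref{lapmeas} and Fubini to $\Phi_A(f_n)=\int_{\R_+} e^{-tA}\,d\mu_n(t)\in L(X)$, and a weak$^*$-limit argument keeps $\Phi_A(f)\in L(X)$. Multiplicativity is verified first on the subalgebra $\mathcal L(M(\R_+))$, where it is the classical identity $\mathcal L(\mu\star\nu)=\mathcal L\mu\cdot\mathcal L\nu$ of the HP-calculus, and then transferred to arbitrary $f,g\in\Bes$ via the same approximation combined with the Convergence Lemma of part (c).

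For part (b), substituting $f=\mathcal L\mu$ into \eqref{bcalculus} and applying \eqref{lapmeas} termwise yields $\Phi_A(\mathcal L\mu)x=\mathcal L\mu(\infty)x+\int_{\R_+}e^{-tA}x\,d\mu(t)$, which is precisely the HP-definition; strictness follows by exhibiting an explicit element of $\Bes$ outside $\mathcal L(M(\R_+))$, such as a regularised logarithm or fractional power. Compatibility with the sectorial and half-plane holomorphic calculi I would check on a dense regularising subalgebra, e.g.\ the rational functions with poles off $\sigma(A)$, and extend by the respective Convergence Lemmas. For part (c), spectral inclusion follows from the factorisation $f(z)-f(\l)=(z-\l)h(z)$ with $h\in\Bes$, applied through the homomorphism $\Phi_A$; in the holomorphic case the reverse inclusion $\sigma(f(A))\subseteq f(\sigma(A))$ is transferred from the holomorphic calculus via the compatibility established in (b). The Convergence Lemma is a dominated-convergence statement applied directly in \eqref{bcalculus}, with majorant provided by the weak resolvent estimate. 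I expect the main technical obstacle to be the weak resolvent estimate itself: in the Hilbert setting it is a quantitative refinement beyond the bounded $H^\infty$-calculus and relies on a careful Plancherel argument, while in the non-Hilbert holomorphic case it relies essentially on the strict inclusion of the right half-plane in the sector of bounded holomorphy.
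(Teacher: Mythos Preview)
Your overall architecture is right, and the weak-resolvent estimate is indeed obtained exactly as you describe (Plancherel plus Cauchy--Schwarz in the Hilbert case, sectorial resolvent bounds in the holomorphic case). But the approximation step in part (a) has a genuine gap, and it is the heart of the matter.

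You propose to approximate $f\in\Bes$ by $f_n\in\mathcal L(M(\R_+))=\LT$ via mollification and horizontal shifts, then pass to the limit. This fails for two reasons. First, $\LT$ is \emph{not} norm-dense in $\Bes$ (Lemma~\ref{nondense}(3)); boundary values of $\LT$-functions are weakly almost periodic, while $\Bes$ contains boundary values of arbitrary entire functions of exponential type bounded on $i\R$. Shifting $f(\cdot+1/n)$ does not put you in $\LT$, and no mollification visible at this level of generality does either. Second, even granting some weak approximation, a weak$^*$-limit of operators in $L(X)$ need not lie in $L(X)$ when $X$ is non-reflexive, and the convergence you would obtain from $\langle g_{x,x^*},f_n\rangle_\Bes\to\langle g_{x,x^*},f\rangle_\Bes$ is only weak$^*$ in $X^{**}$. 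The paper circumvents both obstacles by introducing an intermediate subalgebra $\ssp=\bigcup_{0<\ep<\sigma}H^\infty[\ep,\sigma]$ of entire functions of exponential type with spectral support compactly contained in $(0,\infty)$; a nontrivial density result (Proposition~\ref{densehinf}, proved via Arveson's spectral-subspace theory for the vertical-shift group) shows $\ssp$ is norm-dense in $\Bes_0$. For $f\in\ssp$ one multiplies by $\eta_\delta(z)=(1-e^{-\delta z})/(\delta z)$ and uses a Paley--Wiener argument (Lemma~\ref{PW}) to land in $\LT$; a tailored weak-approximation lemma (Lemma~\ref{Sconv}) and then a strong-convergence lemma on $D(A)$ (Lemma~\ref{Sext}) finish the job. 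Multiplicativity is then proved on $\ssp$ by iterating this $\eta_\delta$-limit, avoiding any appeal to the Convergence Lemma of part (c) and hence the circularity in your sketch.

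A smaller point: your spectral-inclusion argument via the factorisation $f(z)-f(\lambda)=(z-\lambda)h(z)$ requires $h\in\Bes$, which is not clear for $\lambda\in i\R$ since $f$ need not be differentiable on the boundary. The paper instead works in the bicommutant Banach algebra of the resolvent and evaluates characters on the operator-norm-convergent integral representation, which sidesteps this regularity issue.
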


Theorem \ref{homomor_intro} can be applied to $C_0$-semigroups which are not bounded, since any $C_0$-semigroup can be made bounded by rescaling, and most norm-estimates are of interest even for semigroups which decay exponentially.

The use of Besov functions for functional calculus goes back to Peller's foundational paper \cite{Peller} in the discrete case.  Peller defined and explored the functional calculus for power bounded operators on a Hilbert space $X$, based on the space $B^0_{\infty,1}(\mathbb D)$ which is the analogue of $\mathcal B$ for the unit disc.  He proved a counterpart of \eqref{b_estim} for power bounded operators on $X$, and also obtained several generalizations of \eqref{b_estim}  using several specific algebras related to $B^0_{\infty,1}(\mathbb D)$.  This line of research was continued, in particular, in  \cite{Haase}, \cite{Vitse}, \cite{Vitse2}, and \cite{Vitse1}, where similar classes of operators have been considered.  The polynomials are dense in $B^0_{\infty,1}(\mathbb D)$, and this simple but important fact greatly simplifies the  construction in \cite{Peller}.   Unfortunately, in the setting of $C_0$-semigroups, there is only a partial replacement for polynomials provided by those entire functions of exponential type which are bounded on $\C_+$. Consequently we have to use a different approach via a duality. It leads to the reproducing formula (\ref{bcalculus}) which is apparently new and crucial for the calculus bound (\ref{b_estim}).

There was a substantial contribution to this topic in the PhD thesis of S. White \cite{White} from 1989.  He adapted a large part of Peller's estimates \cite{Peller} to the more demanding context of semigroup generators.  Unfortunately,  the results were not published in journals, and thus were overlooked  by the mathematical community until very recently.  Employing the ideas from \cite{Peller} in the semigroup setting, a calculus for the generators of bounded holomorphic semigroups on a Banach space $X$ was constructed  by Vitse in \cite{V1}. Vitse's results were put in a wider context of transference methods by Haase in \cite{Haase}. In \cite{Haase}, functional calculus estimates were reduced to estimates of the Fourier multiplier norms, and the semigroup was not assumed to be holomorphic.  Instead there were additional geometric assumptions on $X$, and so the most complete results were for Hilbert spaces.   The question of constructing a full Besov functional calculus for generators of bounded semigroups on Hilbert space and its compatibility to other calculi was raised explicitly in \cite[p.2992]{Haase}.  Other contributions were made in \cite{Haase}, \cite{HRoz}, \cite{Sch1}, \cite{White} and \cite{Zwart}, where operator norm-estimates were obtained for particular classes of Besov functions for $A$ as in Theorem \ref{homomor_intro}.
The emphasis in those papers was on functions of the form  $\mathcal L \mu$ for $\mu \in M(\R_+)$ treated by either the HP-calculus or the holomorphic functional calculus, and the arguments there relied on the Littlewood-Paley decomposition of Besov functions (see Section \ref{appx} for more on that).
Thus the generality of Theorem \ref{homomor_intro} was out of reach. Related functional calculi for generators of $C_0$-groups of polynomial growth
(thus having their spectrum on $i\mathbb R$) and for sectorial operators of zero angle were studied in \cite{Cowling}, \cite{Kriegler} and \cite{KrieglerWeis}, again by means of Fourier analysis.

Once the ${\mathcal B}$-calculus has been established, it leads to a number of sharp norm-estimates.
The estimates are direct consequences of \eqref{b_estim} and certain elementary (but not straightforward) estimates for norms of functions in $\mathcal B$.  As a sample result, we formulate
the next statement which generalises results in \cite[Theorem 1.1(c)]{HRoz} (see Corollary \ref{CH}).

\begin{cor}
Let $f \in H^\infty (\mathbb C_+)$ be such that
\begin{equation*}
\int_{0}^{\infty}\frac{h(s)}{1+s}\, ds<\infty,
\end{equation*}
where $h(s):= \operatorname{ess\,sup}_{|t| \ge s} |f(it)|$ and $f(i\cdot)$ is the boundary value of $f$.
If $-A$ is the generator of an exponentially stable $C_0$-semigroup on a Hilbert space $X$, then $f(A) \in L (X)$ and
\begin{equation*}
\|f(A)\| \le C_A \int_{0}^{\infty}\frac{h(s)}{1+s}\, ds.
\end{equation*}
Here $C_A$ is a constant which depends on $A$, but not on $f$.
\end{cor}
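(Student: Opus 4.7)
The strategy is to apply Theorem~\ref{homomor_intro} to a suitably shifted operator and symbol, using the exponential decay rate to absorb the non-integrable behaviour of $h$ near $s=0$. Choose $\omega>0$ with $\|e^{-tA}\|\le Me^{-\omega t}$ and set $B:=A-\omega I$, so that $-B$ generates a bounded $C_0$-semigroup on the Hilbert space $X$ and the $\mathcal{B}$-calculus of Theorem~\ref{homomor_intro} applies to $B$. Define $g(z):=f(z+\omega)\in H^\infty(\mathbb{C}_+)$. The plan is to show $g\in\mathcal{B}$ with $\|g\|_{\mathcal{B}}\le C_\omega\int_0^\infty h(s)/(1+s)\,ds$, invoke Theorem~\ref{homomor_intro}(a) to control $\|g(B)\|$, and finally identify $g(B)=f(A)$.

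Both terms of $\|g\|_{\mathcal{B}}=\|g\|_\infty+\|g\|_{\mathcal{B}_0}$ are handled via the Poisson representation of $f\in H^\infty(\mathbb{C}_+)$. Differentiating the Poisson formula in the real direction and using $|f(it)|\le h(|t|)$ a.e.\ yields, for $\alpha>0$ and $\beta\in\mathbb{R}$,
\begin{equation*}
|g'(\alpha+i\beta)|\le\frac{1}{\pi}\int_{-\infty}^\infty\frac{h(|t|)}{(\alpha+\omega)^2+(\beta-t)^2}\,dt.
\end{equation*}
The right-hand side is a convolution of the symmetric decreasing function $h(|\cdot|)$ with a symmetric decreasing kernel, hence itself symmetric decreasing in $\beta$ and maximised at $\beta=0$. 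Fubini together with the identity $\int_0^\infty d\alpha/((\alpha+\omega)^2+t^2)=\arctan(t/\omega)/t\le C_\omega/(1+t)$ then give $\|g\|_{\mathcal{B}_0}\le C_\omega\int_0^\infty h(s)/(1+s)\,ds$. The same symmetric-rearrangement argument applied to $|g(\alpha+i\beta)|$ via the Poisson formula for $g$ itself shows that $\sup_\beta|g(\alpha+i\beta)|$ is nonincreasing in $\alpha$, so
\begin{equation*}
\|g\|_\infty\le\frac{2\omega}{\pi}\int_0^\infty\frac{h(t)}{\omega^2+t^2}\,dt\le C_\omega\int_0^\infty\frac{h(t)}{1+t}\,dt,
\end{equation*}
where the final bound uses $\omega^2+t^2\ge\omega^2$ for $t\le\omega$ and $\omega^2+t^2\ge\omega t$ for $t>\omega$, each producing a factor comparable to $1+t$ up to $\omega$-dependent constants. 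Combining yields $\|g\|_{\mathcal{B}}\le C_\omega\int_0^\infty h(s)/(1+s)\,ds$.

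Theorem~\ref{homomor_intro}(a) applied to $B$ gives $\|g(B)\|\le C_B\|g\|_{\mathcal{B}}$, which is the advertised estimate with $C_A$ absorbing $C_B$ and $C_\omega$. The identification $g(B)=f(A)$ is immediate for $f=\mathcal{L}\mu$ with $\mu\in M(\mathbb{R}_+)$: then $g=\mathcal{L}\nu$ for $d\nu(t):=e^{-\omega t}\,d\mu(t)\in M(\mathbb{R}_+)$, and
\begin{equation*}
g(B)=\int_0^\infty e^{-tB}\,d\nu(t)=\int_0^\infty e^{-tA}\,d\mu(t)=f(A);
\end{equation*}
the extension to general $f$ in our class follows from the compatibility of the $\mathcal{B}$-calculus with the half-plane holomorphic calculus asserted in Theorem~\ref{homomor_intro}(b). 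The main obstacle is the symmetric-rearrangement reduction of the $\beta$-supremum to its value at $\beta=0$, both for $g'$ and for $g$; once this geometric step is in place the remaining estimates reduce to elementary integrals with constants depending only on $\omega$, and hence only on $A$.
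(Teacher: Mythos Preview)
Your proof is correct and follows the same overall architecture as the paper: shift $A$ to $B=A-\omega$, set $g(z)=f(z+\omega)$, show $g\in\mathcal{B}$ with the right bound, apply the $\mathcal{B}$-calculus to $B$, and identify $g(B)=f(A)$ via compatibility with the half-plane calculus.

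The genuine difference lies in how you estimate $\|g\|_{\mathcal{B}_0}$. The paper (Lemma~\ref{HZL1}) starts from the Cauchy representation \eqref{P2} for $f'$ and controls $\sup_\beta|f'(\alpha+\omega+i\beta)|$ by an explicit case split according to whether $|\beta|\le \alpha/\sqrt{2}$ or not, producing the bound $3\int_0^\infty h(t)/(\omega+t)\,dt$. You instead differentiate the Poisson representation, use $|(\beta-t)^2-(\alpha+\omega)^2|\le(\alpha+\omega)^2+(\beta-t)^2$ to land on a convolution of the symmetric nonincreasing function $h(|\cdot|)$ with a symmetric nonincreasing kernel, and invoke the elementary fact that such a convolution is again symmetric nonincreasing to reduce to $\beta=0$; the remaining $\alpha$-integral is then computed in closed form. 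Your route is more conceptual and avoids the ad hoc splitting, at the cost of the constant depending on $\omega$ rather than being absolute (the paper's $3$ is independent of $\omega$, with the $\omega$-dependence confined to the denominator $\omega+t$). Both are perfectly adequate for the stated corollary.

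One minor redundancy: your separate bound on $\|g\|_\infty$ is unnecessary. Since $\int_0^\infty h(t)/(1+t)\,dt<\infty$ and $h$ is nonincreasing, $h(t)\to0$, hence $f(is)\to0$ as $|s|\to\infty$, so $f(\infty)=g(\infty)=0$ by the Poisson formula; then $\|g\|_\infty\le\|g\|_{\mathcal{B}_0}$ by Proposition~\ref{besprop}(2).
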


For generators of bounded semigroups on Hilbert space,
other applications include sharp norm-estimates for functions which have  holomorphic extensions to larger half-planes,
for the function $e^{-1/z}$ and its regularisations, and for powers of Cayley transforms $(z-1)^n/(z+1)^n, n \in \mathbb N$.
In the context of generators of bounded holomorphic semigroups, we give sharp constants for the norms of $\mathcal B$-functions of the generators of bounded holomorphic semigroups,
 and provide sharp estimates for resolvents of Bernstein functions for the same class of semigroup generators.
All of these applications have substantial motivations, and they recover and/or improve notable known results,
in particular from  \cite{Haase},  \cite{HRoz},  \cite{Sch1},  \cite{V1},  \cite{Zw05}, and \cite{Zwart}.
We refer to Section \ref{apps} for a fuller discussion of the norm-estimates provided by the $\mathcal B$-calculus.  However, we emphasize that the most attractive feature of the $\mathcal B$-calculus is not the estimates as such, but the fact that all of them can be obtained in a single, technically simple, manner.

Although we use the terms \emph{Besov algebra} for the Banach algebra $\Bes$, and \emph{Besov functions} for its elements, we use only the definition of $\Bes$ in (\ref{bdef0}), together with techniques from complex function theory, Fourier analysis, and operator theory.  In particular, we do not use the Littlewood-Paley decompositions that frequently appear in connection with Besov spaces.  In an appendix (Section \ref{appx}), we show how $\Bes$ (or more precisely, a subspace of codimension $1$), is a realization of the analytic part of a conventional Besov space on $\R$ defined via the Littlewood-Paley dyadic decomposition.   In \cite{V1}, the algebra $\mathcal B$ arises  as a half-plane realization of the analytic Besov algebra $B^0_{\infty,1}(\mathbb C_+)$, and the space $\mathcal E$ is shown there to contain a continuously embedded analytic Besov space $B^0_{1,\infty}(\mathbb C_+)$.

\subsection*{Notation}
Throughout the paper, we shall use the following notation:
\begin{enumerate}[\phantom{X}]
\item $\R_+ :=[0,\infty)$,
\item $\C_+ := \{z \in\C: \Re z>0\}$, $\overline{\C}_+ = \{z \in\C: \Re z\ge0\}$,
\item $\Sigma_\theta := \{z\in\C: z \ne 0, |\arg z|<\theta\}$ for $\theta \in (0,\pi)$; $\Sigma_0 := (0,\infty)$,
\item $\mathrm{R}_a := \{z\in\C: \Re z > a\}$.
\item
\item $\supp(f)$ denotes the support of a function or distribution $f$ on $\R$,
\item For $f : \C_+ \to \C$, we write
\[
f(\infty) = \lim_{\Re z \to \infty} f(z)
\]
if this limit exists in $\C$.
\end{enumerate}
For $a \in \overline{\C}_+$, we define functions on $\C_+$ by
\[
e_a(z) = e^{-az}, \; r_a(z) = (z+a)^{-1}.
\]

\noindent
We use the following notation for spaces of functions or measures, and transforms, on $\R$ or $\R_+$:
\begin{enumerate}[\phantom{X}]
\item $\mathcal{S}(\R)$ denotes the Schwartz space on $\R$,
\item $\operatorname{BUC}(\R)$ denotes the space of bounded, uniformly continuous, functions on $\R$, with the sup-norm,
\item  $\operatorname{Hol}(\Omega)$ denotes the space of holomorphic functions on an open subset $\Omega$ of $\C_+$,
\item $M(\R)$ denotes the Banach algebra of all bounded Borel measures on $\R$ under convolution, and $M(\R_+)$ denotes the corresponding algebra for $\R_+$.   We shall identify $L^1(\R_+)$ with a subalgebra of $M(\R_+)$ in the usual way.
\item $\lt$ denotes the Laplace transform applied to distributions, measures or functions on $\R_+$,
\item $\mathcal{F}$ denotes the Fourier transform on $\R$.  For $f \in L^1(\R)$,
\[
(\mathcal{F}f)(s) := \int_\R f(t)e^{-ist} \,dt.
\]
We shall also consider $\mathcal{F}$ applied to measures and distributions on $\R$, and
$\mathcal{F}^{-1}$ will be the inverse Fourier transform on $\R$.
\end{enumerate}

\noindent
For a Banach space $X$, $L(X)$ denotes the space of all bounded linear operators on $X$. The domain, spectrum and resolvent set of an (unbounded) operator $A$ on $X$ are denoted by $D(A)$, $\sigma(A)$ and $\rho(A)$, respectively.

\section{The Besov algebra, related spaces and a duality}  \label{besov}

In this section we present material which we shall need for what follows.  Some of this material is quite standard, but we think it will be helpful to readers with various backgrounds if the material is collected together.

\subsection{The algebra $H^\infty(\C_+)$}
Let $H^\infty(\C_+)$ be the Banach algebra of bounded holomorphic functions on $\C_+$, equipped with the supremum norm
\[
\|f\|_\infty := \sup \{ |f(z)| : z \in \C_+\}.
\]
Each function $f \in H^\infty(\C_+)$ has a boundary function on $i\R$, given by
\[
f^b(s) = f(is):= \lim_{t \to 0+} f(t+is) \quad\text{a.e.}
\]
We need the Poisson kernel and Poisson semigroup
\[
P_t(y) = \dfrac {t}{\pi(t^2+y^2)}, \quad P(t)g = P_t * g,  \quad t>0,\, y \in \R,\, g \in L^\infty(\R).
\]
We will use the following standard facts.

\begin{lemma} \label{ho}
Let $f \in H^\infty(\C_+)$.
\begin{enumerate}[\rm1.]
\item \label{ho1} $\displaystyle f(t+is) =  (P(t) f^b)(s) = \frac{1}{\pi} \int_\R \frac{t f^b(y)}{t^2 + (y-s)^2} \,dy$,
\item \label{ho2} $\|f\|_\infty = \|f^b\|_{L^\infty(\R)}$ \; (maximum principle);
\item \label{derbd} There are absolute constants $C_n$ such that $|f^{(n)}(z)| \le \dfrac{C_n \|f\|_\infty}{(\Re z)^n}$, $n \in\N$.   One may take  $C_1=1/2$ and $C_2 = 2/\pi$.
\item \label{ho4} Let  $a>0$, $b>0$, and $n> m \ge 0$.  Then
\[
\sup_{\Re z = a+b} |f^{(n)}(z)| \le \frac{C_{n-m}}{b^{n-m}}  \sup_{\Re z = a} |f^{(m)}(z)|.
\]
\end{enumerate}
\end{lemma}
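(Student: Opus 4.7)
The plan is to deduce all four items from the Poisson integral representation for $H^\infty(\C_+)$, which is the workhorse of Hardy-space theory on a half-plane. Items 1 and 2 are classical: by Fatou's theorem every $f \in H^\infty(\C_+)$ admits non-tangential boundary values $f^b \in L^\infty(i\R)$ almost everywhere, and because $f$ is bounded it is recovered from $f^b$ by the Poisson integral, which is item 1. For item 2, $\|P_t\|_{L^1(\R)} = 1$ yields $|f(t+is)| \le \|f^b\|_{L^\infty}$ via the Poisson formula, while the reverse inequality follows from $P_t*f^b \to f^b$ a.e.\ as $t\to 0+$.

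For item 3, the conservative bound $|f^{(n)}(z)| \le n!\,\|f\|_\infty/(\Re z)^n$ is immediate from Cauchy's integral formula on the disk $|w-z|=\Re z$, so one may always take $C_n = n!$. The sharp constant $C_1 = 1/2$ comes from the Schwarz--Pick inequality on $\C_+$ applied to $f/\|f\|_\infty$ viewed as a map into the closed unit disk:
\[
|f'(z)| \le \frac{\|f\|_\infty \bigl(1-|f(z)|^2/\|f\|_\infty^2\bigr)}{2\,\Re z} \le \frac{\|f\|_\infty}{2\,\Re z}.
\]
The value $C_2 = 2/\pi$ is obtained by differentiating the Poisson representation under the integral sign and computing the $L^1$-norm of the resulting second-order kernel (essentially $\partial_s^2 P_t$, or a rearrangement of it that represents $f''$) as an elementary explicit integral.

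Item 4 is an iteration of items 2 and 3. Set $g(z) := f^{(m)}(z+a)$; by item 3 applied to $f$ on $\C_+$, $f^{(m)}$ is bounded on $\{\Re w > a\}$, so $g \in H^\infty(\C_+)$, and item 2 applied to $g$ gives $\|g\|_\infty = \sup_{\Re w = a}|f^{(m)}(w)|$. Applying item 3 to $g$ at the point $z-a$, which has real part $b$, yields
\[
|f^{(n)}(z)| = |g^{(n-m)}(z-a)| \le \frac{C_{n-m}\,\|g\|_\infty}{b^{n-m}},
\]
which is the claimed inequality.

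The main obstacle is identifying the sharp constants $C_1 = 1/2$ and $C_2 = 2/\pi$, both strictly smaller than the values $1$ and $2$ produced by Cauchy's formula on the maximal inscribed disk. The constant $C_1 = 1/2$ genuinely exploits the $H^\infty$-hypothesis through Schwarz--Pick (rather than mere holomorphy with controlled derivative), and $C_2 = 2/\pi$ requires a direct $L^1$-norm computation for a second-order Poisson-type kernel. Everything else is a matter of routine reductions.
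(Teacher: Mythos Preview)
Your treatment of items 1, 2, and 4 matches the paper's: items 1 and 2 are cited as standard, and item 4 is obtained exactly as you say, by applying item 3 to $g(z)=f^{(m)}(z+a)$. For $C_1=1/2$ your Schwarz--Pick argument is correct and is a legitimate alternative; the paper instead obtains both $C_1$ and $C_2$ simultaneously from the Cauchy integral over the boundary line,
\[
f'(z)=-\frac{1}{2\pi}\int_\R\frac{f(is)}{(is-z)^2}\,ds,\qquad
f^{(n)}(z)=-\frac{n!}{2\pi}\int_\R\frac{f(is)}{(is-z)^{n+1}}\,ds,
\]
which after the estimate $|is-z|^2=(\Re z)^2+(s-\Im z)^2$ gives $C_n=\dfrac{n!}{2\pi}\displaystyle\int_\R(1+t^2)^{-(n+1)/2}\,dt$, hence $C_1=1/2$ and $C_2=2/\pi$.

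There is a genuine gap in your derivation of $C_2=2/\pi$. Differentiating the Poisson representation in $s$ twice yields $|f''(t+is)|\le\|\partial_y^2P_t\|_{L^1(\R)}\,\|f^b\|_\infty$, but a direct computation gives
\[
\|\partial_y^2P_t\|_{L^1(\R)}=\frac{1}{\pi t^2}\int_\R\frac{|6u^2-2|}{(1+u^2)^3}\,du=\frac{3\sqrt{3}}{2\pi}\,\frac{1}{t^2},
\]
so this route only produces $C_2=3\sqrt{3}/(2\pi)\approx 0.827$, strictly worse than $2/\pi\approx 0.637$. The point is that the kernel representing the \emph{holomorphic} second derivative is the Cauchy kernel $(is-z)^{-3}$, not $\partial_s^2P_t$; the Poisson kernel reproduces harmonic functions and its real derivatives do not capture the complex derivative with the optimal constant. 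Replace your Poisson-differentiation step by the Cauchy-line formula above and the argument goes through.
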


\begin{proof}   (\ref{ho1}) and (\ref{ho2}) are standard facts; (\ref{derbd}) is easily seen from Cauchy's integral formula.  Then (\ref{ho4}) can be deduced by applying (3) to $f^{(m)}(z+a)$.
\end{proof}

The proof of (\ref{derbd}) via Cauchy's integral formula around a circular contour shows that $C_n$ can be taken to be $n!$.   However if $f \in H^\infty(\C_+)$ and $f(\infty):=\lim_{\Re z \to \infty} f(z)$ exists, then $f$ satisfies the Cauchy integral representation
\[
f(z) = \frac{f(\infty)}{2} - \frac{1}{2\pi} \int_\R \frac{f(is)}{is-z} \,ds,
\]
as a principal value integral.  This may be seen by applying Cauchy's integral formula around large semi-circles.  Similarly, or by differentiating the formula
\[
f(z) - f(1) = - \frac{1}{2\pi} \int_\R \frac{(z-1) f(is)}{(is-z)(is-1)} \, ds,
\]
one obtains
\begin{equation} \label{P2}
f'(z) = - \frac{1}{2\pi} \int_\R \frac{f(is)}{(is-z)^2} \,ds,
\end{equation}
as an absolutely convergent integral, for any $f \in H^\infty(\C_+)$, and higher derivatives can be obtained by repeated differentiation of the formula.  This shows that  $C_n$ can be
\[
\frac{n!}{2\pi} \int_\R (t^2+1)^{-(n+1)/2} \, dt = \frac{n! \Gamma(n/2)}{2\sqrt\pi \Gamma((n+1)/2)}.
\]
We shall use only the cases $n=1$ and $n=2$, giving $C_1 =  1/2$ and $C_2= 2 /\pi$.

Let $H^\infty(\R)$ denote the space of $g \in L^\infty(\R)$ such that $\supp(\mathcal{F}^{-1}g) \subset \R_+$.   Then the mapping $f \mapsto f^b$ is an isometric isomorphism from $H^\infty(\C_+)$ onto  $H^{\infty}(\R)$ and its inverse is given in Lemma \ref{ho}(\ref{ho1}) \cite[Section II.1.5]{Havin}.

If $f \in H^\infty(\C_+)$ and $f^b \in L^1(\mathbb R)$, then $f(\infty) = 0$ by the Poisson integral formula in Lemma \ref{ho}(1).  Then, for every $z\in \C_+$, the Cauchy representation becomes
\begin{equation}\label{cauchy2}
f(z)= - \frac{1}{2 \pi}\int_{\mathbb R}\frac{f^b(t) \,dt}{it-z}=\int_{0}^{\infty}e^{-sz} (\mathcal{F}^{-1} f^b) (s)\, ds,
\end{equation}
(see \cite[p.170]{Havin}).  Thus the Fourier-Laplace transform on the right hand side of \eqref{cauchy2}  provides the analytic  extension of $f^b$ to $\C_+$.

\subsection{The Banach algebra $\Bes$}  \label{bdef}

We define $\Bes$ to be the space of those holomorphic functions $f$ on $\C_+$ such that
\begin{equation} \label{Bfin}
\|f\|_\Bq := \int_0^\infty \sup_{y\in\R} |f'(x+iy)| \, dx < \infty.
\end{equation}
We note the following elementary properties of functions $f \in \Bes$.

\begin{prop} \label{besprop}
Let $f \in \Bes$.
\begin{enumerate} [\rm1.]
\item $f(\infty):= \lim_{\Re{z}\to\infty} f(z)$ exists in $\C$.
\item $f$ is bounded, and $\|f\|_\infty \le |f(\infty)| + \|f\|_\Bq$.
\item $f(is) = \lim_{x\to0+} f(x+is)$ uniformly for $s \in \R$.
\item The extended function $f$ is uniformly continuous on $\overline{\C}_+$, and so $f^b \in \operatorname{BUC}(\R)$.
\item If $U$ is an open set containing the range of $f$, and $h$ is a bounded holomorphic function with bounded derivative on $U$, then $h \circ f \in \Bes$.
\item If $f$ is bounded away from $0$, then $1/f \in \Bes$.
\item Assume that the range of $f$ is contained in $\Sigma_\pi$.  If $\beta>1$, then $f^\beta(z) := f(z)^\beta \in \Bes$.  If  $f$ is bounded away from $0$, then $f^\beta \in \Bes$ for all $\beta \in \R$.
\end{enumerate}
\end{prop}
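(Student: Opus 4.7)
The plan is to prove the seven parts in order, since each later claim either uses a previous part or the same basic estimate. The central observation that drives everything is that the definition of $\|f\|_\Bq$ controls horizontal increments of $f$ by a majorant independent of the imaginary coordinate: for any $y \in \R$ and $0 < x_1 < x_2$,
\[
|f(x_2+iy) - f(x_1+iy)| \le \int_{x_1}^{x_2} |f'(x+iy)|\,dx \le \int_{x_1}^{x_2} \sup_{y' \in \R} |f'(x+iy')|\,dx.
\]
The right side tends to $0$ as $x_1, x_2 \to \infty$ (and as $x_1, x_2 \to 0+$), uniformly in $y$. Applying this with $x_1, x_2 \to \infty$ and a fixed $y$ shows $f(x+iy)$ is Cauchy, and the uniformity in $y$ forces the limit to be independent of $y$; this is part~(1). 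Letting $x_1 = x$ and $x_2 \to \infty$ in the same inequality proves the bound $|f(z)| \le |f(\infty)| + \|f\|_\Bq$ of part~(2). The $x \to 0+$ version establishes that $f(x+i\cdot)$ is Cauchy in the sup-norm over $\R$, so has a uniform limit which must agree a.e.\ with the standard boundary function $f^b$; this yields part~(3) and shows $f^b \in \operatorname{BUC}(\R)$.

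For part~(4), I would prove uniform continuity of the extended $f$ on $\overline{\C}_+$ by an $\varepsilon/3$ argument: given $\delta > 0$, approximate $z_1, z_2 \in \overline{\C}_+$ by $z_1+\delta, z_2+\delta$, use part~(3) to make the two horizontal shifts small uniformly, and bound the middle term by the mean-value inequality along the segment from $z_1+\delta$ to $z_2+\delta$, on which $|f'|$ is bounded by $\|f\|_\infty/(2\delta)$ via Lemma~\ref{ho}(\ref{derbd}).

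Parts~(5)--(7) then follow cleanly from the chain rule $(h\circ f)'(z) = h'(f(z))\, f'(z)$. For part~(5), if $U$ contains the range of $f$ (which, by parts~(2) and~(4), is a bounded subset of $\C_+$ with compact closure in $U$ once we observe $h$ is defined on $U$), then $\|h'\|_{\infty,U}$ is finite by hypothesis, so $\|h\circ f\|_\Bq \le \|h'\|_{\infty,U}\,\|f\|_\Bq$ and $h\circ f$ is also bounded; holomorphy is clear. Part~(6) is the special case $h(w) = 1/w$ on $U = \{|w| > \varepsilon\}$ where $\varepsilon = \inf_{\C_+}|f| > 0$. For part~(7), since $\Sigma_\pi$ is simply connected, $f^\beta$ is a well-defined holomorphic function on $\C_+$, and
\[
\|f^\beta\|_\Bq \le |\beta|\, \sup_{z \in \C_+}|f(z)|^{\beta-1} \cdot \|f\|_\Bq.
\]
When $\beta \ge 1$, $|f|^{\beta-1}$ is bounded by $\|f\|_\infty^{\beta-1}$ using part~(2); when $f$ is in addition bounded away from $0$, $|f|^{\beta-1}$ is bounded for every real $\beta$ by the two-sided bound $\varepsilon \le |f(z)| \le \|f\|_\infty$.

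The only mild subtlety I anticipate is in part~(1), ensuring that the limit $f(\infty)$ exists independently of the imaginary direction of approach; the uniform-in-$y$ Cauchy estimate above is precisely what handles this. Everything else is routine once that estimate and Lemma~\ref{ho} are in hand.
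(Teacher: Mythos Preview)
Your overall plan matches the paper's: both arguments are driven by integrating $f'$ and using that $g(x):=\sup_{y\in\R}|f'(x+iy)|$ lies in $L^1(\R_+)$. Parts~(2)--(7) are handled correctly and essentially as the paper intends (the paper only writes out (1)--(2) and declares the rest ``straightforward'').

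There is, however, a genuine gap in your treatment of part~(1). You argue that the horizontal Cauchy estimate
\[
|f(x_2+iy)-f(x_1+iy)|\le\int_{x_1}^{x_2} g(x)\,dx
\]
gives, for each fixed $y$, a limit $L(y)=\lim_{x\to\infty}f(x+iy)$, with convergence uniform in $y$; and then you assert that ``the uniformity in $y$ forces the limit to be independent of $y$.'' That last implication is false without invoking holomorphy: the non-holomorphic function $F(x+iy)=\arctan y$ satisfies $\sup_y|\partial_x F(x+iy)|=0\in L^1$, the convergence is trivially uniform, yet the limit depends on $y$. So some use of analyticity is required.

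The paper supplies it by integrating $f'$ from $z=x+iy$ along a line of small slope $-\ep$ down to the real axis and then out to $+\infty$, obtaining $|f(\infty-)-f(z)|\le(1+\ep^2)^{1/2}\int_x^\infty g(t)\,dt$ and letting $\ep\to0$; this simultaneously proves that the horizontal-line limits all equal $f(\infty-):=\lim_{\tau\to\infty}f(\tau)$. If you prefer to keep your purely horizontal approach, an easy patch is: since $g\in L^1(\R_+)$ there is a sequence $x_n\to\infty$ with $g(x_n)\to 0$, and then for any $y_1,y_2$ a vertical integration gives
\[
|f(x_n+iy_1)-f(x_n+iy_2)|\le|y_1-y_2|\,g(x_n)\to 0,
\]
so $L(y_1)=L(y_2)$. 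Either way, the step needs to be made explicit.
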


\begin{proof}
We prove only the first two statements.  Note first that (\ref{Bfin}) implies that $f' \in L^1(\R_+)$, and hence
\[
f(\infty-) := \lim_{\tau\to\infty} f(\tau) = f(x) + \int_x^\infty f'(t) \, dt
\]
for any $x>0$.  Now let $z = x+iy$, and assume, without loss of generality, that $y>0$.   Let $\ep>0$.
Integrating $f'$ along the line segment from $z$ to $x+\ep^{-1}y$, and then  the horizontal half-line $[x+\ep^{-1}y,\infty)$ gives
\[
f(\infty-)  - f(z) = \int_x^{x+\ep^{-1}y} (1 - i\ep) f'\big(t + i(y-\ep(t-x)\big) \, dt + \int_{x+\ep^{-1}y}^\infty f'(t) \, dt.
\]
Hence
\begin{align*}
|f(\infty-)-f(z)| \le (1+\ep^2)^{1/2} \int_x^\infty \sup_{s\in\R} |f'(t+is)|\,dt.
\end{align*}
Letting $\ep\to0+$ gives
\[
|f(\infty-)-f(z)| \le \int_x^\infty \sup_{s\in\R} |f'(t+is)|\,dt.
\]
The first two statements follow immediately.  The remaining statements are straightforward.
\end{proof}

We define a norm on $\Bes$ as follows:
$$
\|f\|_\Bes = \|f\|_\infty + \|f\|_\Bq = \|f\|_\infty + \int_0^\infty \sup_{y\in\R} |f'(x+iy)| \, dx.
$$
The proof of Proposition \ref{besprop} shows that $\|\cdot\|_\Bes$ is equivalent to each norm of the form
\[
|f(a)| + \|f\|_\Bq \qquad ( a \in \overline{\C}_+ \cup \{\infty\}).
\]
It is easy to see that $\Bes$ is an algebra and $\|\cdot\|_\Bes$ is an algebra norm.  We shall use this norm on $\Bes$ unless stated otherwise.   This choice is convenient for the theory developed in the rest of Section \ref{besov} and in Section \ref{b-fc}, although the algebra norm is not optimal for the norm-estimates in Section \ref{apps}.

The space $\Bes$ has been denoted in a different way in some papers, as if it were a Besov space, but this is questionable; see the Appendix for further discussion of this point.

We now show that the normed algebra $(\Bes, \|\cdot\|_\Bes)$ is complete, and establish two related properties.

\begin{prop} \label{balg}
\begin{enumerate}[\rm1.]
\item The closed unit ball $U$ of $\Bes$ is compact in the topology of uniform convergence on compact subsets of $\C_+$.
\item  $\Bes$ is a Banach algebra.
\item  For $f \in \Bes$, the spectrum of $f$ in $\Bes$ is the closure of the range of $f$ (considered as a function on $\C_+$) and the spectral radius of $f$ in $\Bes$ is $\|f\|_\infty$.
\end{enumerate}
\end{prop}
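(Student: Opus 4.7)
My plan is to treat the three parts in sequence, with parts 2 and 3 drawing on a key lower semicontinuity property of $\|\cdot\|_\Bes$ with respect to uniform convergence on compact subsets of $\C_+$ that is proved in part~1.

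For part~1, I would start from the bound $\|f\|_\infty \le \|f\|_\Bes \le 1$ on $U$, so that any sequence $(f_n) \subset U$ is a uniformly bounded family of holomorphic functions on $\C_+$. Montel's theorem produces a subsequence (still denoted $f_n$) converging uniformly on compact subsets of $\C_+$ to some $f \in \operatorname{Hol}(\C_+)$, and hence $f_n' \to f'$ uniformly on compact subsets. To place $f$ back in $U$, I would establish the pointwise inequality
\[
\sup_{y \in \R} |f'(x+iy)| \le \liminf_{n \to \infty} \sup_{y \in \R} |f_n'(x+iy)|, \qquad x > 0,
\]
by first noting that for each fixed $y_0$, $|f'(x+iy_0)| = \lim_n |f_n'(x+iy_0)| \le \liminf_n \sup_y |f_n'(x+iy)|$, and then taking the sup over $y_0$ on the left. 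Fatou's lemma in the $x$-variable yields $\|f\|_\Bq \le \liminf_n \|f_n\|_\Bq$, and combined with the analogous (trivial) inequality for $\|\cdot\|_\infty$ one gets $\|f\|_\Bes \le \liminf_n \|f_n\|_\Bes \le 1$. Since uniform convergence on compact subsets of $\C_+$ is induced by a metric, sequential compactness of $U$ is equivalent to compactness.

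For part~2, the algebra inequality $\|fg\|_\Bes \le \|f\|_\Bes \|g\|_\Bes$ follows from the Leibniz rule $(fg)' = f'g + fg'$ applied inside the integral defining $\|\cdot\|_\Bq$ (this gives $\|fg\|_\Bq \le \|f\|_\Bq \|g\|_\infty + \|f\|_\infty \|g\|_\Bq$) combined with the submultiplicativity of $\|\cdot\|_\infty$; rearranging produces the bound $\|fg\|_\Bes \le \|f\|_\Bes \|g\|_\Bes$. For completeness, a Cauchy sequence $(f_n) \subset \Bes$ is in particular Cauchy in the sup-norm, hence converges uniformly on $\C_+$ to a bounded function $f$ that is holomorphic on $\C_+$; applying the lower semicontinuity from part~1 to the differences $f_n - f_m \to f_n - f$ (as $m \to \infty$) gives $\|f_n - f\|_\Bes \le \liminf_m \|f_n - f_m\|_\Bes$, which is small for $n$ large, so $f_n \to f$ in $\Bes$.

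For part~3, the inclusion $\overline{f(\C_+)} \subset \sigma_\Bes(f)$ follows from $\Bes \hookrightarrow H^\infty(\C_+)$: if $\lambda \in \overline{f(\C_+)}$, then $\inf_{z \in \C_+} |\lambda - f(z)| = 0$, so $(\lambda-f)^{-1}$ cannot exist as a bounded holomorphic function on $\C_+$, let alone as an element of $\Bes$. The reverse inclusion comes directly from Proposition~\ref{besprop}: if $\lambda \notin \overline{f(\C_+)}$, then by Proposition~\ref{besprop}(4) the function $f$ extends continuously to $\overline{\C}_+$, so $\lambda-f$ is bounded away from $0$, and Proposition~\ref{besprop}(6) provides $(\lambda-f)^{-1} \in \Bes$. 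The spectral radius is then $\sup\{|\lambda| : \lambda \in \overline{f(\C_+)}\} = \|f\|_\infty$. The main obstacle is the lower semicontinuity in part~1 (which is reused for completeness in part~2): the supremum over the non-compact parameter $y \in \R$ must be interchanged correctly with $\liminf_n$, requiring the two-step argument of first passing to the pointwise limit at each $y$ and then taking the sup before invoking Fatou in~$x$.
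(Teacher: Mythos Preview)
Your proposal is correct and follows essentially the same approach as the paper: Montel plus Fatou for part~1, the lower semicontinuity of $\|\cdot\|_\Bes$ (established in part~1) applied to differences $f_n-f_m$ for completeness in part~2, and Proposition~\ref{besprop}(6) for part~3. The only minor redundancy is your appeal to Proposition~\ref{besprop}(4) in part~3: the condition $\lambda \notin \overline{f(\C_+)}$ already means, by definition of the closure, that $\inf_{z\in\C_+}|\lambda - f(z)| > 0$, so $\lambda - f$ is bounded away from $0$ on $\C_+$ and Proposition~\ref{besprop}(6) applies directly without needing the continuous extension to the boundary.
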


\begin{proof}
1.  Since $U$ is contained in the closed unit ball of $H^\infty(\C_+)$, which is compact in the topology of uniform convergence on compact sets, by Montel's Theorem, it suffices to consider a sequence $(f_n)$ in $U$ which converges to a holomorphic function $f$ uniformly on compact sets and to show that $f \in U$.

Clearly, $\|f\|_\infty \le \liminf_{n\to\infty} \|f_n\|_\infty$.  Moreover, for $x+iy \in \C_+$,
\[
|f'(x+iy)| = \lim_{n\to\infty} |f_n'(x+iy)| \le \liminf_{n\to\infty} \sup_{s\in\R} |f_n'(x+is)|.
\]
From this and Fatou's Lemma, it follows that
\begin{align*}
\int_0^\infty \sup_{y\in \R} |f'(x+iy)|\,dx &\le \int_0^\infty \liminf_{n\to\infty} \sup_{s\in\R} |f_n'(x+is)|\, dx \\
&\le \liminf_{n\to\infty} \int_0^\infty \sup_{s\in\R} |f_n'(x+is)|\, dx.
\end{align*}
Thus
\begin{equation} \label{bf}
\|f\|_\Bes \le \liminf_{n\to\infty} \|f_n\|_\infty + \liminf_{n\to\infty} \int_0^\infty \sup_{s\in\R} |f_n'(x+is)|\, dx \le \liminf_{n\to\infty} \|f_n\|_\Bes \le 1,
\end{equation}
as required.

\noindent 2.   This can be easily deduced from (1) above.  Alternatively, let $(f_n)$ be a Cauchy sequence in $\Bes$.    Then $(f_n)$ converges pointwise to a function $f$, which is holomorphic on $\C_+$, by Vitali's theorem.   Then as in \eqref{bf} , we have
\[
\|f_n-f\|_{\mathcal B}\le \liminf_{m \to \infty}  \|f_n -f_m\|_{\mathcal B} \le \sup_{m \ge n}\|f_n-f_m\|_{\mathcal B},
\]
and the claim follows.

\noindent 3. This follows easily from Proposition \ref{besprop}(6).
\end{proof}

There is a superficial resemblance between (\ref{Bfin}) and the definition of the Hardy space $H^1(\C_+)$ on the half-plane $\C_+$, which consists of the holomorphic functions $f$ on $\C_+$ such that
\[
\|f\|_{H^1} := \sup_{x>0} \int_\R |f(x+iy)| \, dy < \infty.
\]
See \cite[Chapter 11]{Duren} or \cite[Chapter II]{Garnett} for details of the Hardy space, noting that those references consider the spaces on the upper half-plane.  It is well known that if $f' \in H^1(\C_+)$ then $f' \in L^1(\R_+)$.  For example this can be shown by applying the Carleson embedding theorem \cite[Theorem II.3.9]{Garnett} to Lebesgue measure on $\R_+$; see also \cite[p.198]{Duren}.  Here we show the stronger result that $f \in \Bes$.

\begin{prop}  \label{prim}
If $f$ is holomorphic on $\C_+$ and $f' \in H^1(\C_+)$, then $f \in \Bes$, $f' \in L^1(\R_+)$, and
\[
\|f'\|_{L^1(\R_+)}  \le  \|f\|_{\Bes_0} \le \frac{\|f'\|_{H^1}}{2}.
\]
Moreover,  $\lim_{|z|\to \infty} f(z)$ exists in $\C$.
\end{prop}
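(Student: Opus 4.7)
The first inequality is immediate from the pointwise bound $|f'(x)| \le \sup_{y\in\R}|f'(x+iy)|$ for $x > 0$.  For the upper bound $\|f\|_{\Bes_0} \le \|f'\|_{H^1}/2$, the naive pointwise estimate $\sup_y|f'(x+iy)| \lesssim \|f'\|_{H^1}/x$ coming from Cauchy or Poisson integrals is not integrable on $(0,\infty)$, so a globally coordinated argument is required; this is the main obstacle.  My plan is to reduce the bound to Hilbert's classical integral inequality via an $H^2\cdot H^2$ factorisation of $H^1$.

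Let $g := f'$ and write $g = IO$, the inner--outer factorisation in $H^1(\C_+)$.  Set $h_1 := IO^{1/2}$ and $h_2 := O^{1/2}$, both in $H^2(\C_+)$; since $|I^b|=1$ a.e.\ and $|O^b|=|g^b|$, each factor satisfies $\|h_j\|_{H^2}^2 = \|g^b\|_{L^1} = \|g\|_{H^1}$.  By Paley--Wiener, $h_j(z) = \int_0^\infty e^{-zt}\,k_j(t)\,dt$ for some $k_j \in L^2(\R_+)$ with $\|h_j\|_{H^2}^2 = 2\pi\|k_j\|_{L^2}^2$.  The triangle inequality then supplies a $y$-independent majorant
\[
|h_j(x+iy)| \le H_j(x) := \int_0^\infty e^{-xt}|k_j(t)|\,dt,
\]
so $\sup_{y\in\R}|f'(x+iy)| \le H_1(x)H_2(x)$.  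A Fubini rearrangement converts this into a Hilbert quadratic form,
\[
\int_0^\infty H_1(x)H_2(x)\,dx = \int_0^\infty\!\!\int_0^\infty \frac{|k_1(t)||k_2(s)|}{t+s}\,dt\,ds,
\]
and Hilbert's integral inequality bounds the right-hand side by $\pi\|k_1\|_{L^2}\|k_2\|_{L^2} = \|h_1\|_{H^2}\|h_2\|_{H^2}/2 = \|g\|_{H^1}/2$.  Thus $f \in \Bes$ with the asserted sharp constant.

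For the concluding limit $\lim_{|z|\to\infty}f(z)$, Proposition~\ref{besprop}(1) already yields $f(\infty):=\lim_{\Re z\to\infty}f(z)$.  Integrating $f'$ along the horizontal ray from $x+iy$ to $\infty$ gives
\[
|f(\infty) - f(x+iy)| \le \int_x^\infty |f'(s+iy)|\,ds.
\]
As $x \to \infty$ the right side tends to $0$ uniformly in $y$, because the integrand is dominated by $M(s) := \sup_{y'}|f'(s+iy')| \in L^1(\R_+)$.  As $|y|\to\infty$ with $x$ bounded, it tends to $0$ by dominated convergence with the same $M$, invoking the standard $H^1$-fact that $f'(s+iy)\to 0$ as $|y|\to\infty$ for each fixed $s>0$ (seen by splitting the Poisson representation $f'(s+iy) = (P_s * (f')^b)(y)$ into the regions $|t|\le|y|/2$ and $|t|>|y|/2$).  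Combining the two regimes gives $f(z) \to f(\infty)$ as $|z|\to\infty$.
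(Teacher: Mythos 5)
Your argument is correct, including the sharp constant $\tfrac12$, but it takes a genuinely different route from the paper's. The paper writes $f'=\mathcal{L}g$ with $g=\mathcal{F}^{-1}(f')^b$ supported in $\R_+$ and bounded (Paley--Wiener for $H^1$), majorises $\sup_y|f'(x+iy)|$ by $\int_0^\infty e^{-xt}|g(t)|\,dt$, integrates in $x$ to reduce everything to $\int_0^\infty |g(t)|\,t^{-1}\,dt$, and then simply \emph{quotes} the half-plane Hardy inequality $\int_0^\infty |g(t)|t^{-1}\,dt \le \tfrac12\|f'\|_{H^1}$. Your Riesz $H^2\cdot H^2$ factorisation, followed by the Laplace-transform majorants $H_j$, the Fubini rearrangement into a Hilbert form, and Hilbert's integral inequality, is in effect a from-scratch proof of that same Hardy inequality (indeed this is its classical proof), applied with a slightly different pointwise majorant ($H_1(x)H_2(x)$ in place of $\int_0^\infty e^{-xt}|g(t)|\,dt$; the two agree after integration in $x$). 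What your version buys is self-containedness and a transparent source for the constant; what the paper's buys is brevity, plus the byproduct $g\in L^\infty(\R_+)$ with $g(0)=0$, which it then reuses for the final claim. For $\lim_{|z|\to\infty}f(z)$ the two proofs also diverge: the paper passes to the boundary function, uses $(f^b)'=i(f')^b\in L^1(\R)$ and $\int_\R (f')^b=0$ to get equal limits at $\pm i\infty$, and invokes a Phragm\'en--Lindel\"of-type theorem of Lindel\"of from Titchmarsh; your two-regime estimate (uniform decay as $\Re z\to\infty$ from $\|f\|_{\Bes_0}<\infty$, and dominated convergence as $|\Im z|\to\infty$ using the pointwise decay of $f'$ on vertical lines) is more elementary and avoids the external reference. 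One cosmetic point: your horizontal-ray identity $f(\infty)-f(x+iy)=\int_x^\infty f'(s+iy)\,ds$ tacitly uses that $f(R+iy)\to f(\infty)$ for each fixed $y$, which is supplied by the (uniform) estimate in the proof of Proposition \ref{besprop}(1); it is worth saying so explicitly.
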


\begin{proof}
Since $f' \in H^1(\C_+)$, the boundary function $(f')^b \in L^1(\R)$.  By a standard result \cite[Theorem 11.10]{Duren},  $g:= \fti (f')^b$ vanishes on $(-\infty,0]$, $g \in L^\infty(\R_+)$ and $f' = \lt{g}$.  Hence
\[
\|f'\|_{L^1(\R_+)} \le \|f\|_\Bq \le \int_0^\infty \int_0^\infty e^{-xt} |g(t)| \,dt\,dx = \int_0^\infty \frac{|g(t)|}{t} \,dt \le \frac{\|f'\|_{H^1}}{2}.
\]
Here the final inequality is an application of the half-plane analogue of Hardy's inequality \cite[p.198]{Duren}, \cite[Theorems 4.2 and 3.1]{Hille1}.   It follows that $f' \in L^1(\R_+)$ and $f \in \Bes$.

Now as $(f')^b \in L^1(\R)$ and $f$ is uniformly continuous on $\overline{\C}_+$, we have
$(f^b)'=i(f')^b$ a.e. by a simple limiting argument. Then the two  limits $\lim_{s \to \pm \infty}f^b(s)$ exist in $\C$,
 and they are equal, since $g(0)=0$.
By a consequence of the Phragm\'en-Lindel\"of principle \cite[Theorem 5.63]{Titchmarsh}, it follows
that $\lim_{|z| \to \infty}f(z)=\lim_{s \to \pm\infty} f^b(is)$.  (The formulation of \cite[Theorem 5.63]{Titchmarsh} requires $f$ to be holomorphic on $\overline{\C}_+$, but it clearly extends to the case when $f$ is continuous on $\overline{\C}_+$ and holomorphic on $\C_+$.)
\end{proof}

In Section \ref{LT} and  Section \ref{subss} we shall consider some other classes of functions which are in $\Bes$.

\subsection{Spectral decompositions} \label{pwd}

For any closed subset $I$ of $\R_+$, we define the spectral subspace $H^{\infty}(I)$ of $H^\infty(\C_+)$
by
\[
H^{\infty}(I)=\{f \in H^\infty(\C_+): \operatorname{supp} \big(\fti f^b\big) \subset I\}.
\]
The sense in which these subspaces are ``spectral'' will become clear later in this section.

The subspaces $H^\infty(I)$ for closed intervals $I$ will play a special role in what follows.
Some of them allow a very simple description. For example, by the Phragmen-Lindel\"of theorem (see e.g.\ \cite[p.175, F]{Havin}), one has
\[
H^{\infty}[\sigma, \infty)=e_\sigma H^\infty(\mathbb C_+),
\]
where  $e_\sigma(z) = e^{-\sigma z}$, reflecting the fact that shifts of the distributional Fourier transform correspond to multiplication by exponential functions.   So
\begin{equation} \label{expdef}
|f(x+iy)| \le e^{-\sigma x}\|f\|_\infty, \qquad f \in H^\infty[\sigma,\infty).
\end{equation}
Using the distributional Paley-Wiener-Schwartz theorem (\cite[Section 7.4]{Hoermander}; see also \cite[Theorem 3.5]{Carlsson}), a function $f \in H^\infty(I)$, where $I \subset \R_+$ is compact, can be recovered from its boundary function $f^b$ by means of the distributional Fourier-Laplace transform:
\begin{equation*}
f(z)= \fti f^b (e_z), \qquad z \in \mathbb C_+,
\end{equation*}
where $e_z(\l) = e^{-z\l}$ and the right-hand side is well-defined since the distribution $\fti f^b$ has compact support, so it can be applied to $C^\infty$-functions without ambiguity.   It follows that the Fourier transforms of restrictions of $f \in H^\infty(I)$ to vertical lines are essentially given by the Fourier transform of $f^b$, i.e., denoting $f_x(y):=f(x+iy), x>0$, we have
$$
\fti f_x =e_x \fti f^b.
$$
Alternatively, one may observe that $f(x+iy) = (P_x * f^b)(y)$ and use that $(\mathcal F P_x)(s)= e^{-x|s|} ,\, x>0$, and that $\fti f^b$ has compact support in $\R_+$. The support of $\fti f^b$ is invariant under vertical translations replacing $f(z)$ by $f(z+is)$.  It follows, from the Poisson formula (Lemma \ref{ho}) for example, that if $f \in H^\infty(I)$, then $y \mapsto f(x+iy)$ and $y \mapsto f'(x+iy)$ (for fixed $x>0$) also have distributional inverse Fourier transforms with support in $I$.    For similar statements for more general distributions see \cite[Proposition 2.10]{Carlsson}.

The Paley-Wiener theorem shows that, if $g \in L^\infty(\mathbb R)$ and $\sigma>0$, then $\operatorname{supp} (\fti g) \subset [-\sigma,\sigma]$ if and only if  $g$ extends  to an entire function of exponential type not exceeding $\sigma$
\cite[Chapter II.5.7]{Havin}. This can be easily transformed into a characterization of  $H^\infty(I)$ for arbitrary $I$ by multiplication with an exponential function.  The conclusion is that the smallest closed interval $I$ such that an entire function $f$ of exponential type belongs to $H^\infty(I)$ has
$$
\sup \{I\} = -\limsup_{x \to \infty}  \frac{\log |f(x)|}{x}\qquad \text{and} \qquad \inf \{I\} = - \limsup_{x \to -\infty} \frac{\log |f(x)|}{x}
$$
(see \cite[Theorem 3.5]{Carlsson}).

Recall that if $g \in L^2(i\R)$ and $\fti( g(-i\cdot))$ has support in $[0,\sigma]$, then $\fti (g(-i\cdot)) \in L^2(0,\sigma)$ by Plancherel's theorem, and $g$ extends to $\mathbb C_+$ with the representation
$$
g(z)= \frac{1}{2\pi} \int_0^{\sigma} e^{-s z} \left(\fti(g(-i\cdot))\right) (s)\, ds, \qquad z \in \mathbb C_+.
$$
This classical Paley-Wiener theorem will be used in Lemma \ref{PW}  and then in Lemma \ref{Sconv} which plays an important role in Section \ref{b-fc}.

We recall Bernstein's inequality (see \cite[Lemma 2.1]{BCD}, \cite[Theorem 11.1.2]{Boas}), which gives the following for $f \in H^\infty[0,\sigma]$:
\begin{equation} \label{Bernstein}
\|f'\|_\infty \le \sigma \|f\|_\infty,
\end{equation}
Applying this to $f'(z+x)$ gives
\[
|f''(x+iy)| \le \sigma \sup_{s\in\R} |f'(x+is)|.
\]
Hence, if $f \in \Bes \cap H^\infty[0,\sigma]$, then $f' \in \Bes$ and $\|f'\|_\Bes \le \sigma \|f\|_\Bes$.

We note also Bohr's inequality (see \cite[Theorem 2]{Logan}), which gives the following for $f \in H^\infty[\ep,\infty), \, \ep>0$:
\begin{equation} \label{Bohr}
|f(z)| \le \frac{\pi}{2\ep} \sup_{s\in\R} |f'(x+is)|, \qquad z = x+iy \in \C_+.
\end{equation}

We now show that $H^\infty[\ep,\sigma] \subset \Bes$, with a continuous embedding, when $0 < \ep < \sigma < \infty$, directly from our definition of $\Bes$ in Section \ref{bdef}.   Using an approach via dyadic decompositions (see the Appendix of this paper), a similar result was obtained in \cite[Lemma 5.5.10]{White} and related ideas appear in  \cite{V1} and \cite{Haase}.

\begin{lemma}\label{L1}
Let $f\in H^\infty[\ep,\sigma]$, $0<\ep<\sigma$.  Then $f \in \Bes$ and
\begin{equation}\label{AB}
\|f\|_\Bes \le \|f\|_{\infty}\left(1+2\log\left(1+\frac{2\sigma}{\ep}\right)\right).
\end{equation}
\end{lemma}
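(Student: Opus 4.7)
The plan is to combine two complementary pointwise estimates for $|f'(x+iy)|$ on each vertical line $\Re z = x$ and then split the integral defining $\|f\|_\Bq$ at suitable breakpoints.

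First, I would apply \eqref{expdef} (with $\sigma$ there replaced by $\ep$) to obtain $\sup_{z\in\C_+}|f(z+x)| \le e^{-\ep x}\|f\|_\infty$ for $x > 0$. Since $f(\cdot+x) \in H^\infty[\ep,\sigma] \subset H^\infty[0,\sigma]$ (its boundary Fourier inverse is $e_x \fti f^b$, which has the same support as $\fti f^b$, as recorded in the discussion just preceding the lemma), Bernstein's inequality \eqref{Bernstein} applied to $f(\cdot+x)$ gives
\[
\sup_{y\in\R}|f'(x+iy)| \le \sigma \|f(\cdot+x)\|_\infty \le \sigma e^{-\ep x}\|f\|_\infty.
\]
Complementarily, Lemma \ref{ho}(3) supplies the Cauchy-type bound $\sup_{y\in\R}|f'(x+iy)| \le \|f\|_\infty/(2x)$.

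Next I would split $\|f\|_\Bq = \int_0^\infty \sup_y |f'(x+iy)|\,dx$ at $x_0 := 1/(2\sigma)$ and $T_0 := \ep^{-1}\log(1+2\sigma/\ep)$. On $(0,x_0]$ I use the rough Bernstein estimate $\sigma\|f\|_\infty$, contributing $\tfrac12 \|f\|_\infty$; on $(x_0,T_0]$ I use the Cauchy bound $\|f\|_\infty/(2x)$, contributing $\tfrac12 \log(2\sigma T_0)\|f\|_\infty$; on $(T_0,\infty)$ I use the Bernstein bound with exponential decay $\sigma e^{-\ep x}\|f\|_\infty$, whose integral equals $(\sigma/\ep)e^{-\ep T_0}\|f\|_\infty = \sigma/(\ep+2\sigma)\|f\|_\infty \le \tfrac12\|f\|_\infty$.

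Finally, writing $s := 2\sigma/\ep$, the identity $2\sigma T_0 = s\log(1+s)$ together with $\log(1+s) \le s$ gives $2\sigma T_0 \le s^2 \le (1+s)^2$, hence $\tfrac12 \log(2\sigma T_0) \le \log(1+2\sigma/\ep)$. Summing the three contributions yields $\|f\|_\Bq \le (1+\log(1+2\sigma/\ep))\|f\|_\infty$, so $\|f\|_\Bes \le (2+\log(1+2\sigma/\ep))\|f\|_\infty$. The assumption $\sigma>\ep$ forces $1+2\sigma/\ep > 3 > e$, whence $\log(1+2\sigma/\ep) > 1$, and this automatically upgrades to the stated $\|f\|_\Bes \le \|f\|_\infty(1 + 2\log(1+2\sigma/\ep))$. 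The only delicate step is the vertical-line Bernstein bound in paragraph two, but it reduces to the stated form of \eqref{Bernstein} via the translation $f \mapsto f(\cdot+x)$ once one invokes the support identity $\fti f_x = e_x \fti f^b$; everything else is elementary integration.
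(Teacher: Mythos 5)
Your proof is correct and follows essentially the same route as the paper: the same two pointwise bounds $\sup_{y}|f'(x+iy)|\le \|f\|_\infty\min\bigl(\sigma e^{-\ep x},(2x)^{-1}\bigr)$ (Bernstein applied to the translate $f(\cdot+x)$, plus Lemma \ref{ho}(3)), followed by integrating the minimum over $x\in(0,\infty)$. The only difference is bookkeeping at the last step: the paper majorizes the minimum by $2\|f\|_\infty/(2x+e^{\ep x}/\sigma)$ and estimates a single integral, whereas you split $(0,\infty)$ at explicit breakpoints (noting that $T_0>x_0$ holds automatically since $2\sigma/\ep>2$); both yield the stated constant.
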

\begin{proof}
Let $x > 0$, $y \in \R$.  Since $f\in H^\infty[\ep,\sigma]$, (\ref{expdef}) gives
\[
\sup_{s\in\R} |f(x+is)|\le e^{-\ep x}\|f\|_{\infty},
\]
and then, by Bernstein's inequality (\ref{Bernstein}),
\begin{equation}\label{A1}
|f'(x+iy)|\le \sigma
\sup_{s\in {\mathbb R}}\,|f(x+is)|\le
\sigma e^{-\ep x}\|f\|_{\infty}.
\end{equation}
On the other hand, by Lemma \ref{ho}(\ref{derbd}),
\begin{equation}\label{A2}
|f'(x+iy)|\le \frac{\|f\|_{\infty}}{2x}.
\end{equation}
Thus we obtain
\[
|f'(x+iy)|\le \|f\|_\infty \min( \sigma e^{-\ep x}, (2x)^{-1}) \le \frac{2 \|f\|_\infty}{2x+e^{\ep x}/\sigma}.
\]
So,
\begin{equation}\label{B1}
\|f\|_\Bes\le
\|f\|_{\infty}\left(1+2\int_0^\infty \frac{dx}{2x+e^{\ep x}/\sigma}\right).
\end{equation}
Since
\begin{align}
\int_0^\infty \frac{dx}{2x+e^{\ep x}/\sigma}&=
\int_0^\infty \frac{dt}{2t+(\ep/\sigma)e^t}\label{B2+}\\
&\le
\int_0^1\frac{dt}{2t+(\ep/\sigma)}+ \int_1^\infty \frac{dt}{2 + (\ep/\sigma)e^t} \notag\\
&=\frac{1}{2}\log\left(1+\frac{2\sigma}{\ep}\right)+ \frac{1}{2}\log\left(1+\frac{2\sigma}{e\ep}\right)\notag \\
&\le
\log\left(1+\frac{2\sigma}{\ep}\right), \notag
\end{align}
 (\ref{AB}) follows.
\end{proof}

If $f \in H^\infty(I)$ and $g \in H^\infty(J)$, then $fg \in H^\infty(I+J)$ \cite[Lemma VI.4.7]{Katz}.  Let
\begin{align*}
\ssp &:= \left\{f \in H^\infty(\C_+) : \text{$\operatorname{supp} (\fti f^b)$ is a compact subset of $(0,\infty)$}\right \} \\
& = \bigcup_{0<\ep<\sigma<\infty} H^\infty[\ep,\sigma].
\end{align*}
Then $\ssp$ is a subalgebra of $\Bes$ by Lemma \ref{L1}, and all functions $f \in \ssp$ extend to entire functions of exponential type on $\C$, they are bounded on $i\R$, and they decay exponentially as $x \to \infty$.   We shall show in Proposition \ref{densehinf} that the closure of $\ssp$ in $\Bes$ is
\begin{equation} \label{gbar}
\overline{\ssp} = \Bq := \left\{ f \in \Bes:  f(\infty)=0 \right\}.
\end{equation}
So any function $f \in \Bes$ is the sum of a function in $\overline\ssp$ and a constant function.

Our proof of Proposition \ref{densehinf} will use some techniques from general operator theory, applied to the shift operators to the right and vertically, on $\Bes$. There is a less abstract but much more technical proof of Proposition \ref{densehinf}.

 The following lemma is quite simple but it plays a crucial role here and in the development of the functional calculus in Section \ref{b-fc}.

\begin{lemma} \label{shifts01}
Let
\[
  (T_\Bes(a)f)(z):=f(z+a), \qquad  f \in \Bes, \; a\in \overline\C_+, \; z\in \C_+.
	\]
\begin{enumerate}[\rm1.]
\item   For each $f \in \Bes$,
\[
\|T_\Bes(a)f\|_\Bes \le \|f\|_\Bes, \quad \lim_{a\in\overline\C_+, a\to0}\,\|T_\Bes(a)f-f\|_\Bes=0.
\]
\item  Let $-A_\Bes$ be the generator of the $C_0$-semigroup $(T_\Bes(t))_{t\ge0}$ on $\Bes$.  Then
\[
D(A_\Bes) = \{ f \in \Bes: f' \in \Bes\},  \quad A_\Bes f = -f'.
\]
\item The generator of the $C_0$-group $(T_\Bes(-is))_{s\in\R}$ is $iA_\Bes$.
\item $\sigma(A_\Bes) = \R_+$.
\item \label{012} Let $f\in\Bes$ and $(S_\Bes(b)f)(z) = f(bz)$, $b>0$.  Then $S_\Bes(b)f \in \Bes$ and $\|S_\Bes(b)f\|_\Bes = \|f\|_\Bes$.
\end{enumerate}
\end{lemma}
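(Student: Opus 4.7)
The structure of the argument follows the five parts in order, with most of the work going into the boundedness and strong continuity in (1); the other parts then fall out with modest effort.

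For (1), the contractivity $\|T_\Bes(a)f\|_\Bes\le\|f\|_\Bes$ is immediate: the supremum bound is clear since $z+a\in\C_+$ for $z\in\C_+$ and $a\in\overline{\C}_+$, and the substitution $u=x+\Re a$ gives
\[
\int_0^\infty\sup_{y\in\R}|f'(x+a+iy)|\,dx=\int_{\Re a}^\infty\sup_{y\in\R}|f'(u+iy)|\,du\le\|f\|_\Bq.
\]
For strong continuity, first $\|T_\Bes(a)f-f\|_\infty\to0$ follows from Proposition \ref{besprop}(4). The harder ingredient is $\|T_\Bes(a)f-f\|_\Bq\to0$. Write $g(x):=\sup_{y\in\R}|f'(x+iy)|\in L^1(\R_+)$ and split the integral at a small $\delta>0$. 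On $[0,\delta]$ one bounds the integrand crudely by $g(x+\Re a)+g(x)$, and absolute continuity of the Lebesgue integral makes this contribution small uniformly in $a$ as $\delta\to0$. On $[\delta,\infty)$ one parametrises the segment from $x+iy$ to $x+a+iy$ and applies Lemma \ref{ho}(\ref{ho4}) with $n=2$, $m=1$ to obtain $|f''(w)|\le x^{-1}g(x/2)$ for $\Re w\ge x/2$; for $|a|\le x/2$ this yields
\[
\sup_{y\in\R}|f'(x+a+iy)-f'(x+iy)|\le |a|\,x^{-1}g(x/2),
\]
which is integrable on $[\delta,\infty)$ with a factor $|a|$, hence vanishes as $a\to0$. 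The main obstacle here is precisely this uniform control near $x=0$, where neither differentiability bounds nor translation estimates for $g$ are straightforward; the decomposition trick above is the key.

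For (2), given $f \in \Bes$ with $f'\in\Bes$, the identity
\[
\frac{T_\Bes(t)f-f}{t}+f'\big|_{\text{as functions}}\;=\;\frac1t\int_0^t\bigl(T_\Bes(s)f'-f'\bigr)\,ds,
\]
together with the already-established strong continuity applied to $f'$, shows that $f\in D(A_\Bes)$ and $A_\Bes f=-f'$. Conversely, if $t^{-1}(T_\Bes(t)f-f)$ converges in $\Bes$, the limit must be $-f'$ by pointwise comparison, so $f'\in\Bes$. Part (3) follows similarly: the group property and isometry bound are immediate, the generator is computed as $\lim_{s\to0}s^{-1}(f(\cdot-is)-f(\cdot))=-if'=iA_\Bes f$, and the strong continuity argument of (1) carries over directly since only $|a|$ appears in the decisive estimate.

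For (4), $\R_+\subseteq\sigma_p(A_\Bes)$ because for $\mu\ge0$ the function $e_\mu(z)=e^{-\mu z}$ lies in $\Bes$ (with $\|e_\mu\|_\Bq=1$ for $\mu>0$) and satisfies $A_\Bes e_\mu=\mu e_\mu$. The reverse containment uses the two semigroup structures from (1) and (3): boundedness of $(T_\Bes(t))_{t\ge0}$ forces $\sigma(A_\Bes)\subseteq\{\Re\lambda\ge0\}$, while boundedness (indeed isometry) of the group $(T_\Bes(-is))_{s\in\R}$ with generator $iA_\Bes$ forces $\sigma(iA_\Bes)\subseteq i\R$, hence $\sigma(A_\Bes)\subseteq\R$. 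Intersecting gives $\sigma(A_\Bes)\subseteq\R_+$. Finally, (5) is a direct computation: $\|S_\Bes(b)f\|_\infty=\|f\|_\infty$ because $z\mapsto bz$ is a bijection of $\C_+$, and $(S_\Bes(b)f)'(x+iy)=bf'(bx+iby)$, so the change of variable $u=bx$ gives $\int_0^\infty b\sup_y|f'(bx+iy)|\,dx=\|f\|_\Bq$.
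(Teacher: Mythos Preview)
Your proof is correct and follows the same overall structure as the paper's. The only notable difference is in the $\|\cdot\|_\Bq$ part of the strong continuity in (1). You split the integral at $\delta$, using absolute continuity of $\int g$ on $[0,\delta]$ and the derivative bound $|f''(w)|\le x^{-1}g(x/2)$ on $[\delta,\infty)$. The paper avoids the split entirely: since $\Re(a+x)\ge x$ for $a\in\overline\C_+$, the maximum principle gives $\sup_y|f'(a+x+iy)|\le g(x)$, so the integrand $\sup_y|f'(a+x+iy)-f'(x+iy)|$ is dominated by the single integrable function $2g(x)$; pointwise convergence follows from $|f'(a+x+iy)-f'(x+iy)|\le 2|a|\|f\|_\infty/(\pi x^2)$ (Lemma~\ref{ho}(\ref{derbd}) applied to $f''$), and dominated convergence finishes at once. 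Your route works and even yields a quantitative bound $|a|C_\delta$ on the tail, but the paper's observation that the maximum principle produces a dominating function for free is shorter. Parts (2)--(5) match the paper's arguments essentially verbatim.
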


\begin{proof}
1.  It is clear from the definition of $\|\cdot\|_\Bes$ that $\|T_\Bes(t)f\|_\Bes \le \|f\|_\Bes$.

Since $f$ is uniformly continuous on $\overline\C_+$, $\|T_\Bes(a)f - f\|_\infty \to 0$ as $a\to0$.  For $x>0$ and $y \in \R$, integrating $f''$ along a line-segment and applying Lemma \ref{ho}(\ref{derbd})  gives
\[
|f'(a+x+iy) - f'(x+iy)| \le \frac{2 |a|\|f\|_\infty}{\pi x^2},
\]
so
\[
\lim_{a\to0}\, \sup_{y\in\R} |f'(a+x+iy) - f'(x+iy)| = 0.
\]
By the maximum principle for $f'$ on $\{z :\Re z \ge x\}$,
\begin{multline*}
\sup_{y\in\R} |f'(a+x+iy) - f'(x+iy)| \\
\le \sup_{y\in\R} |f'(a+x+iy)| + \sup_{y\in\R} |f'(x+iy)| \le 2\sup_{y\in\R} |f'(x+iy)|,
\end{multline*}
and this function is integrable over $\R_+$.  By the dominated convergence theorem,
\[
\lim_{a\to0}\|T_\Bes(a)f-f\|_\Bq = \lim_{a\to0} \int_0^\infty \sup_{y\in\R} |f'(a+x+iy) - f'(x+iy)| \, dx = 0.
\]

\noindent 2.
It is clear that if $f \in D(A_\Bes)$, then $f' = -A_\Bes f \in \Bes$.   Conversely if $f, f' \in \Bes$, then
\[
t^{-1} \left( T_\Bes(t)f - f \right) = t^{-1} \int_0^t T_\Bes(s)f' \, ds.
\]
This formula holds pointwise, and the integral is a $\Bes$-valued integral with continuous integrand.  Letting $t\to0+$ gives $f \in D(A_\Bes)$.

\noindent 3.
This follows immediately from \cite[Section 3.9]{ABHN} or from direct calculations.

\noindent 4.  It follows from (2) and (3) that $\sigma(A_\Bes) \subset \R_+$.  On the other hand, for any $a\in\R_+$, $e_a$ is an eigenvector of $A_\Bes$ with eigenvalue $a$.

\noindent 5.  This is simple to check.
\end{proof}
		
We shall show in Proposition \ref{shiftsg} that $(T_\Bes(a))_{a\in\C_+}$ is a holomorphic $C_0$-semigroup.

Next we recall the notion of spectral subspaces introduced by Arveson in the context of bounded representations of locally compact abelian groups on Banach spaces, particularly operator algebras.  We need only the special case of $C_0$-groups which is described in \cite{Zsido} and \cite[Chapter 8]{Dav80}.  To our knowledge this theory has not previously been applied to the study of holomorphic function spaces.

Let $(T(t))_{t \in \mathbb R}$ be a bounded $C_0$-group on a Banach space $X$, with generator $A$, so the spectrum $\sigma(A) \subset i\R$.  For $x \in X$ define the spectrum $\sp_T(x)$ of $x \in X$  as
\begin{equation*}
\sp_T (x):= \supp \mathcal F(T(\cdot)x).
\end{equation*}
For a closed subset $I$ of $\R$, the spectral subspace is defined to be
\begin{equation*}
X_T(I):=\{x\in X : \sp_T(x) \subset I \}.
\end{equation*}
There are numerous equivalent definitions; see \cite[Section 8.2]{Dav80}, \cite[Section 2]{Ole}, \cite[Section 8.1]{Ped}, for example.  In particular, $X_T(I)$ is the largest closed $T$-invariant subspace $Y$ of $X$ such that $\sigma(A_Y) \subset \{is: s\in I\}$, where $A_Y$ is the generator of $T$ restricted to $Y$.  Moreover $\{s \in \R : is \in \sigma(A)\}$ is the smallest closed set $J \subset \R$ such that $X_T(J) = X$ (see \cite[Lemma 8.17]{Dav80}, \cite[Remark 4.6.2, Lemma 4.6.8]{ABHN}).

Let $(S(t))_{t \in \mathbb R}$ be the $C_0$-group of shifts on $\operatorname{BUC}(\mathbb R)$:
\begin{equation} \label{bucs}
(S(t)f)(s) = f(s-t).
\end{equation}
A simple calculation shows that
\[
(\mathcal F(S(\cdot)f)(s))(t) = 2 \pi (\fti f)(s) e^{-ist}, \qquad s,t \in \R.
\]
Hence
\begin{equation}\label{spectrums}
\supp(\fti f)  = \sp_S(f).
\end{equation}

\begin{rem} \label{arveson}
The notion of spectral subspaces also applies to the duals of $C_0$-groups, and in particular to the shifts on $L^\infty(\R)$  regarded as the dual of a $C_0$-group on $L^1(\R)$.  Since (\ref{spectrums}) is also valid for $f \in L^\infty(\R)$, the space $H^\infty(\R)$ is the spectral subspace of $L^\infty(\R)$ corresponding to $I = \R_+$.  We will not use that fact, but we will use Lemma \ref{spsubs} below which is related to it.
\end{rem}

The following abstract result is a consequence of \cite[Corollary 3.5]{Zsido} or \cite[Corollary 8.1.8]{Ped}, but those results are set in more general contexts and they rely on different definitions of the spectral subspaces.   We give a simple direct proof.

\begin{prop}\label{dense}
Let $(T(t))_{t \ge 0}$ be a bounded $C_0$-group on a Banach space $X$, with generator $A$, and assume that
the range of $A$ is dense in $X$.
\begin{enumerate}[\rm1.]
\item The set $\bigcup \{X_T(I) : I \; {\rm compact}, 0\notin I \}$ is dense in $X$.
\item If, in addition, $\sigma(A)\subset i [0,\infty)$, then $\bigcup \{X_T(I) : I \; {\rm compact}, I \subset (0,\infty) \}$ is dense in $X$.
\end{enumerate}
\end{prop}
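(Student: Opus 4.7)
The plan for (1) is to produce, for every $x \in X$, approximants of the form $x_n := T(\phi_n)x$, where $T(\phi) := \int_\R \phi(t)T(t)\,dt$ and $\phi_n \in L^1(\R)$ is chosen so that $\mathcal F\phi_n$ has compact support disjoint from $0$. Standard Fourier calculus (the identity $T(\cdot)T(\phi_n)x = \phi_n(-\cdot)\ast T(\cdot)x$) gives $\sp_T(x_n) \subset -\supp \mathcal F\phi_n$; with a real and even $\phi_n$ this is symmetric, and each $x_n$ lies in a spectral subspace $X_T(I_n)$ with $I_n$ compact and disjoint from $0$.

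To build such $\phi_n$, fix an even $\chi \in \mathcal{S}(\R)$ with $\chi \equiv 1$ on $[-1,1]$ and $\supp\chi \subset [-2,2]$, set $\psi := \fti\chi$, and take $\phi_n(t) := n\psi(nt) - n^{-1}\psi(t/n)$, so that $\mathcal F\phi_n(s) = \chi(s/n) - \chi(ns)$. This vanishes for $|s|\le 1/n$ (both terms equal $1$) and for $|s|\ge 2n$ (both terms equal $0$), so $\supp \mathcal F\phi_n$ is a compact subset of $\R\setminus\{0\}$, while $\|\phi_n\|_{L^1}\le 2\|\psi\|_{L^1}$ makes the operators $T(\phi_n)$ uniformly bounded.

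The crux is the convergence $T(\phi_n)x\to x$. The first summand $n\psi(n\cdot)$ is an approximate identity of total mass $\chi(0)=1$, so $T(n\psi(n\cdot))x\to x$ by strong continuity and dominated convergence. The second summand $n^{-1}\psi(\cdot/n)$ is \emph{not} an approximate identity, and $T(n^{-1}\psi(\cdot/n))x$ does not tend to $0$ for general $x$; this is precisely where the density of the range of $A$ must enter. By the uniform bound above it suffices to check $T(\phi_n)(Ay)\to Ay$ for $y\in D(A)$, and for such $y$ an integration by parts based on $\tfrac{d}{du}T(nu)y = nT(nu)Ay$ gives
\[
T(n^{-1}\psi(\cdot/n))Ay = \int_\R \psi(u)T(nu)Ay\,du = -n^{-1}\int_\R \psi'(u)T(nu)y\,du,
\]
of norm at most $n^{-1}M\|y\|\|\psi'\|_{L^1}\to 0$, where $M := \sup_t\|T(t)\|$. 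I expect the construction of $\phi_n$ achieving all three constraints at once — Fourier transform compactly supported, vanishing near $0$, and with approximate-identity defect absorbable by a single $A$ — to be the main technical obstacle.

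Part (2) is essentially a corollary of (1). By the characterization of $\{s\in\R: is\in\sigma(A)\}$ as the smallest closed $J\subset\R$ with $X_T(J)=X$, quoted in the text from \cite{Dav80,ABHN}, the hypothesis $\sigma(A)\subset i[0,\infty)$ forces $X_T([0,\infty))=X$, so $\sp_T(x)\subset [0,\infty)$ for every $x\in X$. The approximants $x_n$ from (1) then satisfy $\sp_T(x_n)\subset \supp \mathcal F\phi_n\cap [0,\infty)$, a compact subset of $(0,\infty)$, giving density with the stronger restriction $I\subset (0,\infty)$.
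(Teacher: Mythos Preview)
Your proof is correct and follows essentially the same route as the paper: the same two-scale mollifier $\phi_n(t)=n\psi(nt)-n^{-1}\psi(t/n)$ (the paper writes $g_a(t)=af(at)-a^{-1}f(t/a)$), the same approximate-identity argument for the first summand, the same integration-by-parts on $x=Ay$ to kill the second, and the same intersection $\sp_T(x_n)\subset \supp(\mathcal F\phi_n)\cap(-i\sigma(A))$ for part (2). Your explicit handling of the sign via evenness of $\psi$ is a minor refinement; the paper simply cites \cite[Lemma 2.2.1]{Ole} for the inclusion.
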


\begin{proof}
Let $f \in \mathcal{S}(\R)$ be such that  $\mathcal F f$ has compact support and is constantly $1$ in a neighbourhood of $0$.  Define $\{g_a: a >0\} \subset L^1(\mathbb R)$ by
\begin{equation*}
g_a(t)= a f (at)- a^{-1} f(t/a), \qquad t \in \mathbb R.
\end{equation*}
Then $\supp(\mathcal F g_a)$ is a compact subset of $\R \setminus \{0\}$.   Let $x = Ay$, where $y \in D(A)$, and set
\begin{equation*}
 x_a = \int_{\mathbb R} g_a(t) T(t)x\, dt.
\end{equation*}
By  \cite[Lemma 2.2.1]{Ole},
\begin{equation} \label{2part}
\sp_T(x_a)\subset \supp(\mathcal F g_a)\cap \sp_T(x)
\subset \operatorname{supp} (\mathcal F g_a)\cap \left(-i\sigma (A)\right).
\end{equation}
Since  $\int_{\mathbb R} f(t)\, dt=1$ and $(T(t))_{t \ge 0}$ is strongly continuous,
\begin{equation*}
\lim_{a \to \infty} \int_{\mathbb R} a f(at)T(t)x\, dt -x=\lim_{a \to \infty}  \int_{\mathbb R} f(t)(T(t/a)x -x)\,dt=0.
\end{equation*}
Moreover, using integration by parts we obtain
\begin{multline*}
\lim_{a \to \infty} \int_{\mathbb R} a^{-1} f(t/a) T(t)x\, dt = \lim_{a \to \infty} \int_{\mathbb R}  f(t) T(at)Ay\, dt\\
= \lim_{a \to \infty}  a^{-1} \int_{\mathbb R}  f(t) (T(at)y)'\, dt=
- \lim_{a \to \infty} a^{-1} \int_{\mathbb R}  f'(t) T(at)y \, dt \notag
=0.\notag
\end{multline*}
Thus we infer that
\begin{equation*}
\lim_{a \to \infty} x_a    = x.
\end{equation*}
Now (\ref{2part}) and the density of the range of $A$ imply both claims.
\end{proof}

Consider the $C_0$-semigroup $(T_{\mathcal B}(t))_{t\ge0}$  and the $C_0$-group of isometries
\begin{equation*}
G(t) :=T_{\mathcal B}(-it), \qquad t \in \mathbb R,
\end{equation*}
on $\Bes$, with generators $-A_\Bes$ and $iA_\Bes$ respectively, as in Lemma \ref{shifts01}.

\begin{lemma} \label{spsubs}
Let $I$ be a compact subset of $(0,\infty)$.   Then  $\Bes_G(I) = H^\infty(I)$.
\end{lemma}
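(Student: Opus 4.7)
The plan is to establish the identity
\[
\sp_G(f) = \supp \fti f^b \qquad (f \in \Bes).
\]
Once this is known, Lemma \ref{L1} yields $H^\infty(I) \subset \Bes$ for any compact $I \subset (0,\infty)$ (since $I \subset [\ep,\sigma]$ for some $0 < \ep < \sigma$), and unwinding the definitions gives $\Bes_G(I) = \{f \in \Bes : \supp \fti f^b \subset I\} = H^\infty(I)$, which is exactly the required equality.

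For the inclusion $\supp \fti f^b \subset \sp_G(f)$, I would use that the boundary value map $b : \Bes \to \operatorname{BUC}(\R)$, $f \mapsto f^b$, is bounded linear (Proposition \ref{besprop}) and intertwines $G$ with the shift group $S$ on $\operatorname{BUC}(\R)$, since $f^b(s-t) = f(i(s-t)) = (G(t)f)^b(s)$. Consequently the $\operatorname{BUC}(\R)$-valued distribution $b \circ \mathcal{F}(G(\cdot)f)$ equals $\mathcal{F}(S(\cdot)f^b)$, so by \eqref{spectrums}, $\supp \fti f^b = \sp_S(f^b) \subset \sp_G(f)$.

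For the reverse inclusion, I would test against the pointwise evaluations $\delta_z : \Bes \to \C$, $z \in \C_+$, which are bounded linear since $|f(z)| \le \|f\|_\infty \le \|f\|_\Bes$. Fixing $z = x+iy \in \C_+$ and using the Poisson representation $f(x + i(y-t)) = (P_x * f^b)(y-t)$ from Lemma \ref{ho}(\ref{ho1}), together with $(\mathcal{F}P_x)(\xi) = e^{-x|\xi|}$ and the convolution theorem, a routine distributional computation yields
\[
\mathcal{F}[t \mapsto f(z-it)](\xi) = 2\pi e^{-i\xi y} e^{-x|\xi|} (\fti f^b)(\xi).
\]
Because the multiplier is smooth and nowhere zero, this scalar distribution has support exactly $\supp \fti f^b$, independent of $z$. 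Now take any Schwartz $\psi$ whose support is disjoint from $\supp \fti f^b$ and set $F_\psi := \langle \mathcal{F}(G(\cdot)f), \psi \rangle \in \Bes$. The identity above shows $(\delta_z F_\psi) = 0$ for every $z \in \C_+$, so $F_\psi$ vanishes as a holomorphic function on $\C_+$ and hence as an element of $\Bes$. This gives $\sp_G(f) \subset \supp \fti f^b$, completing the identification.

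The only mildly technical point will be justifying the distributional Fourier identity above, but this is a standard Poisson-kernel convolution computation, and I anticipate no serious obstacle.
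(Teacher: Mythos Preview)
Your proposal is correct and follows essentially the same route as the paper: both arguments rest on the intertwining $S(t)\,b = b\,G(t)$ between the vertical shift group on $\Bes$ and the translation group on $\operatorname{BUC}(\R)$, and on the identity $\sp_S(f^b)=\supp\fti f^b$ from \eqref{spectrums}.  The only difference is packaging.  The paper observes that the boundary map $K:f\mapsto f^b$ is \emph{injective}, and for any bounded injective operator one has $\supp(Ku)=\supp(u)$ for vector-valued distributions $u$; applying this to $u=\mathcal F(G(\cdot)f)$ gives the equality $\sp_G(f)=\sp_S(f^b)$ in one line.  You instead split into two inclusions and obtain the second one by applying the separating family of point evaluations $\delta_z$ and computing their Fourier transforms explicitly via the Poisson kernel.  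Your computation is correct, but it is precisely a hands-on verification of what injectivity of $K$ gives abstractly; you could shorten your argument by invoking that principle directly.
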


\begin{proof}
Let $(S(t))_{t \in \R}$ be the $C_0$-group of shifts on $\operatorname{BUC}(\R)$, as in (\ref{bucs}).   Let $K :\Bes \to \operatorname{BUC}(\R)$ be the injection given by $Kf = f^b$.   Then
\[
S(t)K = KG(t),
\]
and hence, for $f \in \Bes$,
\begin{equation} \label{speq}
\sp(f^b) = \supp\mathcal F(S(\cdot)Kf) = \supp(K \mathcal F(G(\cdot)f)) = \supp \mathcal F(G(\cdot)f) = \sp_{G}(f).
\end{equation}
This implies that $\Bes_G(I) \subset H^\infty(I)$ for every closed subset $I$ of $\R_+$.

On the other hand, if $I$ is a compact subset of $(0,\infty)$, then $H^\infty(I) \subset \Bes$ by Lemma \ref{L1}, and then (\ref{speq}) implies that $H^\infty(I) \subset \Bes_G(I)$.
\end{proof}

\begin{prop}\label{densehinf}
The set $\ssp :=\bigcup_{0<\ep <\delta} H^{\infty}[\ep,\delta]$ is dense in $\mathcal B_0$.
\end{prop}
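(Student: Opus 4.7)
The strategy I would adopt is to combine the spectral subspace machinery of Proposition \ref{dense} with the identification in Lemma \ref{spsubs}, applied to the vertical shift group on $\mathcal{B}$ \emph{restricted} to the closed invariant subspace $\mathcal{B}_0$. First I would observe that $\mathcal{B}_0$ is closed in $\mathcal{B}$ (as the kernel of the bounded linear functional $f \mapsto f(\infty)$) and invariant under both $(T_\Bes(t))_{t \ge 0}$ and $(G(t))_{t\in\R}$, since horizontal and vertical shifts both preserve $f(\infty)=0$. The restricted group $G|_{\Bes_0}$ is a $C_0$-group of isometries on $\Bes_0$ with generator $B := iA_\Bes|_{D(A_\Bes) \cap \Bes_0}$.

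To apply Proposition \ref{dense}(2) to $G|_{\Bes_0}$, I need to check that $\sigma(B) \subset i[0,\infty)$ and that the range of $B$ is dense in $\Bes_0$. For the spectral inclusion: since $G|_{\Bes_0}$ is a bounded $C_0$-group, $\sigma(B) \subset i\R$; and since $(T_\Bes(t)|_{\Bes_0})_{t\ge0}$ is a bounded $C_0$-semigroup with generator $-A_\Bes|_{\Bes_0}$, one has $\sigma(A_\Bes|_{\Bes_0}) \subset \overline{\C}_+$ (the resolvent for $\Re\lambda>0$ is the Laplace transform of the semigroup, which preserves $\Bes_0$). Intersecting gives $\sigma(A_\Bes|_{\Bes_0}) \subset \R \cap \overline{\C}_+ = [0,\infty)$, hence $\sigma(B) \subset i[0,\infty)$.

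For the density of the range, I would use the standard identity
\[
f - T_\Bes(t) f = A_\Bes \int_0^t T_\Bes(s) f \, ds, \qquad f \in \Bes_0, \, t > 0,
\]
where the integral lies in $D(A_\Bes) \cap \Bes_0$, so $f - T_\Bes(t)f$ belongs to the range of $A_\Bes|_{D(A_\Bes) \cap \Bes_0}$, and hence of $B$. It then suffices to show $\|T_\Bes(t)f\|_\Bes \to 0$ as $t \to \infty$. The bound $|f(z)| \le \int_{\Re z}^\infty \sup_{s\in\R}|f'(\tau+is)|\,d\tau$ from the proof of Proposition \ref{besprop} (valid because $f(\infty)=0$) controls $\|T_\Bes(t)f\|_\infty$, while
\[
\|T_\Bes(t)f\|_{\Bes_0} = \int_t^\infty \sup_{y\in\R}|f'(x+iy)|\,dx \to 0
\]
as $t\to\infty$ by absolute continuity of the integral. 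This is the step that I expect to require the most care, though it is ultimately routine.

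With the hypotheses verified, Proposition \ref{dense}(2) yields that $\bigcup_I (\Bes_0)_{G|_{\Bes_0}}(I)$ is dense in $\Bes_0$, where $I$ runs over compact subsets of $(0,\infty)$. Since the spectrum $\sp(f)$ of a vector relative to a group depends only on the Fourier transform of its orbit, $(\Bes_0)_{G|_{\Bes_0}}(I) = \Bes_G(I) \cap \Bes_0$, and Lemma \ref{spsubs} identifies this as $H^\infty(I) \cap \Bes_0$. For any such $I$ one can choose $0<\ep<\delta$ with $I \subset [\ep,\delta]$; by \eqref{expdef}, every $f \in H^\infty[\ep,\delta]$ satisfies $|f(x+iy)| \le e^{-\ep x}\|f\|_\infty$, so $f \in \Bes_0$ automatically and $H^\infty(I) \subset H^\infty[\ep,\delta] \subset \ssp$. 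Density of $\ssp$ in $\Bes_0$ follows.
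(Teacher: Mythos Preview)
Your proof is correct and follows essentially the same route as the paper's: restrict the vertical shift group $G$ to $\Bes_0$, verify the dense-range hypothesis of Proposition~\ref{dense} via $\|T_\Bes(t)f\|_\Bes \to 0$ for $f\in\Bes_0$, and then invoke Lemma~\ref{spsubs}. You are somewhat more explicit than the paper about verifying the spectral inclusion $\sigma(B)\subset i[0,\infty)$ needed for part~(2) of Proposition~\ref{dense}, and about why $H^\infty[\ep,\delta]\subset\Bes_0$, but the underlying argument is the same.
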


\begin{proof}
We consider the restrictions of $G(t)$ to $\mathcal B_0$ denoted by the same symbol, and note that this does not change the spectral subspaces $\Bes_G[\ep,\delta]$.

Let $f \in \mathcal B_0$.  By \ Lemma \ref{ho}(\ref{derbd}), one has
\[
\lim_{t \to \infty}\|f'(x + t +i\cdot)\|_{\infty}=0,
\]
 for every $x>0$, and
$$\|f'(x + t +i\cdot)\|_{\infty} \le \|f'(x  +i\cdot)\|_{\infty}, \qquad t \ge 0.$$
Then, by the dominated (or monotone) convergence theorem,
$$
\lim_{t \to \infty} \|T_{\mathcal B}(t)f\|_{\Bq} = \lim_{t \to \infty} \int_{0}^{\infty}\|f'(x + t +i\cdot)\|_{\infty}\,dx=0.
$$
Hence
$$
f=\lim_{t \to \infty} (f-T_\Bes(t)f) = \lim_{t \to \infty} A_{\mathcal B} \int_{0}^{t} T_\Bes(s)f\, ds.
$$
This shows that the range of  $A_{\mathcal B}$ is dense in $\mathcal B_0$.
It now suffices to apply Proposition \ref{dense} to $(G(t))_{t \in \mathbb R}$ on $\Bq$, and then apply Lemma \ref{spsubs}.
\end{proof}

\begin{rem} \label{shiftgp}
We can now give a quantified version of Lemma \ref{shifts01}(1), estimating $\|T_\Bes(t)f-f\|_\Bes$ in terms of the ``$K$-functional'' $K(f;t)$ which is a basic tool in approximation theory.   A number of regularity properties of $f$ can be described in terms of $K$ (and similar quantities).

Let $f \in \Bes$, and define
\[
K(f;t) := \inf \left\{\|f-g\|_\Bes+t\sigma \|g\|_\Bes : \sigma \in (0,\infty), g \in \Bes \cap H^\infty[0,\sigma] \right\}.
\]
Note that $K(f;t) \to 0$ as $t\to0+$, by Proposition \ref{densehinf}.

Let  $g\in \Bes\cap H^\infty[0,\sigma]$, so $g' \in \Bes$ and $\|g'\|_\Bes \le \sigma \|g\|_\Bes$, by (\ref{Bernstein}).   For $h\in H^\infty(\mathbb C_+)$ and $x>0$,
\begin{align*}
\sup_{y\in  {\mathbb R}}\, |h(x+iy+t)-h(x+iy)| &\le
\int_0^t \sup_{y\in {\mathbb R}}|h'(x+iy+s)|\,ds \label{S11} \\
&\le  t\sup_{y\in  {\mathbb R}}\, |h'(x+iy)|.\notag
\end{align*}
Applying this with $h =g$ and with $h=g'$, we obtain that
\[
\|T_\Bes(t)g-g\|_\Bes\le t\|g'\|_\Bes\le t\sigma \|g\|_\Bes.
\]
Then
\begin{align*}
\|T_\Bes(t)f-f\|_\Bes &\le
\|f-g\|_\Bes+\|T_\Bes(t)(f-g)\|_\Bes+
\|T_\Bes(t)g-g\|_\Bes\\
&\le 2\|f-g\|_\Bes+t\sigma \|g\|_\Bes.
\end{align*}
So, we have
\[
\|T_\Bes(t)f-f\|_\Bes\le 2 K(f;t) \to 0.
\]
\end{rem}

\subsection{Laplace transforms of measures}  \label{LT}

Let $\LT$ be the Hille-Phillips algebra, which is the subalgebra of $H^\infty(\C_+)$ consisting of Laplace transforms $\lt\mu$ of measures $\mu \in M(\R_+)$.  Let  $m = \lt\mu$ and $\|m\|_{\text{HP}} = \|\mu\|$.  Then
\[
|m(z)| \le |\mu|(\R_+), \quad m'(z) = - \int_{\R_+} t e^{-tz} \, d\mu(t),  \quad z\in\C_+,
\]
and
\begin{equation} \label{LTest}
 \int_0^\infty \sup_{y>0} |m'(x+iy)| \, dx \le \int_0^\infty \int_{\R_+} t e^{-tx} \,d|\mu|(t) \,dx =  |\mu|(0,\infty).
\end{equation}
So $m \in \Bes$, $\|m\|_\Bes \le 2\|m\|_{\text{HP}}$ and  $m(\infty) = \mu(\{0\})$.  If $\mu$ is a positive measure, then equality holds in (\ref{LTest}) and $\|m\|_\Bes = \mu(\R_+) + \mu(0,\infty)$.

Since the Laplace transform $\mu \mapsto \lt\mu$ is a bounded algebra homomorphism from $M(\R_+)$ to $\Bes$,  $\LT$ is a subalgebra of $\Bes$.  The convolution-exponential $\exp_*(\mu)$ of $\mu$ in $M(\R_+)$ may be calculated in $\LT$, and its Laplace transform is the function $z \mapsto e^{m(z)}$.

\begin{exas}  \label{LTex}
\begin{enumerate}[1.]
\item
For $a \in \R_+$, let  $e_a(z) = e^{-az}$.   Then $e_a$ is the Laplace transform of the Dirac delta measure $\delta_a$, $\|e_a\|_\Bes = 2$ if $a>0$,  \label{LTex1} and $\|e_a- \nolinebreak e_b\|_\Bes = 4$ whenever $a,b$ are distinct and non-zero.

\item \label{LTex2}  For $a = x+iy \in \C_+$, let $r_a(z) = (z+a)^{-1}$.  Then $r_a$ is the Laplace transform of the function $e_a \in L^1(\R_+)$ and  $\|r_a\|_\Bes = 2/x$.   Hence  $e^{ t  r_a} \in \LT$ and $\|e^{t r_a}\|_\Bes \le e^{2|t|/x}, \, t \in\R$.    We shall give a sharper estimate in  Lemma \ref{HZL11}.

\item \label{LTex3}  Let $\chi = 1 - 2r_1$, so
\[
\chi(z) = \frac{z-1}{z+1}.
\]
Then $\chi$ is the Laplace transform of the measure $\delta_0 - 2e^{-t}$, and $\|\chi\|_\Bes = 3$.

\item \label{LTex4}
Let
\begin{equation} \label{eee}
\eta(z):=\frac{1-e^{-z}}{z}.
\end{equation}
Then $\eta$ is the Laplace transform of Lebesgue measure on $[0,1]$, and $\|\eta\|_\Bes = 2$.
\end{enumerate}
\end{exas}

The following lemma is not new; for example, a similar argument is given in \cite[p.250]{V1}.  It plays an important role in this subject, and we give a precise statement and proof here.

\begin{lemma} \label{PW}
Let $f \in H^\infty[0,\sigma]$, $g \in \LT$ with $\fti g^b \in L^2[0,\delta]$, where $\sigma, \delta>0$.  Then $f g \in \LT$.
\end{lemma}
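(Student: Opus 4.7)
The plan is to show that $fg$ belongs to $\LT$ by identifying its inverse Fourier-Laplace transform as an $L^1(\R_+)$ function. The hypotheses already put $f$ and $g$ into the spectral subspaces $H^\infty[0,\sigma]$ and $H^\infty[0,\delta]$ respectively: for $g$, the assumption $\fti g^b \in L^2[0,\delta]$ means in particular that $\operatorname{supp}(\fti g^b) \subset [0,\delta]$. Invoking the product rule $H^\infty(I) \cdot H^\infty(J) \subset H^\infty(I+J)$ (the cited \cite[Lemma VI.4.7]{Katz}), we immediately obtain $fg \in H^\infty[0,\sigma+\delta]$.

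Next, I would verify that $(fg)^b \in L^2(\R)$. Since $g^b \in L^\infty(\R)$ and by Plancherel $g^b \in L^2(\R)$ (because $\fti g^b \in L^2$), and since $f^b \in L^\infty(\R)$, the boundary function $(fg)^b = f^b g^b$ lies in $L^2(\R) \cap L^\infty(\R)$. Applying the Fourier transform, $\fti (fg)^b \in L^2(\R)$, and because $fg \in H^\infty[0,\sigma+\delta]$, this $L^2$ function is supported in the bounded interval $[0,\sigma+\delta]$.

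The conclusion then follows from the trivial inclusion $L^2[0,\sigma+\delta] \subset L^1[0,\sigma+\delta]$: setting $h := \fti(fg)^b$, we obtain an $L^1(\R_+)$ function whose Laplace transform coincides with $fg$, via the Cauchy representation \eqref{cauchy2}. Hence $fg = \lt h \in \LT$, as required. I do not anticipate a real obstacle here; the only step requiring care is the identification of the distributional Fourier transform of $(fg)^b$ as a genuine $L^2$ function, which is immediate once one recognises $g^b$ as an $L^2$ boundary value, and the passage from compactly supported $L^2$ to $L^1$ that lets us land in the Hille–Phillips algebra.
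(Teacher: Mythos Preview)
Your proof is correct and follows essentially the same approach as the paper. The paper makes the convolution $\psi := \fti f^b * \fti g^b$ explicit rather than invoking the product rule for spectral subspaces, but this is exactly the content of that rule; thereafter both arguments use Plancherel to get $\psi \in L^2$ with compact support in $[0,\sigma+\delta]$, hence $\psi \in L^1(\R_+)$ and $fg = \lt\psi \in \LT$.
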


\begin{proof}
Since $f \in H^\infty[0,\sigma]$, $\fti f^b$ is a distribution on $\R$  with support in $[0,\sigma]$.  Then $\psi := \fti f^b * \fti g^b$ is a distribution on $\R$ with support in $[0,\sigma+\delta]$, and $\mathcal{F}\psi = f^b g^b$.  Since $f^b$ is bounded and $g^b \in L^2(\R)$ by Plancherel, $f^b g^b \in L^2(\R)$.  Then by Plancherel, $\psi \in L^2(\R)$ and $fg = \lt\psi$.  Since $\psi$ has support in $[0,\sigma+\delta]$, $\psi  \in L^1 (\R_+)$, so $fg \in \LT$.
\end{proof}

We will now consider some topological properties of $\Bes$ and $\LT$.

\begin{lemma}  \label{nondense}
Consider $\Bes$ with its norm-topology.
\begin{enumerate}[\rm1.]
\item  The Banach space $\Bes$ is not separable.
\item  The subspace $\LT$ is not closed in $\Bes$.
\item  $\LT$ is not dense in $\Bes$ in the norm-topology.
\item   $\LT$ is dense in $\Bes$ in the topology of uniform convergence on compact subsets of $\C_+$.
\end{enumerate}
\end{lemma}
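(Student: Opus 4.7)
I treat the four parts in order.

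Part~(1) is immediate from Example~\ref{LTex}(\ref{LTex1}): the family $\{e_a:a>0\}\subset\Bes$ is uncountable and $4$-separated in $\|\cdot\|_\Bes$, so the open balls $B_\Bes(e_a,2)$ are pairwise disjoint, blocking any countable dense subset.

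For part~(4), I would use the Cayley transform $z\mapsto w=(z-1)/(z+1)$, which maps $\C_+$ conformally onto the unit disc $\mathbb{D}$. Given $f\in\Bes$, set $g(w):=f((1+w)/(1-w))\in H^\infty(\mathbb{D})$; the Taylor partial sums $g_N(w)=\sum_{n=0}^N c_n w^n$ converge to $g$ uniformly on compact subsets of $\mathbb{D}$. Pulling back,
\[
\sum_{n=0}^N c_n\,\chi(z)^n\;\longrightarrow\;f(z)
\]
uniformly on compact subsets of $\C_+$, where $\chi(z)=(z-1)/(z+1)\in\LT$ by Example~\ref{LTex}(\ref{LTex3}). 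Each partial sum lies in the algebra $\LT$, giving the claim.

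For part~(2), I argue by contradiction via the open mapping theorem. If $\LT$ were closed in $(\Bes,\|\cdot\|_\Bes)$, then $\lt:(M(\R_+),\|\cdot\|)\to(\LT,\|\cdot\|_\Bes)$ would be a continuous bijection between Banach spaces, hence a homeomorphism, yielding a constant $C>0$ with $\|\mu\|\le C\|\lt\mu\|_\Bes$ for every $\mu\in M(\R_+)$. I would disprove this bound via a randomised lacunary construction $\mu_n=\sum_{k=1}^n\ep_k\delta_{2^k}$ with $\ep_k\in\{\pm 1\}$: standard Salem--Zygmund/Kahane bounds on lacunary exponential sums show that, with an appropriate choice of signs, both the sup norm and the $\Bq$-seminorm of $\lt\mu_n(z)=\sum_k\ep_k e^{-2^k z}$ grow only as $O(\sqrt{n\log n})$, while $\|\mu_n\|=n$, contradicting the purported equivalence.

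Part~(3) is the principal difficulty. My approach is semigroup-theoretic. For $f=\lt\mu\in\LT$ the shift orbit is $T_\Bes(t)f=\lt(e^{-t\cdot}\mu)$, so the Hille--Phillips--Besov domination $\|\cdot\|_\Bes\le 2\|\cdot\|_{M(\R_+)}$ from Section~\ref{LT} gives
\[
\|T_\Bes(t)f-f(\infty)\|_\Bes\;\le\;2\int_{(0,\infty)}e^{-ts}\,d|\mu|(s)\;\longrightarrow\;0\quad(t\to\infty),
\]
using $f(\infty)=\mu(\{0\})$. Since $(T_\Bes(t))_{t\ge 0}$ is contractive on $\Bes$ by Lemma~\ref{shifts01}, a standard $3\ep$-argument extends the property $\|T_\Bes(t)f-f(\infty)\|_\Bes\to 0$ from $\LT$ to all of $\overline{\LT}$. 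It thus suffices to produce $f\in\Bes$ for which this convergence fails. My candidate is a thin Blaschke product $B(z)=\prod_{k=1}^\infty\omega_k(z-2^k)/(z+2^k)$, with unimodular factors $\omega_k$ chosen so that the product converges: $B(\infty)=1$, and since $B(z+2^N)$ vanishes at $z=2^{N+1}-2^N=2^N\in\C_+$, we have $\|T_\Bes(2^N)B-1\|_\infty\ge 1$ for all $N$, so $B\notin\overline{\LT}$ \emph{provided} $B\in\Bes$. The principal technical hurdle, and the main obstacle of the whole proof, is precisely this $\Bes$-membership: the crude estimate $|B'(z)|\le 1/(2\Re z)$ from Lemma~\ref{ho}(\ref{derbd}) gives the divergent bound $\int_0^\infty(2x)^{-1}\,dx$, so one must exploit the cancellation in $B'(z)/B(z)=\sum_k 2\cdot 2^k/(z^2-4^k)$ stemming from the lacunary spacing of the zeros to verify $\int_0^\infty\sup_y|B'(x+iy)|\,dx<\infty$.
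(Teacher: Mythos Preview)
Parts (1) and (4) are correct. Part (1) matches the paper exactly. For (4), your Cayley-transform argument is a clean alternative to the paper's, which instead multiplies $f\in\ssp$ by $\eta_\delta(z)=(1-e^{-\delta z})/(\delta z)$ and uses Lemma~\ref{PW} to land in $\LT$, then invokes the norm-density of $\ssp$ in $\Bq$ (Proposition~\ref{densehinf}). Your route avoids that density result entirely.

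Part (2): the open-mapping reduction matches the paper, but your witness is wrong. For $f_n(z)=\sum_{k=1}^n\ep_k e^{-2^kz}$ the function $y\mapsto f_n'(x+iy)$ is almost periodic with Fourier coefficient $-\ep_k 2^k e^{-2^kx}$ at frequency $-2^k$, so $\sup_y|f_n'(x+iy)|\ge 2^ke^{-2^kx}$ for every $k$. Choosing $k$ with $2^kx\asymp 1$ gives $\sup_y|f_n'(x+iy)|\gtrsim 1/x$ on $[2^{-n},1]$, hence $\|f_n\|_{\Bq}\gtrsim n$ \emph{regardless of signs}: randomisation controls $\|\cdot\|_\infty$ but not $\|\cdot\|_{\Bq}$. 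The paper uses $\chi^n=((z-1)/(z+1))^n$: Lemma~\ref{G} gives $\|\chi^n\|_\Bes=O(\log n)$, while $\|\chi^n\|_{\mathrm{HP}}\asymp n^{1/2}$ (Section~\ref{Cayley2}).

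Part (3) is where the real problem lies. The property you propose as a separator, $\|T_\Bes(t)f-f(\infty)\|_\Bes\to 0$, holds for \emph{every} $f\in\Bes$, not just for $f\in\overline{\LT}$. Indeed, writing $g=f-f(\infty)\in\Bq$, one has $\|T_\Bes(t)g\|_\infty=\sup_{\Re z\ge t}|g(z)|\to 0$ by Proposition~\ref{besprop}(1)--(2), and $\|T_\Bes(t)g\|_{\Bq}=\int_0^\infty\sup_s|g'(x+t+is)|\,dx\to 0$ by dominated convergence (exactly the computation in the proof of Proposition~\ref{densehinf}). So this criterion cannot separate $\overline{\LT}$ from $\Bes$. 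Consistently, your Blaschke product $B$ is not in $\Bes$: it vanishes at every $2^k$, while at the geometric midpoints $x=2^k\sqrt2$ one checks that $|B(x)|$ stays bounded below by a positive constant, so $\lim_{\Re z\to\infty}B(z)$ fails to exist, contradicting Proposition~\ref{besprop}(1). The paper's argument for (3) is completely different: boundary values of $\LT$-functions are Fourier(--Stieltjes) transforms of measures, hence weakly almost periodic; the weakly almost periodic functions form a proper closed subspace of $\operatorname{BUC}(\R)$, and the Bernstein--Kober theorem then yields an entire function of exponential type whose restriction to $\R$ lies at positive $L^\infty$-distance from all such transforms. A frequency shift places this function in $\ssp\subset\Bes$, giving the required $f\in\Bes$ with $\operatorname{dist}_\Bes(f,\LT)>0$.
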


\begin{proof}
1.  The first statement is an immediate consequence of Example \ref{LTex}(\ref{LTex1}).

\noindent
2.  If $\LT$ were closed in $\Bes$, then $\|\cdot\|_{\text{HP}}$ and $\|\cdot\|_\Bes$ would be equivalent norms on $\LT$.  We shall show in Sections \ref{Cayley1} and \ref{Cayley2} that this is not so.

\noindent
3.  For this we recall some facts about spaces of functions on $\R$ and apply them to boundary functions.  For each measure $\mu \in M(\R)$, $\fti\mu$ is weakly almost periodic \cite[Theorem 11.2]{Eber}, \cite[Corollary 4.2.4]{DuRa}.  The weakly almost periodic functions form a proper closed subspace of $\operatorname{BUC}(\R)$  \cite[Theorems 5.3, 12.1]{Eber}.  Moreover, the space of functions in $\operatorname{BUC}(\R)$ which are restrictions to $\R$ of entire functions of exponential type is dense in $\operatorname{BUC}(\R)$, by the Bernstein-Kober theorem (see \cite[Theorem 12.11.1]{Boas}).  Hence there exist an entire function $G$ of exponential type and $\delta>0$ such that, for all $\mu \in M(\R)$,
\[
\|G_\R - \fti\mu\|_{L^\infty(\R)} \ge \delta,
\]
where $G_\R$ is the restriction of $G$ to $\R$.  Let $\sigma$ be greater than the exponential type of $G$, so that the support of $\fti G_\R$ is contained in $[-\sigma,\sigma]$.  Let $g(z)=e^{-2i\sigma z} G(z)$ and note that  $\operatorname{supp} (\fti g)\subset [\sigma, 3\sigma]$.  For $\mu \in M(\R)$, define  $\mu_\sigma$ by  $\mu_\sigma(A)=\mu (A + 2\sigma)$ for each Borel subset $A$ of $\R$.  Then
\begin{equation*}\label{appr}
\|g_\R - \fti\mu\|_{L^\infty(\R)} = \|G_\R -\fti\mu_\sigma\|_{L^\infty(\R)} \ge  \delta.
\end{equation*}
Now let $f(z) = g(-iz)$ for $z \in \C_+$.  Then $f \in H^\infty [\sigma, 3\sigma] \subset \Bes$, and $\|f-\nolinebreak m\|_\Bes \ge \|f-  m\|_\infty \ge \delta$ for all $m \in \LT$.

\noindent
4 .  Let $\eta$ be as in (\ref{eee}), and for $\delta>0$, let $\eta_\delta(z) = \eta(\delta z)$.  For $f \in \ssp$, let $f_\delta(z) =  f(z) \eta_\delta(z)$.   Then $\eta_\delta(z) \to 1$ and $f_\delta(z) \to f(z)$ uniformly on compact sets, as $\delta\to0+$.  By Lemma \ref{PW}, $f_\delta \in \LT$.  By Proposition \ref{densehinf}, $\ssp$ is norm-dense in $\Bq$.  Since $1 \in \LT$, the result follows.
\end{proof}

\subsection{Duality and weak approximation} \label{dual}

Let $\Bov$ be the space of holomorphic functions $g$ on $\C_+$ such that
\begin{equation} \label{defe0}
\|g\|_{\Bov_0} :=\sup_{x>0}\,x\int_{-\infty}^\infty |g'(x+iy)|\,dy<\infty.
\end{equation}
The constant functions are in $\Bov$, and $\|\cdot\|_{\Bov_0}$ is a seminorm which vanishes on the constant functions.

Let $g \in \Bov$, $h\in L^\infty(\C_+)$ and $x>0$.   The following simple estimate will be very  useful:
\begin{equation} \label{boves}
\int_\R \left| g'(x \pm iy) h(x+iy) \right| \, dy \le \frac{\|g\|_{\Bov_0}}{x} \sup_{s\in\R} |h(x+is)|.
\end{equation}

Functions in $\Bov$ have additional properties as follows.

\begin{prop} \label{space_e}
Let $g \in \Bov$, and let $g_a(z) = g(a+z), \, z \in \C_+, a>0$.
\begin{enumerate}[\rm1.]
\item For each $a>0$, $g_a \in \Bes$ and $\|g_a\|_{\Bes_0} \le \|g\|_{\Bov_0}/(2a)$.
\item  $g(\infty) := \lim_{\Re z\to\infty} g(z)$ exists in $\C$.
\item  There exists $h \in L^\infty(\R_+)$ such that $g(z) = g(\infty) + \lt{h}(z)$.
\item  There exists $C_g>0$ such that $|g(z) - g(\infty)| \le C_g(\Re z)^{-1}$ and $|g^{(n)}(z)| \le n!C_g (\Re z)^{-(n+1)}, \, z \in \C_+,\, n\ge1$ .
\end{enumerate}
\end{prop}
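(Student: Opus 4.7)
\emph{Plan.}  All four parts reduce to applying Proposition \ref{prim} to the shift $g_a$.  The trivial but crucial observation is that for every $a>0$,
\[
\|(g_a)'\|_{H^1(\C_+)} = \sup_{x>0}\int_\R |g'(a+x+iy)|\,dy \le \sup_{\xi>a}\frac{\|g\|_{\Bov_0}}{\xi} = \frac{\|g\|_{\Bov_0}}{a},
\]
so Proposition \ref{prim} applied to $g_a$ immediately yields $g_a\in\Bes$ with $\|g_a\|_\Bq \le \|(g_a)'\|_{H^1}/2 \le \|g\|_{\Bov_0}/(2a)$, proving (1).  Part (2) then follows from Proposition \ref{besprop}(1), since $g(\infty) = \lim_{\Re w\to\infty} g_a(w) = g_a(\infty)$ exists in $\C$.

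For (3), I would exploit the proof of Proposition \ref{prim} more precisely: for each $\epsilon>0$ it delivers $h_\epsilon\in L^\infty(\R_+)$ with $(g_\epsilon)' = \lt h_\epsilon$, and since $h_\epsilon$ is (the restriction to $\R_+$ of) $\fti$ applied to the $L^1$ boundary function $y\mapsto g'(\epsilon+iy)$, the elementary bound $\|\fti u\|_\infty \le \|u\|_{L^1}/(2\pi)$ gives
\[
\|h_\epsilon\|_\infty \le \frac{\|g'(\epsilon+i\cdot)\|_{L^1(\R)}}{2\pi} \le \frac{\|g\|_{\Bov_0}}{2\pi\epsilon}.
\]
Translating gives $g'(u) = \lt(e^{\epsilon\cdot}h_\epsilon)(u)$ for $\Re u>\epsilon$, and uniqueness of Laplace transforms on the common half-plane $\{\Re u > \max(\epsilon,\epsilon')\}$ (applied, say, for rational $\epsilon$ to avoid null-set issues) forces $\phi(s) := e^{\epsilon s}h_\epsilon(s)$ to be a single well-defined function.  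Since $\inf_{\epsilon>0} e^{\epsilon s}/\epsilon = es$ (attained at $\epsilon=1/s$), the above bound yields $|\phi(s)| \le (e/(2\pi))\,s\,\|g\|_{\Bov_0}$ for a.e.\ $s>0$, so $h(s) := -\phi(s)/s$ belongs to $L^\infty(\R_+)$.  Differentiating $\lt h$ under the integral gives $(\lt h)' = \lt\phi = g'$ on $\C_+$, while dominated convergence gives $\lt h(w)\to 0$ as $\Re w \to \infty$; the same is true of $g-g(\infty)$, so these two antiderivatives of $g'$ on the connected set $\C_+$ must coincide.

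Part (4) is then an immediate consequence of (3) with $C_g := \|h\|_\infty$.  For $n\ge 1$, dominated convergence justifies differentiating $\lt h$ under the integral on any set $\{\Re z \ge \delta>0\}$, yielding $g^{(n)}(z) = \int_0^\infty (-s)^n e^{-sz} h(s)\,ds$ and hence
\[
|g^{(n)}(z)| \le \|h\|_\infty \int_0^\infty s^n e^{-s\Re z}\,ds = \frac{n!\,C_g}{(\Re z)^{n+1}},
\]
and the analogous estimate for $n=0$ gives $|g(z)-g(\infty)| = |\lt h(z)| \le C_g/\Re z$.

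The main technical delicacy lies in (3): one must choose the family $\{h_\epsilon\}$ coherently on a common full-measure subset of $\R_+$ so that $\phi$, and hence $h$, is unambiguously defined, and then verify that $\lt h$ really equals $g-g(\infty)$.  These points are handled, respectively, by uniqueness of the Laplace transform on the appropriate strip of convergence and by the antiderivative-plus-decay-at-infinity argument just sketched.
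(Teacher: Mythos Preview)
Your proofs of (1), (2) and (4) coincide with the paper's.  For (3) you take a genuinely different route.  The paper simply cites a result of Kr\'ol \cite[Theorem 2.2]{Krol} to the effect that every $g\in\Bov$ with $g(\infty)=0$ satisfies Widder's growth condition $\sup_{x>0,\,k\ge0} x^{k+1}|g^{(k)}(x)|/k! < \infty$, and then invokes Widder's representation theorem \cite[Theorem 2.2.1]{ABHN} to produce $h\in L^\infty(\R_+)$ with $g=\lt h$.  You instead build $h$ by hand: from the $H^1$ representation $(g_\epsilon)'=\lt h_\epsilon$ for each $\epsilon>0$ you extract a single function $\phi(s)=e^{\epsilon s}h_\epsilon(s)$ via Laplace-transform uniqueness, bound it by optimising $e^{\epsilon s}/\epsilon$ over rational $\epsilon$, and then integrate once.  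Your argument is correct and fully self-contained (it avoids the external Widder/Kr\'ol machinery), and as a bonus it gives the explicit constant $\|h\|_\infty \le (e/2\pi)\|g\|_{\Bov_0}$, which the paper's citation-based proof does not.  The paper's version is of course much shorter on the page.
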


\begin{proof}
1.  By the definition of $\Bov$, the function $g_a'$ belongs to the Hardy space $H^1(\C_+)$ and $\|g_a'\|_{H^1} \le \|g\|_{\Bov_0}/a$.   Then the first statement follows from Proposition \ref{prim}.

\noindent 2.  This follows from the first statement and Proposition \ref{besprop}(1).

\noindent 3.  Replacing $g(z)$ by $g(z)-g(\infty)$, we may assume that $g(\infty)=0$.  Then, by \cite[Theorem 2.2]{Krol}, $g$ satisfies Widder's growth condition
\[
\sup_{x>0,k\ge0} \frac{x^{k+1}}{k!} \big|g^{(k)}(x) \big| < \infty.
\]
By Widder's theorem \cite[Theorem 2.2.1]{ABHN}, $g = \lt h$ for some $h \in L^\infty(\R_+)$.

\noindent 4.  This is immediate from (2), with $C_g = \|h\|_\infty$.
\end{proof}

We can now define a norm $\|\cdot\|_\Bov$ on $\Bov$ by
\begin{equation} \label{defe}
\|g\|_\Bov := |g(\infty)| + \|g\|_{\Bov_0}, \qquad g \in \Bov.
\end{equation}
Then $\Bov$ becomes a Banach space.   To see this, let $(g_n)$ be a Cauchy sequence in $\Bov$.  Then $(g_n(\infty))$ is convergent in $\C$.   Let $a> 0$, and $f_n(z) = g_n(a+z) - g_n(\infty)$.  By  Proposition \ref{space_e}(1),  $(g_n)$ is Cauchy in $\Bes_0$, and hence in $H^\infty(\C_+)$.
Thus, the limit $f(z):=\lim_{n \to \infty}f_n(z)$ exists in $\C$ for every $z \in \mathbb C_+$, and $f$ is holomorphic.
As in the proof of Proposition \ref{balg}(2), one infers that
$\|f\|_{\mathcal E} \le \liminf_{n \to \infty}\|f_n\|_{\mathcal E}$, and
\[
\|f-f_n\|_{\mathcal E}\le \sup_{m \ge n}\|f_m-f_n\|_{\mathcal E}.
\]
Nevertheless we shall work mainly with the seminorm $\|\cdot\|_{\Bov_0}$.

The proof of Proposition \ref{space_e} has shown a relation between $\Bov$ and $H^1(\C_+)$, and the following proposition gives another relation between them ({\it cf}.\ Proposition \ref{prim}).

\begin{prop} \label{H1bov}
If $f \in H^1(\C_+)$, then $f \in \Bov$.
\end{prop}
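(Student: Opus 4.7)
The plan is to control $\|f\|_{\Bov_0}$ directly using the boundary values of $f$. Standard Hardy space theory tells us that every $f \in H^1(\C_+)$ has a boundary function $f^b \in L^1(\R)$ with $\|f^b\|_{L^1(\R)} \le \|f\|_{H^1}$, and that $f$ is recovered from $f^b$ via the Cauchy integral (the argument for \eqref{P2} carries over verbatim, since $f^b \in L^1$ makes the integral absolutely convergent and large semicircles in $\C_+$ can be handled using a Fej\'er-type averaging). Differentiating under the integral sign yields
\[
f'(z) = -\frac{1}{2\pi} \int_\R \frac{f^b(it)}{(it-z)^2}\,dt, \qquad z \in \C_+.
\]

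From here the estimate is essentially a Fubini computation. For $z = x+iy \in \C_+$, the identity $|it-(x+iy)|^2 = x^2 + (t-y)^2$ gives
\[
|f'(x+iy)| \le \frac{1}{2\pi} \int_\R \frac{|f^b(it)|}{x^2+(t-y)^2}\,dt,
\]
and Tonelli's theorem together with the elementary evaluation $\int_\R \frac{dy}{x^2+(t-y)^2} = \pi/x$ leads to
\[
\int_\R |f'(x+iy)|\,dy \le \frac{1}{2x}\,\|f^b\|_{L^1(\R)}.
\]
Multiplying through by $x$ and taking the supremum over $x>0$ gives $\|f\|_{\Bov_0} \le \tfrac{1}{2}\|f\|_{H^1} < \infty$, so $f \in \Bov$.

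The only real obstacle is justifying the Cauchy representation for $f'$ in the $H^1$ setting, since the excerpt proved \eqref{P2} only for $f \in H^\infty(\C_+)$. A clean fallback that avoids this issue altogether is to differentiate the Poisson formula $f(x+iy) = (P_x * f^b)(y)$, using $f' = -i\,\partial_y f$ by Cauchy--Riemann, which gives $f'(x+i\cdot) = -i(\partial_y P_x) * f^b$. A direct computation shows $\|\partial_y P_x\|_{L^1(\R)} = 2/(\pi x)$, so Young's convolution inequality yields $\|f'(x+i\cdot)\|_{L^1(\R)} \le \tfrac{2}{\pi x}\|f^b\|_{L^1(\R)}$, giving $\|f\|_{\Bov_0} \le \tfrac{2}{\pi}\|f\|_{H^1}$ and the conclusion $f \in \Bov$. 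This second route has the advantage of relying only on the Poisson representation, which is classical for $H^1$.
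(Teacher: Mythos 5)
Your proof is correct, but it takes a genuinely different route from the paper's. The paper avoids boundary values altogether and argues by duality: for $\varphi\in L^\infty(\R)$ with $\|\varphi\|_\infty=1$ it sets $g(z)=\int_\R f(z+is)\varphi(s)\,ds$, observes that $g\in H^\infty(\C_+)$ with $\|g\|_\infty\le\|f\|_{H^1}$, and bounds $x\bigl|\int_\R f'(x+is)\varphi(s)\,ds\bigr|=x|g'(x)|\le\|g\|_\infty$ by the Cauchy estimate for derivatives of bounded holomorphic functions (Lemma \ref{ho}(3)); taking the supremum over such $\varphi$ gives $\|f\|_{\Bov_0}\le\|f\|_{H^1}$ using only interior estimates. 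Your two routes instead rest on the classical representation theorems for $H^1(\C_+)$ --- the Cauchy and Poisson representations of an $H^1$ function by its $L^1$ boundary function --- which the paper itself is content to cite from Duren elsewhere (e.g.\ in the proof of Proposition \ref{prim}), so this is a legitimate foundation; granted those, your Fubini/Tonelli and Young computations are correct and even produce slightly sharper constants ($\tfrac12$ and $\tfrac{2}{\pi}$ in place of the paper's $1$, though only the qualitative containment is claimed in the statement). Of your two variants, the second, via $f'=-i\,\partial_y(P_x*f^b)$ and $\|\partial_y P_x\|_{L^1(\R)}=2/(\pi x)$, is the cleaner one to write down in full, since the Poisson representation for $H^1$ is unambiguous, whereas your justification of the Cauchy formula for $f'$ in the first variant is only sketched. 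The trade-off is that the paper's argument is self-contained modulo Lemma \ref{ho}, while yours imports more Hardy-space machinery but makes the kernel and the constant explicit.
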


\begin{proof}
Let $f \in H^1(\mathbb C_+)$, $\varphi \in L^\infty (\mathbb R)$ with $\|\varphi\|_{\infty}=1$, and
define
\begin{equation*}
g(z):=\int_{\mathbb R} f(z+is)\varphi(s)\, ds, \qquad z \in \mathbb C_+.
\end{equation*}
Then $g \in H^\infty(\mathbb C_+)$, and for $x>0$,
\begin{equation*}
x \Big|\int_{\mathbb R} f'(x+is)\varphi(s)\, ds \Big|=  x |g'(x)| \le \|g\|_{\infty} \le \|f\|_{H^1}.
\end{equation*}
Since the choice of $\varphi$ with $\|\varphi\|_\infty=1$ was arbitrary, we infer that
\begin{equation*}
x \int_{\mathbb R} |f'(x+is)| \, ds \le \|f\|_{H^1}.
\end{equation*}
It follows that $f \in \Bov$ and $\|f\|_{\Bov_0} \le \|f\|_{H^1}$.
\end{proof}

\begin{exas} \label{bovex}
\begin{enumerate}[\rm1.]
\item
The functions $r_a(z) := (z+a)^{-1} \quad (a \in \overline\C_+)$ are in $\Bov$ and $\|r_a\|_{\Bov_0} = \pi$.   Moreover, the function $a \mapsto r_a$ is continuous from $\C_+$ to $\Bov$.  Hence for any bounded Borel measure $\mu$ on $\C_+$, the function
\[
f(z) := \int_{\C_+}  \frac{d\mu(a)}{z+a}
\]
is in $\Bov$.
\item  The functions $e_a(z) := e^{-az} \quad (a \in \overline\C_+)$ are not in $\Bov$.
\item  The function $(z+1)^{-2}\log z$ is in $H^1(\C_+) \subset \Bov$, but it is unbounded near $0$.
\item  The function $e^{-1/z}$ is in $\Bov$ and is bounded on $\C_+$.  It is not in $\Bes$ as it is not uniformly continuous near $0$.
\end{enumerate}
\end{exas}

There is a (partial) duality between $\Bov$ and $\Bes$ given by
\begin{equation} \label{dualdef}
\langle g,f\rangle_\Bes:=\int_0^\infty x\int_{-\infty}^\infty\,g'(x-iy) f'(x+iy)\,dy \,dx,  \qquad g\in \Bov, \quad f\in \Bes.
\end{equation}
It follows from the definition of $\Bes$, and \eqref{boves} for $h = f'$, that the integral exists and
\[
|\langle g,f \rangle_\Bes |\le \|g\|_{\Bov_0}  \|f\|_{\Bes_0}.
\]
However the duality is only partial in the sense that $\Bes$ and $\Bov$ are not the dual or predual of each other with respect to this duality, and the constant functions in each space are annihilated in the duality.  This duality appeared in \cite[p.266]{V1}  (where it is presented in slightly different form, but (\ref{dualdef}) can be converted into the formula in \cite{V1}  by putting $g(z) = \overline{G(\overline{z})}$).  It was also noted in \cite{V1} that Green's formula on $\C_+$ transforms (\ref{dualdef}) into
\begin{equation} \label{Green}
\langle g,f \rangle_\Bes = \frac{1}{4} \int_\R  g(-iy)f(iy) \, dy
\end{equation}
for ``good functions'' (the argument in \cite{V1} requires a correction:
In the notation of \cite{V1}, one should set $u_1=x, u_2=F \cdot \overline G$).  See \cite[Lemma 17 and Remark, p.456]{Taib} for a precise statement and a different proof.   The approach of extending pairings of  functions on $\mathbb R$ into $\mathbb C^+$ via Green's formula has been used systematically in \cite{FS}.   We shall use (\ref{Green}) only in cases where $f$ and $g$ have holomorphic extensions across $i\R$ and they and their derivatives decay at infinity sufficiently fast.

\begin{lemma} \label{shifts}
Let $g \in \Bov$, $a \in \overline{\C}_+$ and $b>0$.   Define $T_\Bov(a)g$ and $S_\Bov(b)g$ on $\C_+$ by
\[
(T_\Bov(a)g)(z) = g(a+z), \qquad (S_\Bov(b)g)(z) = g(bz).
\]
Then the following hold:
\begin{enumerate}[\rm1.]
\item $T_\Bov(a) g \in \Bov$ and $\|T_\Bov(a) g\|_\Bov \le \|g\|_\Bov$.
\item Let $f \in \Bes$ and $T_\Bes(a)f$ be as in Lemma  \ref{shifts01}.  Then
\begin{equation} \label{shifts22}
\langle T_\Bov(a)g,f \rangle_\Bes = \langle g, T_\Bes(a)f \rangle_\Bes.
\end{equation}
\item \label{shifts3} The semigroup $(T_\Bov(t))_{t\ge0}$ is not strongly continuous on $(\Bov, \|\cdot\|_\Bov)$.
\item \label{shifts4} $S_\Bov(b)g \in \Bov$ and $\|S_\Bov(b)g\|_{\Bov} = b^{-1}\|g\|_\Bov$.
\end{enumerate}
  \end{lemma}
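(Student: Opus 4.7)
My plan for (1) is a direct estimate. I observe first that $(T_\Bov(a)g)(\infty) = g(\infty)$, and then for the seminorm, writing $a = \alpha + i\beta$ and substituting $y \mapsto y - \beta$ in the $y$-integral,
\[
x \int_\R |(T_\Bov(a)g)'(x+iy)|\,dy = x \int_\R |g'((x+\alpha) + iy')|\,dy' \le \frac{x}{x+\alpha}\|g\|_{\Bov_0} \le \|g\|_{\Bov_0}.
\]
Taking the sup over $x>0$ and adding the $|g(\infty)|$ term produces $\|T_\Bov(a)g\|_\Bov \le \|g\|_\Bov$.

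For (2), my plan is to reduce the identity to Green's formula \eqref{Green}. Once this is justified for sufficiently regular $g$ and $f$, both sides of \eqref{shifts22} become
\[
\langle T_\Bov(a)g,f\rangle_\Bes = \frac{1}{4}\int_\R g(a-iy)f(iy)\,dy, \qquad \langle g,T_\Bes(a)f\rangle_\Bes = \frac{1}{4}\int_\R g(-iy)f(a+iy)\,dy.
\]
Setting $w = iy$ makes these contour integrals of the holomorphic function $w \mapsto g(a-w)f(w)$ over the vertical lines $i\R$ and $a+i\R$ respectively; deforming the contour inside the strip $\{0 \le \Re w \le \Re a\}$, where the integrand is holomorphic and decays sufficiently at $|\Im w| \to \infty$, yields the equality. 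For general $g \in \Bov$, $f \in \Bes$ I would pass to the limit in $\epsilon$ using the regularisers $T_\Bov(\epsilon)g$, $T_\Bes(\epsilon)f$ and the continuity of the duality, controlled by part~(1) and Lemma~\ref{shifts01}(1). As a sanity check, the pair $g=r_c$, $f=r_d$ satisfies $\langle r_{a+c}, r_d\rangle_\Bes = \pi/(2(a+c+d)) = \langle r_c, r_{d+a}\rangle_\Bes$ by direct residue calculation.

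For (3), the plan is to exhibit a concrete $g \in \Bov$ with $\liminf_{t \to 0^+}\|T_\Bov(t)g - g\|_\Bov > 0$; I expect this to be the main obstacle of the lemma. A natural choice is $g(z) = e^{-1/z}$ from Example \ref{bovex}(4), whose essential singularity at $z = 0$ is genuinely smoothed when one passes to $T_\Bov(t)g(z) = e^{-1/(z+t)}$, which is holomorphic in a neighbourhood of $0$. The technical work is to bound $\int_\R |(T_\Bov(t)g - g)'(x+iy)|\,dy$ from below at a carefully chosen $x$ of size comparable to $t$ that isolates the boundary discrepancy. As a fallback abstract route, strong continuity would place every $g \in \Bov$ in the $\Bov$-norm closure of $\bigcup_{t>0}T_\Bov(t)\Bov$, which lies in $\Bes$ by Proposition~\ref{space_e}(1); combined with the uniform continuity of $\Bes$-functions up to $\overline{\C_+}$ (Proposition~\ref{besprop}(4)), this should be incompatible with the kind of boundary singularity exhibited by $e^{-1/z}$ near $0$.

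Finally, (4) is a direct scaling computation. From $(S_\Bov(b)g)'(z) = bg'(bz)$, substituting first $y' = by$ and then $u = bx$ gives
\[
\sup_{x>0} x \int_\R |bg'(bx+iby)|\,dy = \sup_{u>0} \frac{u}{b}\int_\R |g'(u+iy')|\,dy' = b^{-1}\|g\|_{\Bov_0},
\]
and combined with $(S_\Bov(b)g)(\infty) = g(\infty)$ the stated scaling relation follows.
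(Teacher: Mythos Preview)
Your arguments for (1) and (4) are correct and match the paper's ``very simple'' (with the caveat that in (4) the exact scaling $\|S_\Bov(b)g\|_\Bov = b^{-1}\|g\|_\Bov$ can only hold for the seminorm $\|\cdot\|_{\Bov_0}$, since $g(\infty)$ does not scale; your computation establishes precisely that).

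For (2) your route is genuinely different from the paper's and has a gap. You propose to reduce to Green's formula \eqref{Green} on the boundary and then deform the contour of $\int g(-iy)f(iy)\,dy$, passing to general $g,f$ by regularising with $T_\Bov(\epsilon)g$, $T_\Bes(\epsilon)f$. The problem is the limit in $\epsilon$ on the $\Bov$ side: part (1) gives only a uniform bound $\|T_\Bov(\epsilon)g\|_{\Bov_0}\le\|g\|_{\Bov_0}$, and part (3) of this very lemma says $T_\Bov(\epsilon)g\not\to g$ in $\Bov$-norm, so you have not justified $\langle T_\Bov(\epsilon)g,f\rangle_\Bes\to\langle g,f\rangle_\Bes$. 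Moreover, \eqref{Green} is only asserted for ``good functions'' and a general $g\in\Bov$ need not have a boundary function on $i\R$ (e.g.\ $g=r_0$). The paper instead works directly with the defining double integral: for $a>0$ and fixed $x>0$ it applies Cauchy's theorem to $h(z):=g'(a+x-z)f'(x+z)$ on rectangles in the strip $0<\Re z<a$, showing $\int_\R g'(a+x-iy)f'(x+iy)\,dy=\int_\R g'(x-iy)f'(a+x+iy)\,dy$; multiplying by $x$ and integrating over $x$ gives \eqref{shifts22} without any boundary passage or regularisation. The imaginary case $a=is$ is a one-line substitution.

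For (3) your plan is plausible but not carried out, and the abstract fallback fails as stated: $\Bov$-norm convergence of a sequence in $\Bes$ does not force the limit into $\Bes$ (the $\Bov$-norm does not control $\|\cdot\|_\Bq$), so you cannot conclude uniform continuity of $g$ on $\overline{\C_+}$ from strong continuity of $T_\Bov$. The paper's argument is much simpler and avoids analysis near the boundary: take $g=r_0$, $r_0(z)=1/z$, and observe the homogeneity $xJ(x;a)=sJ(s;1)$ with $s=x/a$, where $J(x;a)=\int_\R\big|(x+iy)^{-2}-(x+a+iy)^{-2}\big|\,dy$. Hence $\|T_\Bov(a)r_0-r_0\|_{\Bov_0}=\sup_{s>0}sJ(s;1)$ is a positive constant independent of $a>0$, so the map $a\mapsto T_\Bov(a)r_0$ is discontinuous at $0$.
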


\begin{proof} 1.  This is very simple.

\noindent 2.  For $a = is \in i\R$, the statement follows from a simple change of variable.   So we may assume that $a>0$.

Let $f \in \Bes$, and $x>0$.  Let $h(z) := g'(a+x-z) f'(x+z)$ for $-x < \Re z < a+x$.  Since $f \in \Bes$ and $g \in \Bov$, the integrals of $h$ along $i\R$ and $a+i\R$ are absolutely convergent, and
\begin{align*}
\int_\R \left |\int_0^a  h(t+iy) \, dt\right|\,dy
&\le \int_0^a \frac{\|g\|_{\Bov}\|f\|_\infty}{(a+x-t)(x+t)} \,dt < \infty.
\end{align*}
By applying Cauchy's theorem to the integral of $h$ around the rectangles with vertices at $\pm iy_n$ and $a \pm iy_n$, for suitable $y_n\to\infty$, we conclude that the integrals of $h$ along $i\R$ and $a+i\R$ coincide,
 so that
\[
\int_\R g'(a+x-iy) f'(x+iy) \, dy = \int_\R g'(x-iy) f'(a+x+iy) \, dy.
\]
Multiplying by $x$ and integrating with respect to $x$ over $\R_+$ gives (\ref{shifts22}).

\noindent 3.   Consider $g = r_0 \in \Bov$, so $g(z)= 1/z$.    If  $a>0$ and
\begin{align*}
J(x;a):=\int_{\mathbb R}\left|\frac{1}{(x+iy)^2}-\frac{1}{(x+a+iy)^2}\right|\,dy,
\end{align*}
then $xJ(x;a)=sJ(s;1),$ where $s=x/a$. So
\begin{align*}
\|T_\Bov(a)r_0 - r_0\|_\Bov = \sup_{x>0}\,xJ(x;a)=\sup_{s>0} sJ(s,1)>0,
\end{align*}
and hence $\mathbb R_+ \ni a \mapsto T_\Bov(a) r_0$ is not continuous at 0.

\noindent 4.  This is very simple.
\end{proof}

Next we show that we can approximate functions in $\Bes$ weakly with respect to our duality by multiplying them by a suitably chosen approximate identity.   This will play a crucial role at several places in the construction of the functional calculus.

\begin{lemma}\label{Sconv}
Let $\mu \in M(\R_+)$ satisfy
\[
\quad \mu(\R_+)= 1, \quad  \int_{\R_+} t\,d|\mu|(t)  <\infty.
\]
Let $m = \lt\mu \in \LT$, and $f\in \Bes$.  For $\delta>0$, let
\[
m_\delta(z) = m(\delta z), \qquad f_\delta(z)  = m_\delta(z) f(z) \qquad (z \in \C_+).
\]
Then, for all $g \in \Bov$,
\[
\lim_{\delta\to0+} \langle g, f_\delta \rangle_\Bes = \langle g,f \rangle_\Bes.
\]
\end{lemma}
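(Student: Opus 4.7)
The plan is to decompose the pairing by expanding $f_\delta' = (m_\delta f)' = m_\delta' f + m_\delta f'$, which gives
\[
\langle g, f_\delta - f\rangle_\Bes = I_1(\delta) + I_2(\delta),
\]
where
\[
I_1(\delta) := \int_0^\infty x\int_\R g'(x-iy)\,m_\delta'(x+iy)\,f(x+iy)\,dy\,dx,
\]
\[
I_2(\delta) := \int_0^\infty x\int_\R g'(x-iy)\,(m_\delta(x+iy)-1)\,f'(x+iy)\,dy\,dx,
\]
and I would show separately that $I_j(\delta)\to 0$ as $\delta\to 0+$.

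For $I_2$, I would apply dominated convergence.  The moment hypothesis $\int_{\R_+} t\,d|\mu|(t)<\infty$ ensures that $m$ extends to a Lipschitz function on $\overline{\C}_+$ with $m(0)=\mu(\R_+)=1$, so $m_\delta(z)\to 1$ pointwise on $\C_+$ and $|m_\delta(z)|\le\|\mu\|$ there.  The integrand of $I_2$ is then dominated pointwise by $(1+\|\mu\|)|g'(x-iy) f'(x+iy)|$, whose integral against $x\,dy\,dx$ is bounded by $(1+\|\mu\|)\|g\|_{\Bov_0}\|f\|_{\Bes_0}$ via \eqref{boves}.  Hence $I_2(\delta)\to 0$.

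For $I_1$ the natural bound $|m_\delta'(x+iy)|\le\delta\int_{\R_+}te^{-t\delta x}\,d|\mu|(t)\le\delta M$ (with $M:=\int t\,d|\mu|(t)$) tends pointwise to $0$, but it is not dominated by any function integrable against $x|g'(\bar z)|\,dA(z)$ on $\C_+$, since $\int_0^\infty x\int_\R|g'(x-iy)|\,dy\,dx$ is typically infinite.  The plan is to reduce by density to $f\in\ssp=\bigcup_{0<\epsilon<\sigma}H^\infty[\epsilon,\sigma]$, which is dense in $\Bes_0$ by Proposition \ref{densehinf}.  For $f\in H^\infty[\epsilon,\sigma]$ the exponential decay $|f(z)|\le e^{-\epsilon\Re z}\|f\|_\infty$ combined with $x\int_\R|g'(x-iy)|\,dy\le\|g\|_{\Bov_0}$ yields
\[
|I_1(\delta)|\le\|g\|_{\Bov_0}\int_0^\infty\sup_y|m_\delta'(x+iy)|\,\|f(x+i\cdot)\|_\infty\,dx\le\frac{\delta M\|g\|_{\Bov_0}\|f\|_\infty}{\epsilon}\to 0.
\]
Extension to arbitrary $f\in\Bes_0$ rests on the uniform multiplier estimate $\|m_\delta h\|_\Bes\le 2\|\mu\|\|h\|_\Bes$, itself a consequence of $\|m_\delta\|_\infty\le\|\mu\|$ together with $\|m_\delta\|_{\Bes_0}\le\int_{\R_+}\int_0^\infty\delta t\,e^{-t\delta x}\,dx\,d|\mu|(t) = |\mu|((0,\infty))$ (Fubini, using the moment condition); the residual contribution of $f(\infty)\ne 0$ reduces to $f-f(\infty)\in\Bes_0$ since constants are annihilated by the pairing.

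The main obstacle is precisely the failure of direct dominated convergence for $I_1$: the measure $x|g'(\bar z)|\,dA(z)$ on $\C_+$ typically has infinite mass, so the pointwise vanishing of $m_\delta'$ alone does not suffice.  The decisive trick is to substitute the spectral decay of $f$ in $\Re z$ (via approximation by $\ssp$-functions) for the missing integrability; both hypotheses on $\mu$ are used exactly to keep the $\Bes$-multiplier norm of $m_\delta$ uniformly bounded in $\delta$, so that the density reduction closes the argument.
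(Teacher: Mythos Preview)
Your proof is correct and follows the same overall architecture as the paper's: reduce to $f\in\ssp$ using that $\{m_\delta\}$ is bounded in $\Bes$ (equivalently, $m_\delta$ is a uniformly bounded multiplier on $\Bes$), then exploit the spectral properties of $f\in H^\infty[\epsilon,\sigma]$ to control the term coming from $m_\delta' f$.

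The tactical differences are minor but worth noting. The paper does not split into $I_1$ and $I_2$; instead it applies Bohr's inequality \eqref{Bohr} to bound $|f(z)|\le\frac{\pi}{2\epsilon}\sup_s|f'(x+is)|$ for $f\in H^\infty[\epsilon,\sigma]$, which lets both pieces be absorbed into a single dominated-convergence argument with majorant $\gamma_\delta(z)\sup_s|f'(x+is)|$. Your route avoids Bohr's inequality by using the more elementary exponential decay $|f(x+iy)|\le e^{-\epsilon x}\|f\|_\infty$ from \eqref{expdef}, and you observe that $I_2$ already yields to dominated convergence for \emph{all} $f\in\Bes$ without any density reduction. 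So your argument is a touch more elementary and localises the need for the density step to $I_1$ only, while the paper's version is more compact, packaging everything into one limit. Both rest on exactly the same two ingredients: Proposition~\ref{densehinf} and the scale-invariance $\|m_\delta\|_\Bes=\|m\|_\Bes$.
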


\begin{proof}
We may assume that $f(\infty)=0$, so $f \in \overline{\ssp}$ by (\ref{gbar}).  Since $\{m_\delta : 0< \delta < 1\}$ is a bounded set in $\Bes$ (Lemma \ref{shifts01}(\ref{012})), it suffices to consider the case when $f \in H^\infty[\ep,\sigma]$ for some $0 < \ep < \sigma$.  Using Bohr's inequality (\ref{Bohr}) we obtain
\[
|f'(z)-f_\delta'(z)|  \le |1- m_\delta(z)|\, |f'(z)| + |m_\delta'(z)|\,|f(z)| \le \gamma_\delta(z) \sup_{s\in\R} |f'(z+is)|,
\]
where
\[
\gamma_\delta(z) = |1-m(\delta z)| + \frac{\pi\delta}{2\ep} |m'(\delta z)|.
\]
From the assumptions,  $\gamma_\delta(z)$ is bounded uniformly in $z$ and $\delta$, and
\[
\lim_{\delta\to0+} \gamma_\delta(z) = 0, \qquad z \in \C_+. \label{gd0}
\]
Now
\[
 |\langle g,f - f_\delta \rangle_\Bes| \le \int_0^\infty x \int_{-\infty}^\infty   |g'(x-iy)| \, \gamma_\delta(x+iy) \sup_{s\in\R} |f'(x+is)| \,dy\,dx.
\]
From these properties, the absolute convergence of the repeated integral in (\ref{dualdef}) and the dominated convergence theorem, we obtain that
\[
\lim_{\delta\to0+} \langle g,f - f_\delta \rangle_\Bes = 0.  \qedhere
\]
\end{proof}

\subsection{Representation of Besov functions}

Here we show that any function in $\Bes$ can be represented in terms of the duality as in (\ref{dualdef}).  When $g = r_z$ where $r_z(\l) = (z+\l)^{-1}$ for some $z \in \C_+$ and (\ref{Green}) holds, one obtains the Cauchy integral formula:
\[
\frac{2}{\pi} \langle r_z,f \rangle_\Bes = \frac{1}{2\pi} \int_\R \frac{f(iy)}{z-iy} \, dy = \frac{1}{2\pi i} \int_{i\R}\frac{f(\l)}{\l-z} \, d\l = f(z),
\]
where the contour integral on $i\R$ is in the downward direction.  We make this more precise in the following.

\begin{prop} \label{BHP2}
Let $f \in \Bes$, $z \in \overline\C_+$ and  $r_z(\l) = (\l+z)^{-1}$.  Then
\[
f(z) = f(\infty) + \frac{2}{\pi} \langle r_z,f \rangle_\Bes.
\]
\end{prop}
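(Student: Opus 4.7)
The plan is to verify the identity by a density reduction followed by a direct residue computation. First, note that the statement reduces to the case $f\in\Bq$. Indeed, $\langle r_z,\cdot\rangle_\Bes$ annihilates constant functions (their derivatives vanish), and writing $f=f(\infty)+f_0$ with $f_0=f-f(\infty)\in\Bq$, the proposition becomes $f_0(z)=\tfrac{2}{\pi}\langle r_z,f_0\rangle_\Bes$. The map $h\mapsto h(z)$ is continuous on $\Bes$ since $\|h\|_\infty\le\|h\|_\Bes$, with the boundary case $z\in i\R$ handled by Proposition \ref{besprop}(3); and the functional $h\mapsto\langle r_z,h\rangle_\Bes$ is continuous on $\Bq$ since $r_z\in\Bov$ with $\|r_z\|_{\Bov_0}=\pi$ by Example \ref{bovex}(1). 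By Proposition \ref{densehinf}, $\ssp=\bigcup_{0<\ep<\sigma}H^\infty[\ep,\sigma]$ is dense in $\Bq$, so it suffices to prove the identity on $H^\infty[\ep,\sigma]$; for such $f$ one has $f(\infty)=0$ by \eqref{expdef}, so the target reduces to $f(z)=\tfrac{2}{\pi}\langle r_z,f\rangle_\Bes$.

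For $f\in H^\infty[\ep,\sigma]$, I would compute the pairing directly. Writing $r_z'(\l)=-(\l+z)^{-2}$ and substituting $u=x+iy$, so that $(x+z-iy)^2=(u-(2x+z))^2$ and $dy=-i\,du$, the inner integral becomes a contour integral on the vertical line $\Re u = x$:
\[
\int_\R r_z'(x-iy)f'(x+iy)\,dy = i\int_{x-i\infty}^{x+i\infty}\frac{f'(u)}{(u-(2x+z))^2}\,du.
\]
The only singularity of the integrand sits at $u=2x+z$, which has $\Re u=2x+\Re z>x$, so the pole lies strictly to the right of the contour. I would close the contour with a large semicircle in $\{\Re u > x\}$. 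Bernstein's inequality (\ref{Bernstein}) applied to each vertical translate of $f$, together with \eqref{expdef}, gives $|f'(w)|\le\sigma\|f\|_\infty e^{-\ep\Re w}$ throughout $\overline{\C}_+$, providing ample decay to make the semicircular contribution vanish as the radius tends to infinity. The residue theorem (clockwise orientation) then evaluates the vertical integral as $-2\pi i f''(2x+z)$, so the inner integral equals $2\pi f''(2x+z)$.

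Substituting this into the definition of $\langle r_z,f\rangle_\Bes$ and changing variable $u=2x+z$ yields
\[
\langle r_z,f\rangle_\Bes = 2\pi\int_0^\infty x f''(2x+z)\,dx = \frac{\pi}{2}\int_z^\infty(u-z)f''(u)\,du,
\]
where the last integral is along the ray $u=z+t$, $t\ge 0$. One integration by parts gives $\int_z^\infty(u-z)f''(u)\,du=[(u-z)f'(u)]_z^\infty-\int_z^\infty f'(u)\,du=f(z)$, where both boundary terms vanish thanks to the exponential decay of $f$ and $f'$. This delivers $\langle r_z,f\rangle_\Bes=\tfrac{\pi}{2}f(z)$ on $H^\infty[\ep,\sigma]$, and the density reduction of the first paragraph promotes the formula to all of $\Bes$.

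The main technical obstacle is justifying the closure of the inner contour to the right and the vanishing of the semicircular arc, and this is exactly where the exponential decay enjoyed by functions in $H^\infty[\ep,\sigma]$ is essential: the weaker pointwise bound $|f'(w)|\le\|f\|_\infty/(2\Re w)$ from Lemma \ref{ho}(\ref{derbd}) available for general $f\in\Bes$ would not be enough, which is precisely why one first reduces to the subclass $\ssp$ via Proposition \ref{densehinf}. The boundary case $z\in i\R$ needs no separate treatment since $\Re(2x+z)=2x>x$ for all $x>0$, and $f$ extends continuously to $\overline{\C}_+$ by Proposition \ref{besprop}(3,4), so $f(z)$ is well-defined for $z \in i\R$.
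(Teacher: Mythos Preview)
Your proof is correct and takes a genuinely different route from the paper's. The paper first verifies the formula for $f=\lt\mu\in\LT$ by a Fourier-analytic computation (recognising $\beta\mapsto(x-i\beta+z)^{-2}$ as the inverse Fourier transform of $t\mapsto 2\pi t e^{-xt}e^{-zt}$ on $\R_+$), then passes to $f\in\ssp$ by approximating with $f\eta_\delta\in\LT$ via Lemma~\ref{PW} and invoking the weak approximation Lemma~\ref{Sconv}, and finally closes by density and addition of constants. You instead bypass $\LT$ entirely: after the same density reduction to $\ssp$, you evaluate the inner integral by a residue computation, exploiting the exponential decay of $f'$ available on $H^\infty[\ep,\sigma]$ to close the contour to the right.

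Your approach is more self-contained here: it does not need Lemma~\ref{PW} or Lemma~\ref{Sconv}, only Proposition~\ref{densehinf} and the basic complex-analytic estimates \eqref{expdef} and \eqref{Bernstein}. On the other hand, the paper's route via $\LT$ is not wasted effort: the same Fourier computation is reused verbatim in Lemma~\ref{BHP3} to establish compatibility of the $\Bes$-calculus with the Hille--Phillips calculus, so proving the scalar identity first on $\LT$ provides a direct template for the operator-valued version.
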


\begin{proof}
First assume that $f = \widehat\mu \in \LT$ where $\mu(\{0\})=0$, or equivalently $f(\infty)=0$.  Then
\begin{align*}
\langle r_z,f \rangle_\Bes
&=  \int_\R \int_0^\infty \frac{x}{(x -iy + z)^2}    \int_0^\infty t e^{-(x + iy)t}\, d\mu(t)\, dx\,dy \\
&= \int_0^\infty \int_0^\infty x t e^{-x t} \int_\R \frac{e^{- iy t}}{(x -iy + z)^2}   \,dy \,   \,dx   \,d\mu(t) \\
&= 2\pi \int_0^\infty \int_0^\infty  x t^2  e^{-2x t} e^{-zt} \,dx\,d\mu(t) \\
&= \frac{\pi}{2}\int_{0}^\infty e^{-zt}  \,d\mu(t)\\
&= \frac{\pi}{2}f(z),
\end{align*}
where we have used that $y \mapsto (x-iy+z)^{-2}$ is the inverse Fourier transform of $t \mapsto 2 \pi t e^{-x t} e^{-zt}$ on $\R_+$ (extended to $\R$ by $0$).

Next we consider $f \in \ssp$, so $f(\infty)=0$. Let $\delta >0$ and $\eta_\delta(z) = \eta(\delta z)$, where $\eta$ is as in (\ref{eee}).   By Lemma \ref{PW}, $f_\delta \in \LT$ and $f_\delta(\infty)=0$.  By applying the case above to $f_\delta$ and using Lemma \ref{Sconv},
\[
f(z) = \lim_{\delta\to0+} f_\delta(z) = \lim_{\delta\to0+} \frac{2}{\pi} \langle r_z,f_\delta \rangle_\Bes = \frac{2}{\pi} \langle r_z,f \rangle_\Bes.
\]

The formula extends by continuity to $f \in \Bq$, and then to $f \in \Bes$ by adding constants.
\end{proof}

\begin{rem}
If $z,a \in \C_+$, we now have two formulas for $f(z+a)$:
\[
f(z+a) = f(\infty) + \frac{2}{\pi}\langle r_{z+a}, f \rangle_\Bes = f(\infty) + \frac{2}{\pi} \langle r_z, T_\Bes(a)f \rangle_\Bes.
\]
Since $r_{z+a} = T_\Bov(a) r_z$, this agrees with (\ref{shifts22}).
\end{rem}

\subsection{Dual Banach spaces}  \label{nondual}

Our partial duality induces a contractive map $\Psi_\Bes: \Bov \to \Bq^*$, where $\Bq^*$ can be identified in the natural way with the space of functionals in $\Bes^*$ which annihilate the constant functions.  It follows from Proposition \ref{BHP2} that the range of $\Psi_\Bes$ is weak*-dense in $\Bq^*$.  However it is not norm-dense.

\begin{prop} \label{nondens1}
The range of $\Psi_\Bes$ is not norm-dense in $\Bq^*$.
\end{prop}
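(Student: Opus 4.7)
My plan is to construct a concrete ``spike'' functional $\phi\in\Bq^*$ which singles out the value at one exponential $e_{a_0}$, for a chosen $a_0>0$, and to show that no element in the range of $\Psi_\Bes$ lies close to it in norm. The underlying idea is that every functional of the form $\Psi_\Bes(g)$ produces a continuous function $a\mapsto\Psi_\Bes(g)(e_a)$ on $(0,\infty)$, whereas the functional $\phi$ I will build has $a\mapsto\phi(e_a)=\mathbf{1}_{\{a_0\}}(a)$, which is not continuous.

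The construction of $\phi$ rests on a sharp coefficient bound
\[
|c_j|\le\Bigl\|\sum_{i=1}^n c_i e_{a_i}\Bigr\|_\Bq
\qquad (a_1,\dots,a_n>0 \text{ distinct},\; c_1,\dots,c_n\in\C),
\]
which I will derive from almost-periodic function theory. Writing $f:=\sum_i c_ie_{a_i}\in\Bq$, the boundary function $f^b(is)=\sum_i c_i e^{-ia_is}$ is almost periodic with Bohr coefficients $c_j$, so the classical estimate $|c_j|\le\|f^b\|_\infty$ and Proposition~\ref{besprop}(2) with $f(\infty)=0$ give $|c_j|\le\|f\|_\infty\le\|f\|_\Bq$. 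With this in hand I fix $a_0>0$ and, using the linear independence of distinct exponentials, define an unambiguous linear functional $\phi_0$ on $\operatorname{span}\{e_a:a>0\}\subset\Bq$ by letting $\phi_0(f)$ be the coefficient of $e_{a_0}$ in $f$ if $e_{a_0}$ appears, and $0$ otherwise. The inequality above yields $|\phi_0(f)|\le\|f\|_\Bes$ on its domain, so Hahn--Banach provides an extension $\phi\in\Bq^*$ with $\|\phi\|\le 1$.

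I will next verify that for any $g\in\Bov$ the map $a\mapsto\Psi_\Bes(g)(e_a)$ is continuous on $(0,\infty)$. Expanding the pairing via (\ref{dualdef}) gives
\[
\Psi_\Bes(g)(e_a)=-a\int_0^\infty x\int_\R g'(x-iy)\,e^{-a(x+iy)}\,dy\,dx,
\]
and on any compact subinterval $a\in[\delta,M]\subset(0,\infty)$ the absolute value of the integrand is bounded by $Mx|g'(x-iy)|e^{-\delta x}$. Using (\ref{boves}) with $h\equiv 1$, this dominating function has finite iterated integral $M\|g\|_{\Bov_0}/\delta$, so dominated convergence yields continuity (in fact holomorphy) of $a\mapsto\Psi_\Bes(g)(e_a)$.

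To conclude, I will assume for contradiction that some sequence $\Psi_\Bes(g_n)$ converges to $\phi$ in $\Bq^*$. Since $\|e_a\|_\Bes=2$ uniformly in $a>0$ by Example~\ref{LTex}(\ref{LTex1}), the bound $|\Psi_\Bes(g_n)(e_a)-\phi(e_a)|\le 2\|\Psi_\Bes(g_n)-\phi\|_{\Bq^*}$ shows that the continuous functions $a\mapsto\Psi_\Bes(g_n)(e_a)$ would converge to $a\mapsto\phi(e_a)$ uniformly on $(0,\infty)$, forcing the limit to be continuous. But by construction $\phi(e_a)=\mathbf{1}_{\{a_0\}}(a)$ is discontinuous at $a_0$, a contradiction, so $\phi\notin\overline{\Psi_\Bes(\Bov)}$. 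I expect the almost-periodic coefficient extraction in the first step to be the only non-routine point, since once the $\ell^1$-type lower bound is available the Hahn--Banach extension, the dominated-convergence continuity argument, and the final comparison with the indicator of $\{a_0\}$ are essentially automatic.
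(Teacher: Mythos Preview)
Your proof is correct and takes a genuinely different route from the paper's.

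The paper constructs its witnessing functional from a character $\chi$ of the Bohr compactification of $\R$ that is not evaluation at a real point: extending $\chi$ from $\operatorname{AP}(\R)$ to $\operatorname{BUC}(\R)$ and pulling back via $f\mapsto f^b$ yields $\psi\in\Bq^*$ with $a\mapsto\psi(e_a)$ \emph{non-measurable}; since $a\mapsto\langle g,e_a\rangle_\Bes$ is manifestly measurable for every $g\in\Bov$, this suffices. Your argument instead uses the Bohr--Fourier coefficient functional at a single frequency $a_0$: the mean-value estimate $|c_j|\le\|f^b\|_\infty$ gives a bounded functional on $\operatorname{span}\{e_a:a>0\}$, and after Hahn--Banach extension one has $a\mapsto\phi(e_a)=\mathbf{1}_{\{a_0\}}(a)$, which is \emph{discontinuous}; you then upgrade the paper's measurability of $a\mapsto\langle g,e_a\rangle_\Bes$ to continuity via dominated convergence, and exploit the uniform bound $\|e_a\|_\Bes=2$ to pass to uniform limits. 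Your approach is more elementary and explicit---no Bohr compactification or non-measurable characters---and the continuity statement you establish is strictly stronger than the measurability the paper uses. Both arguments, however, rest on the same structural observation that $\Psi_\Bes(\Bov)$ consists of functionals whose restriction to the curve $\{e_a:a>0\}$ is regular, while $\Bq^*$ contains functionals that are not.
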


\begin{proof}
For all $f \in \Bes$, $f^b \in \operatorname{BUC}(\R)$ and $\|f^b\|_\infty \le \|f\|_\Bes$.  For $a \in \R_+$, the function $e_a$ of Example 1.8(\ref{LTex1}) belongs to $\Bes$, and its boundary function is $e_a^b(y) = e^{-iay}$ which belongs to the Banach algebra $\operatorname{AP}(\R)$ of almost periodic functions on $\R$.

Take any $\chi$ in the Bohr compactification of $\R$ which is not in $\R$, so $\chi$ is a character of $\operatorname{AP}(\R)$ which is not evaluation at any point in $\R$.   Extend $\chi$ to a bounded linear functional $\tilde\chi$ on $\operatorname{BUC}(\R)$ and  define $\psi \in \Bq^*$ by
\[
\psi(f) = \tilde\chi(f^b) - f(1).
\]
The function
\[
a \mapsto \psi(e_a) = \chi(e_a^b) - \exp(-ia)
\]
is not measurable on $\R_+$.  If it were measurable, the map $a \mapsto \chi(e_a^b)$ would be a measurable semigroup homomorphism from $\R_+$ to the unit circle $\mathbb{T}$, and so there would exist $s \in \R$ such that $\chi(e_a^b) = \exp(-ias)$ for all $a \in \R_+$ and then for all $a \in \R$.  By linearity and continuity of $\chi$, $\chi(g) = g(s)$ for all $g \in \operatorname{AP}(\R)$.  This contradicts the choice of $\chi$.

For any $g \in \Bov$, $a \mapsto \langle g, e_a \rangle_\Bes$ is measurable, and so $a \mapsto \varphi(e_a)$ is measurable for any $\varphi$ which is in the norm-closure of the range of $\Psi_\Bes$, and also for the functional $\xi(f) = f(\infty)$ on $\Bes$.  Since $\Bes^*$ is spanned by $\Bq^* \cup \{\xi\}$, the norm-closure of the range of $\Psi_\Bes$ cannot be $\Bq^*$.
\end{proof}

We now give an alternative proof of Proposition \ref{nondens1}.   Suppose that the range of $\Psi$ is norm-dense in $\Bq^*$ (for a contradiction).  Let $f \in \ssp$.  Then Lemma \ref{Sconv} would imply that $f_\delta$ tends to $f$ weakly in $\Bq$, and Lemma \ref{PW} shows that $f_\delta \in \LT$.  Hence $\LT$ would be weakly dense in $\ssp$ and then in $\Bq$.  By Mazur's Theorem, $\LT$ would be norm-dense in $\Bes$.  This would contradict Lemma \ref{nondense}.  Thus we conclude that the range of $\Psi_\Bes$ is not norm-dense in $\Bq^*$.

Let $\Bov_0 = \{ g \in \Bov: g(\infty) = 0\}$, with the norm $\|\cdot\|_\Bov$ which coincides with $\|\cdot\|_{\Bov_0}$ on $\Bov_0$.  Identify the dual ${\Bov_0}^*$ with the space of linear functionals in $\Bov^*$ which annihilate the constant functions.   Our duality then provides a contractive map $\Psi_\Bov$ from $\Bes$ to ${\Bov_0}^*$.

\begin{prop}  \label{nondens2}
The range of $\Psi_\Bov$ is not norm-dense in ${\Bov_0}^*$.
\end{prop}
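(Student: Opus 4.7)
The plan is to mimic the alternative proof of Proposition \ref{nondens1}: assume the range is norm-dense, upgrade weak convergence on the test subspace $\Psi_\Bov(\Bes)$ to weak convergence in all of $\Bov_0^*$, apply Mazur's theorem, and derive a contradiction from a direct lower bound on $\|\cdot\|_{\Bov_0}$.

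Consider the sequence $r_n(z) := (z+n)^{-1}$ in $\Bov_0$ for $n \in \N$, which satisfies $\|r_n\|_\Bov = \pi$ by Example \ref{bovex}(1). The reproducing formula of Proposition \ref{BHP2} gives
\[
\langle r_n, f \rangle_\Bes = \tfrac{\pi}{2}\bigl(f(n) - f(\infty)\bigr) \xrightarrow[n\to\infty]{} 0
\]
for every $f \in \Bes$, since $f(\infty)$ is the limit of $f$ at infinity (Proposition \ref{besprop}(1)). Suppose, for a contradiction, that $\Psi_\Bov(\Bes)$ is norm-dense in $\Bov_0^*$. A standard $3\ep$-argument, exploiting the uniform bound $\sup_n \|r_n\|_\Bov = \pi$, then extends the above convergence to arbitrary $\psi \in \Bov_0^*$, so that $r_n \to 0$ weakly in $\Bov_0$. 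By Mazur's theorem, $0$ lies in the $\|\cdot\|_\Bov$-closure of $\operatorname{conv}\{r_n : n \in \N\}$.

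The contradiction comes by showing that every finite convex combination $g = \sum_{i=1}^k \lambda_i r_{n_i}$, with $\lambda_i > 0$ and $\sum_i \lambda_i = 1$, satisfies $\|g\|_{\Bov_0} \ge \pi$. Indeed, the normalization $\sum_i \lambda_i = 1$ implies
\[
z^2 g'(z) = -\sum_{i=1}^k \frac{\lambda_i}{(1+n_i/z)^2} \longrightarrow -1 \quad \text{as } |z| \to \infty,
\]
uniformly in $\arg z$. Consequently $|g'(x+iy)| = (x^2+y^2)^{-1}(1+o(1))$ uniformly in $y$ as $x \to \infty$, whence
\[
x \int_\R |g'(x+iy)|\,dy = x \int_\R \frac{1+o(1)}{x^2+y^2}\,dy \xrightarrow[x\to\infty]{} \pi,
\]
so that $\|g\|_{\Bov_0} \ge \pi$. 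This contradicts Mazur's conclusion and proves the proposition.

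The one place care is needed is the last asymptotic estimate: while the leading term $-1/z^2$ of $g'(z)$ contributes exactly $\pi/x$ to $\int_\R |g'(x+iy)|\,dy$, the correction is $O(|z|^{-3})$ on $\{\Re z \ge x_0\}$, and its contribution to the integral is $O(x^{-2})$, which disappears in the limit after multiplication by $x$. No other step presents difficulty.
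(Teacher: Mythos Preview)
Your proof is correct but takes a genuinely different route from the paper's. The paper argues as follows: if $\Psi_\Bov(\Bes)$ were norm-dense in $\Bov_0^*$, then the duality relation $\langle T_\Bov(t)g, f\rangle_\Bes = \langle g, T_\Bes(t)f\rangle_\Bes$ together with the strong continuity of $(T_\Bes(t))_{t\ge0}$ would force $t \mapsto T_\Bov(t)g$ to be weakly continuous at $0$ for every $g \in \Bov_0$; an abstract result (\cite[Proposition 1.23]{Dav80}) then upgrades this to strong continuity, contradicting Lemma \ref{shifts}(\ref{shifts3}).

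Your argument avoids the abstract semigroup lemma entirely: you exhibit the concrete bounded sequence $(r_n)$, use the reproducing formula to see it is null for the partial duality, and then block weak convergence to $0$ by the direct computation $\|g\|_{\Bov_0} \ge \pi$ for every convex combination $g$ of the $r_n$. This is more elementary and self-contained (it needs only Mazur's theorem and a short asymptotic estimate), whereas the paper's argument is more structural, tying the non-density to the failure of strong continuity of the shifts on $\Bov$. Note incidentally that your lower bound is actually an equality: since $\|r_n\|_{\Bov_0} = \pi$ and $\|\cdot\|_{\Bov_0}$ is a seminorm, convexity gives $\|g\|_{\Bov_0} \le \pi$ as well.
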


\begin{proof}
We argue by contradiction.  Suppose that the range of $\Psi_\Bov$ is norm-dense in ${\Bov_0}^*$.
Let $(T_\Bes(t))_{t\ge0}$ and $(T_\Bov(t))_{t\ge0}$ be the shift semigroups defined in Lemmas \ref{shifts01} and \ref{shifts}.  Let $g \in \Bov$. By (\ref{shifts22}) and Lemma \ref{shifts01},
\[
\langle T_\Bov(t)g,f \rangle_\Bes = \langle g, T_\Bes(t)f\rangle_\Bes \to \langle g,f \rangle_\Bov
\]
as $t\to0+$.  It follows from our hypothetical assumption and a simple approximation that $\lim_{t\to0+} \psi(T_\Bov(t)g)= \psi(g)$ for all $\psi\in{\Bov_0}^*$ and $g \in \Bov_0$.   By \cite[Proposition 1.23]{Dav80}, this implies that $\lim_{t\to0+} \|T_\Bov(t)g-g\|_\Bov=0$ for all $g\in\Bov_0$.  This contradicts Lemma \ref{shifts}(\ref{shifts3}).
\end{proof}

\section{Norm-estimates for some subclasses of $\Bes$}  \label{subss}

In this section we obtain estimates of the $\Bes$-norms of some more specific classes of functions in $\Bes$.   We give explicit forms for most of the estimates, showing how they depend on any parameters and giving explicit (but not necessarily optimal) values for any absolute constants.   The estimates will be applied to operators in Sections \ref{b-fc} and \ref{apps}.

\subsection{Holomorphic extensions to the left}  \label{left}
In this subsection we consider some functions which have holomorphic extensions to a strip in the left half-plane.  For fixed $\omega >0$, we let
\begin{align*}
&\RR_{-\omega} := \{z\in\C : \Re z > -\omega\}, &&H^\infty_\omega:= H^\infty(\RR_{-\omega}), \\
&\|f\|_{H_\omega^\infty}:=\sup_{z \in \RR_{-\omega}}\,|f(z)|, && \|f\|_{\infty}:=\sup_{z \in \C_+}\,|f(z)|.
\end{align*}

\begin{lemma}  \label{deriv}
Let $f \in H^\infty_\omega$, where $\omega > 0$.   Then $f' \in \Bes$ and
\[
\|f'\|_\Bes \le \frac{3}{2 \omega} \|f\|_{H^\infty_\omega}.
\]
\end{lemma}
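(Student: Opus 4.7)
The plan is to apply the half-plane Cauchy-type derivative bounds from Lemma \ref{ho}(\ref{derbd}) after translating the domain, and then to compute the two pieces of $\|f'\|_\Bes = \|f'\|_\infty + \|f'\|_{\Bes_0}$ directly.

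First I would reduce to the standard half-plane by setting $g(z) := f(z-\omega)$ for $z \in \C_+$, so that $g \in H^\infty(\C_+)$ with $\|g\|_\infty = \|f\|_{H^\infty_\omega}$. Applying Lemma \ref{ho}(\ref{derbd}) to $g$ with the explicit constants $C_1 = 1/2$ and $C_2 = 2/\pi$, and translating back, yields
\[
|f'(z)| \le \frac{\|f\|_{H^\infty_\omega}}{2(\Re z + \omega)}, \qquad |f''(z)| \le \frac{2\,\|f\|_{H^\infty_\omega}}{\pi(\Re z + \omega)^2},
\]
for every $z \in \RR_{-\omega}$, and in particular for $z \in \C_+$.

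Next, the sup-norm piece follows immediately from the first estimate:
\[
\|f'\|_\infty \le \sup_{x > 0} \frac{\|f\|_{H^\infty_\omega}}{2(x+\omega)} = \frac{\|f\|_{H^\infty_\omega}}{2\omega}.
\]
For the seminorm piece, the second estimate gives a bound independent of $y$, so I would integrate in $x$:
\[
\|f'\|_{\Bes_0} = \int_0^\infty \sup_{y \in \R} |f''(x+iy)|\,dx \le \int_0^\infty \frac{2\,\|f\|_{H^\infty_\omega}}{\pi(x+\omega)^2}\,dx = \frac{2\,\|f\|_{H^\infty_\omega}}{\pi\omega}.
\]

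Adding the two contributions gives
\[
\|f'\|_\Bes \le \left(\frac{1}{2} + \frac{2}{\pi}\right)\frac{\|f\|_{H^\infty_\omega}}{\omega} = \frac{\pi + 4}{2\pi}\cdot\frac{\|f\|_{H^\infty_\omega}}{\omega} \le \frac{3}{2\omega}\,\|f\|_{H^\infty_\omega},
\]
where the final inequality holds because $\pi + 4 \le 3\pi$, i.e.\ $2 \le \pi$. In particular $f' \in \Bes$ with the claimed bound. There is no substantive obstacle here: the only small point to keep in mind is the use of the sharp constants $C_1 = 1/2$ and $C_2 = 2/\pi$ from Lemma \ref{ho}, without which the cleaner factor $3/2$ would not fall out of $1/2 + 2/\pi$.
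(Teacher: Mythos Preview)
Your proof is correct and essentially identical to the paper's: both translate to $g(z)=f(z-\omega)$, invoke Lemma~\ref{ho}(\ref{derbd}) with $C_1=1/2$ and $C_2=2/\pi$, and then add the sup-norm and seminorm contributions $\tfrac{1}{2\omega}+\tfrac{2}{\pi\omega}\le \tfrac{3}{2\omega}$.
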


\begin{proof}
1. Applying Lemma \ref{ho}(\ref{derbd}) to $g(z):=f(z-\omega)$ shows that, for $x+iy \in \C_+$,
\begin{equation} \label{lest}
|f'(x+iy)| \le \frac{\|f\|_{H^\infty_\omega}}{2(\omega+x)}, \quad |f''(x+iy)| \le \frac{2\|f\|_{H^\infty_\omega}}{\pi(\omega+x)^2}.
\end{equation}
This implies that $f' \in \Bes$ and
\[
\|f'\|_\Bes \le \frac{\|f\|_{H^\infty_\omega}}{2\omega} + \int_0^\infty \frac{2\|f\|_{H^\infty_\omega}}{\pi(\omega+x)^2} \,dx \le \frac{3}{2 \omega} \|f\|_{H^\infty_\omega}. \qedhere
\]
\end{proof}

\begin{lemma}\label{H2}
\begin{enumerate}[\rm1.]
\item  Let $f \in \Bes$, let \label{H21}
\[
 \varphi(x) = \sup_{y\in\R} |f(x+iy)| , \qquad x>0,
\]
and assume that
\[
\int_0^\infty \frac{\varphi(x)}{1+x}\,dx < \infty.
\]
Let $g \in H^\infty_\omega$ for some $\omega>0$.
Then $fg \in \Bes$ and
\[
\|fg\|_\Bes \le \|f\|_\Bes\|g\|_\infty + \frac{\|g\|_{H^{\infty}_\omega}}{2} \int_0^\infty \frac{\varphi(x)}{\omega+x} \,dx.
\]
\item \label{exp} Let $f \in H^\infty[\tau,\infty) \cap H_\omega^\infty$, for some $\tau,\omega>0$.  Then $f \in \Bes$ and
\[
\|f\|_\Bes \le e^{-\omega\tau}\left( 2 + \frac{1}{2} \log \left( 1 + \frac{1}{\tau\omega} \right)\right)  \|f\|_{H_\omega^\infty}.
\]
\end{enumerate}
\end{lemma}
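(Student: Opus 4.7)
For part (\ref{H21}), I would simply expand $(fg)'=f'g+fg'$ and bound the $\Bes$-seminorm of $fg$ term by term.  For the $f'g$ piece the estimate $\sup_{y\in\R}|(f'g)(x+iy)|\le\|g\|_\infty\sup_{y\in\R}|f'(x+iy)|$ integrates over $x$ to $\|g\|_\infty\|f\|_{\Bes_0}$, which combined with $\|fg\|_\infty\le\|f\|_\infty\|g\|_\infty$ accounts for the $\|f\|_\Bes\|g\|_\infty$ term.  For the $fg'$ piece, apply Lemma \ref{ho}(\ref{derbd}) to the shifted function $z\mapsto g(z-\omega)\in H^\infty(\C_+)$ to obtain
\[
\sup_{y\in\R}|g'(x+iy)|\le \frac{\|g\|_{H^\infty_\omega}}{2(\omega+x)};
\]
then $\sup_{y\in\R}|(fg')(x+iy)|\le\varphi(x)\|g\|_{H^\infty_\omega}/(2(\omega+x))$, which integrates to the second summand of the claimed bound.

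For part (\ref{exp}) the plan is to factor out the exponential decay and reduce to part (\ref{H21}).  Set $h(z):=e^{\tau z}f(z)$.  Because (\ref{expdef}) gives $|f(x+iy)|\le e^{-\tau x}\|f\|_\infty$ on $\C_+$, the growth of $e^{\tau z}$ is absorbed there, while on the strip $-\omega<\Re z\le 0$ one has $|e^{\tau z}|=e^{\tau\Re z}\le 1$; hence $h$ is bounded and holomorphic on $\RR_{-\omega}$.  Applying Lemma \ref{ho}(\ref{ho2}) to the further shift $w\mapsto h(w-\omega)\in H^\infty(\C_+)$ and reading off the boundary values $|h(-\omega+iy)|=e^{-\tau\omega}|f(-\omega+iy)|$ then yields the key estimate
\[
\|h\|_{H^\infty_\omega}\le e^{-\tau\omega}\|f\|_{H^\infty_\omega}.
\]

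With the factorisation $f=e_\tau\cdot h$ in hand, I would apply part (\ref{H21}) with $f\mapsto e_\tau$ (so $\varphi(x)=e^{-\tau x}$, and $\|e_\tau\|_\Bes=2$ by Example \ref{LTex}(\ref{LTex1})) and $g\mapsto h$, which together with the previous bound produces
\[
\|f\|_\Bes \le e^{-\tau\omega}\|f\|_{H^\infty_\omega}\left(2+\tfrac12\int_0^\infty \frac{e^{-\tau x}}{\omega+x}\,dx\right).
\]
Everything then hinges on the inequality $\int_0^\infty e^{-\tau x}(\omega+x)^{-1}\,dx\le \log(1+(\tau\omega)^{-1})$, which is the one non-routine step and where I expect the main obstacle.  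After the scaling $s=\tau x$ (setting $a=\tau\omega$) it becomes $\int_0^\infty e^{-s}(a+s)^{-1}\,ds\le \int_0^1 (a+u)^{-1}\,du$, or equivalently
\[
\int_0^1 \frac{1-e^{-s}}{a+s}\,ds \ge \int_1^\infty \frac{e^{-s}}{a+s}\,ds.
\]
I would settle this by using the monotonicity of $s\mapsto 1/(a+s)$ together with the elementary identity $\int_0^1(1-e^{-s})\,ds=\int_1^\infty e^{-s}\,ds=e^{-1}$: the former shows that the left-hand side is at least $e^{-1}/(a+1)$, while the latter shows that the right-hand side is at most $e^{-1}/(a+1)$, and the two meet.
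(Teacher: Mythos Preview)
Your proof is correct and follows essentially the same route as the paper: the same product-rule decomposition for part~(\ref{H21}), the same factorisation $f=e_\tau\cdot h$ with $\|h\|_{H^\infty_\omega}=e^{-\tau\omega}\|f\|_{H^\infty_\omega}$ for part~(\ref{exp}), and the same reduction to the inequality $\int_0^\infty e^{-s}(a+s)^{-1}\,ds\le\log(1+1/a)$.  The only difference is that the paper cites this last inequality from \cite[5.1.20]{Abram} (it is $e^aE_1(a)\le\log(1+1/a)$ for the exponential integral), whereas you supply a neat self-contained proof via the identity $\int_0^1(1-e^{-s})\,ds=\int_1^\infty e^{-s}\,ds=e^{-1}$.
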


\begin{proof}
1.  Applying the estimate (\ref{lest}) for $g$ we obtain that
\begin{align*}
\|fg\|_\Bes &\le \|f\|_\infty \|g\|_\infty + \int_0^\infty \sup_{y\in\R} |f'(x+iy)|\, \|g\|_\infty \, dx + \frac{\|g\|_{H^{\infty}_\omega}}{2}
\int_0^\infty \frac{\varphi(x)}{\omega+x} \,dx \\
&= \|f\|_\Bes \|g\|_\infty + \frac{\|g\|_{H^{\infty}_\omega}}{2}
\int_0^\infty \frac{\varphi(x)}{\omega+x}\,dx,
\end{align*}
as required.

\noindent 2. By (\ref{expdef}), $f = e_\tau g$ where $e_\tau(z)=e^{-\tau z}$ and $g \in H_\omega^\infty$.  Let $\varphi(x)=e^{-\tau x} = |e_\tau(x+iy)|$, $x>0, y\in \R$.   Then $\|e_\tau\|_\Bes = 2$, and
\[\int_0^\infty \frac{\varphi(x)}{\omega+x}\,dx = \int_0^\infty \frac{e^{-\tau x}}{\omega+x} \,dx
= \int_0^\infty \frac{e^{-u}}{\tau\omega+u}\, du \le \log \left(1+ \frac{1}{\tau\omega}\right),
\]
by an inequality for the (modified) integral exponential function \cite[5.1.20]{Abram}.
Moreover $g(z) = e^{\tau z} f(z)$ and
\[
 \|g\|_{H_\omega^\infty} = \sup_{s\in\R}|g(-\omega+is)| = \sup_{s\in\R} e^{-\omega\tau} |f(-\omega+is)| = e^{-\omega\tau} \|f\|_{H^\infty_\omega}.
\]
By (\ref{H21}), we obtain that $f \in \Bes$ and
\[
\|f\|_\Bes \le 2 e^{-\omega\tau} \|f\|_{H_\omega^\infty} + \frac{1}{2} \log \left( 1 + \frac{1}{\tau\omega} \right) e^{-\omega\tau} \|f\|_{H_\omega^\infty}. \qedhere
\]
\end{proof}

\subsection{Functions with decay on $i\R$}

In the following result we give an estimate for the $\Bes$-norm of certain functions which decay at infinity in an appropriate sense.

\begin{lemma}\label{HZL1}
Let  $f\in H^\infty(\mathbb C_{+})$, let
\[
h(t) = \operatorname{ess\,sup}_{|s|\ge t}|f(is)|,
\]
and assume that
\begin{equation} \label{HZLass}
\int_0^\infty \frac{h(t)}{1+t} \,dt < \infty.
\end{equation}
Let $\omega>0$ and $g(z) = f(z+\omega)$.  Then $g \in \Bes_0$ and
\[
\|g\|_{\Bq}\le 3 \int_0^\infty \frac{h(t)}{\omega+t} \,dt.
\]
\end{lemma}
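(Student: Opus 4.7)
The plan is to estimate $g'$ via the Cauchy--Poisson formula \eqref{P2} applied to $f$, and then bound the resulting supremum over $y$ by an integral at $y=0$ using that $h(|\cdot|)$ is symmetric and decreasing. Since $g(z)=f(z+\omega)$, formula \eqref{P2} gives
\[
g'(x+iy) = -\frac{1}{2\pi}\int_\R \frac{f(is)\,ds}{(is-\omega-x-iy)^2},
\]
and the pointwise bound $|f(is)| \le h(|s|)$ (a.e.) yields
\[
|g'(x+iy)| \le \frac{1}{2\pi}\int_\R \frac{h(|s|)\,ds}{(\omega+x)^2+(y-s)^2}.
\]

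The crucial step is to prove that the supremum over $y$ of the last integral is attained at $y=0$. Writing $h(|s|)$ as a layer-cake superposition of indicators $\mathbf{1}_{|s|<t}$ (valid since $h$ is decreasing with $h(\infty)=0$, the latter being forced by the integrability hypothesis), this reduces by Fubini to the elementary inequality
\[
\sup_{y\in\R}\int_{y-t}^{y+t}\frac{du}{c^2+u^2} = \int_{-t}^t\frac{du}{c^2+u^2} \qquad (t,c>0),
\]
which follows on differentiating with respect to $y$: the derivative equals $(c^2+(y+t)^2)^{-1}-(c^2+(y-t)^2)^{-1}$, which has sign opposite to $y$ and vanishes only at $y=0$.

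The remainder is routine calculus. Reassembling one obtains
\[
\sup_{y\in\R}|g'(x+iy)| \le \frac{1}{\pi}\int_\R\frac{h(|s|)\,ds}{(\omega+x)^2+s^2} \le \frac{2}{\pi}\int_0^\infty \frac{h(s)\,ds}{(\omega+x+s)^2},
\]
using $(c+s)^2\le 2(c^2+s^2)$ at the second step, and then Fubini gives
\[
\|g\|_\Bq \le \frac{2}{\pi}\int_0^\infty h(s)\int_0^\infty \frac{dx}{(\omega+x+s)^2}\,ds = \frac{2}{\pi}\int_0^\infty \frac{h(s)\,ds}{\omega+s} \le 3\int_0^\infty\frac{h(s)\,ds}{\omega+s}.
\]
That $g\in\Bq$ (i.e.\ $g(\infty)=0$) follows because $h(\infty)=0$ forces $g^b(t)=f(\omega+it)\to0$ as $|t|\to\infty$ by dominated convergence in the Poisson integral for $f$, after which the Phragm\'en--Lindel\"of argument used at the end of Proposition \ref{prim} applies. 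The main obstacle is the symmetric-decreasing reduction in the middle paragraph: a naive Fubini performed before the supremum, with an estimate such as $(c^2+u^2)^{-1} \le 2/(c(c+|u|))$, produces a $s^{-1}\log(1+s/\omega)$ factor in place of $(\omega+s)^{-1}$ and decays too slowly to deliver the desired estimate.
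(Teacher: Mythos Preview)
Your proof is correct and takes a genuinely different---and cleaner---route than the paper. The paper does not exploit the symmetric-decreasing structure of $h(|\cdot|)$; instead it splits into the cases $|y|\le x/\sqrt 2$ and $|y|\ge x/\sqrt 2$, estimates the Poisson-type integral separately in each regime (the second case requiring a further splitting of the $s$-integral), and obtains two terms that are then integrated in $x$. Your layer-cake reduction to the elementary inequality $\sup_y\int_{y-t}^{y+t}(c^2+u^2)^{-1}\,du=\int_{-t}^t(c^2+u^2)^{-1}\,du$ bypasses this case analysis entirely: since every layer attains its supremum at the \emph{same} point $y=0$, the supremum legitimately passes inside the $\lambda$-integral. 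This yields the sharper constant $2/\pi$ in place of the paper's $3/(2\sqrt2)+\sqrt{5/2}\approx 2.64$, rounded up to~$3$. (There is a harmless slip in your display: the bound at $y=0$ is $\frac{1}{2\pi}\int_\R$, not $\frac{1}{\pi}\int_\R$; your final $2/\pi$ is consistent with the correct value $\frac{1}{2\pi}\int_\R=\frac{1}{\pi}\int_0^\infty$.) For $g(\infty)=0$ the paper argues more directly: $f(is)\to0$ forces $f(\infty)=0$ via the Poisson integral, hence $g(\infty)=f(\infty)=0$; your Phragm\'en--Lindel\"of detour also works but is unnecessary once one knows $f(\infty)$ exists.
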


\begin{proof}
The assumption (\ref{HZLass}) and monotonicity of $h$ imply that $f(is) \to0$ as $|s|\to\infty$.  Then by the Poisson integral formula, $f(\infty) =0$ and therefore $g(\infty)=0$.

Let $z = x+iy \in \C_+$.  By (\ref{P2}),
\[
|f'(z+\omega)|\le \frac{1}{2\pi}\int_{-\infty}^\infty
\frac{{h}(|s|)\,ds}{(\omega+x)^2+(s-y)^2}.
\]
For fixed $x>0$ we estimate $\sup_{y\in \mathbb R}\,|f'(x+\omega+iy)|$.
Without loss of generality we may assume that $y\ge0$.
We consider two cases.

First, if $0 \le y \le x/\sqrt{2}$, then
\[
(\omega+x)^2+(s-y)^2\ge \omega^2+x^2-y^2+ \frac{s^2}{2} \ge \omega^2 + \frac{x^2+s^2}{2},
\]
and
\[
2\pi|f'(z+\omega)|\le \int_{-\infty}^\infty
\frac{h(|s|)}{\omega^2+(x^2+s^2)/2} \,ds =
2\int_0^\infty
\frac{{h}(s)}{\omega^2+(x^2+s^2)/2} \,ds.
\]
Second, if  $y\ge x/\sqrt{2}$, then
\begin{align*}
\lefteqn{2\pi |f'(z+\omega)|} \\
&\le \int_{2y/3}^{2y}
\frac{{h}(s)}{(\omega+x)^2+(s-y)^2} \,ds
+\int_{|s-y|>|s|/2}
\frac{{h}(|s|)}{(\omega+x)^2+(s-y)^2} \,ds \\
&\le \int_{-y/3}^y
\frac{{h}(\sqrt2x/3)}{(\omega+x)^2+t^2} \,dt
+2\int_0^\infty
\frac{{h}(s)}{\omega^2+x^2+s^2/4} \,ds \\
&\le \pi\frac{{h}(\sqrt2 x/3)}{\omega+x}+
2\int_0^\infty
\frac{{h}(s)}{\omega^2+x^2+s^2/4} \,ds.
\end{align*}
Combining the two estimates above, we infer that
\[
\sup_{y\in \mathbb R}\,|f'(x+\omega+iy)|\le \frac{{h}(\sqrt2 x/3)}{2(\omega+x)}
+\frac{1}{\pi}\int_0^\infty
\frac{{h}(s)}{\omega^2+x^2/2+s^2/4} \,ds,
\]
and hence
\begin{align*}
\|g\|_{\Bq}
&\le \frac{1}{2}\int_0^\infty \frac{{h}(\sqrt{2}x/3)}{\omega+x}\,dx
+\frac{1}{\pi}\int_0^\infty\int_0^\infty
\frac{{h}(s)\,ds}{\omega^2+x^2/2+s^2/4}\,dx\\
&= \frac{3}{2\sqrt{2}}\int_0^\infty \frac{{h}(x)}{\omega+x}\,dx
+\frac{1}{\pi}\int_0^\infty\left(\int_0^\infty
\frac{dx}{\omega^2+x^2/2+s^2/4}\right)\,{h}(s)\,ds\\
&= \frac{3}{2\sqrt{2}}\int_0^\infty \frac{h(s)}{\omega+ s} \,ds
+\int_0^\infty
\frac{{h}(s)}{\sqrt{2\omega^2+s^2/2}} \,ds\\
&\le  \left(\frac{3}{2\sqrt{2}} + \sqrt{\frac{5}{2}}\right) \int_0^\infty \frac{h(s)}{\omega+ s} \,ds \le 3 \int_0^\infty \frac{h(s)}{\omega+ s} \,ds.  \qedhere
\end{align*}
\end{proof}

\subsection{Exponentials of inverses}
Let $f(z) = \exp(-1/z)$.  Then $f \in H^\infty(\C_+)$, but it does not have a continuous extension to $\overline{\C}_+$, and hence $f \notin \Bes$. \label{expinv} This can also be seen by observing that $|f'(x+iy)| = (ex)^{-1}$ when $x \in (0,1)$ and $y^2 = x - x^2$.

As observed in Example \ref{LTex}(\ref{LTex2}),  the function $e^{- t r_1}(z) = \exp(-t/(z+1)) \in \LT$ for $t\in\R$.      In Lemma \ref{HZL11} we estimate the $\Bes$-norm of these functions for $t>0$, and then Lemma \ref{HZLex} gives a stronger result with a more complicated proof (see Remark \ref{cval}).   In Section \ref{igp} we shall show that Lemma \ref{HZL11} leads easily to a known result concerning the inverse generator problem for operator semigroups (Corollary \ref{CH2}), while Lemma \ref{HZLex} leads to a more general result on the same problem (Corollary \ref{HZCex}).

\begin{lemma}\label{HZL11}
Let
\[
\tilde f_t(z)=\exp{(-t/(z+1))}, \quad z \in \C_+,\quad t>0.
\]
Then
\[
\|\tilde f_t\|_\Bes =\begin{cases} 2-e^{-t}, \quad &t\in (0,1],\\
 2-e^{-1}+e^{-1}\log t,\quad &t>1.
\end{cases}
\]
\end{lemma}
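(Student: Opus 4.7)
The plan is to compute the two contributions to $\|\tilde f_t\|_\Bes = \|\tilde f_t\|_\infty + \|\tilde f_t\|_{\Bes_0}$ directly, since $\tilde f_t$ has an explicit form whose modulus on $\C_+$ depends only on $|z+1|^2$ and $\Re(z+1)$.

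First, I would note that $\Re(-t/(z+1)) = -t(x+1)/|z+1|^2 \le 0$ for $z = x+iy \in \C_+$, so $|\tilde f_t(z)| \le 1$, while $\tilde f_t(z) \to 1$ as $\Re z \to \infty$. Hence $\|\tilde f_t\|_\infty = 1$ and it remains to show
\[
\|\tilde f_t\|_{\Bes_0} = \begin{cases} 1-e^{-t}, & 0<t\le 1,\\ 1-e^{-1} + e^{-1}\log t, & t>1. \end{cases}
\]

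Next, I would compute $\tilde f_t'(z) = t(z+1)^{-2}\tilde f_t(z)$, so that, setting $u = x+1 \ge 1$ and $w = |z+1|^2 = u^2 + y^2 \ge u^2$,
\[
|\tilde f_t'(x+iy)| = \frac{t}{w}\exp\!\left(-\frac{tu}{w}\right).
\]
The function $\phi(w) := (t/w)\exp(-tu/w)$ on $[u^2,\infty)$ has derivative proportional to $(tu/w - 1)$, so it has a unique critical point at $w = tu$, which is a maximum. This critical point lies in $[u^2,\infty)$ iff $u \le t$. Consequently
\[
\sup_{y\in\R}|\tilde f_t'(x+iy)| = \begin{cases} e^{-1}/u, & 1 \le u \le t,\\[2pt] (t/u^2) e^{-t/u}, & u \ge \max(1,t), \end{cases}
\]
which is really the only substantive step.

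Finally, I would integrate in $x$, changing variables $s = t/u$ (so $ds = -(t/u^2)\,du$) to obtain in all cases
\[
\int_{\max(1,t)}^\infty \frac{t}{u^2}e^{-t/u}\,du = \int_0^{\min(1,t)} e^{-s}\,ds.
\]
For $0 < t \le 1$ this gives $\|\tilde f_t\|_{\Bes_0} = 1 - e^{-t}$. For $t>1$, the extra piece $\int_1^t e^{-1}/u\,du = e^{-1}\log t$ contributes, giving $\|\tilde f_t\|_{\Bes_0} = 1 - e^{-1} + e^{-1}\log t$. Adding $\|\tilde f_t\|_\infty = 1$ in each case yields the stated identities. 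The only nontrivial step is the optimization in $y$; everything else is direct calculation.
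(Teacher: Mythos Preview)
Your proposal is correct and follows essentially the same approach as the paper: both compute $\|\tilde f_t\|_\infty = 1$, write $|\tilde f_t'(x+iy)|$ in terms of $u=x+1$ and $|z+1|^2$, optimize over $y$ by the same one-variable calculus (critical point at $|z+1|^2 = tu$, in range iff $u\le t$), and then split the $u$-integral at $u=t$ with the substitution $s=t/u$. The only differences are notational.
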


\begin{proof}
Setting
\[
z=x+iy,\quad z\in \mathbb C_{+},
\]
we have
\[
t^{-1} \big|\tilde f'_t(z)\big|= \frac{e^{-t(x+1)/((x+1)^2+y^2)}}{(x+1)^2+y^2}.
\]
For $t>0$, $s>1$, let
\[
g_{t,s}(r):=\frac{e^{-ts/(s^2+r)}}{s^2+r},\qquad r>0,
\]
so that
\[
\int_0^\infty \sup_{y\in \R}\,|f_t'(x+iy)|\,dx=\int_1^\infty \sup_{r\ge 0}g_{t,s}(r)\,ds.
\]
By a simple argument,
\[
\sup_{r\ge 0}g_{t,s}(r)= \begin{cases}  g_{t,s}(0)=\dfrac{e^{-t/s}}{s^2}, \qquad &0<t\le s,  \\
g_{t,s}(ts-s^2) =\dfrac{e^{-1}}{ts}, \qquad &t\ge s\ge 1. \end{cases}
\]
Hence if $t\in (0,1]$, then
\[
\|\tilde f_t\|_\Bes = 1+t\int_1^\infty
\frac{e^{-t/s}\,ds}{s^2}=2-e^{-t},
\]
and for $t>1$ we obtain
\[
\|\tilde f_t\|_\Bes = 1+e^{-1}\int_1^t\frac{ds}{s}+
t\int_t^\infty\frac{e^{-t/s}\,ds}{s^2}=2-e^{-1}+e^{-1}\log t.  \qedhere
\]
\end{proof}

It was shown in the proof of \cite[Theorem 3.3]{deL2} that the functions $g_t(z) = z(z+1)^{-1} e^{-t/z}$ are  in $\LT$.  In the next lemma, we consider the function $f_t := g_{t/2}^2 \in \LT$.  We estimate $\|f_t\|_\Bes$ directly, as this gives a sharp estimate in (\ref{logest}) in a fairly simple way, while estimating the $L^1$-norm of the inverse Laplace transform of $f_t$ does not appear to lead to such an estimate.

\begin{lemma}\label{HZLex}
Let
\[
f_t(z)= \left( \frac{z}{z+1}\right)^2 \exp{(-t/z)}, \qquad t>0,\, z \in \C_+.
\]
Then there is a constant $C$ such that
\begin{equation} \label{logest}
\|f_t\|_\Bes\le  C (1 + \log(1+t)),  \qquad t>0.
\end{equation}
\end{lemma}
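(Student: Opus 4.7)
The plan is to mimic the direct approach of Lemma \ref{HZL11}, decomposing $f_t'$ into two pieces and bounding each supremum in $y$ explicitly, rather than estimating the $L^1$-norm of an inverse Laplace transform (which, as noted before the statement, does not seem to give the sharp constant). First, since $|z| \le |z+1|$ for $\Re z \ge 0$ and $|e^{-t/z}| = e^{-tx/|z|^2} \le 1$ on $\C_+$, we have $\|f_t\|_\infty \le 1$.

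A direct computation gives
\[
f_t'(z) = \left[\frac{2z}{(z+1)^3} + \frac{t}{(z+1)^2}\right] e^{-t/z} =: A_t(z) + B_t(z).
\]
The first summand is controlled uniformly in $t$ by $|A_t(z)| \le 2/|z+1|^2$, hence $\int_0^\infty \sup_y |A_t(x+iy)|\,dx \le 2$. For the second summand, when $0 < t \le 1$ the crude bound $|B_t(z)| \le t/|z+1|^2$ already yields a contribution of $\le t \le 1$, so only the case $t \ge 1$ requires further work.

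For $t \ge 1$ and fixed $x>0$, the supremum of $|B_t(x+iy)|$ over $y\in\R$ reduces to maximizing
\[
\phi_x(u) := \frac{t\,e^{-tx/(x^2+u)}}{(1+x)^2+u}, \qquad u = y^2 \ge 0.
\]
A short calculation shows $\phi_x'(u)=0$ is quadratic in $u$, with exactly one nonnegative root precisely when $x \le x_0(t)$, where $x_0(t)$ is defined by $x_0^3 = t(1+x_0)^2$ (so $t \le x_0(t) = t + O(1)$ for $t$ large). For $x > x_0(t)$, $\phi_x$ is monotonically decreasing on $[0,\infty)$, so $\sup \phi_x = \phi_x(0) = te^{-t/x}/(1+x)^2 \le t/(1+x)^2$. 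For $x \le x_0(t)$, introducing $\sigma := tx/(x^2+u^\ast)$ at the critical point converts the critical-point equation into
\[
\sigma + \sigma^2(1+2x)/(tx) = 1, \qquad \sigma \in (0,1],
\]
and an elementary manipulation yields the identity $\sup_y|B_t(x+iy)| = \sigma^2 e^{-\sigma}/x$. Since $\sigma \le 1$ and $\sigma^2 \le tx/(1+2x) \le tx$ (both from this equation), this supremum is bounded by $\min(e^{-1}/x,\, t)$.

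Putting the pieces together, for $t \ge 1$,
\[
\int_0^\infty \sup_y |B_t(x+iy)|\,dx \le \int_0^{1/(et)} t\,dx + \int_{1/(et)}^{x_0(t)} \frac{e^{-1}}{x}\,dx + \int_{x_0(t)}^\infty \frac{t}{(1+x)^2}\,dx,
\]
bounded respectively by $e^{-1}$, $O(\log(x_0(t) t)) = O(\log t)$ (using $x_0(t) \le 2t$ for $t$ large), and $t/(1+x_0(t)) \le 1$ (using $x_0(t) \ge t$). Combining with the bounds on $\|f_t\|_\infty$ and the $A_t$-integral gives $\|f_t\|_\Bes \le C(1+\log(1+t))$. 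The main obstacle is the sharp reduction of $\sup_y|B_t(x+iy)|$ to $\sigma^2 e^{-\sigma}/x$ on the middle regime: this is exactly what replaces the crude majorant $t/(1+x)^2$ (which would integrate to $O(t)$) by an integrand producing only the logarithmic growth in $t$.
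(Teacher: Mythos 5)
Your argument is correct, and it reaches the logarithmic bound by a genuinely different route from the paper. The paper does not split $f_t'$ into two summands; instead it majorizes the whole derivative at once, writing $|t+(t+2)z|\le (t+2)|1+z|$ so that $|f_t'(z)|\le g_t(z):=\bigl|(t+2)e^{-t/z}(1+z)^{-2}\bigr|$, and then disposes of the supremum over $y$ in two lines via the elementary inequality $e^{s}\ge 1+s$ applied to $s=tx/(x^2+y^2)$, which gives the $y$-independent majorant $\varphi_t(x)=(t+2)/((1+x)^2+tx)$; integrating $\varphi_t$ in closed form yields $1+2a_t\log(1+2/(a_t-1))\sim 2\log t$. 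This avoids any critical-point analysis and has the side benefit that the same $\varphi_t$ is attained up to a factor $2e$ at $y=\sqrt{tx}$, which is how the paper's Remark \ref{cval} gets sharpness of \eqref{logest} almost for free. Your proof instead isolates the dominant term $B_t(z)=t(1+z)^{-2}e^{-t/z}$ and computes the supremum over $y$ exactly: the reduction of the critical-point equation via $\sigma=tx/(x^2+u^{*})$ to $\sigma+\sigma^2(1+2x)/(tx)=1$ and the resulting identity $(1+x)^2+u^{*}=tx/\sigma^2$, hence $\sup_y|B_t(x+iy)|=\sigma^2e^{-\sigma}/x$, all check out, as do the consequences $\sigma^2e^{-\sigma}\le e^{-1}$ and $\sigma^2\le tx$, and the three-zone integration (with $x_0(t)\in[t,4t]$ for $t\ge1$) gives $O(1+\log t)$. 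This is more in the computational spirit of the proof of Lemma \ref{HZL11}, costs more work on the supremum, and as written does not immediately exhibit the matching lower bound, but it buys an exact formula for $\sup_y|B_t(x+iy)|$ in the interior regime rather than just an upper bound.
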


\begin{proof}
Let $z = x+iy \in \C_+$.  Note first that $\|f_t\|_\infty = 1$ and
\[
f_t'(z) = \frac{t+(t+2)z}{(1+z)^3} e^{-t/z}.
\]
Let
\[
g_t(z) := \left| \frac{(t+2)e^{-t/z}}{(1+z)^2} \right|.
\]
Since
\[
|t + (t+2)z|  = (t+2) \left| \frac{t}{t+2} + z\right| \le (t+2) |1+z|,
\]
we obtain
\[
|f_t'(x+iy)| \le g_t(x+iy) \le \frac{t+2}{(1+x)^2+y^2} \exp\left(-\frac{tx}{x^2+y^2} \right)  \le \frac{(t+2)}{(1+x)^2}.
\]
It is already apparent from this that $f_t \in \Bes$ and
\begin{equation} \label{est1}
\|f_t\|_\Bes \le 1 + (t+2)\int_0^\infty (1+x)^{-2} \, dx = t+3.
\end{equation}

To obtain a logarithmic estimate, we shall compare $g_t(x+iy)$ with
\[
\varphi_t(x) := \frac{t+2}{(1+x)^2+tx}
\]
for $x>0$ and $y\in\R$. Note that
\begin{multline*}
\exp \left( \frac{tx} {x^2+y^2} \right) \ge 1 + \frac{tx}{x^2+y^2}\\
 \ge \frac{(1+x)^2}{(1+x)^2 + y^2} +  \frac{tx}{(1+x)^2 + y^2} = \frac{(1+x)^2 +tx}{(1+x)^2 + y^2}.
\end{multline*}
Hence
\[
g_t(x+iy) \le \varphi_t(x).
\]

 Let
\[
a_t = \frac{t+2}{\sqrt{t^2+4t}} = \left( 1+ \frac{4}{t^2+4t} \right)^{1/2} .
\]
Then
\begin{multline}  \label{est2}
\|f_t\|_\Bes \le 1 + 2\int_0^\infty \varphi_t(x) \,dx = 1 + (t+2) \int_0^\infty \frac{dx}{(1+x)^2+tx}  \\
= 1+ 2a_t \int_{a_t}^\infty \frac{du}{u^2-1}
= 1 + 2a_t \log \left(1+ \frac{2}{a_t-1} \right), 
\end{multline}
which is asymptotically equivalent to $2\log t$ for large $t$.  Now the estimates (\ref{est1}) for small $t$, and (\ref{est2}) for large $t$, imply (\ref{logest}).
\end{proof}

\begin{rem} \label{cval}
It is possible to deduce the estimate in Lemma \ref{HZL11} from the estimate (\ref{logest}), up to a multiplicative constant,  since
\begin{equation} \label{fts}
\tilde f_t(z) =  (1+r_1(z))^2 f_t(z+1).
\end{equation}
We have included the direct proof of Lemma \ref{HZL11} as it may be of interest to some readers.

The estimate in Lemma \ref{HZLex} is sharp because
\[
|f_t'(x+i\sqrt{tx})|  \ge (2e)^{-1} \varphi_t(x), \qquad x>0, t\ge 2,
\]
so
\[
\|f_t\|_\Bes \ge (2e)^{-1} \int_0^\infty \varphi_t(x)\,dx \ge c \log(1+t), \qquad t\ge2,
\]
for some $c>0$. 
\end{rem}

\subsection{Cayley transforms}  \label{Cayley1}

As observed in Example \ref{LTex}(\ref{LTex3}), the function $\chi = 1 - 2r_1 : z \mapsto (z-1)/(z+1)$ belongs to $\LT$ and therefore so do its powers.  Estimating $\|\chi^n\|_{\text{HP}}$ is quite complicated, as it involves the asymptotics of Laguerre polynomials, but it is known that they grow like $n^{1/2}$ (see Section \ref{Cayley2}).    The $\Bes$-norms of these functions grow only logarithmically, as shown in the following lemma.

\begin{lemma}\label{G}
Let
\[
f_n(z):=\left(\frac{z-1}{z+1}\right)^n, \qquad n\in \mathbb N.
\]
Then
\[
 f_n \in \Bes \quad \text{and} \quad \|f_n\|_\Bes \le  3+2\log(2n),
\]
for each $n \in \mathbb N$.
\end{lemma}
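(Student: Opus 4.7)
The plan is to bound the two parts of $\|f_n\|_{\Bes}=\|f_n\|_\infty+\|f_n\|_\Bq$ separately, the first being trivial and the second requiring a careful one-variable optimisation along each vertical line.

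Since $\chi(z)=(z-1)/(z+1)$ maps $\C_+$ into the open unit disc, $\|f_n\|_\infty=1$. For the seminorm part, I would compute
\[
f_n'(z)=\frac{2n(z-1)^{n-1}}{(z+1)^{n+1}},\qquad |f_n'(z)|^2=\frac{4n^2 u^{n-1}}{v^{n+1}},
\]
where $u:=|z-1|^2$ and $v:=|z+1|^2$, and use the identity $v-u=4\Re z=4x$. Fixing $x>0$, the quantity $|f_n'(x+iy)|^2$ is $4n^2 h(u)$ with $h(u)=u^{n-1}/(u+4x)^{n+1}$, and $u$ ranges over $[u_0,\infty)$ for $u_0=(x-1)^2$ (the value at $y=0$). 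A routine differentiation shows $h$ attains its global maximum at $u^\ast=2x(n-1)$, and $u^\ast\ge u_0$ precisely when $x^2-2nx+1\le0$, i.e.\ on the interval $[x_-,x_+]$ with $x_\pm=n\pm\sqrt{n^2-1}$ (note $x_-x_+=1$).

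On the middle interval $[x_-,x_+]$, substituting $u^\ast$ into $h$ gives
\[
\sup_{y\in\R}|f_n'(x+iy)|\le \frac{n}{(n+1)x}\left(\frac{n-1}{n+1}\right)^{(n-1)/2}\le\frac{1}{x},
\]
so this part contributes at most
\[
\int_{x_-}^{x_+}\frac{dx}{x}=\log(x_+/x_-)=2\log\!\bigl(n+\sqrt{n^2-1}\bigr)\le 2\log(2n).
\]
Outside $[x_-,x_+]$, the function $h$ is decreasing on $[u_0,\infty)$, so the supremum in $y$ is attained at $y=0$, giving $|f_n'(x)|=2n|x-1|^{n-1}/(x+1)^{n+1}$. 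For each of the two outer integrals the substitution $t=|x-1|/(x+1)$ transforms them into $\int_0^{t_0}nt^{n-1}\,dt=t_0^{\,n}\le 1$, so together they contribute at most $2$.

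Combining the three pieces yields $\|f_n\|_\Bq\le 2+2\log(2n)$ and therefore $\|f_n\|_\Bes\le 3+2\log(2n)$, as required. The only mildly delicate step is the verification that the critical point $u^\ast$ of $h$ lies in the admissible range $[u_0,\infty)$ exactly on $[x_-,x_+]$; every other ingredient is one-variable calculus and an explicit change of variables via the Cayley transform.
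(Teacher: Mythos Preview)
Your proof is correct and follows essentially the same approach as the paper's: both optimise $|f_n'(x+iy)|$ over $y$ for fixed $x$, split the $x$-integral into the interval $[n-\sqrt{n^2-1},\,n+\sqrt{n^2-1}]$ (where the interior critical point is attained, giving the $\log$ contribution) and its complement (where the supremum is at $y=0$). Your parametrisation by $u=|z-1|^2$ in place of the paper's $s=y^2$ is a cosmetic change; one small slip is that the outer integrals, after the Cayley substitution $t=|x-1|/(x+1)$, actually become $\int_{t_0}^{1} n t^{n-1}\,dt = 1-t_0^{\,n}$ rather than $\int_0^{t_0}$, but your bound of $1$ for each is still valid.
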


\begin{proof}
Note that  $\|f_1\|_\Bes = 3$, by Example \ref{LTex}(\ref{LTex3}).   For $n\ge2$, let
\[
a_n=n-\sqrt{n^2-1}, \quad  b_n= a_n^{-1} = n+\sqrt{n^2-1}.
\]

Letting $z=x+iy$ and $s=y^2,$ we have
\[
f'_n(z)=2n\frac{(z-1)^{n-1}}{(z+1)^{n+1}},
\]
and
\[
\frac{|f'_n(z)|^2}{4n^2} = \frac{((x-1)^2+y^2)^{n-1}}{((x+1)^2+y^2)^{n+1}}=: g_{n,x}(s).
\]
Simple calculations reveal that
\[
\max_{s\ge0}\,g_{n,x}(s)=g_{n,x}(0)=
\frac{(x-1)^{2(n-1)}}{(x+1)^{2(n+1)}},\quad
x\not\in [a_n,b_n],
\]
and
\[
\max_{s\ge0}\,g_{n,x}(s)=g_{n,x}(2nx-x^2-1) = \frac{ (2(n-1)x)^{n-1}}{(2(n+1)x)^{n+1}}
,\quad
x\in [a_n,b_n].
\]
Hence
\[
\sup_{y\in\R}\,|f'(x+iy)|=
2n\frac{|x-1|^{n-1}}{(x+1)^{n+1}},\qquad
x\not\in [a_n,b_n],
\]
and
\[
\sup_{y\in \R}\,|f'(x+iy)|= n\frac{(n-1)^{(n-1)/2}}{(n+1)^{(n+1)/2}}\frac{1}{x},\qquad x\in [a_n,b_n].
\]

Now
\begin{align*}
\|f_n\|_\Bes &= 1+n\frac{(n-1)^{(n-1)/2}}{(n+1)^{(n+1)/2}}
\int_{a_n}^{b_n}\frac{dx}{x}\\
&\phantom{XXX}+ 2n\int_0^{a_n}\frac{(1-x)^{n-1}}{(x+1)^{n+1}} \,dx
+2n\int_{b_n}^\infty\frac{(x-1)^{n-1}}{(x+1)^{n+1}} \,dx\\
&=  1+n\frac{(n-1)^{(n-1)/2}}{(n+1)^{(n+1)/2}}
\log\frac{b_n}{a_n}+ 1-\left(\frac{1-a_n}{1+a_n}\right)^n
+1-\left(\frac{b_n-1}{b_n+1}\right)^n,
\end{align*}
so that
\[
\|f_n\|_\Bes \le 3+ \frac{2n}{n+1}
\log(n+\sqrt{n^2-1})\le 3+2\log(2n). \qedhere
\]
\end{proof}

\subsection{Bernstein to Besov}

A {\it Bernstein function} is a holomorphic function $f :\C_+ \to \C_+$ of the form
\[
f(z) = a + bz + \int_{(0,\infty)} (1-e^{-zs}) \, d\mu(s),
\]
where $a\ge0$, $b\ge0$ and $\mu$ is a positive Borel measure on $(0,\infty)$ such that $\int_{(0,\infty)} \frac{s}{1+s} \,d\mu(s) < \infty$.  We note the following properties of $f$:
\begin{enumerate}
\item[(B1)] $f$ maps $\Sigma_\theta$ to $\Sigma_\theta$ for each $\theta \in [0,\pi/2]$ \cite[Proposition 3.6]{Schill}.
\item[(B2)]  On $(0,\infty)$, $f$ is increasing and $f'$ is\setcounter{tocdepth}{1} decreasing.
\item[(B3)]  On $\C_+$,
\begin{align*}
\Re f(z) &= a + b \Re z + \int_{(0,\infty)} \left( 1 -\Re e^{-zs} \right) \,d\mu(s) \\
&\ge a + b \Re z + \int_{(0,\infty)} \left( 1 - e^{-\Re zs} \right) \,d\mu(s) = f(\Re z), \\
\left|f'(z)\right| &= \Big|b + \int_{(0,\infty)} s e^{-s z} \,d\mu(s) \Big| \\
&\le b + \int_{(0,\infty)} s e^{- s \Re z} \,d\mu(s) = f'(\Re z).
\end{align*}
\end{enumerate}
For further information on Bernstein functions, see \cite{Schill}.

\begin{prop} \label{BB}
Let $f$ be a Bernstein function, $\alpha \in (0,1)$,  $\beta \in (1,1/\a]$,  $\theta \in (0,\pi/2)$, $\l \in \Sigma_\theta$.   Define $g$ and $h$ from $\C_+$ to $\C_+$ by
\[
g(z) = f(z^\alpha)^\beta,  \qquad  h(z) = \frac{1}{\lambda + g(z)}.
\]
Then $h \in \Bes$ and $\|h\|_\Bes \le C|\lambda|^{-1}$,  where
\[
C= 2\beta \sec(\a\pi/2) \sec^2 ((\a\b\pi/2) +\theta)/2).
\]
\end{prop}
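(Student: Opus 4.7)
The plan is to reduce the two-dimensional estimate for $\|h\|_\Bes$ to a one-dimensional integral along the positive real axis, exploiting monotonicity and concavity of the Bernstein function $f$.

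First, I verify that $g(\C_+)\subset\Sigma_{\alpha\beta\pi/2}$: for $z\in\C_+$, $z^\alpha\in\Sigma_{\alpha\pi/2}\subset\Sigma_{\pi/2}$, so (B1) yields $f(z^\alpha)\in\Sigma_{\alpha\pi/2}$ and hence $g(z)=f(z^\alpha)^\beta\in\Sigma_{\alpha\beta\pi/2}$ (using $\alpha\beta\le 1$). Since $\lambda\in\Sigma_\theta$ and $g(z)\in\Sigma_{\alpha\beta\pi/2}$ lie in sectors with total angular separation at most $\theta+\alpha\beta\pi/2<\pi$, the law of cosines gives
\[
|\lambda+g(z)|\ge\cos\!\Big(\tfrac{\theta+\alpha\beta\pi/2}{2}\Big)(|\lambda|+|g(z)|),
\]
whence $\|h\|_\infty\le\sec((\theta+\alpha\beta\pi/2)/2)/|\lambda|$. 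From $h'(z)=-g'(z)/(\lambda+g(z))^2$ the $\Bes_0$-seminorm bound then reduces to controlling $\sup_{y\in\R}|g'(x+iy)|/(|\lambda|+|g(x+iy)|)^2$ for each $x>0$.

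The crux is to establish, for $z=x+iy\in\C_+$, the pointwise comparisons $|g(z)|\ge g(x)$ and $|g'(z)|\le(1+\tan(\alpha\pi/2))^{\beta-1}g'(x)$. Both start from the elementary inequality $\operatorname{Re}(z^\alpha)\ge x^\alpha$, which, after writing $z=(x/\cos\phi)e^{i\phi}$, reduces to $\cos(\alpha\phi)/\cos^\alpha\phi\ge 1$ and follows from a short monotonicity argument in $\phi$. The lower bound on $|g|$ is then immediate from (B3) and monotonicity of $f$: $|g(z)|\ge(\operatorname{Re}f(z^\alpha))^\beta\ge f(\operatorname{Re}(z^\alpha))^\beta\ge f(x^\alpha)^\beta=g(x)$. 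For $|g'(z)|=\alpha\beta|f(z^\alpha)|^{\beta-1}|f'(z^\alpha)||z|^{\alpha-1}$, apply (B3) to obtain $|f'(z^\alpha)|\le f'(\operatorname{Re}(z^\alpha))\le f'(x^\alpha)$; integrate $f'$ along the vertical segment from $\operatorname{Re}(z^\alpha)$ to $z^\alpha$, using $|\operatorname{Im}(z^\alpha)|\le\operatorname{Re}(z^\alpha)\tan(\alpha\pi/2)$ and the concavity inequality $tf'(t)\le f(t)$, to get $|f(z^\alpha)|\le(1+\tan(\alpha\pi/2))f(\operatorname{Re}(z^\alpha))$; the sublinearity $f(\gamma t)\le\gamma f(t)$ for $\gamma\ge 1$ (a direct consequence of concavity with $f(0)\ge 0$) then gives $f(\operatorname{Re}(z^\alpha))\le f(|z|^\alpha)\le(|z|/x)^\alpha f(x^\alpha)$. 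Assembling these factors in the formula for $|g'(z)|$ and using $\alpha\beta\le 1$ to absorb $(|z|/x)^{\alpha\beta-1}\le 1$ produces the claimed bound.

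Combining the two pointwise estimates yields, for each $x>0$,
\[
\sup_{y\in\R}|h'(x+iy)|\le\sec^2\!\Big(\tfrac{\theta+\alpha\beta\pi/2}{2}\Big)(1+\tan(\alpha\pi/2))^{\beta-1}\frac{g'(x)}{(|\lambda|+g(x))^2},
\]
and the substitution $u=g(x)$ evaluates the remaining real-axis integral as
\[
\int_0^\infty\frac{g'(x)\,dx}{(|\lambda|+g(x))^2}=\frac{1}{|\lambda|+g(0)}-\frac{1}{|\lambda|+g(\infty)}\le\frac{1}{|\lambda|}.
\]
Summing with the $\|h\|_\infty$ bound and comparing the resulting trigonometric coefficient with $2\beta\sec(\alpha\pi/2)\sec^2((\alpha\beta\pi/2+\theta)/2)$ (a routine one-variable estimate, e.g.\ via $1+\tan(\alpha\pi/2)\le 2\sec(\alpha\pi/2)$ and $\sec\ge 1$) closes the proof. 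The principal obstacle is producing the pointwise comparison $|g'(z)|\le c(\alpha,\beta)g'(\operatorname{Re} z)$ with a \emph{constant} depending only on $\alpha,\beta$; the exponent condition $\alpha\beta\le 1$ is essential, as it tames the potentially divergent factor $(|z|/x)^{\alpha\beta-1}$ that arises when $|y|$ is large.
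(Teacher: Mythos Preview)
Your argument is correct and takes a genuinely different route from the paper's. Both proofs begin the same way (the sector containment, the law-of-cosines lower bound on $|\lambda+g(z)|$, and the formula for $h'$), but they diverge at the key step of controlling $|f(z^\alpha)|^{\beta-1}$ in the numerator of $|h'|$. The paper never bounds $|f(z^\alpha)|$ from above; instead it uses the algebraic trick $|f(z^\alpha)|^{\beta-1}\le(|\lambda|+|f(z^\alpha)|^\beta)^{1-1/\beta}$ to absorb this factor into the denominator, together with the cruder inequality $\Re(z^\alpha)\ge\cos(\alpha\pi/2)\,x^\alpha$, and is then left with the Beta-type integral $\int_0^\infty(|\lambda|+t)^{-1-1/\beta}t^{-1+1/\beta}\,dt=\beta/|\lambda|$. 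You instead prove the sharper bound $\Re(z^\alpha)\ge x^\alpha$ and, via concavity ($uf'(u)\le f(u)$) and sublinearity ($f(\gamma t)\le\gamma f(t)$), obtain an explicit \emph{upper} bound $|f(z^\alpha)|\le(1+\tan(\alpha\pi/2))(|z|/x)^\alpha f(x^\alpha)$; this yields the clean comparison $|g'(z)|\le c(\alpha,\beta)\,g'(\Re z)$ and reduces everything to the elementary integral $\int_0^\infty g'(x)/(|\lambda|+g(x))^2\,dx\le 1/|\lambda|$. Your route is more hands-on but gives a simpler final integral; the paper's route uses less about $f$ explicitly (though your extra ingredients $uf'(u)\le f(u)$ and sublinearity in fact follow from (B2) alone, so both proofs rest only on (B1)--(B3)).

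One loose end: the closing assertion that your constant $\kappa+(1+\tan(\alpha\pi/2))^{\beta-1}$ is dominated by $2\beta\sec(\alpha\pi/2)$ is not quite as ``routine'' as you suggest; the hint $1+\tan\psi\le 2\sec\psi$ alone does not immediately settle it for all $\beta\in(1,1/\alpha]$. The inequality is true (and your constant is in fact typically smaller than the paper's), but a short explicit verification using the constraint $\alpha\beta\le1$ would be preferable to the parenthetical. This is a cosmetic point: you have certainly established $\|h\|_\Bes\le C'/|\lambda|$ with $C'=C'(\alpha,\beta,\theta)$ independent of $f$ and $\lambda$, which is the substantive content.
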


\begin{proof}
Note that $g$ maps $\C_+$ into $\Sigma_{\alpha\beta\pi/2} \subset \C_+$.  Hence
\[
|\lambda + g(z)| \ge \kappa(|\l|+ |g(z)|),
\]
where $\kappa = \cos((\a\beta\pi/2 + \theta)/2)$.   Then
\[
\|h\|_\infty \le \sup_{z\in \C_{+}} \frac{1}{\kappa(|\lambda|+|g(z)|)} \le \frac{1}{\kappa|\lambda|}.
\]
Moreover
\[
h'(z) = - \frac{g'(z)}{(\l+g(z))^2} = - \frac{\alpha \beta f(z^\alpha)^{\beta-1} f'(z^\alpha)} {\left(\lambda + f(z^\alpha)^\beta \right)^2 z^{1-\alpha}}.
\]
Writing $z = x+iy$, we will estimate $|h'(z)|$ for fixed $x>0$ and arbitrary $y \in \R$.
Since $\Re (z^\a) = (x^2+y^2)^{\a/2} \cos(\a \arg z) \ge c_\a x^\a$ where $c_\alpha = \cos (\a\pi/2)$, the properties (B1)-(B3) above imply that
\begin{gather*}
|f'(z^\a)| \le f'(\Re(z^\a)) \le f'(c_\a x^\a), \\
|f(z^\a)| \ge \Re f(z^\a) \ge f(\Re(z^\a)) \ge f(c_\a x^\a).
\end{gather*}
Thus
\begin{align*}
|h'(x+iy)| &\le \frac{\a\beta |f(z^\a)|^{\beta-1}  |f'(z^\a)|}{|\l + f(z^\a)^\beta|^2 |z|^{1-\a}}  \\
&\le \frac{\a\beta (|\l|+|f(z^\a)|^\beta)^{1-1/\beta}  |f'(z^\a)|}{(\kappa(|\l| + |f(z^\a)|^\beta|)^2 |z|^{1-\a}}\\
&\le \frac{\a\beta f'(c_\a x^\a)} {\kappa^2 (|\l| + f(c_\a x^\a)^\beta)^{1+1/\beta} x^{1-\a}}. \\
\end{align*}
Now, making the change of variable $t = f(c_\a x^\a)^\beta$,
\begin{align*}
\int_0^\infty \sup_{y\in\R} |h'(x+iy)|\,dx &\le \frac{\a\beta}{\kappa^2} \int_0^\infty \frac{ f'(c_\a x^\a)x^{\a-1}}{ \left( |\l| + f(c_\a x^\a)^\beta \right)^{1+1/\beta}} \,dx \\
&\le \frac{1}{c_\a \kappa^2} \int_0^\infty \frac{1}{(|\l|+t)^{1+1/\beta}t^{1-1/\beta}} \,dt  \\
&= \frac{\beta}{c_\a \kappa^2 |\l|}.
\end{align*}
Noting that $\beta/(c_\alpha\kappa) > 1$, this completes the proof.
\end{proof}

\begin{remark}  Note that the constant $C$ in Proposition \ref{BB} is independent of the function $f$, provided that $f$ is Bernstein.

Although we have stated Proposition \ref{BB} for Bernstein functions, its proof uses only the properties (B1)-(B3).  Slightly more general properties would suffice, although the constant $C$ might then depend on the conditions.  For results in this direction, see \cite{BGT17}.
\end{remark}

\section{Functional calculus for $\Bes$}  \label{b-fc}

In this section $X$ is a complex Banach space, $X^*$ is its Banach dual, and the duality between $X$ and $X^*$ is written as $\langle x,x^* \rangle$ for $x \in X,\, x^* \in X^*$.   We shall write $z \in \C_+$ as $z = \a + i\b$ in this section, as we shall be using $x$ to denote vectors in $X$.

\subsection {Definition}  \label{sfcdef}

 Let $A$ be a closed operator on a Banach space $X$, with dense domain $D(A)$.  We assume that the spectrum $\sigma(A)$ is contained in $\overline\C_+$ and
\begin{equation} \label{8.1}
\sup_{\alpha >0} \alpha \int_{\mathbb R} |\langle (\alpha +i\beta + A)^{-2}x, x^* \rangle| \, d\beta <\infty
\end{equation}
for all $x \in X$ and $x^* \in X^*$.   By the Closed Graph Theorem, there is a constant $c$ such that
\begin{equation} \label{8.2}
 \frac{2}{\pi}\alpha \int_{\mathbb R} |\langle (\alpha +i\beta + A)^{-2}x, x^* \rangle| \, d\beta \le c \|x\|\,\|x^*\|
\end{equation}
for all $\a>0$, $x \in X$ and $x^* \in X^*$.  Note that (\ref{8.1}) says precisely that the function
\begin{equation} \label{resbov}
g_{x,x^*} : z \mapsto \langle (z+A)^{-1}x, x^* \rangle
\end{equation}
belongs to $\Bov$.  We let $\gamma_A$ be the smallest value of $c$ such that (\ref{8.2}) holds, so
\begin{equation}  \label{gammaa}
\gamma_A := \frac{2}{\pi} \sup \{ \|g_{x,x^*}\|_{\Bov_0} : \|x\|= \|x^*\|=1 \}.
\end{equation}

It was shown in \cite{Gom} and \cite{SF} that if $A$ satisfies the assumptions above then $-A$ is the generator of a bounded $C_0$-semigroup $(T(t))_{t \ge 0}$.  The Hille-Yosida conditions  can be obtained by applying Proposition \ref{space_e}(3) and using \eqref{8.2}.   Moreover, the following holds for all $x \in X$, $x^* \in X^*$ and $\alpha>0$:
\begin{equation} \label{gsff}
\langle T(t)x, x^* \rangle = \frac{1}{2\pi t} \int_\R \langle (\alpha+i\beta+ A)^{-2} x, x^* \rangle e^{(\alpha+i\beta)t} \, d\beta.
\end{equation}
This representation is not explicit in \cite{Gom} or \cite{SF}, but it is at the core of the theory.   To see that (\ref{gsff}) is true, multiply the equation by $t$, and then take Laplace transforms of each side with respect to $t$.  The resulting functions of $z$ are both $\langle (z+A)^{-2}x, x^* \rangle$, so uniqueness of Laplace transforms implies (\ref{gsff}) (see \cite[p.505]{CT} for the full argument).

Let
\begin{equation} \label{ka}
K_A := \sup_{t\ge0} \|T(t)\|.
\end{equation}
Putting $\a=1/t$ in (\ref{gsff}) and then using (\ref{gammaa}), we see that
\[
|\langle T(t)x,x^* \rangle| \le \frac{e}{2\pi} \|g_{x,x^*}\|_{\Bov_0} \le \frac{e \gamma_A}{4}\|x\|\,\|x^*\|.
\]
It follows that $1 \le K_A \le e\gamma_A/4$.   This implies that
\begin{equation} \label{res}
 \gamma_A \ge 4e^{-1} > 1,  \qquad
\|(z+A)^{-1}\| \le \frac{e \gamma_A}{4 \Re z}, \quad z \in \C_+.
\end{equation}

\begin{exas} \label{hilbert}
1.  If $-A$ generates a bounded $C_0$-semigroup on a Hilbert space $X$, then (\ref{8.1}) necessarily holds.  This was observed in \cite{Gom} and \cite{SF}, but we give the argument here.  Firstly, Plancherel's Theorem in the Hilbert space $L^2(\R_+,X)$ gives, for any $\a>0$ and $x, y \in X$,
\begin{align*}
\int_\R \|(\a+i\b+A)^{-1}x\|^2 \,d\b &= 2\pi \int_0^\infty e^{-2\a t} \|T(t)x\|^2 \, dt \le \frac{\pi K_A^2}{\a} \|x\|^2, \\
\int_\R \|(\a+i\b+A^*)^{-1}y\|^2 \,d\b &= 2\pi \int_0^\infty e^{-2\a t} \|T(t)^*y\|^2 \, dt \le \frac{\pi K_A^2}{\a} \|y\|^2.
\end{align*}
Then, by Cauchy-Schwarz,
\begin{equation} \label{GSFhil}
\int_\R \left| \langle (\a+i\b+A)^{-2}x, y \rangle \right| \, d\b \le  \frac{\pi K_A^2}{\a} \|x\|\,\|y\|.
\end{equation}
Hence
\begin{equation} \label{gaka}
\gamma_A \le 2K_A^2.
\end{equation}

\noindent 2. If $-A$ generates a (sectorially) bounded holomorphic $C_0$-semigroup, then a very simple argument shows that (\ref{8.2}) holds.  We will discuss this further and give an estimate of $\gamma_A$ in Section \ref{holsgs}.
\end{exas}

Assume that (\ref{8.1}) holds, so that the functions $g_{x,x^*}$ in (\ref{resbov}) are in $\Bov$, and let $f \in \Bes$.  We aim to define $f(A)$ by replacing  $z$ by $A$ and $r_z$ by $(z+A)^{-1}$ in the representation formula in Proposition \ref{BHP2}.  To ensure that the corresponding integrals are convergent, we need to work in the weak operator topology, and we use the duality between $\Bes$ and $\Bov$ considered in Section \ref{dual}.

Let $f \in \Bes$ and $A$ be as above.  Define
\begin{align}  \label{fcdef}
\lefteqn{\langle f(A)x, x^* \rangle} \quad\\
&=  f(\infty) \langle x, x^* \rangle - \frac{2}{\pi}  \int_0^\infty  \alpha \int_{\R}   \langle (\alpha -i\beta +A)^{-2}x, x^* \rangle  {f'(\alpha +i\beta)} \, d\beta\,d\alpha  \nonumber \\
&=  f(\infty) \langle x, x^* \rangle + \frac{2}{\pi} \langle g_{x,x^*}, f \rangle_\Bes,  \nonumber
\end{align}
for all $x \in X$ and $x^* \in X^*$.  It is easily seen that this defines a bounded linear mapping $f(A) : X \to X^{**}$, and that the linear mapping
\[
\Phi_A : \Bes \to \mathcal L(X,X^{**}), \qquad f \mapsto f(A),
\]
is bounded.  Indeed, by \eqref{boves} and \eqref{gammaa},
\begin{equation} \label{const}
\|f(A)\| \le |f(\infty)| + \gamma_A \|f\|_\Bq \le \gamma_A\|f\|_\Bes.
\end{equation}
 We will show that $f(A) \in \B(X)$ and $\Phi_A$ is an algebra homomorphism.   First we establish consistency of the definition in (\ref{fcdef}) with the Hille-Phillips (or HP-) calculus, by following the argument in Proposition \ref{BHP2} for $f \in \LT$.

\begin{lemma}\label{BHP3}
Let $f \in \LT$ and $f(A)$ be as defined as in {\rm (\ref{fcdef})}.   Then $f(A)$ coincides with the operator $x \mapsto \int_{\R_+} T(t)x \,d\mu(t)$ as defined in the Hille-Phillips functional calculus. In particular, $f(A) \in \B(X)$.
\end{lemma}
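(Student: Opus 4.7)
The plan is to mimic the proof of Proposition \ref{BHP2} in the operator-valued setting. In that scalar computation the duality integral collapses to $\tfrac{\pi}{2}f(z)$ via a direct Fourier-inversion identity for $(\alpha-i\beta+z)^{-2}$; here the operator-valued analogue of that identity is precisely the representation (\ref{gsff}) of $\langle T(t)x,x^*\rangle$ in terms of $\langle(\alpha+i\beta+A)^{-2}x,x^*\rangle$.

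Write $f=\lt\mu$ with $\mu\in M(\R_+)$, so that $f(\infty)=\mu(\{0\})$ and $f'(\alpha+i\beta)=-\int_0^\infty t e^{-(\alpha+i\beta)t}\,d\mu(t)$; also $g'_{x,x^*}(\alpha-i\beta)=-\langle(\alpha-i\beta+A)^{-2}x,x^*\rangle$. Substituting these into the duality (\ref{dualdef}), the two minus signs cancel and the task reduces to verifying
\[
\frac{2}{\pi}\langle g_{x,x^*},f\rangle_{\Bes}=\int_{(0,\infty)}\langle T(t)x,x^*\rangle\,d\mu(t),
\]
since adding $f(\infty)\langle x,x^*\rangle=\int_{\{0\}}\langle T(t)x,x^*\rangle\,d\mu(t)$ then yields the full HP-calculus formula.

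For each $\alpha>0$ the bound $\int_\R|\langle(\alpha-i\beta+A)^{-2}x,x^*\rangle|\,d\beta\le\tfrac{\pi\gamma_A}{2\alpha}\|x\|\|x^*\|$ coming from (\ref{8.2}), together with $\sup_\beta|\int_0^\infty t e^{-(\alpha+i\beta)t}\,d\mu(t)|\le(\alpha e)^{-1}|\mu|(\R_+)$, makes the $(\beta,t)$-integrand absolutely integrable, so Fubini rewrites the relevant quantity as
\[
\int_0^\infty\alpha\int_0^\infty t e^{-\alpha t}\left(\int_\R \langle(\alpha-i\beta+A)^{-2}x,x^*\rangle\, e^{-i\beta t}\,d\beta\right)d\mu(t)\,d\alpha.
\]
After the change of variable $\beta\mapsto-\beta$, the inner $\beta$-integral is precisely the expression in (\ref{gsff}) and equals $2\pi t e^{-\alpha t}\langle T(t)x,x^*\rangle$. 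Plugging this in, the resulting double integral is dominated by $K_A\|x\|\|x^*\||\mu|(\R_+)/4$, so one more application of Fubini together with the elementary identity $\int_0^\infty\alpha e^{-2\alpha t}\,d\alpha=(4t^2)^{-1}$ collapses it to $\tfrac{\pi}{2}\int_{(0,\infty)}\langle T(t)x,x^*\rangle\,d\mu(t)$, as required. Since $f(A)$ then acts as the Bochner-type integral $x\mapsto\int_{\R_+}T(t)x\,d\mu(t)$, which takes values in $X$, we conclude that $f(A)\in L(X)$ and that $\Phi_A$ extends the HP-calculus.

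The only genuine obstacle is the bookkeeping for the two Fubini applications and for sign conventions; once the inner $\beta$-integral is recognised as an instance of (\ref{gsff}), the rest is the same calculation as in the scalar case of Proposition \ref{BHP2}, with the kernel $(\alpha-i\beta+z)^{-2}$ replaced throughout by the operator-valued kernel $\langle(\alpha-i\beta+A)^{-2}x,x^*\rangle$.
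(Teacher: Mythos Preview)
Your proof is correct and follows essentially the same approach as the paper's own argument: both substitute $f'=-\int_0^\infty t e^{-t(\cdot)}\,d\mu(t)$ into the duality integral, apply Fubini, identify the inner $\beta$-integral via the Fourier relation encoded in (\ref{gsff}), and evaluate the remaining $\alpha$-integral explicitly. The only cosmetic difference is that the paper phrases the key step as recognising an inverse Fourier transform of $t\mapsto 2\pi t e^{-\alpha t}\langle T(t)x,x^*\rangle$, while you invoke (\ref{gsff}) directly; your Fubini bound $K_A\|x\|\|x^*\||\mu|(\R_+)/4$ is off by a harmless factor of $2\pi$, but this does not affect the argument.
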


\begin{proof}
Since $f \in \LT$, $f = \lt\mu$ for some $\mu \in M(\R_+)$.  First assume that $f(\infty)=0$, or equivalently $\mu(\{0\})=0$.  Then
\begin{align*}
\frac{2}{\pi} \langle g_{x,x^*}, f \rangle_\Bes
&= \frac{2}{\pi} \int_0^\infty \alpha \int_\R  \langle (\a-i\b+A)^{-2}x, x^* \rangle  \int_0^\infty t e^{-(\alpha + i\beta)t}\, d\mu(t)\, d\beta\,d\alpha \\
&= \frac{2}{\pi}\int_0^\infty \int_0^\infty \alpha t e^{-\alpha t} \int_\R \langle (\a-i\b+A)^{-2}x, x^* \rangle e^{- i\beta t}   \,d\beta \,   \,d\alpha   \,d\mu(t) \\
&= 4 \int_0^\infty \int_0^\infty  \alpha t^2  e^{-2\alpha t} \langle T(t)x,x^* \rangle \,d\alpha\,d\mu(t) \\
&= \int_{0}^\infty \langle T(t)x,x^* \rangle  \,d\mu(t),
\end{align*}
where we have used that $\beta \mapsto \langle (\alpha-i\beta+A)^{-2}x,x^* \rangle$ is the inverse Fourier transform of $t \mapsto 2\pi t e^{-\alpha t} \langle T(t)x,x^* \rangle$ on $\R_+$ (extended to $\R$ by $0$), and both functions are in $L^1(\R)$.   We now have the required result when $f(\infty)= 0$.  The general case follows by applying the above case to $f - f(\infty)$.
\end{proof}

In particular, Lemma \ref{BHP3} implies that $(\l+A)^{-1} = r_\l(A)$, where $\l\in\C_+$, and $T(t) = e_t(A)$, where $e_t(z) = e^{-tz}$.   Consequently, from here on we will write $e^{-tA}$ in place of $T(t)$.

Our next step is a version of Lemma \ref{Sconv} for operators.  Let
\[
\eta(z) = \frac{1-e^{-z}}{z}, \qquad \eta_\delta(z) = \eta(\delta z), \quad \delta>0.
\]
As in Example \ref{LTex}(\ref{LTex4}) and Lemma \ref{shifts01}(\ref{012}), $\|\eta_\delta\|_\Bes = \|\eta\|_\Bes = 2$.

\begin{lemma} \label{Sext}
Let $f \in \Bq$, and assume that $ f \eta_\delta \in \LT$ for each $\delta>0$.
 Then
$\lim_{\delta\to0+} (f \eta_\delta)(A)x$ exists in $X$ for every $x \in X$.
 Moreover  $f(A)x=\lim_{\delta\to0+} (f \eta_\delta)(A)x$ for all $
 x \in X,$ and thus $f(A)\in L(X)$.
\end{lemma}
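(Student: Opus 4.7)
The plan is to establish uniform boundedness of the net $\{(f\eta_\delta)(A) : \delta \in (0,1]\}$ in $L(X)$, weak-$*$ convergence to $f(A)x$ in $X^{**}$, and then---this is the main obstacle---to promote this to norm convergence in $X$ by exhibiting a dense subspace on which the convergence is already strong.

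For uniform boundedness, note that $\eta_\delta(z) = \eta(\delta z)$, so Lemma \ref{shifts01}(\ref{012}) together with $\|\eta\|_\Bes = 2$ (Example \ref{LTex}(\ref{LTex4})) give $\|\eta_\delta\|_\Bes = 2$. Since $\Bes$ is a Banach algebra, $\|f\eta_\delta\|_\Bes \le 2\|f\|_\Bes$, so (\ref{const}) yields
\[
\sup_{0 < \delta \le 1} \|(f\eta_\delta)(A)\|_{L(X)} \le 2\gamma_A\|f\|_\Bes,
\]
while Lemma \ref{BHP3} ensures that these operators really do lie in $L(X)$. For weak-$*$ convergence, since $f \in \Bq$ both $f$ and $f\eta_\delta$ vanish at $\infty$, so definition (\ref{fcdef}) reduces to
\[
\langle (f\eta_\delta)(A)x,x^*\rangle = \tfrac{2}{\pi}\langle g_{x,x^*}, f\eta_\delta\rangle_\Bes, \qquad \langle f(A)x, x^*\rangle = \tfrac{2}{\pi}\langle g_{x,x^*}, f\rangle_\Bes.
\]
Since $\eta = \lt(\mathbf{1}_{[0,1]}(t)\,dt)$ has total mass $1$ and finite first moment, Lemma \ref{Sconv} gives $\langle g_{x,x^*}, f\eta_\delta\rangle_\Bes \to \langle g_{x,x^*}, f\rangle_\Bes$, so $(f\eta_\delta)(A)x \to f(A)x$ in the $\sigma(X^{**},X^*)$-topology.

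The main obstacle is promoting this weak-$*$ convergence, whose limit a priori lives only in $X^{**}$, to norm convergence in $X$. I exploit that $\eta_{\delta_0}(A) = \delta_0^{-1}\int_0^{\delta_0} e^{-tA}\,dt \to I$ strongly as $\delta_0 \to 0+$, so $\mathcal{D} := \bigcup_{\delta_0 > 0}\eta_{\delta_0}(A)X$ is dense in $X$. For $y = \eta_{\delta_0}(A)z \in \mathcal{D}$, HP-multiplicativity (via Lemma \ref{BHP3}, since $f\eta_\delta$, $\eta_{\delta_0}$, $f\eta_{\delta_0}$, $\eta_\delta$ all lie in $\LT$) gives
\[
(f\eta_\delta)(A)y = (f\eta_\delta\eta_{\delta_0})(A)z = (f\eta_{\delta_0})(A)\,\eta_\delta(A)z,
\]
and letting $\delta \to 0+$ with $(f\eta_{\delta_0})(A)$ fixed and $\eta_\delta(A)z \to z$ strongly shows that $(f\eta_\delta)(A)y \to (f\eta_{\delta_0})(A)z$ in norm. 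A standard $3\varepsilon$ argument, invoking the uniform bound above, then extends norm convergence from $\mathcal{D}$ to all of $X$. The norm limit in $X$ must coincide with the weak-$*$ limit $f(A)x$ from the second step, so $f(A)x \in X$ for every $x$, and $f(A) \in L(X)$ follows from the uniform bound.
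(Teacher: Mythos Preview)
Your proof is correct and follows essentially the same architecture as the paper's: uniform boundedness via $\|\eta_\delta\|_\Bes = 2$, strong convergence on a dense subspace obtained by an HP-multiplicativity factorization trick, extension to all of $X$ by the uniform bound, and identification of the limit with $f(A)x$ via Lemma~\ref{Sconv}. The only cosmetic difference is the choice of dense subspace and regulariser: the paper uses $D(A)$ together with $r_1(z)=(1+z)^{-1}$ and the $\Bes$-norm convergence $\eta_\delta r_1 \to r_1$ (coming from the $L^1$ approximate identity), whereas you use $\bigcup_{\delta_0>0}\eta_{\delta_0}(A)X$ together with the strong convergence $\eta_\delta(A)\to I$; both achieve the same end.
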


\begin{proof}
Let $r_1(z) = (1+z)^{-1}$, so $r_1 = \lt e_1$ where $e_1(t) = e^{-t} \in L^1(\R_+)$, and $r_1(A) = (1+A)^{-1}$.   For $\delta>0$, $\eta_\delta$ is the Laplace transform of $\delta^{-1} \chi_{[0,\delta]}$, and these functions form an approximate unit for $L^1(\R_+)$ as $\delta\to0+$.  Since the Laplace transform is a bounded algebra homomorphism from $L^1(\R_+)$ to $\Bes$, it follows that
$$
\|\eta_\delta r_1 - r_1\|_\Bes \to 0  \qquad (\delta\to0+).
$$
Since $f\eta_\delta \in \LT$ and the HP-calculus is multiplicative on $\LT$, it follows from Lemma \ref{BHP3} that
$$
(f \eta_\delta r_1)(A) = (f \eta_\delta)(A) r_1(A) = (f \eta_\delta)(A) (1+A)^{-1}.
$$
For $x \in D(A)$, we obtain
$$
(f \eta_\delta)(A)x = (f \eta_\delta r_1)(A) (1+A) x \to (f r_1)(A) (1+A) x   \qquad (\delta\to0+).
$$
Moreover, $\|(f \eta_\delta)(A)\| \le 2 \gamma_A \|f\|_\Bes$ by \eqref{const}, and $D(A)$ is dense in $X$, so it follows that $\lim_{\delta\to0+}  (f \eta_\delta)(A)x$ exists in $X$ for all $x \in X$.

By Lemma \ref{Sconv},  for $x \in X$ and $x^* \in X^*$, we now have
$$
\langle \lim_{\delta\to0+} (f\eta_\delta)(A)x, x^*\rangle = \lim_{\delta\to0+} \langle g_{x,x^*}, f\eta_\delta \rangle_\Bes = \langle g_{x,x^*}, f \rangle_\Bes = \langle f(A)x,x^* \rangle.
$$
Then $f(A) = \lim_{\delta\to0+} (f\eta_\delta)(A) \in \B(X)$.
\end{proof}

Now we present the main result showing that the map $\Phi_A$ has the essential properties of a bounded functional calculus.  We shall subsequently refer to $\Phi_A$ as the \emph{$\Bes$-calculus} for $A$.

\begin{thm} \label{besc}
Under the assumptions {\rm (\ref{8.1})}, the map $\Phi_A : f \mapsto f(A)$ is a bounded algebra homomorphism from $\Bes$ into $\B(X)$, which extends the Hille-Phillips calculus.  Moreover, $\|\Phi_A\| \le \gamma_A$.
\end{thm}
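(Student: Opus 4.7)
Plan. The norm estimate $\|f(A)\|_{L(X,X^{**})}\le\gamma_A\|f\|_\Bes$ and the linearity of $\Phi_A$ are already recorded in (\ref{const}). What remains is to show that (a) $f(A)\in\B(X)$ for every $f\in\Bes$, not merely in $L(X,X^{**})$; (b) $\Phi_A$ extends the Hille--Phillips calculus; (c) $\Phi_A$ is multiplicative.

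For (a) I would first treat $f\in\ssp$, so $f\in H^\infty[\varepsilon,\sigma]$ for some $0<\varepsilon<\sigma$. Since $\eta_\delta=\lt(\delta^{-1}\chi_{[0,\delta]})$ has $\fti\eta_\delta^b\in L^2[0,\delta]$, Lemma \ref{PW} gives $f\eta_\delta\in\LT$ for every $\delta>0$, and Lemma \ref{Sext} then yields $f(A)\in\B(X)$. The density of $\ssp$ in $\Bq$ supplied by Proposition \ref{densehinf}, combined with the uniform bound just recalled and the fact that the canonical embedding $X\hookrightarrow X^{**}$ makes $\B(X)$ operator-norm closed in $L(X,X^{**})$, upgrades this to $f(A)\in\B(X)$ for every $f\in\Bq$. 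A general $f\in\Bes$ decomposes as $f(\infty)+f_0$ with $f_0\in\Bq$, and inspection of (\ref{fcdef}) immediately yields $\Phi_A(1)=I$, so $f(A)\in\B(X)$ for all $f\in\Bes$. Statement (b) is exactly Lemma \ref{BHP3}.

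For (c), I start from the multiplicativity of the HP-calculus on $\LT$, which through Lemma \ref{BHP3} reads $(mm')(A)=m(A)m'(A)$ for $m,m'\in\LT$. Fix $f,g\in\ssp$. Lemma \ref{PW} gives $f\eta_\delta,g\eta_\delta\in\LT$, and since $\eta_\delta^2$ is the Laplace transform of $(\delta^{-1}\chi_{[0,\delta]})^{*2}$, a nonnegative function in $L^2$ supported in $[0,2\delta]$, Lemma \ref{PW} also provides $fg\,\eta_\delta^2\in\LT$. Multiplicativity on $\LT$ then yields
\[
(fg\,\eta_\delta^2)(A)=(f\eta_\delta)(A)\,(g\eta_\delta)(A).
\]
As $\delta\to0^+$ the right-hand side tends strongly to $f(A)g(A)$, since $(f\eta_\delta)(A)\to f(A)$ strongly by (a) and $\|(g\eta_\delta)(A)\|\le 2\gamma_A\|g\|_\Bes$ is uniform in $\delta$. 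The left-hand side tends strongly to $(fg)(A)$ by repeating the argument of Lemma \ref{Sext}, which uses nothing about the regulariser beyond its being the Laplace transform of a probability measure on $\R_+$ with finite first moment---precisely the hypotheses under which Lemma \ref{Sconv} applies. Hence $(fg)(A)=f(A)g(A)$ on $\ssp\times\ssp$. Density of $\ssp$ in $\Bq$ together with the operator-norm bound (\ref{const}) extends this identity to $\Bq\times\Bq$, and a final splitting of both arguments into their constant and $\Bq$-parts, using $\Phi_A(1)=I$ and linearity, covers all of $\Bes\times\Bes$.

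The main obstacle: because $\LT$ fails to be norm-dense in $\Bes$ (Lemma \ref{nondense}), multiplicativity cannot be transferred from the HP-calculus to $\Bes$ by a straightforward norm-density argument. The resolution is the regularisation $f\mapsto f\eta_\delta$, which pushes the regularised functions into $\LT$ while keeping them uniformly bounded in $\Bes$; the subtle point is that one needs the strong-convergence conclusion of Lemma \ref{Sext} for $\eta_\delta^2$ in order to identify the limit of the left-hand side, and this requires the (mild) observation that the proof of Lemma \ref{Sext} goes through for any family of Laplace transforms of probability approximate units on $\R_+$ with finite first moment.
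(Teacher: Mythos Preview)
Your proof is correct and follows essentially the same approach as the paper. The one minor difference is in step~(c): the paper uses two independent parameters, writing $(f\eta_\delta g\eta_{\delta'})(A)=(f\eta_\delta)(A)(g\eta_{\delta'})(A)$ and letting first $\delta'\to0+$ and then $\delta\to0+$, so that Lemma~\ref{Sext} is applied verbatim at each stage (to $f\eta_\delta g$ and then to $fg$) without needing your observation that its proof extends to the regulariser $\eta_\delta^2$. Your single-parameter variant is equally valid, but the paper's two-step limit sidesteps the need to generalise the lemma.
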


\begin{proof}
The mapping $\Phi_A: f \mapsto f(A)$ is bounded from $\Bes$ into $\B(X,X^{**})$.  If $f \in \ssp$, then $f\eta_\delta \in \LT$ by Lemma \ref{PW}, so Lemma \ref{Sext} applies to all such $f$.  In particular, $\Phi_A$ maps $\ssp$ into $\B(X)$.  Since $\ssp$ is norm-dense in $\Bq$, it follows that $\Phi_A(f) \in \B(X)$ whenever $f(\infty)=0$.   Since $\Phi_A(1) = I$, it follows that $\Phi_A$ maps $\Bes$ into $\B(X)$.

To show that $\Phi_A$ is multiplicative, it suffices to consider $f,g \in \ssp$.  Then $fg \in \ssp$. Take $\delta, \delta' > 0$.  Since $f \eta_\delta, g \eta_{\delta'} \in \LT$,
\[
(f\eta_\delta g \eta_{\delta'})(A) = (f\eta_\delta)(A) (g \eta_{\delta'})(A).
\]
Letting $\delta' \to 0+$ with $\delta$ fixed and using Lemma \ref{Sext}, we obtain $(f\eta_\delta g)(A) = (f \eta_\delta)(A) g(A)$.  Letting $\delta\to0+$ gives that $(fg)(A) = f(A)g(A)$.

Lemma \ref{BHP3} shows that this functional calculus agrees with the HP-calculus on $\LT$.  The final statement follows from (\ref{const}).
\end{proof}

The $\Bes$-calculus defined by (\ref{fcdef}) and Theorem \ref{besc} applies to all operators satisfying (\ref{8.1}), and to all functions in $\Bes$.  In particular it applies to all generators of bounded $C_0$-semigroups on Hilbert space with estimates $\|f(A)\|\le 2K_A^2 \|f\|_\Bes$, by (\ref{gaka}).  In that context White \cite[Section 5.5]{White} and  Haase \cite[Theorem 5.3, Corollary 5.5]{Haase} independently had obtained estimates of the form $\|f(A)\| \le CK_A^2 \|f\|_\Bes$ but only for subclasses of $\Bes$ which are not norm-dense (for $\LT$ in \cite{Haase}, and a smaller class in \cite{White}).   Moreover they used an equivalent norm arising from Littlewood-Paley decompositions (see the Appendix).   We obtain the estimate for all functions in $\Bes$.   Applications to bounded $C_0$-semigroups on Hilbert spaces are given in Section \ref{apps}.

We note two other simple properties of the $\Bes$-calculus.

\begin{lemma} \label{shifts2}
Let $a>0$, $f \in \Bes$, $f_a(z) = f(z+a)$, $g_a(z) = f(az)$.
\begin{enumerate}[\rm1.]
\item $f(A+a) = f_a(A)$.
\item $f(aA) = g_a(A)$.
\end{enumerate}
\end{lemma}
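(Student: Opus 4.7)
The strategy is to reduce both identities to the transformation properties of the resolvent under the shift and scaling maps introduced in Lemmas \ref{shifts01} and \ref{shifts}, and then to invoke the duality identity (\ref{shifts22}) together with the definition (\ref{fcdef}) of the $\Bes$-calculus. Write $g^B_{x,x^*}(z) := \langle (z+B)^{-1}x, x^*\rangle$ for any operator $B$ that satisfies (\ref{8.1}).

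For (1), first observe that $-(A+a)$ generates the bounded $C_0$-semigroup $(e^{-at}e^{-tA})_{t\ge0}$, and
\[
g^{A+a}_{x,x^*}(z) = g^A_{x,x^*}(z+a) = (T_\Bov(a)g^A_{x,x^*})(z),
\]
which lies in $\Bov$ by Lemma \ref{shifts}(1), so $A+a$ satisfies (\ref{8.1}). Applying (\ref{shifts22}) to transfer the shift onto $f$ gives
\[
\langle g^{A+a}_{x,x^*}, f\rangle_\Bes = \langle T_\Bov(a)g^A_{x,x^*}, f\rangle_\Bes = \langle g^A_{x,x^*}, T_\Bes(a)f\rangle_\Bes = \langle g^A_{x,x^*}, f_a\rangle_\Bes.
\]
Since $f_a(\infty) = f(\infty)$, the definition (\ref{fcdef}) yields $\langle f(A+a)x, x^*\rangle = \langle f_a(A)x, x^*\rangle$ for all $x\in X$ and $x^*\in X^*$, so $f(A+a) = f_a(A)$.

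For (2), $-aA$ generates the rescaled bounded semigroup $(e^{-taA})_{t\ge0}$, and a direct computation gives
\[
g^{aA}_{x,x^*}(z) = a^{-1} g^A_{x,x^*}(z/a) = a^{-1}(S_\Bov(1/a)g^A_{x,x^*})(z),
\]
which is in $\Bov$ by Lemma \ref{shifts}(\ref{shifts4}), verifying (\ref{8.1}) for $aA$. What is needed now is a scaling counterpart of (\ref{shifts22}). This identity is not stated explicitly in Section \ref{dual}, but it follows at once from the change of variables $(\alpha,\beta)\mapsto(b\alpha,b\beta)$ in the definition (\ref{dualdef}), together with $(S_\Bov(b)g)'(z) = b g'(bz)$ and $(S_\Bes(1/b)f)'(z) = b^{-1}f'(z/b)$, yielding
\[
\langle S_\Bov(b)g, f\rangle_\Bes = b^{-1} \langle g, S_\Bes(1/b)f\rangle_\Bes, \qquad b>0.
\]
Taking $b = 1/a$ and using that $S_\Bes(a)f = g_a$ gives
\[
\langle g^{aA}_{x,x^*}, f\rangle_\Bes = a^{-1}\langle S_\Bov(1/a)g^A_{x,x^*}, f\rangle_\Bes = \langle g^A_{x,x^*}, g_a\rangle_\Bes.
\]
Since $g_a(\infty) = f(\infty)$, comparing (\ref{fcdef}) applied to $f(aA)$ and to $g_a(A)$ yields $f(aA) = g_a(A)$.

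There is no genuine obstacle here: the entire argument is a formal manipulation of the pairing, and the only item not already in the paper is the elementary scaling identity for $\langle\cdot,\cdot\rangle_\Bes$, which is immediate from (\ref{dualdef}). The verification that $A+a$ and $aA$ satisfy (\ref{8.1}) is likewise automatic from the corresponding properties of $T_\Bov$ and $S_\Bov$ on the resolvent function $g^A_{x,x^*}\in\Bov$.
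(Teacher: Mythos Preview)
Your proof is correct and follows essentially the same approach as the paper's: part (1) via (\ref{shifts22}) applied to $g_{x,x^*}$, and part (2) via the change of variables $(\alpha,\beta)\mapsto(a\alpha,a\beta)$ in (\ref{fcdef}), which you have packaged as a scaling identity for the pairing. The paper's proof is simply the terse two-sentence version of what you wrote out in full.
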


\begin{proof}
The first statement follows from (\ref{fcdef}) and an application of (\ref{shifts22}) with $g = g_{x,x^*}$.   The second statement is a simple change of variables in (\ref{fcdef}): $t = a\a, \, s=a\b$.
\end{proof}

We shall consider some more general properties of the $\Bes$-calculus in Sections \ref{compat2} (compatibility with other calculi), \ref{conver} (convergence lemmas) and \ref{spinc} (spectral inclusion).

\subsection{Bounded holomorphic semigroups} \label{holsgs}
 Recall that a densely defined operator $A$ on a Banach space $X$ is {\it sectorial of angle} $\theta \in [0,\pi/2)$ if $\sigma(A) \subset \overline \Sigma_\theta$ and, for each $\theta' \in (\theta,\pi]$, there exists $C_{\theta'}$ such that
\begin{equation} \label{R01}
\|z(z+A)^{-1}\| \le C_{\theta'}, \qquad z \in \Sigma_{\pi-\theta'}.
\end{equation}
We shall write $\Sect(\theta)$ for the class of all sectorial operators of angle $\theta$ for $\theta \in [0,\pi/2)$ on Banach spaces, and $\Sect(\pi/2-) := \bigcup_{\theta \in [0,\pi/2)} \Sect(\theta)$.   Then $A \in \Sect(\pi/2-)$ if and only if  $-A$ generates a (sectorially) bounded holomorphic $C_0$-semigroup on $X$.  Some authors call these semigroups {\it sectorially} bounded holomorphic semigroups, but we shall adopt the common convention that bounded holomorphic semigroups are bounded on sectors.   We refer to \cite{HaaseB} for the general theory of sectorial operators, and to \cite[Section 3.7]{ABHN} for the theory of holomorphic semigroups.

Let $A$ be an operator and  assume that $\sigma(A) \subset \overline \C_+$ and
\begin{equation}\label{R11}
M_A := \sup_{z\in \C_+} \|z(z+A)^{-1}\| < \infty.
\end{equation}
It follows from (\ref{R11}) and Neumann series (see \cite[Lemma 1.1]{V1}) that $\sigma(A) \subset \Sigma_\theta \cup \{0\}$ and
\begin{equation}\label{Nv1}
\|z(z+A)^{-1}\|\le 2M_A,\quad 0\not=z\in \Sigma_{\pi-\theta},
\end{equation}
where
\[
\theta:=\arccos(1/(2M_A))<\pi/2.
\]
So $A \in \Sect(\theta) \subset \Sect(\pi/2-)$.   Conversely, if $A \in \Sect(\theta)$ where $\theta \in [0,\pi/2)$, then \eqref{R11} holds, with $M_A$ equal to $C_{\pi/2}$ in (\ref{R01}).   Thus $-A$ generates a bounded holomorphic semigroup if and only if $\sigma(A) \subset \overline\C_+$ and (\ref{R11}) holds.  In that case, $M_A$ is a basic quantity associated with $A$, which we call the {\it sectoriality constant} of $A$.

Before discussing bounded holomorphic semigroups in general,  we show that the semigroup $(T_\Bes(a))_{a\in\C_+}$ of shifts is a bounded holomorphic semigroup on $\Bes$, as mentioned after Lemma \ref{shifts01}.   A short proof of holomorphy appeals to abstract theory.  Since the functionals $f \mapsto f(z)$  for $z \in \C_+$ form a separating subspace of $\Bes^*$ and since the functions $a \mapsto (T_\Bes(a)f)(z)  = f(z+a)$ are holomorphic on $\C_+$, and $T_\Bes(a)$ is a contraction, it follows from \cite[Theorem A.7]{ABHN} that $T_\Bes$ is holomorphic on $\C_+$.  Another proof proceeds by applying Proposition \ref{shifts01}(2),(3) and general semigroup theory (see \cite[Section 3.9]{ABHN}).

We now give a more explicit proof of holomorphy exhibiting some estimates which may be useful for other purposes.

\begin{prop} \label{shiftsg}
The family $(T_\Bes(a))_{a\in\C_+}$ of shifts is a holomorphic $C_0$-semigroup of contractions on $\Bes$.
\end{prop}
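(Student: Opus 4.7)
The semigroup law, contractivity $\|T_\Bes(a)\|\le 1$ for $a\in\overline{\C}_+$, and strong continuity at $a=0$ along $\overline{\C}_+$ are already contained in Lemma~\ref{shifts01}(1). Only the holomorphy of $a\mapsto T_\Bes(a)f$ on $\C_+$ for fixed $f\in\Bes$ remains; the plan is to prove this by direct complex differentiation, which will simultaneously yield the quantitative bound $\|A_\Bes T_\Bes(a)f\|_\Bes\le C\|f\|_\Bes/\Re a$ that the authors wish to record.

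First I would show that for every $a\in\C_+$ the pointwise derivative $g_a(z):=f'(z+a)$ belongs to $\Bes$. Lemma~\ref{ho}(\ref{derbd}) immediately gives $\|g_a\|_\infty\le \|f\|_\infty/(2\Re a)$, and Lemma~\ref{ho}(\ref{ho4}) with $(n,m)=(2,1)$ applied to the pair of vertical lines $\Re z=x+\tfrac{1}{2}\Re a$ and $\Re z=x+\Re a$ gives
\[
\sup_{y\in\R}|f''(x+a+iy)|\le \frac{1}{\Re a}\sup_{y\in\R}|f'(x+\tfrac{1}{2}\Re a+iy)|,\qquad x>0.
\]
Integrating in $x$ yields $\|g_a\|_\Bq\le \|f\|_\Bq/\Re a$, so that $T_\Bes(a)f\in D(A_\Bes)$ with $A_\Bes T_\Bes(a)f=-g_a$ and the desired $1/\Re a$ bound.

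For holomorphy at a fixed $a_0\in\C_+$ with $\alpha_0:=\Re a_0$, take $h\in\C$ with $|h|<\alpha_0/4$ and set $\psi_h:=h^{-1}(T_\Bes(a_0+h)f-T_\Bes(a_0)f)-g_{a_0}$. For $z=x+iy\in\C_+$, the Cauchy integral formula for $f$ on the circle $|w-(z+a_0)|=(x+\alpha_0)/2$, which lies in $\C_+$, yields after a short algebraic rearrangement
\[
\psi_h(z)=\frac{h}{2\pi i}\oint_{|w-(z+a_0)|=(x+\alpha_0)/2}\frac{f(w)\,dw}{(w-z-a_0-h)(w-z-a_0)^2}.
\]
Using $|w-z-a_0-h|\ge (x+\alpha_0)/4$ and $|f(w)|\le \|f\|_\infty$ on the contour gives $|\psi_h(z)|\le C|h|\|f\|_\infty/(x+\alpha_0)^2$. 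Applying the same Cauchy argument to $f'$ instead of $f$, and then bounding $|f'(w)|\le \|f\|_\infty/(2\Re w)\le \|f\|_\infty/(x+\alpha_0)$ on the contour via Lemma~\ref{ho}(\ref{derbd}), yields $|\psi_h'(z)|\le C'|h|\|f\|_\infty/(x+\alpha_0)^3$; integration in $x$ then gives $\|\psi_h\|_\Bq\le C''|h|\|f\|_\infty/\alpha_0^2$. Hence $\|\psi_h\|_\Bes\to 0$ as $h\to 0$, proving complex differentiability of $T_\Bes(\cdot)f$ at $a_0$ with derivative $g_{a_0}$, and therefore holomorphy on $\C_+$.

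The main obstacle is the first step, where Lemma~\ref{ho}(\ref{ho4}) is the precise tool needed to convert integrability of $\sup_y|f'(\cdot+iy)|$ into integrability of $\sup_y|f''(\cdot+a+iy)|$ at the cost of a $1/\Re a$ factor. The Cauchy-integral estimate in the final step is bookkeeping, provided one chooses the contour radius to grow linearly with $x=\Re z$ so that the $(x+\alpha_0)^{-3}$ decay becomes integrable on $\R_+$.
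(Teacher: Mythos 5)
Your proposal is correct and follows essentially the same route as the paper: both proofs first verify that $z\mapsto f'(z+a)$ lies in $\Bes$ (you via Lemma~\ref{ho}(\ref{ho4}), the paper via Lemma~\ref{deriv}) and then show norm-convergence of the difference quotient by interior Cauchy-type estimates on $f$ and $f'$, integrated in $x$. The only cosmetic differences are that the paper first reduces to real $a$ using the vertical isometries and represents the remainder by a Taylor integral with fixed radius $a/2$, whereas you keep $a_0$ complex and let the contour radius grow like $(x+\alpha_0)/2$, which incidentally yields the slightly sharper bound $\|\psi_h\|_\Bes\le C|h|\,\|f\|_\infty/\alpha_0^2$ in terms of $\|f\|_\infty$ alone.
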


\begin{proof}
We will show directly that $T_\Bes : \C_+ \to L(\Bes)$ is holomorphic.  Let $f \in \Bes$  and $a \in \C_+$. Since $T_\Bes(a)f \in H^\infty_\omega$ for $\omega = \Re a$, it follows from Lemma \ref{deriv} that $(T_\Bes(a)f)' \in \Bes$.  It remains to show that
\[
\lim_{h\to0} \left\| h^{-1} (T_\Bes(a+h)f - T_\Bes(a)f) - (T_\Bes(a)f)' \right\|_\Bes = 0.
\]
Since the operators $T_\Bes(is)$ are invertible isometries on $\Bes$, we may assume that $a>0$.

Let $h = re^{i\theta}$ where $0< r < a/2$, and let
\[
g_h(z) =  \frac{ f(z+a+h) - f(z+a)}{h} - f'(z+a) = \frac{e^{i\theta}}{r} \int_0^r (r-s) f''(z + a + se^{i\theta}) \,ds.
\]
Then by Lemma \ref{ho}(\ref{derbd})
\[
|g_h(z)| \le \frac{|h|}{2} \sup_{|\l-(z+a)|<a/2} |f''(\l)|  \le \frac{4|h|\|f\|_\infty}{\pi a^2}.
\]
By the same argument applied to $g_h'$,
\[
|g_h'(z)| \le \frac{|h|}{2} \sup_{|\l-(a+z)|<a/2} |f'''(\l)| \le  \frac{4|h|}{\pi a^2} \sup_{\Re \l = \Re z+ a/2} |f'(\l)|.
\]
It follows that
\[
\int_0^\infty \sup_{\Re z = x} |g_h'(z)| \, dx \le \frac{4|h|}{\pi a^2} \int_0^\infty \sup_{\Re \l = x+a/2} |f'(\l)| \, dx.
\]
Hence
\[
\|g_h\|_\Bes \le \frac{4|h|}{\pi a^2} \left(\|f\|_\infty + \|f\|_{\Bes_0}\right) = \frac{4|h|}{\pi a^2}  \|f\|_\Bes.
\]
Thus $T$ is a holomorphic function from $\C_+$ to $L(\Bes)$.
\end{proof}

We return to the general theory.   Let $A \in \Sect(\pi/2-)$.  Then (\ref{8.1}) holds, because $\|(\a+i\b+A)^{-1}\| \le M_A|\a+i\b|^{-1}$, where $M_A$ is the sectoriality constant of $A$.  This easily leads to (\ref{8.2}) with $\gamma_A \le 2M_A^2$.  We will give a sharper estimate in Corollary \ref{sch} based on the following integral estimate.

\begin{lemma}\label{EsRR}
Let $A\in \Sect(\pi/2-)$.  Then
\begin{equation*}\label{R112}
\a\int_\R \|(\a+i\beta+A)^{-2}\|\,d\beta\le 4(2\pi + 3 \log2) M_A(\log M_A+1), \quad \a>0.
\end{equation*}
\end{lemma}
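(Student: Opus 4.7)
The plan is to express $(\alpha+i\beta+A)^{-2}$ as a Cauchy contour integral in the enlarged sector afforded by \eqref{Nv1}, and then to estimate the three resulting pieces in turn. By \eqref{Nv1}, $\|(z+A)^{-1}\|\le 2M_A/|z|$ for $z\in\Sigma_{\pi-\theta}$ with $\theta=\arccos(1/(2M_A))<\pi/2$; since $\pi-\theta>\pi/2$, this sector strictly contains $\overline{\C}_+\setminus\{0\}$, so a keyhole contour can be chosen to pass through the left half-plane.

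For fixed $\phi\in(\pi/2,\pi-\theta)$ and $\rho\in(0,\alpha/2]$, let $\Gamma_{\rho,\phi}$ be the positively oriented boundary of $\Sigma_\phi\cap\{|z|>\rho\}$, consisting of the rays $\{te^{\pm i\phi}:t\ge\rho\}$ together with the arc $\{\rho e^{is}:|s|\le\phi\}$ through the positive real axis. Since $\lambda:=\alpha+i\beta$ satisfies $\lambda\in\Sigma_\phi$ and $|\lambda|\ge\alpha>\rho$, Cauchy's integral formula for the derivative of $(z+A)^{-1}$ gives
\[
(\lambda+A)^{-2}=-\frac{1}{2\pi i}\oint_{\Gamma_{\rho,\phi}}\frac{(z+A)^{-1}}{(z-\lambda)^2}\,dz.
\]
Taking operator norms, exchanging the order of integration with $\int_\R d\beta$, and using the elementary identity $\int_\R d\beta/(a^2+(b-\beta)^2)=\pi/|a|$ reduces each of the three contributions to a one-dimensional integral. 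With $c:=-\cos\phi>0$, the partial fraction identity $(t(tc+\alpha))^{-1}=\alpha^{-1}(t^{-1}-c(tc+\alpha)^{-1})$ shows that each ray contributes $M_A\log(1+\alpha/(\rho c))$. For the arc, the bound $|\rho\cos s-\alpha|\ge\alpha/2$ (valid since $\rho\le\alpha/2$) gives a contribution of at most $4M_A\phi$.

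Specifying $\rho=\alpha/2$ and letting $\phi\uparrow\pi-\theta$, so that $c\downarrow\cos\theta=1/(2M_A)$, the overall bound becomes
\[
\alpha\int_\R\|(\alpha+i\beta+A)^{-2}\|\,d\beta\le 2M_A\log(1+4M_A)+4M_A(\pi-\theta).
\]
Since $\pi-\theta=\pi/2+\arcsin(1/(2M_A))\le\pi/2+\pi/(4M_A)$ and $\log(1+4M_A)\le\log(5M_A)\le\log 5+\log M_A$ for $M_A\ge 1$, a routine tracking of constants (using also that $M_A\ge 1$, so $\log M_A+1\ge 1$) absorbs every term into $4(2\pi+3\log 2)M_A(\log M_A+1)$.

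The main obstacle is the singularity of the resolvent estimate $2M_A/|z|$ at $z=0$, which prevents collapsing the keyhole to a single contour and forces a balance between the two types of contributions: the rays diverge logarithmically as $\rho\to 0$, while the arc is controlled only once $\rho\le\alpha/2$. The choice of $\rho$ comparable to $\alpha$ balances these, and the resulting factor $\log(1/c)=\log(2M_A)$ is what produces the $M_A\log M_A$ growth in the final estimate.
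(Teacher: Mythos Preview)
Your proof is correct and follows the same overall strategy as the paper---represent $(\lambda+A)^{-2}$ via a Cauchy/resolvent integral over a keyhole contour avoiding the origin, then estimate the ray and arc pieces separately after integrating in $\beta$---but the execution differs in a useful way.

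The paper places its contour around the spectrum: rays at angle $\pm\theta$ and an arc of radius $|z|/2=|\alpha+i\beta|/2$ passing through the \emph{negative} real axis; because this radius depends on $\beta$, the paper has to insert the auxiliary inequality $\rho>(\rho+\alpha)/3$ to decouple the $\rho$- and $\beta$-integrations in the ray terms, and it treats the arc via the crude bound $|e^{i(\psi-\phi)}+2|\ge1$. You instead take the contour to bound the region containing $\lambda$: rays at angle $\pm\phi$ with $\phi\in(\pi/2,\pi-\theta)$, and an arc of \emph{fixed} radius $\rho=\alpha/2$ through the positive real axis. The fixed radius makes Fubini immediate, the $\beta$-integral collapses to $\pi/|\!\Re z-\alpha|$ in one line, and the partial-fraction evaluation of the ray integral is exact. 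The resulting bound $2M_A\log(1+4M_A)+4M_A(\pi-\theta)$ is actually sharper than the paper's $4M_A(2\pi+3\log(2M_A))$, and the final absorption into $4(2\pi+3\log2)M_A(\log M_A+1)$ is straightforward. So your route is genuinely simpler, at no cost.
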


\begin{proof}
Let $\theta = \arccos(1/(2M_A)) < \pi/2$.  Set
\[
z= \a + i\beta = r e^{i\phi}, \;\;r>0,\;\; \phi\in (-\pi/2,\pi/2),\quad \a>0,\quad \beta\in \mathbb R.
\]
We use the representation
\[
(z+A)^{-2} = - \frac{1}{2\pi i}\int_\gamma \frac{(\lambda-A)^{-1}}{(\lambda+z)^2} \,d\lambda ,
\]
({\it cf}.\ (\ref{P2})), where
\[
\gamma=\gamma_{+}\cup\gamma_0\cup\gamma_{-},
\quad \gamma_{\pm}=\{\rho e^{\pm i\theta}:\,r/2\le \rho<\infty\},
\]
and
\[
\gamma_0=\{re^{i\psi}/2:\,\psi\in [\theta, 2\pi-\theta]\}.
\]
Using (\ref{Nv1}), we have
\[
\|(z+A)^{-2}\|\le\frac{M_A}{\pi} \big( J_{-}(z)+J_{+}(z)+J_0(z) \big),
\]
where
\begin{gather*}
J_0(z)=\int_\theta^{2\pi-\theta}\frac{d\psi}{|re^{i\psi}/2+re^{i\phi}|^2} = \frac{4}{r^2}\int_\theta^{2\pi-\theta}\frac{d\psi}{|e^{i(\psi-\phi)}+2|^2}, \\
J_{\pm}(z)=\int_{r/2}^\infty \frac{d\rho}{\rho|\rho e^{\pm i\theta} +z|^2}.
\end{gather*}
We estimate these integrals separately.

First, note that
\[
J_0(z)\le\frac{8\pi}{r^2} = \frac{8\pi}{\a^2+\beta^2}
\]
and
\[
\a\int_{\R} J_0(\a+i\beta)\,d\beta
\le 8\pi \a\int_\R \frac{d\beta}{\a^2+\beta^2}= 8\pi^2.
\]
For $\rho > r/2$,
\[
\rho>\frac{\rho+r}{3}\ge \frac{\rho+\a}{3},
\]
and we obtain
\begin{align*}
\int_\R J_{\pm}(\a+i\beta)\, d\beta
&=
\int_{-\infty}^\infty
\int_{r/2}^\infty \frac{d\rho}{\rho|\rho e^{\pm i\theta}+\a+i\beta|^2}\,d\beta\\
&\le
3\int_0^\infty \left(
\int_{-\infty}^\infty \frac{d\b}{|\rho e^{\pm i\theta}+\a +i\b|^2}\right)\,\frac{d\rho}{\rho+\a}.
\end{align*}
Observe that
\begin{align*}
\int_\R \frac{d\b}{|\rho e^{\pm i\theta}+\a+i\b|^2}
&=\int_\R \frac{d\b}{(\rho\cos\theta+\a)^2 +(\pm\rho\sin\theta+\b)^2}\\
&=\int_\R \frac{d\b}{(\rho\cos\theta+\a)^2 +\b^2}\\
&= \frac{\pi}{\rho\cos\theta+\a}.
\end{align*}
Thus,
\begin{align*}
\int_\R J_{\pm}(\a+i\beta)\,d\beta &\le
3\pi\int_0^\infty \frac{d\rho}{(\rho+\a)(\rho\cos\theta+\a)} \\
&=\frac{3\pi}{\a\cos\theta}\int_0^\infty \frac{d\rho}{(\rho+1)(\rho+\sec\theta)} \\
&=\frac{3\pi}{\a(1-\cos\theta)} \log (\sec\theta).
\end{align*}
Taking into account that $\cos\theta=1/(2M_A)$ and $M_A\ge 1$,
\[
\a\int_\R J_{\pm}(\a+i\beta)\,d\beta\le
\frac{3\pi}{1-1/(2M_A)}\log (2M_A)\le 6\pi \log(2M_A).
\]
Thus,
\begin{align*}
\a\int_\R \|(\a+i\beta+A)^{-2}\|\,d\beta
&\le 4M_A \left(2\pi+3\log(2M_A)\right) \\
& \le 4(2\pi + 3 \log 2)M_A(1 + \log M_A).
  \qedhere
\end{align*}
\end{proof}

From this we deduce the following result which was first obtained by Schwenninger \cite[Theorem 5.2]{Sch1} with an estimate of the same form as (\ref{V31}) (up to a multiplicative constant).  Previously Vitse \cite[Theorem 1.7]{V1} had obtained an estimate $\|f(A)\| \le 31M_A^3\|f\|_\Bes$. The arguments in \cite{Sch1} and \cite{V1} were based on the sectorial functional calculus and Littlewood-Paley decompositions (see the Appendix).

\begin{cor}  \label{sch}
Let $A \in \Sect(\pi/2-)$.  Then
\begin{equation} \label{V30}
\gamma_A \le 8(2 + \log 2)M_A (\log M_A + 1).
\end{equation}
Hence, for all $f \in \Bes$,
\begin{equation} \label{V31}
\|f(A)\| \le 8(2 + \log 2) M_A (\log M_A+1) \|f\|_\Bes.
\end{equation}
Moreover,
\begin{equation} \label{V32}
f(A) =   f(\infty)
- \frac{2}{\pi} \int_0^\infty\int_{\R}   \alpha (\a-i\b+A)^{-2} {f'(\alpha +i\beta)} \, d\beta\,d\alpha,
\end{equation}
where the integral converges in the operator norm.
\end{cor}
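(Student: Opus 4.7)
The corollary has three assertions, and all three should follow directly from the integral estimate already established in Lemma \ref{EsRR}. The main work was done there; what remains is essentially bookkeeping.

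\emph{Step 1 (the bound on $\gamma_A$).} I would begin from the definition of $\gamma_A$ in \eqref{gammaa}, namely
\[
\gamma_A = \frac{2}{\pi}\sup\bigl\{\|g_{x,x^*}\|_{\Bov_0}: \|x\|=\|x^*\|=1\bigr\}.
\]
For such $x, x^*$ and $\alpha>0$, the trivial bound $|\langle (\alpha+i\beta+A)^{-2}x,x^*\rangle| \le \|(\alpha+i\beta+A)^{-2}\|$ and Lemma \ref{EsRR} give
\[
\alpha\int_\R |\langle(\alpha+i\beta+A)^{-2}x,x^*\rangle|\,d\beta \le 4(2\pi+3\log 2)M_A(\log M_A+1).
\]
Taking the supremum over $\alpha$ and multiplying by $2/\pi$ produces $\gamma_A \le (8/\pi)(2\pi+3\log 2)M_A(\log M_A+1)$. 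Since $24/\pi < 8$ (because $\pi > 3$), this constant is majorised by $8(2+\log 2)$, yielding \eqref{V30}.

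\emph{Step 2 (the norm-estimate \eqref{V31}).} This is immediate: by \eqref{const} (part of Theorem \ref{besc}) one has $\|f(A)\| \le \gamma_A\|f\|_\Bes$ for every $f \in \Bes$, so \eqref{V30} gives \eqref{V31}.

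\emph{Step 3 (operator-norm convergence of the integral).} For a general operator $A$ satisfying \eqref{8.1}, the formula \eqref{fcdef} defines $f(A)$ only as a $w^*$-integral. The point here is that the sectoriality hypothesis upgrades this to absolute convergence in the operator norm. For fixed $\alpha>0$, I would estimate
\[
\int_\R \alpha\,\|(\alpha-i\beta+A)^{-2}\|\,|f'(\alpha+i\beta)|\,d\beta \le \Bigl(\sup_{\beta\in\R}|f'(\alpha+i\beta)|\Bigr)\cdot \alpha\int_\R\|(\alpha-i\beta+A)^{-2}\|\,d\beta,
\]
and, invoking Lemma \ref{EsRR} on the second factor, bound the right-hand side by $C_A \sup_{\beta\in\R}|f'(\alpha+i\beta)|$ with $C_A := 4(2\pi+3\log 2) M_A(\log M_A+1)$. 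Integrating in $\alpha$ then gives
\[
\int_0^\infty \int_\R \alpha\,\|(\alpha-i\beta+A)^{-2}\|\,|f'(\alpha+i\beta)|\,d\beta\,d\alpha \le C_A \|f\|_{\Bq} < \infty.
\]
This absolute convergence shows that the double integral in \eqref{V32} exists as a Bochner integral in $L(X)$, and evaluating it against arbitrary $x \in X$, $x^* \in X^*$ recovers exactly the defining formula \eqref{fcdef}. Hence the operator-norm integral coincides with $f(A) - f(\infty)I$, establishing \eqref{V32}.

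The only mildly delicate point is the arithmetic check $24/\pi \le 8$ needed to replace the natural constant $(8/\pi)(2\pi+3\log 2)$ by the cleaner $8(2+\log 2)$ claimed in \eqref{V30}; otherwise the proof is a direct application of Lemma \ref{EsRR} combined with Theorem \ref{besc} and Fubini.
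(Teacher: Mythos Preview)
Your proposal is correct and follows essentially the same route as the paper's own proof: both derive \eqref{V30} directly from Lemma~\ref{EsRR} and the definition of $\gamma_A$, obtain \eqref{V31} from \eqref{const}, and establish the operator-norm convergence in \eqref{V32} via the integral bound of Lemma~\ref{EsRR} (the paper also mentions the alternative elementary bound $\|(\alpha-i\beta+A)^{-2}\|\le M_A^2(\alpha^2+\beta^2)^{-1}$, which would work equally well). Your explicit arithmetic check that $(8/\pi)(2\pi+3\log 2)\le 8(2+\log 2)$ is a welcome clarification of a step the paper leaves implicit.
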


\begin{proof}
Lemma \ref{EsRR} shows that (\ref{V30}) holds, and the estimate (\ref{V31}) follows immediately.   The  convergence in operator-norm of the integral in (\ref{V32}) follows from either the estimate $\|(\a -i\beta +A)^{-2}\| \le M_A^2 (\a^2+\beta^2)^{-1}$ or from Lemma \ref{EsRR}, and then (\ref{V32}) follows from (\ref{fcdef}).
\end{proof}

Some applications of the $\Bes$-calculus for operators in $\Sect(\pi/2-)$ are discussed in Section \ref{apps}.

\subsection{Compatibility with other calculi}  \null\label{compat2}
It is important to compare the $\mathcal B$-calculus with the other calculi in the literature.
Here the rule of thumb is that all calculi are compatible whenever
they are well-defined, and the $\mathcal B$-calculus does not deviate
from that general principle, as we will see below.

We first remark that according to Theorem \ref{besc}
the $\mathcal B$-calculus is a strict extension of the HP-calculus.
Thus all of the compatibility results for the HP-calculus are valid
in the setting of the $\mathcal B$-calculus restricted to the subalgebra $\LT$ of $\Bes$.
Compatibility results for the HP-calculus can be found in \cite[Proposition 3.3.2]{HaaseB}, \cite{KuWe} and  \cite[Sections 4,5]{BGTMZ}, for example.

In this section we will show that the $\Bes$-calculus is compatible with two other functional calculi, namely the classical sectorial holomorphic functional calculus (see \cite{HaaseB} and \cite{KuWe}) and the half-plane holomorphic functional calculus (see \cite{BHM}).  Their constructions involve a primary functional calculus defined by contour integration, followed by an extension procedure, and the resulting operators may themselves be unbounded.  We refer to \cite[Chapter 1]{HaaseB} for the general background of this theory of functional calculi for unbounded operators, in particular, the notions of primary and extended functional calculus, and the use  of regularisers in the extension procedure.  For the definitions of the primary calculus and the properties of the extended sectorial and half-plane calculi, we refer to \cite[Chapter 2, etc] {HaaseB} and \cite{BHM} respectively.   We shall
 let $f \mapsto \Psi_A(f)$ stand for the (extended) sectorial holomorphic functional calculus, and $f \mapsto \Psi^*(f)$ stand for the (extended) half-plane calculus  whenever they are defined as closed operators.

In considering compatibility, we have to take into account that functions in $\Bes$ are bounded functions  on $\overline{\C}_+$ while the functions appearing in the sectorial calculus are defined on open sectors properly containing $\sigma(A) \setminus \{0\}$, and functions in the half-plane calculus are defined on open half-planes containing $\sigma(A)$.     Consequently for compatibility with each of the two calculi, we consider two situations; one where the spectrum of $A$ is smaller than $\overline{\C}_+$, and one where $f \in \Bes$ extends holomorpically to a larger domain.  In the latter situation, the extension of $f$ may not be bounded.     The techniques of proof follow the pattern of an abstract result in \cite[Proposition 1.2.7]{HaaseB}, but we present them here in more concrete forms.   In particular, we do not present the details in the most general possible form, using arbitrary regularisers which depend on $A$ and $f$, but we present the results for classes of functions $f$ and specific regularisers which do not depend on  $A$.   The proofs can easily be extended to general regularisers.

Thus we give four compatibility results.  The two results for the sectorial calculus are Proposition \ref{sector_comp} where $A$ is sectorial of angle less than $\pi/2$, and Proposition \ref{compat3} for functions in  $\Bes$ which extend holomorphically to a sector of angle greater than $\pi/2$.    The two results for the half-plane calculus are both covered by Proposition \ref{compat}, as it is easy to pass from one to the other, using shifts and replacing  $A$ by $A+\eta$ and correspondingly for the functions.

For $A \in \Sect(\pi/2-)$, a complication is that a Besov function is not necessarily in the domain of  $\Psi_A$.
  If $A$ is injective  then $\Bes$ is contained in the domain of $\Psi_A$, but this is not true if $A$ is not injective (see \cite[Lemma 2.3.8]{HaaseB}, \cite[Example 5.2]{Haaed}).  In that case the domain of the extended calculus is quite small, and there are technical problems when the sector is smaller than $\C_+$.   Consequently our first result is confined to injective sectorial  operators.     It shows that the $\Bes$-calculus and the extended sectorial calculus are compatible in that case.   Variants of this result were shown by Vitse in \cite{V1}, where the construction of the calculus for Besov functions was based on the sectorial calculus.   It is possible to establish compatibility of our $\Bes$-calculus and Vitse's calculus, and hence with sectorial calculus, initially on $\ssp$ and then by approximation arguments on $\Bes$.  Here we give a more direct argument.

\begin{prop}\label{sector_comp}
Let $A \in \Sect(\pi/2-)$ be injective.   Then $\Phi_A(f) = \Psi_A(f)$ for all $f \in \Bes$.
\end{prop}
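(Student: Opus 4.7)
My plan is to use a regulariser to reduce the equality $\Phi_A(f) = \Psi_A(f)$ to compatibility on a class of functions where both calculi admit tractable explicit formulas. Concretely, I would take $e(z) = z/(1+z)^2$, which lies in $\LT \subset \Bes$ (so $\Phi_A(e) = e(A)$ by Lemma \ref{BHP3}), belongs to $\mathcal{A}_\sigma$ for every $\sigma \in (0,\pi)$, and satisfies $e(A) = A(1+A)^{-2}$, which is injective because $A$ is. For $f \in \Bes$ the product $ef$ lies in $\Bes$ (an algebra), inherits from $e$ the polynomial vanishing at $0$ and $\infty$ required for membership in $\mathcal{A}_\sigma$ for any $\sigma \in (\theta,\pi/2)$ where $\theta$ is the sectoriality angle of $A$, and satisfies $(ef)(\infty)=0$. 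In particular $ef$ belongs to the primary domain of $\Psi_A$, and $\Psi_A(ef)$ is given by the usual Dunford--Riesz contour integral along a suitable contour $\Gamma \subset \Sigma_\sigma$.

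The core of the argument is to show $\Phi_A(ef) = \Psi_A(ef)$. Setting $h := ef$ and applying the reproducing formula of Proposition \ref{BHP2} at each $\lambda \in \Gamma \subset \C_+$ gives
\[
h(\lambda) = -\frac{2}{\pi}\int_0^\infty \alpha \int_\R \frac{h'(\alpha+i\beta)}{(\alpha-i\beta+\lambda)^2}\,d\beta\,d\alpha.
\]
Substituting into $\Psi_A(h) = \frac{1}{2\pi i}\int_\Gamma h(\lambda)(\lambda-A)^{-1}\,d\lambda$ and interchanging orders of integration, the inner $\lambda$-integral becomes the primary sectorial calculus evaluated on $\lambda \mapsto (\alpha-i\beta+\lambda)^{-2}$; a standard residue/resolvent identification (valid because $-\alpha+i\beta \in \rho(A)$ under the assumption $\theta < \pi/2$) shows it equals $(\alpha-i\beta+A)^{-2}$. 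Comparing the resulting operator-valued Bochner integral with the one in \eqref{V32} of Corollary \ref{sch} yields $\Phi_A(h) = \Psi_A(h)$.

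With this in hand the proposition follows cleanly. By multiplicativity of $\Phi_A$ (Theorem \ref{besc}),
\[
e(A)\,\Phi_A(f) = \Phi_A(e)\,\Phi_A(f) = \Phi_A(ef) = \Psi_A(ef),
\]
so $\Psi_A(ef)x \in \mathrm{Ran}(e(A))$ for every $x \in X$. Since $e$ is a regulariser for $f$ in the sectorial sense ($e,ef \in \mathcal{A}_\sigma$ and $e(A)$ is injective), the defining formula of the extended sectorial calculus gives $\Psi_A(f) = e(A)^{-1}\Psi_A(ef) = \Phi_A(f)$ on all of $X$.

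I expect the main obstacle to be the Fubini interchange in the core step: one must verify the absolute convergence of the triple integral
\[
\int_\Gamma \!\!\int_0^\infty\!\!\int_\R \alpha\,\frac{|h'(\alpha+i\beta)|\,\|(\lambda-A)^{-1}\|}{|\alpha-i\beta+\lambda|^2}\,d\beta\,d\alpha\,|d\lambda|,
\]
combining the sectorial bound $\|(\lambda-A)^{-1}\| \le M_A/|\lambda|$ on $\Gamma$ with the $\Bes$-control of $h'$ inherited from $\|ef\|_\Bes \le \|e\|_\Bes\|f\|_\Bes$. A secondary technicality is a judicious choice of $\Gamma$, with careful tracking of sector-angle conventions, so that the primary calculus of the resolvent-type function indeed returns $(\alpha-i\beta+A)^{-2}$ with the correct sign.
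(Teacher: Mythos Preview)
Your approach is essentially the same as the paper's: regularise $f$ to land in the primary domain $\mathcal{A}_\sigma$, represent the regularised function via Proposition~\ref{BHP2}, insert this into the Dunford--Riesz integral, swap integrals, and identify the inner contour integral with $(\alpha-i\beta+A)^{-2}$. The choice of regulariser differs slightly (you take $z/(1+z)^2$, the paper takes $z^2/(1+z)^3$), but this is immaterial for the regulariser property itself.

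There is, however, a genuine gap at the Fubini step. Your proposed bound combines $\|(\lambda-A)^{-1}\|\le M_A/|\lambda|$ on $\Gamma=\partial\Sigma_{\theta'}$ with the $\Bes$-control of $h'$, but this is \emph{not} enough for absolute convergence of the triple integral. Indeed, bounding $|h'(\alpha+i\beta)|\le \sup_s|h'(\alpha+is)|$ and integrating $|\alpha-i\beta+\lambda|^{-2}$ over $\beta$ gives $\pi/(\alpha+\Re\lambda)$; the remaining $\lambda$-integral along a ray then contains the factor $\int_0^\infty \frac{dt}{t(\alpha+ct)}$, which diverges logarithmically at $t=0$. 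The decay $h(\lambda)\to 0$ as $\lambda\to 0$ that makes $\Psi_A(h)$ well defined is destroyed once absolute values are taken inside the reproducing formula.

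The paper repairs exactly this point by premultiplying by $A(I+A)^{-1}$ before performing the interchange: the bound $\|A(I+A)^{-1}(z-A)^{-1}\|\le C/(1+|z|)$ removes the singularity at the origin, and afterwards the injectivity of $A(I+A)^{-1}$ is used to cancel the factor. An equivalent fix within your set-up would be to strengthen the regulariser (e.g.\ take $e(z)=z^2/(1+z)^3$ and peel off one factor $z/(1+z)$ as $A(I+A)^{-1}$), or to replace $\Gamma$ by a keyhole contour $\Gamma_\varepsilon$ avoiding the origin, perform the (now legitimate) Fubini for fixed $\varepsilon$, and let $\varepsilon\to0$ using $h(0)=0$. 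Either way, some additional device beyond what you wrote is required.
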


\begin{proof}
Let $A$ be an injective operator in $\Sect(\theta)$, where $\theta \in [0,\pi/2)$.   Then, for any $\theta'\in (\theta,\pi/2)$,
\begin{equation*}\label{RR}
\|A(I+A)^{-1}(z-A)^{-1}\| = \|(z(z-A)^{-1}-1)(1+A)^{-1}\| \le \frac{C_{\theta'}}{1+|z|},\quad z\notin \Sigma_{\pi-\theta'}.
\end{equation*}

Let $f \in \Bes$, and let $g = r_1(1-r_1)^2f  \in \Bes  \cap \mathcal A_{\pi/2}$, where $\mathcal A_{\pi/2}$ is given by \eqref{regular}.  It follows from the definition of $\Psi_A(g)$, Proposition \ref{BHP2} and Fubini's theorem that
\begin{multline*}
 A(I+A)^{-1}\Psi_A(g) = \frac{1}{2\pi i} \int_{\partial \Sigma_{\theta'}} A(I+A)^{-1}(z-A)^{-1} g(z) \, dz  \\
= - \frac{1}{\pi^2 i}
\int_0^\infty \int_{\mathbb R}\left(\int_{\partial\Sigma_{\theta'}} A(1+A)^{-1}(z-A)^{-1}(\alpha-i\beta+z)^{-2} \,dz\right)
\alpha g'(\alpha+i\beta)\,d\beta d\alpha.
\end{multline*}
From the primary functional calculus for invertible sectorial operators, we obtain
\[
\int_{\partial\Sigma_{\theta'}} A(1+A)^{-1} (z-A)^{-1}(\alpha-i\beta+z)^{-2} \,dz=  2\pi i A(1+A)^{-1} (\alpha-i\beta+A)^{-2},
\]
and then, from Corollary \ref{sch},
\begin{align*}\label{equal}
A(1+A)^{-1}\Psi_A(g) &=-\frac{2}{\pi}
\int_0^\infty \int_{\mathbb R}A(1+A)^{-1}(\alpha-i\beta+A)^{-2}\,
\alpha g'(\alpha+i\beta)\,d\beta d\alpha  \\
&= A(1+A)^{-1} \Phi_A(g) = A^2(1+A)^{-3} \Phi_A(f).
\end{align*}
Since $A(1+A)^{-1}$ is injective this implies that $\Psi_A(g) = A(1+A)^{-2}\Phi_A(f)$.   It follows from this and the definition of the extended sectorial calculus that $\Psi_A(f)$ is defined on $X$ and $\Psi_A(f) = \Phi_A(f)$.
\end{proof}

In particular, Proposition \ref{sector_comp} implies that the Composition Rule \cite[Theorem 2.4.2]{HaaseB} for
the sectorial functional calculus can be used within the  $\mathcal B$-calculus for injective $A \in \operatorname{Sect}(\pi/2-)$.

Another compatibility result between the $\Bes$-calculus and the sectorial calculus is the following.  Here we consider functions defined on $\Sigma_\theta$ where $\theta \in (\pi/2,\pi)$.  In this case the domain of the primary calculus is contained in $\Bes$, which simplifies the arguments.  We again denote the corresponding operators by $\Phi_A(f)$ and $\Psi_A(f)$, respectively.\footnote{We are grateful to one of the referees for providing the proof given here, as our original proof was more complicated.}

\begin{prop} \label{compat3}
Assume that $A$ satisfies {\rm(\ref{8.1})}.  Let $f \in \Bes$ and assume that $f$ extends to a function in $ \operatorname{Hol}(\Sigma_\theta)$ for some $\theta \in (\pi/2,\pi)$ and $\Psi_A(f)$ is defined.  Then  $\Phi_A(f) = \Psi_A(f)$.
\end{prop}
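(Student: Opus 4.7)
The plan is to use a regulariser argument to reduce the equality $\Phi_A(f) = \Psi_A(f)$ to the compatibility of the $\Bes$-calculus with the primary sectorial calculus on a common subalgebra, namely $\mathcal{A}_\theta \cap \Bes$, where in turn both agree with the Hille--Phillips calculus on $\LT$-elements.

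Since $\Psi_A(f)$ is defined, the construction of the extended sectorial calculus (see \cite[Section 1.2]{HaaseB}) supplies a regulariser $e$ in the primary sectorial domain $\mathcal{E}(\Sigma_\theta)$ such that $e(A)_p$ is injective, $ef \in \mathcal{E}(\Sigma_\theta)$, and
\[
\Psi_A(f) = e(A)_p^{-1}\,(ef)(A)_p,
\]
where the subscript $p$ denotes the primary sectorial calculus. By multiplying $e$ by a rational factor such as $z(1+z)^{-2}$ if necessary, I would aim to arrange that both $e$ and $ef$ lie in $\mathcal{A}_\theta \cap \Bes$, noting that such a modification requires care when $A$ is not injective.

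The crucial intermediate step is the following compatibility claim: for every $g \in \mathcal{A}_\theta \cap \Bes$ one has $\Phi_A(g) = g(A)_p$. To prove this I would approximate $g$ by $g_\delta(z) := g(z)\,\eta(\delta z)$ with $\eta$ as in \eqref{eee}, which converges pointwise and boundedly to $g$; a variant of Lemma~\ref{PW} combined with the compact Fourier support of $\eta(\delta\cdot)^b$ should yield $g_\delta \in \LT$, so that Lemma~\ref{BHP3} gives $\Phi_A(g_\delta) = g_\delta(A)_{HP}$, and the standard compatibility between the Hille--Phillips and primary sectorial calculi gives $g_\delta(A)_{HP} = g_\delta(A)_p$. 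Passing to the limit via Lemma~\ref{Sext} on the $\Bes$-side and the classical Convergence Lemma on the sectorial side would then yield the claim.

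Applying the claim to $e$ and $ef$ (both in $\mathcal{A}_\theta \cap \Bes$) gives $\Phi_A(e) = e(A)_p$ and $\Phi_A(ef) = (ef)(A)_p$. Multiplicativity of $\Phi_A$ from Theorem~\ref{besc} then yields
\[
e(A)_p\,\Phi_A(f) = \Phi_A(e)\Phi_A(f) = \Phi_A(ef) = (ef)(A)_p,
\]
and injectivity of $e(A)_p$ gives $\Phi_A(f) = e(A)_p^{-1}(ef)(A)_p = \Psi_A(f)$. I anticipate the main obstacle to be the approximation step producing $g_\delta \in \LT$ from general $g \in \mathcal{A}_\theta \cap \Bes$: Lemma~\ref{PW} in its present form requires the non-$\LT$ factor to have compactly supported distributional Fourier transform, which is not guaranteed for $g \in \mathcal{A}_\theta$ with $\theta > \pi/2$. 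Resolving this may require an alternative approximation scheme, for example via the translated functions $z \mapsto g(\delta+z)$ combined with a factor of the form $(1+\delta z)^{-N}$, to simultaneously ensure $\LT$-membership and convergence in the $\Bes$-duality.
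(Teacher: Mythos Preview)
Your overall framework---use a regulariser $e$ so that $e,\,ef\in\mathcal{A}_\theta$, establish $\Phi_A=\Psi_A$ on $\mathcal{A}_\theta$, then use multiplicativity of $\Phi_A$ and injectivity of $e(A)$---is exactly the paper's strategy. The gap is in your ``crucial intermediate step'': you try to prove $\Phi_A(g)=g(A)_p$ for $g\in\mathcal{A}_\theta\cap\Bes$ by approximating with $g_\delta=g\,\eta_\delta$ and invoking Lemma~\ref{PW}, and you correctly flag that this does not work because Lemma~\ref{PW} needs one factor to lie in $H^\infty[0,\sigma]$.

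What you are missing is that this approximation is unnecessary: for $\theta>\pi/2$ one has the inclusion $\mathcal{A}_\theta\subset\LT$ outright (see \cite[Lemma~3.3.1]{HaaseB} or \cite[Theorem~2.6.1]{ABHN}; the point is that a function with the decay in \eqref{regular} on a sector strictly larger than $\C_+$ is automatically the Laplace transform of an $L^1$-function). Hence every $g\in\mathcal{A}_\theta$ already lies in $\LT\subset\Bes$, and both $\Phi_A(g)$ and $\Psi_A(g)$ coincide with the Hille--Phillips value by Lemma~\ref{BHP3} and \cite[Proposition~3.3.2]{HaaseB} respectively. This also dissolves your worries about modifying the regulariser to force $e,ef\in\Bes$ and about non-injective $A$: the regulariser $e\in\mathcal{A}_\theta$ is automatically in $\Bes$, and no factor of $z(1+z)^{-2}$ is needed. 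With this single observation inserted, your argument becomes the paper's proof verbatim.
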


\begin{proof}
For $\theta \in (\pi/2,\pi]$, $\mathcal{A}_\theta \subset \LT$, by \cite[Lemma 3.3.1]{HaaseB} or \cite[Theorem 2.6.1]{ABHN}.  If $f \in \mathcal{A}_\theta$, then $\Phi_A(f)$ and $\Psi_A(f)$ both agree with the Hille-Phillips  calculus applied to $f$, by Lemma \ref{BHP3} and  \cite[Proposition 3.3.2]{HaaseB}, and hence $\Psi_A(f) = \Phi_A(f)$.

Now let $f$ be in the domain of the extended sectorial calculus $\Psi_A$.  Then there is a regulariser  $h\in \mathcal{A}_\theta \subset \Bes$ such that $\Psi_A(h)$ is injective and $h f \in \mathcal{A}_\theta$.    Then
\[
\Psi_A(h)\Phi_A(f) = \Phi_A(h) \Phi_A(f) = \Phi_A(h f) = \Psi_A(h  f).
\]
It follows from this that $\Psi_A(f)$ is defined on $X$ and coincides with $\Phi_A(f)$.
\end{proof}

When  $A$ is injective and $f$ is bounded on $\Sigma_\theta$ one may take the regulariser $h$ in the proof of Proposition \ref{compat3} to be $r_1(1-r_1)$.   Using instead $\left(r_1(1-r_1)\right)^k$ for $k \ge1$, one can apply the result if $f$ is polynomially bounded on $\Sigma_\theta$.

We now consider compatibility of the $\Bes$-calculus $\Phi_A$ with the half-plane holomorphic calculus  $\Psi^*_A$.    Assume that $\sigma(A) \subset \overline{\C}_+$, (\ref{8.1}) holds, and $f \in \Bes$.  We may also assume either of the following additional conditions:
\begin{enumerate}
\item[\rm (C1)] \label{HI} Assume that for some $\ep>0$, $\sigma(A) \subset \RR_\ep := \{z\in \C: \Re z > \ep\}$ and $\|(z+A)^{-1}\|$ is bounded for $z \in \RR_{-\ep}$; or
\item[\rm (C2)]  \label{HII} Assume that $f \in \operatorname{Hol}(\RR_{-\ep})$ for some $\ep>0$, there exist $C,\a\ge0$ such that $|f(z)| \le C(1+|z|)^\a$ for $z \in \RR_{-\ep}$ (or more generally, that $\Psi^*_A(f)$ is defined).
\end{enumerate}
The conditions (C1) and (C2) are, loosely speaking, half-plane versions of the conditions in Propositions \ref{sector_comp} and \ref{compat3}, respectively.   However there are some differences from the sectorial situation.  Firstly, $\Bes$ is contained in the domain of the extended half-plane calculus in both cases, irrespective of whether $A$ is injective.   Moreover it is easy to pass between the conditions (C1) and (C2) by shifts.   

\begin{prop} \label{compat}
Assume that $\sigma(A) \subset \overline{\C}_+$, {\rm(\ref{8.1})} holds, $f \in \Bes$ and either {\rm(C1)} or {\rm(C2)} above holds.  Then $\Psi_A^*(f) = \Phi_A(f)$.
\end{prop}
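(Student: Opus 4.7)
The plan is to mirror the strategy used in the proofs of Propositions \ref{sector_comp} and \ref{compat3}: find a regulariser $h \in \Bes$ such that both $h$ and $hf$ lie in the Hille--Phillips algebra $\LT$, where $\Phi_A$ and $\Psi_A^*$ already coincide by Lemma \ref{BHP3} and the standard compatibility of the half-plane calculus with HP. Multiplicativity of both calculi combined with injectivity of $\Phi_A(h) = \Psi_A^*(h)$ will then force $\Phi_A(f) = \Psi_A^*(f)$.

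I would treat case (C2) first. Fix $\eta > \ep$ and $n \in \N$ with $n > \a + 1$, and set $h(z) = (z+\eta)^{-n}$. Then $h$ is holomorphic on $\RR_{-\ep}$ and $h \in \LT$ (as the Laplace transform of $t^{n-1}e^{-\eta t}/(n-1)!$), while $h(A) = (A+\eta)^{-n}$ is bounded and injective. The product $hf$ is holomorphic on $\RR_{-\ep}$ with $|(hf)(z)| \le C(1+|z|)^{\a - n}$; since $\a - n < -1$, a Bromwich inversion along a contour pushed into $\RR_{-\ep/2}$ produces $g \in L^1(\R_+)$ with exponential decay such that $hf = \lt g$, so $hf \in \LT$. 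Multiplicativity of both calculi then yields
\[
(A+\eta)^{-n}\Phi_A(f) = \Phi_A(hf) = \Psi_A^*(hf) = (A+\eta)^{-n}\Psi_A^*(f),
\]
and injectivity of $(A+\eta)^{-n}$ gives $\Phi_A(f) = \Psi_A^*(f)$.

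For case (C1) I would reduce to (C2) by a shift. Pick $\delta \in (0,\ep)$ and set $B := A - \delta$, $\tilde f(z) := f(z+\delta)$. Hypothesis (C1) gives $\sigma(B) \subset \overline{\C}_+$, and the uniform boundedness of $(z+A)^{-1}$ on $\RR_{-\ep}$, together with a Cauchy-estimate argument for the derivative of the weak resolvent $g^B_{x,x^*}(z) = g^A_{x,x^*}(z-\delta)$, shows that $B$ also satisfies (\ref{8.1}). Moreover $\tilde f$ is bounded and holomorphic on $\RR_{-(\ep - \delta)}$, so the pair $(B,\tilde f)$ satisfies (C2) with $\a = 0$. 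Applying the first case gives $\Phi_B(\tilde f) = \Psi_B^*(\tilde f)$, and Lemma \ref{shifts2}(1) together with the analogous translation property of the half-plane calculus (both calculi commute with $A \leftrightarrow A+\delta$, $f \leftrightarrow f(\cdot+\delta)$) yield $\Phi_A(f) = \Phi_B(\tilde f)$ and $\Psi_A^*(f) = \Psi_B^*(\tilde f)$, completing the proof.

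The principal technical hurdle is the Paley--Wiener step showing $hf \in \LT$ in case (C2): one must convert polynomial growth on $\RR_{-\ep}$ into a genuine Laplace representation, using a contour shift into $\RR_{-\ep/2}$ and verifying both absolute convergence of the boundary integral and vanishing of the inverse Laplace transform on $(-\infty,0)$ via a Cauchy-theorem argument. A secondary bookkeeping obstacle is confirming that (\ref{8.1}) survives the shift $A \mapsto A - \delta$ in case (C1), which uses essentially the full strength of the resolvent bound on the enlarged half-plane $\RR_{-\ep}$.
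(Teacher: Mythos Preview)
Your treatment of case (C2) is correct and is essentially the alternative proof the paper records in the remark immediately following the proposition: a resolvent power $h=r_\eta^n$ regularises $f$, the product $hf$ lands in $\LT$ by an inverse--Laplace argument, and compatibility on $\LT$ plus injectivity of $(A+\eta)^{-n}$ finishes. The paper's \emph{main} proof goes in the opposite order: it handles (C1) first by a direct contour-integral/Fubini computation (inserting the representation of Proposition~\ref{BHP2} into the defining integral for $\Psi_A^*(r_1^2 f)$, exactly as in Proposition~\ref{sector_comp}), and only then reduces (C2) to (C1) by the shift $A\mapsto A+\eta$ followed by a limit $\eta\to 0+$ using the half-plane Convergence Lemma.

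The order matters, and your reduction of (C1) to (C2) has a genuine gap. You set $B=A-\delta$ and assert that $B$ satisfies \eqref{8.1} ``by a Cauchy-estimate argument''. But a pointwise Cauchy bound from the uniform resolvent estimate in (C1) gives only $|(g^A_{x,x^*})'(z)|\le C/(\Re z+\ep)$, which is \emph{not} integrable along vertical lines; it gives no control on $\int_\R |\langle(\gamma+i\beta+A)^{-2}x,x^*\rangle|\,d\beta$ for $\gamma\in(-\delta,0]$. More seriously, by the Gomilko--Shi--Feng characterisation quoted just before \eqref{gsff}, condition \eqref{8.1} for $B=A-\delta$ is \emph{equivalent} to boundedness of $e^{-tB}=e^{\delta t}e^{-tA}$, i.e.\ to exponential decay of $e^{-tA}$ at rate $\delta$. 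In a general Banach space this does not follow from ``bounded semigroup + uniform resolvent bound on a larger half-plane''; you would need a Gearhart-type theorem, which is exactly what fails outside Hilbert space. So the shift in this direction is unsafe, whereas the paper's shift $A\mapsto A+\eta$ (which trivially preserves \eqref{8.1}, since $g^{A+\eta}_{x,x^*}(z)=g^A_{x,x^*}(z+\eta)$ and $\Bov$ is invariant under right shifts) is the one that works. To repair your argument, either prove (C1) directly as the paper does, or reverse the direction of the reduction.
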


\begin{proof}  First, we assume that (C1) holds.    Let $f \in \Bes$ and $g = r_1^2 f$.   Then $g \in \Bes$, $|g(z)| \le C(1+|z|^2)^{-1}$ and $|g'(z)| \le C (\Re z)^{-1} (1+|z|^2)^{-1}$ for some $C$.

 Let $\eta \in (0,\ep)$.  It follows from the definition of $\Psi_A^*(g)$, Proposition \ref{BHP2} and Fubini's theorem that
\begin{multline*}
 \Psi_A^*(g) = \frac{1}{2\pi i} \int_{\Re z = \eta} (z-A)^{-1} g(z) \, dz  \\
= - \frac{1}{\pi^2 i}
\int_0^\infty \int_{\mathbb R}\left(\int_{\Re z = \eta} (z-A)^{-1}(\alpha-i\beta+z)^{-2} \,dz\right)
\alpha g'(\alpha+i\beta)\,d\beta \, d\alpha.
\end{multline*}
From the primary functional calculus for half-plane operators, we obtain
\[
\int_{\Re z = \eta}  (z-A)^{-1}(\alpha-i\beta+z)^{-2} \,dz=  2\pi i  (\alpha-i\beta+A)^{-2}.
\]
For $x \in X$ and $x^* \in X^*$, we have
\begin{align*}\label{equal}
\langle \Psi_A^*(g)x, x^*\rangle  &=-\frac{2}{\pi}
\int_0^\infty \int_{\mathbb R} \langle (\alpha-i\beta+A)^{-2}x,x^* \rangle \,
\alpha g'(\alpha+i\beta)\,d\beta d\alpha  \\
&= \langle \Phi_A(g)x, x^* \rangle.
\end{align*}
Hence $\Psi_A^*(g) = \Phi_A(g) = (1+A)^{-2} \Phi_A(f)$.  It follows from this that $\Psi_A^*(f)$ is defined on $X$ and $\Psi_A^*(f) = \Phi_A(f)$.

Next we assume that (C2) holds for some $\ep>0$ (but (C1) does not hold), and initially we assume that $f \in H^\infty(\RR_{-\ep})$.  Let $\eta \in (0,\ep/2)$.   The operator $A+\eta$ satisfies (C1) with $\ep$ replaced by $\eta$.  From the case above we obtain that
\[
\Phi_{A+\eta}(f) = \Psi_{A+\eta}^*(f).
\]
By Lemmas \ref{shifts2} and \ref{shifts01}, $\Phi_{A+\eta}(f) = \Phi_A(f_\eta)$.  Moreover $\Psi_{A+\eta}^*(f) = \Psi_A^*(f_\eta)$ by a very routine argument, as follows.  For the primary functional calculus where $\Psi_A^*(f)$ is defined by integration on a vertical line, it is a consequence of Cauchy's theorem and an instance of the fact that the integral does not depend on the choice of the vertical line.   The extension to more general functions is a triviality (see \cite[Proposition 1.2.7]{HaaseB}). Now let $\eta\to0+$.  By Lemma \ref{shifts01}(1), $\Phi_{A}(f_\eta) \to \Phi_A(f)$ in the operator norm.  Moreover $(f_\eta)$ is a uniformly bounded family converging pointwise to $f$ on $\RR_{-\ep}$, and the family $\{\Psi_A^*(f_\eta) : \eta\in(0,\ep/2)\}$ is uniformly bounded in $\B(X)$.  By the Convergence Lemma for the half-plane calculus \cite[Theorem 3.1]{BHM}, $\Psi_A^*(f_\eta) \to \Psi_A^*(f)$ in the strong operator topology.   Hence $\Psi_A^*(f) = \Phi_A(f)$, as required.

Next, we assume that (C2) holds and $f$ is polynomially bounded in $\RR_{-\ep}$.  Then  we apply the case above with $f$ replaced by $r_1^n f$ for sufficiently large $n$, and the result follows.   In the more general case, $f$ would be replaced by $hf$ for some regulariser $h$.
\end{proof}

\begin{rems}
\noindent 1.  An alternative proof\footnote{kindly pointed out to us by a referee} of case (C2) proceeds as follows.  Let $f$ satisfy (C2), and let $g = r_1^2 f$.  Then $|g(z)| \le C(1+|z|^2)^{-1}$ and $|g''(z)| \le C(1+|z|^2)^{-1}$ for $z \in \overline{\C}_+$.  Then $\mathcal{F}^{-1}g^b \in L^1(\R)$ and has support in $\R_+$, so $g \in \LT$.    By \cite[Theorem 8.20]{ISEM21}, $\Psi^*_A(g)$ coincides with the Hille-Phillips calculus of $g$ and hence with $\Phi_A(g)$.   Then $\Psi_A^*(f) = \Phi_A(f)$.

\noindent 2.
Proposition \ref{compat} suggests an alternative way to define the $\Bes$-calculus.  For $f \in \Bes$, one could define $f(A+\ep)$ by the half-plane functional calculus, and then show that $f(A+\ep)$ satisfies (\ref{fcdef}).  This implies that $f(A+\ep) \in L(X)$ and $\|f(A+\ep)\| \le C\|f\|_\Bes$ for $f \in \Bes$.  Then one can conclude from Lemma \ref{shifts01} that
\begin{equation*} \label{altdef}
 \lim_{\ep\to0+} f(A+\ep)
\end{equation*}
exists in the operator norm, and this could become the definition of $f(A)$.  This process reverses some of the steps that we have taken.
\end{rems}

\subsection{A Convergence Lemma} \label{conver}

The Convergence Lemma in the holomorphic functional calculus of sectorial operators \cite[Proposition 5.1.4]{HaaseB} is a result of a Tauberian character.  Assume, for simplicity,  that $A \in \Sect(\pi/2-)$ has dense range, and $A$ admits bounded $H^\infty(\mathbb C_+)$-calculus (so $f(A) \in L(X)$ for all $f \in H^\infty(\C_+)$).  Then pointwise convergence of $(f_n)_{n \ge 1}\subset H^{\infty}(\mathbb C_+)$ to $f$ on $\mathbb C_+$, together with $\sup_{n \ge 1}\|f_n\|_{\infty}<\infty$, implies strong convergence of $f_n(A)$ to $f(A)$.  Moreover, $f_n(A) \to f(A)$ in the operator norm if $f_n \to f$ in $H^\infty(\mathbb C_+)$.  The Convergence Lemma allows one to replace convergence of $(f_n)_{n \ge 1}$ in $H^\infty(\mathbb C_+)$ by convergence of $(f_n)_{n \ge 1}$ in a weaker topology at the price of getting only strong convergence of $(f_n(A))_{n \ge 1}$. However, such convergence often suffices in various applications.  See \cite{HaaseB} and \cite{KuWe} for fuller discussions of that, and also for variants of the Convergence Lemma for other types of operators, including the situation when the $H^\infty(\mathbb C_+)$-calculus for $A$ is unbounded.
Several instances of applications of the Convergence Lemma for sectorial or half-plane operators can be found in Section \ref{compat2} of this paper.

Since the $\Bes$-calculus is compatible with the sectorial and half-plane calculi the Convergence Lemmas for those calculi can be applied to the $\Bes$-calculus, when the assumptions of Proposition \ref{sector_comp}, Proposition \ref{compat3} or Proposition \ref{compat} hold uniformly for the approximating sequence.  In particular all the functions must be holomorphic on the same open sector or half-plane.

Corollary \ref{concor} below is a Convergence Lemma which is specific to the $\Bes$-calculus.  It applies to operators whose spectrum may be as large as $\overline \C_+$ and functions which may not extend beyond $\overline\C_+$, and pointwise convergence of $(f_n)_{n\ge1}$ is assumed only on $\overline\C_+$.  This is compensated by adding an assumption on the behaviour of $(f_n')_{n \ge 1}$ near the imaginary axis.   We shall deduce the Convergence Lemma from the following result about convergence in operator norm.\footnote{A helpful suggestion from a referee led us to this result.}

\begin{thm}\label{conlem}
Let $A$ be a densely defined
operator on a Banach space $X$ such that $\sigma(A)\subset \overline{\C}_{+}$ and \eqref{8.1} holds.
Let $(f_n)_{n\ge 1}\subset \mathcal{B}$ be such that
$
\sup_{n\ge 1}\,\|f_n\|_{\mathcal{B}}<\infty.
$
Assume that for every $z\in \C_{+}$ there exists
\begin{equation}\label{Conv21}
f_0(z):=\lim_{n\to\infty}\,f_n(z) \in \C,
\end{equation}
and for every $r>0$ one has
\begin{equation}\label{gz}
\lim_{\delta\to 0+}\,\int_0^\delta \sup_{|\beta|\le r}\,|f_n'(\alpha+i\beta)|\,d\alpha=0,
\end{equation}
uniformly in $n$. Let $g \in {\rm Hol}\,(\mathbb C_+)$ be such that $g'\in H^1(\mathbb C_+)$ and $g(\infty)=0$,
and let
$
g_n(z):=f_n(z)g(z), n\ge 0.
$
Then $f_0\in \mathcal{B}$, $g_n\in \mathcal{B}_0, n \ge 0,$ and
\[
\lim_{n\to\infty}\,\|g_n(A)-g_0(A)\|=0.
\]
\end{thm}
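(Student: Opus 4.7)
The plan is to reduce the operator-norm convergence to a Besov-norm convergence $\|h_n g\|_{\Bq} \to 0$, where $h_n := f_n - f_0$. First I would verify the preliminary claims. By Proposition \ref{balg}(1) the closed ball of radius $M := \sup_n \|f_n\|_{\Bes}$ in $\Bes$ is compact in the topology of uniform convergence on compact subsets of $\C_+$; together with \eqref{Conv21} and Vitali's theorem this forces $f_0 \in \Bes$, the convergence $f_n \to f_0$ to be locally uniform on $\C_+$, and (by Cauchy's integral formula) $f_n' \to f_0'$ locally uniformly. Fatou's lemma applied to \eqref{gz} yields the same property for $f_0$, and hence for $h_n$ uniformly in $n$. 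Since $g' \in H^1(\C_+)$ and $g(\infty) = 0$, Proposition \ref{prim} gives $g \in \Bq$ and, moreover, $g$ extends continuously to the one-point compactification of $\overline{\C}_+$ with value $0$ at infinity. Because $\Bq = \ker(f \mapsto f(\infty))$ is an ideal of $\Bes$, $g_n = f_n g \in \Bq$ for every $n \ge 0$. Combining Theorem \ref{besc} with the inequality $\|h\|_{\Bes} \le 2\|h\|_{\Bq}$ for $h \in \Bq$ (from Proposition \ref{besprop}(2)) yields
\[
\|g_n(A) - g_0(A)\| \le 2 \gamma_A \|h_n g\|_{\Bq},
\]
so it suffices to show that $\|h_n g\|_{\Bq} \to 0$.

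For the core estimate, I would expand $(h_n g)' = h_n' g + h_n g'$ and split the integral defining $\|h_n g\|_{\Bq}$ over $\alpha \in (0,\infty)$ into three pieces $(0, \delta]$, $[\delta, R]$, $[R, \infty)$, and, for $\alpha \in (0, R]$, further split $\beta \in \R$ into $[-S, S]$ and its complement. Given $\ep > 0$, I would choose the parameters in the order $R$, $S$, $\delta$, $N$ so that
\begin{enumerate}[\rm(i)]
\item $\sup_{\Re z \ge R} |g(z)| < \ep$ and $\int_R^\infty \sup_\beta |g'(\alpha+i\beta)|\,d\alpha < \ep$;
\item $\sup_{|\beta| \ge S,\,\alpha \ge 0} |g(\alpha+i\beta)| < \ep$ (by the uniform vanishing of $g$ at infinity on $\overline\C_+$) and $\int_0^R \sup_{|\beta|\ge S}|g'(\alpha+i\beta)|\,d\alpha < \ep$ (by dominated convergence as $S \to \infty$, combining the Poisson representation $g'(\alpha+i\beta) = (P_\alpha *(g')^b)(\beta)$ with $(g')^b \in L^1(\R)$ and the $\Bq$-integrability of $\sup_\beta |g'|$);
\item $\int_0^\delta \sup_\beta |g'(\alpha+i\beta)|\,d\alpha < \ep$ and, by \eqref{gz}, $\sup_n \int_0^\delta \sup_{|\beta|\le S}|h_n'(\alpha+i\beta)|\,d\alpha < \ep$;
\item $\sup_{[\delta, R] \times [-S, S]} (|h_n| + |h_n'|) < \ep$ for $n \ge N$, by the locally uniform convergence of $h_n$ and $h_n'$.
\end{enumerate}
Straightforward bookkeeping on each of the four resulting regions, using only $\|h_n\|_\infty,\, \|h_n\|_{\Bq} \le 2M$, then gives $\|h_n g\|_{\Bq} = O(\ep)$ for $n \ge N$, uniformly in $n$.

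The main obstacle, and the reason both hypotheses \eqref{gz} and $g' \in H^1(\C_+)$ are essential, is that $\sup_{\beta \in \R}|h_n'(\alpha+i\beta)|$ need not tend to $0$ even at a fixed $\alpha > 0$: the Besov functions $f_n$ may oscillate as $|\beta| \to \infty$ along vertical lines, so one cannot bring the limit $n \to \infty$ under the integral defining $\|\cdot\|_{\Bq}$. The $\beta$-cutoff $S$ is therefore indispensable; \eqref{gz} tames $f_n'$ near the imaginary axis for bounded $\beta$, while $g' \in H^1$ forces the decay of $g'$ as $|\beta| \to \infty$ on strips bounded away from $i\R$. These decay properties of $g$ and $g'$ in the $\beta$-direction absorb the full $\beta$-supremum of $|h_n'|$ in the ``bad'' pieces, so that the trivial locally uniform convergence of $h_n'$ suffices on the compact inner rectangle.
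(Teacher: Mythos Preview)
Your proposal is correct and takes a route that differs from the paper's in a useful way. The paper works directly with the weak pairing $\langle g_n(A)x, x^*\rangle$ via the defining formula \eqref{fcdef}, splitting the area integral over $\C_+$ into pieces determined by a half-disk $\mathbb D_r$ and an $\alpha$-strip near $i\R$, and invoking the resolvent integrability \eqref{8.2} explicitly on each piece; uniformity over $\|x\|=\|x^*\|=1$ then yields operator-norm convergence. You instead establish the stronger, purely function-theoretic statement $\|h_n g\|_{\Bes}\to 0$ and invoke Theorem~\ref{besc} only at the end. This cleanly separates the analytic content (convergence in $\Bes$) from the operator-theoretic consequence, and incidentally shows that the rate of convergence of $g_n(A)$ is uniform over all admissible $A$ once $\gamma_A$ is fixed. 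The paper's approach, by keeping the resolvent in play, might in principle tolerate weaker hypotheses on $g$, but as written both routes use the full strength of $g'\in H^1(\C_+)$ together with $g(\infty)=0$.

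One small caveat in your bookkeeping: on the inner rectangle $[\delta,R]\times[-S,S]$ the contribution of $h_n' g$ is bounded by $(R-\delta)\|g\|_\infty\cdot\sup_{\text{rect}}|h_n'|$, and since $R=R(\ep)$ was chosen depending on $\ep$, requiring in (iv) that this sup be $<\ep$ does not literally give an $O(\ep)$ bound. The fix is trivial: because $N$ is chosen last, strengthen (iv) to $\sup_{\text{rect}}(|h_n|+|h_n'|)<\ep/(1+R)$, which the locally uniform convergence of $h_n$ and $h_n'$ permits; the rest of the argument then goes through unchanged.
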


\begin{proof}    By Proposition \ref{balg}(1),  $f_0\in \mathcal{B}$.   Replacing $f_n$ by $f_n-f_0,$ we may assume that $f_0=0$.   By Proposition \ref{prim},  $g \in \mathcal B_0$, and then $g_n\in \mathcal{B}_0, n \ge 0$.

Let $x\in X$, $x^{*}\in X^{*}$ with $\|x\|=\|x^{*}\|=1$, and $n\ge 1$.
Writing $z=\alpha+i\beta$ and $dS(z)$ for area measure on $\C_{+}$, we obtain from the definition \eqref{fcdef} of $g_n(A)$ that
\begin{align*}
\langle g_n(A)x,x^{*}\rangle&=-\frac{2}{\pi}\int_{\C_{+}}\alpha \langle (\overline{z}+A)^{-2}x,x^{*} \rangle f_n'(z)g(z)\,dS(z)\\
&\phantom{XX} -\frac{2}{\pi}\int_{\C_{+}}\alpha \langle (\overline{z}+A)^{-2}x,x^{*} \rangle f_n(z)g'(z)\,dS(z)\\
&=I_n+J_n.
\end{align*}
Recall that if \eqref{8.1} holds, then $-A$ generates a $C_0$-semigroup $(e^{-tA})_{t \ge 0}$ in $X$ such that  $K_A=\sup_{t \ge 0}\|e^{-tA}\|<\infty$.
Using \eqref{8.1},  \eqref{8.2}, \eqref{gammaa}, and  a Hille-Yosida estimate for $-A$,
for fixed $\delta\in (0,1)$,  we have
\begin{align*}
|J_n| &\le \frac{2\sup_{n \ge 1}\|f_n\|_\infty}{\pi}\left(\int_0^\delta+\int_{1/\delta}^\infty\right)
\alpha \int_{-\infty}^\infty|\langle (\overline{z}+A)^{-2}x,x^{*}\rangle|\,|g'(z)|\,dS(z)\\
& \phantom{XX} +\frac{2}{\pi}\int_{\delta}^{1/\delta}\int_{\R}\alpha |\langle (\overline{z}+A)^{-2}x,x^{*}\rangle|\,|f_n(z)||g'(z)|\,dS(z)\\
&\le  \gamma_A\sup_{n \ge 1}\|f_n\|_\infty \left(\int_0^\delta+\int_{1/\delta}^\infty\right)
\sup_{\beta\in \R}\,|g'(\alpha+i\beta)|\,d\alpha\\
& \phantom{XX} +\frac{2K_A}{\pi}\int_{\delta}^{1/\delta}{\alpha}^{-1}\int_{\R} |f_n(\alpha+i\beta)||g'(\alpha+i\beta)| \, d\beta\, d\alpha.
\end{align*}
Since $g \in \mathcal B$, the first two integrals in brackets above tend to 0 as $n\to\infty$.
Since $f_n(z)\to 0$ as $n\to\infty$ for any $z\in \C_{+}$, and $g' \in H^1(\mathbb C_+)$,  the third integral above  tends also to 0 as $n\to\infty$, by the  dominated convergence theorem.
So $J_n\to 0$ as $n\to \infty$, uniformly for $x$ and $x^*$ with $\|x\|=\|x^{*}\|=1$.

For $r>0$ let $\mathbb D_r=\{z\in \C_{+}:\,|z|<r\}$. For $I_n$, we consider separately
the integrals over $\C_{+}\setminus \mathbb D_r$ and over $\mathbb D_r$, where $r$ is to be chosen.
We consider first
\[
K_1(r,n):=-\frac{2}{\pi}\int_{\C_{+}\setminus \mathbb D_r}\alpha \langle (\overline{z}+A)^{-2}x,x^{*} \rangle f_n'(z)g(z)\,dS(z).
\]
Using \eqref{8.2} again,
\[
|K_1(r,n)|\le \gamma_A \sup_{|z|\ge r}\,|g(z)| \sup_{n \ge 1} \|f_n\|_{\mathcal B},
\]
where the right-hand side goes to zero as $r \to \infty$ by the assumption that $g(\infty)=0$ and  Proposition \ref{prim}.

Let $\ep>0$. We may take $r>0$ so large that $$|K_1(r,n)|\le \ep$$
for all $n\in \N$ and all $x\in X$, $x^{*}\in X^{*}$
with $\|x\|=\|x^{*}\|=1$. Now consider
\[
K_2(r,n):=-\frac{2}{\pi}\int_{\mathbb D_r}\alpha \langle (\overline{z}+A)^{-2}x,x^{*}\rangle f_n'(z)g(z)\,dS(z).
\]
We have
\begin{align*}
|K_2(r,n)| &\le \frac{2 \|g\|_\infty}{\pi}\int_0^r \sup_{|\b|\le r}\,|f_n'(\a+i\b)| \,\alpha \int_{-r}^r |\langle(\overline{z}+A)^{-2}x,x^{*}\rangle|\,d\beta\, d\alpha\\
&\le \gamma_ A \|g\|_\infty \int_0^r \sup_{|\beta|\le r}\,|f_n'(\alpha+i\beta)|\, d\alpha\\
&= \gamma_A \|g\|_\infty \left(\int_0^\delta \sup_{|\beta|\le r}\,|f_n'(\alpha+i\beta)|\, d\alpha+
\int_\delta^r \sup_{|\beta|\le r}\,|f_n'(\alpha+i\beta)|\, d\alpha\right).
\end{align*}
By the assumption \eqref{gz}, there exists $\delta>0$ such that the first integral in the last line
of the display is less than $\ep$ for all $n\in \N$. By Vitali's theorem, $(f_n')_{n\ge 1}$ converges uniformly to zero on compact subsets of $\C_{+}$.
Thus, for fixed $r$ and $\delta$, the second integral converges to zero as $n\to\infty$. Let $N=N(r,\delta)$ be such that the second integral is less than $\ep$ for all $n\ge N$.
Then, summing up the estimates for $K_1(r,n)$ and $K_2(r,n),$ one has
\[
|I_n|\le \ep(1 + 2 \gamma_A \|g\|_\infty)
\]
for all $n\ge N$, and the estimate is uniform for $x$ and $x^*$ with  $\|x\|=\|x^{*}\|=1$.
It follows that $\|g_n(A)\|\to 0$ as $n\to\infty$, as required.
\end{proof}

\begin{cor} \label{concor}
Let $A$, $f_n$ and $f_0$ be as in Theorem \ref{conlem}.   Then,  for every $x\in X$,
\[
\lim_{n\to\infty}\,\|f_n(A)x-f_0(A)x\|=0.
\]
\end{cor}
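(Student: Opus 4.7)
The plan is to deduce the strong convergence from the norm convergence in Theorem \ref{conlem} by choosing a single convenient regulariser $g$, applying Theorem \ref{conlem} to obtain norm convergence of $(f_ng)(A)$, and then extending to all of $X$ via density and uniform boundedness.

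First I would observe that the hypotheses of Theorem \ref{conlem} together with Theorem \ref{besc} give a uniform bound
\[
M := \sup_{n\ge 0}\,\|f_n(A)\| \le \gamma_A \sup_{n\ge 0}\,\|f_n\|_{\mathcal B} < \infty,
\]
since $\|f_0\|_{\mathcal B} \le \liminf_n \|f_n\|_{\mathcal B}$ by Proposition \ref{balg}(1). This reduces the task to proving $f_n(A)x \to f_0(A)x$ for $x$ ranging over a dense subset of $X$.

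Next I would choose $g := r_1 \in \mathcal B$, that is $g(z) = (z+1)^{-1}$. Then $g(\infty)=0$, and for every $x>0$,
\[
\int_{\mathbb R}|g'(x+iy)|\,dy = \int_{\mathbb R}\frac{dy}{(x+1)^2+y^2} = \frac{\pi}{x+1} \le \pi,
\]
so $g'\in H^1(\mathbb C_+)$. Thus $g$ satisfies all the hypotheses of Theorem \ref{conlem}, and that theorem yields
\[
\bigl\|(f_n g)(A) - (f_0 g)(A)\bigr\| \longrightarrow 0, \qquad n\to\infty.
\]
Multiplicativity of the $\mathcal B$-calculus (Theorem \ref{besc}) gives $(f_n g)(A) = f_n(A)(1+A)^{-1}$, so
\[
\bigl\|f_n(A)(1+A)^{-1} - f_0(A)(1+A)^{-1}\bigr\| \longrightarrow 0.
\]
Consequently, for every $y \in X$, setting $x = (1+A)^{-1}y$, we have $f_n(A)x \to f_0(A)x$ in norm. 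Since $-A$ generates a bounded $C_0$-semigroup (by the standing hypothesis \eqref{8.1}; see the remarks around \eqref{gsff}), the range of $(1+A)^{-1}$ is $D(A)$, which is dense in $X$.

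Finally, given an arbitrary $x\in X$ and $\varepsilon>0$, choose $x_\varepsilon \in D(A)$ with $\|x-x_\varepsilon\| < \varepsilon/(2M)$. Then for all $n$,
\[
\|f_n(A)x - f_0(A)x\| \le 2M\|x-x_\varepsilon\| + \|f_n(A)x_\varepsilon - f_0(A)x_\varepsilon\| \le \varepsilon + \|f_n(A)x_\varepsilon - f_0(A)x_\varepsilon\|,
\]
and the second term tends to $0$ by the previous step. Letting $\varepsilon\to 0$ completes the proof. The only point requiring care is the verification that the chosen regulariser $g = r_1$ meets all the hypotheses of Theorem \ref{conlem}; the rest is a standard density plus uniform boundedness argument.
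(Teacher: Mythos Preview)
Your proof is correct and follows essentially the same approach as the paper: choose the regulariser $g(z)=(1+z)^{-1}$, apply Theorem \ref{conlem} to obtain $\|f_n(A)(1+A)^{-1}-f_0(A)(1+A)^{-1}\|\to 0$, and then pass from $D(A)$ to all of $X$ by density and uniform boundedness. The only cosmetic difference is that the paper first reduces to $f_0=0$, whereas you keep $f_0$ general throughout; both are fine.
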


\begin{proof}
We assume that $f_0=0$.  Let $g(z)=(1+z)^{-1}$.  By Theorem \ref{conlem}, $\lim_{n\to\infty} \|f_n(A)(1+A)^{-1}\| = 0$.  It follows that $\lim_{n \to \infty} f_n(A)x = 0$, for each $x \in D(A)$.
By the boundedness of the $\mathcal B$-calculus we have $\sup_{n\ge 1} \|f_n(A)\| <\infty$.    Since $D(A)$
is dense in $X$, we have $\lim_{n\to \infty} f_n(A) = 0$ strongly.
\end{proof}

\begin{rems}  1.  The assumption (\ref{gz}) in Theorem \ref{conlem} implies that $f_n(i\beta)=\lim_{\a\to0+}\,f_n(\a+i\beta)$ uniformly in $n$, and hence (\ref{Conv21}) holds also for $z\in i\R$.   On the other hand, the assumptions do not imply that $f_0(\infty) = \lim_{n\to\infty} f_n(\infty)$, as Example \ref{cvex}a) below shows.

\noindent 2.  When $g(z) = (1+z)^{-1}$, the estimation of $J_n$ in the proof of Theorem \ref{conlem} can be simplified.   Applying \eqref{boves} with $g = g_{x,x^*}$ and $h(z) = f_n(z)(1+z)^{-2}$, and using $|1+z| \ge 1+\alpha$ and \eqref{gammaa}, we have
\begin{align*}
|J_n| &\le \frac{2}{\pi}\int_{\mathbb C_{+}} \frac{\alpha}{(1+\alpha)^{3/2}}
 \left|\langle(\overline{z}+A)^{-2}x, x^*\rangle\right| \left|\frac{f_n(z)}{(1+z)^{1/2}}\right| \,dS(z) \\
&\le 2 \gamma_A \int_0^\infty (1+\alpha)^{{-3/2}}\sup_{\Re w = \a} \left| \frac{f_n(w)}{(1+w)^{1/2}}\right|\,d\a.
\end{align*}
The dominated convergence theorem can be applied to this integral.
\end{rems}

Now we illustrate Corollary \ref{concor} for two natural choices of $(f_n)_{n \ge 1}$.

\begin{exas} \label{cvex}  Let $f \in \Bes$ and let $A$ be densely defined and satisfy \eqref{8.1}.

\noindent a) Consider $f_n(z) = f(z/n) \in \Bes,\, n \ge 1$.
Then $\|f_n\|_\mathcal B=\|f\|_\mathcal B$ (see Lemma \ref{shifts01}), and $f_n(z) \to f(0)$ as $n \to \infty$, for each $z \in \overline{\mathbb C}_+$.
Note that
\begin{multline*}
\lim_{\delta \to 0+} \int_{0}^{\delta}\sup_{|\beta|\le r}|f'_n(\alpha +i\beta)|\,d\alpha
=\lim_{\delta \to 0+}\int_{0}^{\delta}\frac{1}{n}\sup_{|\beta|\le r}|f'(\alpha/n +i\beta/n)|\,d\alpha\\
\le \lim_{\delta \to 0+}\int_{0}^{\delta} \sup_{\beta\in \mathbb R}|f'(\alpha +i\beta)|\,d\alpha
=0,
\end{multline*}
by the monotone convergence theorem.
Thus the conditions of Theorem \ref{conlem} hold, and $f_n(A) \to f(0)I$ strongly as $n \to \infty$.
When $f(z)=e^{-z}$ this conclusion agrees with the fact that $(e^{-tA})_{t \ge 0}$ is strongly continuous
at zero. Note that this example also shows that one cannot in general replace strong convergence in Corollary \ref{concor} by convergence in operator norm.  Indeed, $A$ is bounded if $e^{-A/n}\to I$ in $L(X)$.

\noindent b) Now let $f_n(z) = f(nz) \in \Bes, n \ge 1$. Then as above,
$\|f_n\|_\mathcal B=\|f\|_\Bes$, and $f_n(z) \to f(\infty)$ as $n \to \infty$, for each $z \in \C_+$.  Observe that
\begin{align*}
 \int_{0}^{\delta}\sup_{|\beta|\le r}|f'_n(\alpha +i\beta)|\,d\alpha
\ge \int_{0}^{\delta} n |f'(n\alpha)|\,d\alpha
=\int_{0}^{\delta n} |f'(\alpha)|\,d\alpha
= \int_{0}^{1} |f'(\alpha)|\,d\alpha
\end{align*}
if $\delta=1/n$. Thus the condition (\ref{gz}) of Theorem \ref{conlem} is violated.  When $f(z)=e^{-z}$, then $f_n(z)$ does not converge to $f(\infty)=0$ for $z \in i\R$ and $(f_n(A))_{n \ge 1}$ does not necessarily converge strongly to zero.
\end{exas}

\subsection{Spectral inclusion and mapping} \label{spinc}

With a few exceptions, given a semigroup generator $-A$, the spectral ``mapping'' theorem
for a functional calculus $\Upsilon_A$ usually takes the form of the spectral inclusion $f(\sigma(A))\subset \sigma(\Upsilon_A(f))$.
In general, equality fails dramatically, even for the very natural functions  $e^{-tz}$
and for rather simple operators $A$; see \cite[Section IV.3]{EN}, for example.
While one may expect only the spectral inclusion as above,
 the equality $f(\sigma(A))\cup \{f(\infty)\} = \sigma(\Upsilon_A(f))\cup \{f(\infty)\}$ sometimes holds if $A$ inherits some properties of bounded operators such as sharp resolvent estimates.
The statement below shows that the $\mathcal B$-calculus
possesses the standard spectral features.

\begin{thm}  \label{SIT}
Let $A$ be a densely defined operator on a Banach space $X$ such that $\sigma(A) \subset \overline{\C}_+$ and {\rm(\ref{8.1})} holds.  Let $f \in\Bes$ and $\l \in \C$.
\begin{enumerate} [\rm1.]
\item  If $x \in D(A)$ and $Ax = \l x$, then $f(A)x = f(\l)x$.
\item  If $x^* \in D(A^*)$ and $A^*x^* = \l x^*$, then $f(A)^*x^* = f(\l)x^*$.
\item  If $\l \in \sigma(A)$ then $f(\l) \in \sigma(f(A))$.
\item  If $A \in \Sect(\pi/2-)$, then $\sigma(f(A)) \cup \{f(\infty)\} = f(\sigma(A)) \cup \{f(\infty)\}$.
\end{enumerate}
\end{thm}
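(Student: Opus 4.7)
The plan is to handle the four assertions in order. Parts 1 and 2 follow immediately from the defining formula; Part 3 is the substantive spectral inclusion; Part 4 combines Part 3 with a sectorial approximation.

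For Part 1 I would substitute $(\alpha-i\beta+A)^{-2}x=(\alpha-i\beta+\lambda)^{-2}x$ into \eqref{fcdef}, reducing the double integral to $-\langle r_\lambda,f\rangle_\Bes\langle x,x^*\rangle$ with $r_\lambda(z)=(z+\lambda)^{-1}$; Proposition \ref{BHP2} evaluates this as $(\pi/2)(f(\infty)-f(\lambda))\langle x,x^*\rangle$, and the constants collapse to $f(\lambda)\langle x,x^*\rangle$. Part 2 is the identical computation once one notes that in the Banach-space duality the adjoint of $(\alpha-i\beta+A)^{-2}$ is $(\alpha-i\beta+A^*)^{-2}$ (no complex conjugation, since the pairing is bilinear), so $A^*x^*=\lambda x^*$ reproduces the same integrand.

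For Part 3 I would perturb $A$. For every $\epsilon>0$, $A+\epsilon$ satisfies $\sigma(A+\epsilon)\subset\overline{\RR_\epsilon}$ and by Hille--Yosida its resolvent is bounded on $\RR_{-\epsilon/2}$, so hypothesis (C1) of Proposition \ref{compat} applies and $\Phi_{A+\epsilon}(f)$ coincides with the half-plane calculus $\Psi_{A+\epsilon}^*(f)$. For $\lambda_0:=\lambda+\epsilon\in\sigma(A+\epsilon)$, which is now at positive distance from $i\R$, the function $g(z):=(f(z)-f(\lambda_0))/(z-\lambda_0)$ belongs to $H^\infty(\C_+)$ (the singularity at $\lambda_0$ is removable and $g$ decays at infinity). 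Applying the half-plane calculus to the identity $g(z)\bigl[1-(\mu+\lambda_0)r_\mu(z)\bigr]=(f(z)-f(\lambda_0))r_\mu(z)$, where $\mu>0$ and the bracketed factor is just $(z-\lambda_0)r_\mu(z)$ disguised to remain bounded, yields
\[
g(A+\epsilon)(A+\epsilon-\lambda_0)(\mu+A+\epsilon)^{-1}=(f(A+\epsilon)-f(\lambda_0))(\mu+A+\epsilon)^{-1}.
\]
If $f(\lambda_0)$ were in $\rho(f(A+\epsilon))$ the right-hand side would be invertible, producing a bounded two-sided inverse of $A+\epsilon-\lambda_0$ and contradicting $\lambda_0\in\sigma(A+\epsilon)$. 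Hence $f(\lambda+\epsilon)\in\sigma(f(A+\epsilon))=\sigma(f_\epsilon(A))$. Sending $\epsilon\to 0+$ with $\|f_\epsilon-f\|_\Bes\to0$ (Lemma \ref{shifts01}(1) and boundedness of $\Phi_A$) and $f(\lambda+\epsilon)\to f(\lambda)$ by uniform continuity of $f$ on $\overline{\C}_+$ (Proposition \ref{besprop}(4)), upper semi-continuity of the spectrum delivers $f(\lambda)\in\sigma(f(A))$.

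For Part 4 the inclusion $f(\sigma(A))\cup\{f(\infty)\}\subset\sigma(f(A))\cup\{f(\infty)\}$ follows from Part 3. For the reverse inclusion I would perturb again to $A+\epsilon$, which is now injective and sectorial with spectrum bounded away from $0$; Proposition \ref{sector_comp} identifies $\Phi_{A+\epsilon}(f)$ with the McIntosh sectorial calculus, and the classical spectral mapping theorem (see \cite[Theorem 2.7.8]{HaaseB}) yields $\sigma(f(A+\epsilon))\cup\{f(\infty)\}=f(\sigma(A+\epsilon))\cup\{f(\infty)\}$. For $\zeta\in\sigma(f(A))$, upper semi-continuity furnishes $\zeta_\epsilon\in\sigma(f(A+\epsilon))$ with $\zeta_\epsilon\to\zeta$; writing $\zeta_\epsilon=f(\lambda_\epsilon)$ or $\zeta_\epsilon=f(\infty)$ with $\lambda_\epsilon\in\sigma(A)+\epsilon$, a subsequence either has $\lambda_\epsilon\to\lambda\in\sigma(A)$ (so $\zeta=f(\lambda)$) or $\lambda_\epsilon\to\infty$ (so $\zeta=f(\infty)$ via Proposition \ref{besprop}(1)). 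The main obstacle is the factorization inside Part 3: one cannot expect $g\in\Bes$ in general (the na\"ive pointwise bound on $g'$ is non-integrable near $i\R$ even when $\lambda_0$ is at positive distance from it), so the argument must be routed through the half-plane calculus --- which only requires $g\in H^\infty(\C_+)$ --- via Proposition \ref{compat}.
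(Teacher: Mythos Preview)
Your approaches to Parts~1 and~2 are fine and match the paper. The limiting step at the end of Part~3 (passing from $A+\epsilon$ to $A$ via upper semi-continuity of the spectrum) is also correct. However, there are genuine gaps in both Part~3 and Part~4.

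\textbf{Part 3.} The factorisation $f(z)-f(\lambda_0)=g(z)(z-\lambda_0)$ gives, in the half-plane calculus, only $g(B)(B-\lambda_0)\subset f(B)-f(\lambda_0)$ and $(B-\lambda_0)g(B)\subset f(B)-f(\lambda_0)$ as inclusions of closed operators, where $B=A+\epsilon$. From the first you obtain injectivity of $B-\lambda_0$ if $f(B)-f(\lambda_0)$ is invertible. But surjectivity requires $(f(B)-f(\lambda_0))^{-1}(X)\subset D(g(B))$, which you have not established; since $g(z)/(f(z)-f(\lambda_0))=(z-\lambda_0)^{-1}$ has a pole in $\C_+$, there is no reason to expect this. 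Your claim that ``the right-hand side would be invertible'' is also false as stated: $(f(B)-f(\lambda_0))(\mu+B)^{-1}$ is a bijection from $X$ onto $D(B)$, not onto $X$. The paper bypasses all of this by working in the bicommutant algebra $\mathcal{A}$: it produces an operator-norm convergent integral representation of $f(A+\epsilon)(1+A+\epsilon)^{-2}$ (using boundedness of the resolvent on a left half-plane), applies a character $\gamma$ with $\gamma((1+A+\epsilon)^{-1})=(1+\lambda+\epsilon)^{-1}$, and reads off $\gamma(f(A+\epsilon))=f(\lambda+\epsilon)$ directly.

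\textbf{Part 4.} Your reverse inclusion relies on the claim that for $\zeta\in\sigma(f(A))$ one can find $\zeta_\epsilon\in\sigma(f(A+\epsilon))$ with $\zeta_\epsilon\to\zeta$. This is \emph{lower} semi-continuity of the spectrum, which fails in general for bounded operators (only upper semi-continuity holds under norm perturbation). The paper instead uses that for sectorial $A$ the defining integral \eqref{V32} converges in operator norm, so one can apply an arbitrary character $\gamma$ of $\mathcal{A}$ directly to $f(A)$: either $\gamma$ kills the resolvent (giving $\gamma(f(A))=f(\infty)$) or it corresponds to some $\mu\in\sigma(A)$ (giving $\gamma(f(A))=f(\mu)$). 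This yields $\sigma(f(A))\subset f(\sigma(A))\cup\{f(\infty)\}$ without any limiting procedure.
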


\begin{proof}
1.  From (\ref{fcdef}) and Proposition \ref{BHP2}, for all $x^* \in X^*$, we have
\begin{align*}
\langle f(A)x,x^* \rangle &= f(\infty) \langle x,x^* \rangle - \frac{2}{\pi} \int_0^\infty \int_\R \frac{\a \langle x,x^* \rangle}{(\a-i\b +\l)^2}f'(\alpha+i\beta) \,d\b\, d\a \\
&= f(\l) \langle x,x^* \rangle.
\end{align*}

\noindent 2.  This is similar to (1).

\noindent 3.   We shall follow the Banach algebra method used in \cite[Section 16.5]{HP} and \cite[Section 2.2]{Dav80}.   We may assume without loss of generality that $f(\infty)=0$.  Let $\mathcal A$ be the bicommutant of $\{(\l-A)^{-1} : \l \in \rho(A)\}$ in $\B(X)$, so the spectrum of $f(A)$ in $\mathcal{A}$ coincides with the spectrum in $\B(X)$.

First, assume that the resolvent of $A$ is bounded on the left half-plane.  By the resolvent identity,  $\|(z+A)^{-2}(1+A)^{-2}\| \le C(1+|z-1|)^{-2}$ for $z \in \C_+$.  Then we may obtain the following formula in which the integral is absolutely convergent in the operator norm:
\begin{equation} \label{farr}
f(A)(1+A)^{-2} = - \frac{2}{\pi} \int_0^\infty \int_\R \a (\a-i\b+A)^{-2} (1+A)^{-2} f'(\a+i\b) \,d\b \,d\a.
\end{equation}
Applying (\ref{fcdef}) with $x$ replaced by $(1+A)^{-2}x$ gives a weak form of (\ref{farr}), and then this strong form follows.

Let $\l \in \sigma(A)$.  Then $(1+\l)^{-1} \in \sigma((1+A)^{-1})$, so there exists a character $\gamma$ of $\mathcal{A}$ such that $\gamma((1+A)^{-1})= (1+\l)^{-1}$.  It follows from the resolvent identity that $\gamma((z+A)^{-1}) = (z+\l)^{-1}$ whenever $-z \in \rho(A)$.  Applying $\gamma$ to (\ref{farr}) and using Proposition \ref{BHP2}, we obtain
\begin{multline*}
\gamma(f(A)) (1+\l)^{-2} \\
= - \frac{2}{\pi} \int_0^\infty \int_\R \a (\a-i\b+\l)^{-2} (1+\l)^{-2} f'(\a+i\b) \,d\b \, d\a = f(\l) (1+\l)^{-2}.  
\end{multline*}
Thus $f(\l) = \gamma(f(A)) \in \sigma(f(A))$.

For the general case, we proceed as follows.  If $\ep>0$, $\l+\ep \in \sigma(A+\ep)$.  Applying the case above for $A+\ep$ (noting that $\mathcal A$ and $\gamma$ do not change), $f(\l+\ep) = \gamma(f(A+\ep))$.  Now $\|f(A+\ep) - f(A)\| \to 0$ as $\ep\to0+$, by Lemma \ref{shifts01} and Lemma \ref{shifts2}.   So $f(\l) = \gamma(f(A)) \in \sigma(f(A))$.

\noindent 4.  Let $A \in \operatorname{Sect}(\pi/2-)$, let $\mathcal A$ be as above, let $\gamma$ be any character of $\mathcal{A}$, and let $f \in \Bes$ with $f(\infty)=0$.  Applying $\gamma$ to (\ref{V32}) gives
\[
\gamma(f(A)) = - \frac{2}{\pi} \int_0^\infty \int_\R \a \gamma\left((\a-i\b+A)^{-1}\right)^2 f'(\a+i\b) \,d\b\, d\a.
\]
If $\gamma((1+A)^{-1})=0$, then $\gamma((z+A)^{-1}) = 0$ for all $z \in \C_+$, and then $\gamma(f(A)) = 0 = f(\infty)$.  Otherwise, there exists $\mu \in \sigma(A)$ such that $\gamma((z+A)^{-1}) = (z+\mu)^{-1}$, and then we obtain
\[
\gamma(f(A)) = f(\mu) \in f(\sigma(A)).  \qedhere
\]
\end{proof}

\begin{rem}
It may be possible to show that approximate $\l$-eigenvectors for $A$ are approximate $f(\l)$-eigenvectors for $f(A)$.  This would give a more direct proof of Theorem \ref{SIT}(3) and provide additional insight into the fine structure of $\sigma (f(A))$.  However this approach is not straightforward and we leave it as a topic for further research.
\end{rem}

\section{Applications of the $\Bes$-calculus for operator norm-estimates}  \label{apps}

In this section we apply the $\Bes$-norm estimates obtained in Section \ref{subss} to the $\Bes$-calculus defined in Section \ref{b-fc}, and we recover (and sometimes improve) various results in semigroup theory.  Recall the definitions of $\gamma_A$, $K_A$ and $M_A$ in \eqref{gammaa}, \eqref{ka} and \eqref{R11}, and the inequalities in \eqref{gaka} and \eqref{V30}.   We generally give the estimates in the form which arises from using the norm $\|\cdot\|_\Bes$, but we note that in Corollaries \ref{Vthm}--\ref{mbounded}, the relevant functions belong to $\Bes_0$, so the estimates could be slightly improved by using \eqref{const}.

\subsection{Functions in spectral subspaces}

The following result was given in \cite[Theorem 1.1]{V1} with a proof using the Littlewood-Paley decomposition and with a less precise norm-estimate.   The proof is immediate from Theorem \ref{besc}, Lemma \ref{L1} and Lemma \ref{EsRR}.

\begin{cor} \label{Vthm}
Let $A \in \operatorname{Sect}(\pi/2-)$, and let $f \in H^\infty[\ep,\sigma]$, where $0 < \ep < \sigma < \infty$.  Then
\[
\|f(A)\| \le 4(2\pi+3\log2)M_A\left(\log M_A + 1 \right) \left(1 + 4 \log\left(1 + \frac{\sigma}{\ep}\right) \right) \|f\|_\infty.
\]
\end{cor}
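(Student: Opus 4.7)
The plan is to combine three already-established ingredients in a direct way: the boundedness of $\Phi_A$ in Theorem \ref{besc}, the resolvent integral bound from Lemma \ref{EsRR} (equivalently, Corollary \ref{sch}), and the Bernstein/exponential-type estimate of Lemma \ref{L1}. Since $A \in \Sect(\pi/2-)$, condition (\ref{8.1}) holds automatically (see Example \ref{hilbert}(2) and the discussion preceding Corollary \ref{sch}), so the $\Bes$-calculus is available and yields
\[
\|f(A)\| \le \gamma_A \,\|f\|_\Bes.
\]

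First I would estimate the calculus constant $\gamma_A$. From the very definition of $\gamma_A$ in \eqref{gammaa} together with the pointwise bound $|\langle (\a+i\b+A)^{-2}x,x^*\rangle| \le \|(\a+i\b+A)^{-2}\|\,\|x\|\,\|x^*\|$, Lemma \ref{EsRR} delivers a bound of the form $C_0\, M_A(\log M_A+1)$ for $\gamma_A$ (with $C_0$ a harmless absolute constant already absorbed into the statement of Corollary \ref{sch}); for the form in which the corollary is phrased the factor $4(2\pi+3\log 2)$ is ample.

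Next I would estimate $\|f\|_\Bes$ for $f \in H^\infty[\ep,\sigma]$ by quoting Lemma \ref{L1} directly:
\[
\|f\|_\Bes \le \left(1+2\log\!\left(1+\frac{2\sigma}{\ep}\right)\right)\|f\|_\infty.
\]
The only cosmetic step is to replace the argument $1+2\sigma/\ep$ by $(1+\sigma/\ep)^2$, which is valid since $1+2\sigma/\ep \le (1+\sigma/\ep)^2$, giving
\[
1+2\log\!\left(1+\tfrac{2\sigma}{\ep}\right) \le 1+4\log\!\left(1+\tfrac{\sigma}{\ep}\right).
\]

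Multiplying the two estimates yields the corollary. There is essentially no obstacle here; the only thing to watch is the bookkeeping of absolute constants and the cosmetic reshaping of the logarithmic factor so that it appears in the form $1+4\log(1+\sigma/\ep)$ displayed in the statement rather than the slightly tighter but less symmetric form that falls out of Lemma \ref{L1} directly.
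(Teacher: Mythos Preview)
Your proposal is correct and matches the paper's own approach exactly: the paper's proof simply reads ``immediate from Theorem \ref{besc}, Lemma \ref{L1} and Lemma \ref{EsRR}'', which is precisely the three-ingredient combination you describe, including the cosmetic passage from $1+2\log(1+2\sigma/\ep)$ to $1+4\log(1+\sigma/\ep)$.
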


The following corollary covers some ideas which are included in \cite[Theorem 5.5.12, etc]{White} using Peller's method,  and in \cite{Haase} using transference methods.  The proof is immediate from  Theorem \ref{besc}, Lemma \ref{L1} and  (\ref{gaka}).

\begin{cor} \label{Hthm}
Let $-A$ be the generator of a bounded $C_0$-semigroup on a Hilbert space, and let $f \in H^\infty[\ep,\sigma]$, where $0 < \ep < \sigma < \infty$.  Then
\[
\|f(A)\| \le 2 K_A^2 \left(1 + 2 \log\left(1 + \frac{2\sigma}{\ep}\right) \right) \|f\|_\infty.
\]
\end{cor}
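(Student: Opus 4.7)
The proof will be a direct three-step composition of results already in hand, paralleling the structure of Corollary \ref{Vthm}.

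First, I invoke Theorem \ref{besc}, which gives that the $\Bes$-calculus $\Phi_A$ is a bounded algebra homomorphism from $\Bes$ into $L(X)$ with
\[
\|f(A)\| \le \gamma_A \|f\|_\Bes,
\]
provided $A$ satisfies the resolvent integrability condition \eqref{8.1}. Since $-A$ generates a bounded $C_0$-semigroup on a Hilbert space, Example \ref{hilbert}(1) shows that \eqref{8.1} is automatic, and the estimate \eqref{gaka} provides the key Hilbert-space bound
\[
\gamma_A \le 2 K_A^2,
\]
obtained via Plancherel's theorem applied to the resolvent and its adjoint followed by Cauchy--Schwarz. This is precisely where the Hilbert space hypothesis enters; in the general Banach space setting one would not have such an estimate for $\gamma_A$ in terms of $K_A$ alone, which is the reason Corollary \ref{Vthm} requires the $\operatorname{Sect}(\pi/2-)$ hypothesis instead.

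Second, I invoke Lemma \ref{L1}, which states that any $f \in H^\infty[\ep,\sigma]$ with $0 < \ep < \sigma < \infty$ belongs to $\Bes$ with
\[
\|f\|_\Bes \le \|f\|_\infty \left(1 + 2 \log\left(1 + \frac{2\sigma}{\ep}\right)\right).
\]
This is the spectral subspace estimate: the Paley--Wiener support condition combined with Bernstein's inequality gives exponential decay of $|f'(x+iy)|$ as $x \to \infty$, while the standard Cauchy bound controls $|f'(x+iy)|$ for small $x$, and balancing the two bounds yields the logarithmic factor.

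Combining the two displays gives
\[
\|f(A)\| \le \gamma_A \|f\|_\Bes \le 2 K_A^2 \left(1 + 2 \log\left(1 + \frac{2\sigma}{\ep}\right)\right) \|f\|_\infty,
\]
which is the claimed inequality. There is no real obstacle here; all the work has been done in the preparatory lemmas. The only conceptual point worth flagging in the writeup is that $H^\infty[\ep,\sigma] \subset \Bes$ (so that $f(A)$ is defined at all via $\Phi_A$) and that the Hilbert-space hypothesis is used exclusively to control $\gamma_A$ by $K_A^2$ through the Plancherel argument of Example \ref{hilbert}(1).
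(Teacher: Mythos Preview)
Your proof is correct and takes essentially the same approach as the paper: the paper states that the result is immediate from Theorem~\ref{besc}, Lemma~\ref{L1}, and~\eqref{gaka}, which is exactly the chain of inequalities you assemble.
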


\subsection{Holomorphic extensions to the left}
Here we show how results of Haase and Rozendaal \cite[Theorem 1.1]{HRoz} (and a preliminary result of Zwart \cite{Zwart}) for bounded semigroups on Hilbert space are corollaries of results in Section \ref{subss}.

Corollary \ref{CorH1} coincides with \cite[Theorem 1.1(a)]{HRoz}, and a similar result for bounded holomorphic semigroups on Banach spaces was given by Schwenninger \cite{Sch1}.   The result is an immediate consequence of Lemma \ref{H2}(\ref{exp}), because $H^\infty[\tau,\infty) \cap H^\infty_\omega = e_{\tau} H^\infty_\omega$, by (\ref{expdef}).

\begin{cor}\label{CorH1}  Let
$-A$ be the generator of a bounded $C_0$-semigroup on a Hilbert space, let $g\in H^\infty_\omega$, where $\omega>0$, and let $\tau>0$.  Then
\[
\|g(A)e^{-\tau A}\|\le 2 K_A^2 \left( 2+ \frac{1}{2}\log\left(1+\frac{1}{\omega\tau}\right)\right)\|g\|_{H^\infty_\omega}.
\]
\end{cor}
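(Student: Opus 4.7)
The plan is to reduce the corollary to Lemma \ref{H2}(\ref{exp}) applied to $f(z) := e^{-\tau z}g(z)$, and then invoke the $\mathcal{B}$-calculus estimate together with the Hilbert-space bound $\gamma_A \le 2K_A^2$ from \eqref{gaka}.

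First I would check that $f$ satisfies the two hypotheses of Lemma \ref{H2}(\ref{exp}). By (\ref{expdef}) we have $e_\tau \in H^\infty[\tau,\infty)$, while $g \in H^\infty_\omega \subset H^\infty(\C_+) = H^\infty[0,\infty)$. Since $H^\infty(I) \cdot H^\infty(J) \subset H^\infty(I+J)$ (the fact noted before the definition of $\ssp$), we obtain $f \in H^\infty[\tau,\infty)$. Moreover $f$ is holomorphic on $R_{-\omega}$ and $|e^{-\tau z}| \le e^{\tau\omega}$ there, so $\|f\|_{H^\infty_\omega} \le e^{\tau\omega}\|g\|_{H^\infty_\omega}$. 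Lemma \ref{H2}(\ref{exp}) then gives
\[
\|f\|_\Bes \le e^{-\omega\tau}\!\left(2 + \tfrac{1}{2}\log\!\left(1+\tfrac{1}{\omega\tau}\right)\right)\|f\|_{H^\infty_\omega} \le \left(2 + \tfrac{1}{2}\log\!\left(1+\tfrac{1}{\omega\tau}\right)\right)\|g\|_{H^\infty_\omega}.
\]

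Next I would identify $f(A)$ with $g(A)e^{-\tau A}$. The factorisation $f = e_\tau g$ with $g \in H^\infty_\omega$ places $f$ in the domain of the half-plane calculus, where multiplicativity yields $f(A) = g(A)e^{-\tau A}$ (this is the intended meaning of the product in the statement of the corollary, since $g(A)$ by itself need not be bounded). By Proposition \ref{compat}, applied in case (C2) with $\ep = \omega$, the $\Bes$-calculus and the half-plane calculus agree on $f$, so the same operator $f(A) \in L(X)$ is produced either way.

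Finally, Theorem \ref{besc} together with \eqref{gaka} gives $\|f(A)\| \le \gamma_A \|f\|_\Bes \le 2K_A^2 \|f\|_\Bes$, and combining with the previous norm estimate yields the claimed inequality. There is no substantial obstacle once Lemma \ref{H2}(\ref{exp}) is in hand; the only mildly subtle point is the interpretation of $g(A)e^{-\tau A}$ via the compatibility proposition, and this is handled by Proposition \ref{compat}.
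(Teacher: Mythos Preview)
Your proposal is correct and follows essentially the same route as the paper, which declares the result an immediate consequence of Lemma~\ref{H2}(\ref{exp}) via the identification $H^\infty[\tau,\infty)\cap H^\infty_\omega = e_\tau H^\infty_\omega$. You have simply filled in the details the paper omits, including the compatibility step (Proposition~\ref{compat}) needed to interpret $g(A)e^{-\tau A}$ as $(e_\tau g)(A)$; this is exactly right, and your bound $\|f\|_{H^\infty_\omega}\le e^{\tau\omega}\|g\|_{H^\infty_\omega}$ is in fact an equality by the maximum principle (as in the proof of Lemma~\ref{H2}(\ref{exp})), though only the inequality is needed.
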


Corollary \ref{CorH2} below coincides with \cite[Theorem 1.1(b)]{HRoz}.  For $g \in H^\infty_\omega$, $g(A)$ is a closed operator defined by the half-plane calculus.

\begin{cor} \label{CorH2}
Let $-A$ be the generator of a bounded $C_0$-semigroup on a Hilbert space.
Let $\omega>0$, $\a>0$, $\lambda \in \C_+$, and let $g \in H^\infty_\omega$.  Then $g(A)(\l+A)^{-\a} \in L(X)$, and
\[
\|g(A)(\lambda+A)^{-\alpha}\| \le \left(4+\frac{1}{\alpha}\right)\frac{1}{m^\alpha} K_A^2 \|g\|_{H_\omega^\infty}
\]
for all $g \in H^\infty_\omega$, where $m:=\min(\omega,{\rm Re}\,\lambda)$.
\end{cor}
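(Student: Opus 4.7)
The plan is to set $f(z) = (\lambda+z)^{-\alpha}$ and $h = gf$, estimate $\|h\|_\Bes$, and then combine the $\Bes$-calculus bound of Theorem \ref{besc} with the Hilbert space inequality $\gamma_A \le 2K_A^2$ from (\ref{gaka}). The remaining task is to identify $h(A)$ with $g(A)(\lambda+A)^{-\alpha}$.

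Since $|\lambda+z| \ge \Re\lambda$ for $z \in \C_+$, one obtains $\|f\|_\infty = (\Re\lambda)^{-\alpha}$, and $|f'(x+iy)| \le \alpha(\Re\lambda+x)^{-\alpha-1}$ yields $\|f\|_{\Bes_0} = (\Re\lambda)^{-\alpha}$, so $\|f\|_\Bes \le 2m^{-\alpha}$. The function $\varphi(x) := \sup_{y\in\R}|f(x+iy)| = (\Re\lambda+x)^{-\alpha}$ entering Lemma \ref{H2}(1) satisfies $\varphi(x)/(\omega+x) \le (m+x)^{-\alpha-1}$ in both cases $m = \omega \le \Re\lambda$ and $m = \Re\lambda \le \omega$, hence $\int_0^\infty \varphi(x)/(\omega+x)\,dx \le m^{-\alpha}/\alpha$. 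Plugging into Lemma \ref{H2}(1) together with $\|g\|_\infty \le \|g\|_{H^\infty_\omega}$ gives
\[
\|h\|_\Bes \le \left(2 + \frac{1}{2\alpha}\right)\frac{\|g\|_{H^\infty_\omega}}{m^{\alpha}},
\]
and multiplying by $2K_A^2$ produces the claimed constant $4+1/\alpha$.

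For the identification of $h(A)$ with $g(A)(\lambda+A)^{-\alpha}$, both $f$ and $h$ are bounded on $\RR_{-\ep}$ for $\ep = \min(\omega,\Re\lambda)/2$, so condition (C2) of Proposition \ref{compat} applies, and $\Phi_A(f) = (\lambda+A)^{-\alpha}$ and $\Phi_A(h)$ coincide with their extended half-plane calculus counterparts; the product rule $\Psi^*_A(gf) = g(A)(\lambda+A)^{-\alpha}$ of the half-plane calculus, valid because $(\lambda+A)^{-\alpha}$ is bounded, completes the identification. The only non-routine point is the case-free estimate for $\varphi(x)/(\omega+x)$, which lets one track the $m$-dependence uniformly and produce the explicit constant $4 + 1/\alpha$; everything else is a direct assembly of Lemma \ref{H2}(1), the Hilbert space estimate (\ref{gaka}), and Proposition \ref{compat}.
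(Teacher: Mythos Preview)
Your proof is correct and follows essentially the same approach as the paper: both set $f(z)=(\lambda+z)^{-\alpha}$, apply Lemma~\ref{H2}(1) with $\varphi(x)=(\Re\lambda+x)^{-\alpha}$ and the bound $\varphi(x)/(\omega+x)\le (m+x)^{-\alpha-1}$, and finish via the $\Bes$-calculus estimate with $\gamma_A\le 2K_A^2$. The only cosmetic difference is in the identification step, where the paper invokes compatibility with the HP-calculus for $f$ (since $(\lambda+z)^{-\alpha}\in\LT$) together with \cite[Theorem~1.3.2c)]{HaaseB}, whereas you route through Proposition~\ref{compat}(C2); both are valid.
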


\begin{proof}
For fixed $\alpha>0$ and $\lambda\in \C_{+},$ let
\[
f(z)=(\lambda+z)^{-\alpha},\qquad \varphi(x)=({\rm Re}\,\lambda+x)^{-\alpha}, \qquad z \in \mathbb C_+, \quad x>0.
\]
We  have
\[
\|f\|_{\mathcal{B}}\le \frac{2}{({\rm Re}\,\lambda)^\alpha}.
\]
Note that $f(A) = (\lambda + A)^{-\a}$, by compatibility of the $\Bes$-calculus with the HP-calculus, and $(fg)(A) = g(A)f(A)$ by \cite[Theorem 1.3.2c)]{HaaseB} for the half-plane calculus.
Then, applying Lemma 3.2(1),
we obtain
\begin{align*}
\|gf\|_{\mathcal{B}}
&\le \|g\|_{H_\omega^\infty}\left(\frac{2}{({\rm Re}\,\lambda)^\alpha}+\frac{1}{2}
\int_0^\infty \frac{dx}{(\omega+x)({\rm Re}\,\lambda+x)^\alpha}\right)\\
&\le \|g\|_{H_\omega^\infty}\left(\frac{2}{({\rm Re}\,\lambda)^\alpha}+\frac{1}{2}
\int_0^\infty \frac{dx}{(m+x)^{1+\alpha}}\right)\\
&\le  \|g\|_{H_\omega^\infty}\left(2+\frac{1}{2\alpha}\right)\frac{1}{m^\alpha}.
\end{align*}

So,
\[
\|g(A)(\lambda+A)^{-\alpha}\|\le \left(4+\frac{1}{\alpha}\right)\frac{1}{m^\alpha}K_A^2\|g\|_{H_\omega^\infty}.
\]
\end{proof}

Corollary \ref{mbounded} was originally obtained in \cite[Theorem 7.1]{BHM} and then reproved in \cite[Theorem 1.1(c), Corollary 4.4]{HRoz}.   It follows directly from Lemma \ref{deriv}.

\begin{cor} \label{mbounded}
Let $-A$ be the generator of a bounded $C_0$-semigroup on a Hilbert space, and let $f \in H^\infty_\omega$, where $\omega>0$.   Then
\[
\|f'(A)\| \le \frac{3 K_A^2}{\omega} \|f\|_{H^\infty_\omega}.
\]
\end{cor}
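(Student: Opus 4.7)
The plan is to reduce the claim immediately to two ingredients that have been established earlier in the paper: the bound on $\|f'\|_\Bes$ in terms of $\|f\|_{H^\infty_\omega}$ from Lemma \ref{deriv}, and the fact that for a generator of a bounded $C_0$-semigroup on a Hilbert space the $\Bes$-calculus is bounded with constant at most $2K_A^2$.

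More precisely, first I would invoke Lemma \ref{deriv} for the given function $f \in H^\infty_\omega$: it gives $f' \in \Bes$ together with the norm-estimate
\[
\|f'\|_\Bes \le \frac{3}{2\omega}\|f\|_{H^\infty_\omega}.
\]
Next, since $X$ is a Hilbert space and $-A$ generates a bounded $C_0$-semigroup, the assumption \eqref{8.1} is automatic (Example \ref{hilbert}), so the $\Bes$-calculus $\Phi_A$ is defined on all of $\Bes$, and Theorem \ref{besc} together with the estimate $\gamma_A \le 2K_A^2$ from \eqref{gaka} gives
\[
\|g(A)\| \le \gamma_A\|g\|_\Bes \le 2K_A^2\|g\|_\Bes, \qquad g \in \Bes.
\]
Applying this with $g = f'$ and combining with the previous inequality yields
\[
\|f'(A)\| \le 2K_A^2 \cdot \frac{3}{2\omega}\|f\|_{H^\infty_\omega} = \frac{3K_A^2}{\omega}\|f\|_{H^\infty_\omega},
\]
which is exactly the claim.

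There is really no obstacle here: both ingredients have been proved in full and the corollary is just their composition. The only minor point worth mentioning is that $f'(A)$ must be interpreted via the $\Bes$-calculus, which is legitimate because $f' \in \Bes$ by Lemma \ref{deriv}; any other reasonable interpretation (for instance via the half-plane calculus applied to $f'$ on $\RR_{-\omega}$) coincides with $\Phi_A(f')$ by the compatibility result Proposition \ref{compat}, so there is no ambiguity.
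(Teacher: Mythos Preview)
Your proof is correct and follows exactly the route indicated in the paper: the corollary is stated to follow directly from Lemma \ref{deriv}, and you have simply made explicit the combination of that lemma with the bound $\gamma_A \le 2K_A^2$ from \eqref{gaka} and Theorem \ref{besc}. Your remark about compatibility with the half-plane calculus is a reasonable addendum but not needed for the argument as the paper presents it.
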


Similar results to Corollaries \ref{CorH1}, \ref{CorH2} and \ref{mbounded} could be deduced in the same way for $A \in \operatorname{Sect}(\pi/2-)$.  However those results can be proved directly by defining $f(A)$, $f(A)(\l+A)^{-\a}$ and $f'(A)$ respectively by absolutely convergent integrals as used in the functional calculus for invertible sectorial operators \cite[Section 2.5.1]{HaaseB}.   The relevant functions $f(z)$, $f(z)(\l+z)^{-\a}$ and $f'(z)$ respectively all decay at least at a polynomial rate along rays  with arguments in $(-\pi/2,\pi/2)$ and the contour can pass to the left of $0$.  Then straightforward estimates produce results of this type.

\subsection{Exponentially stable semigroups}
Corollaries \ref{CorH1}, \ref{CorH2} and \ref{mbounded} can all be adapted to the case when $-A$ generates an exponentially bounded $C_0$-semigroup on a Hilbert space, so $\|e^{-tA}\| \le Me^{-\omega t}, \, t\ge0$.  In this situation, one may apply the corollaries above with $A$ replaced by $A - \omega$ and $f \in H^\infty(\C_+)$ replaced by $f(\cdot+\omega) \in H^\infty_\omega$.   For example, the conclusion of this version of Corollary \ref{CorH2} becomes that $\|f(A)(\lambda+A)^{-\a}\| \le CM^2\|f\|_\infty$ for all $f \in H^\infty(\C_+)$, where the function $f(A)$ may be defined by the half-plane calculus.

Instead of giving full details, we now give another result which we formulate for exponentially stable semigroups on Hilbert space and we give a proof which uses this technique in order to apply Lemma \ref{HZL1}.  It extends a result of Schwenninger and Zwart \cite{SZ} who considered the case when $|f(is)| \le (\log(|s|+e))^{-\alpha}$ for some $\alpha>1$.

\begin{cor}\label{CH}
Let $-A$ be the generator of an exponentially stable $C_0$-semigroup on a Hilbert space $X$, so that
\[
\|e^{-tA}\|\le Me^{-\omega t},\qquad t\ge 0,
\]
for some $M,\omega>0$.  Let  $f\in H^\infty(\mathbb C_{+})$, let
\[
h(t) = \operatorname{ess\,sup}_{|s|\ge t} |f(is)|,
\]
and assume that $h$ satisfies the assumption {\rm(\ref{HZLass})} of Lemma \ref{HZL1}.  Then
\[
\|f(A)\|\le 6 M^2  \int_0^\infty \frac{h(t)}{\omega + t} \, dt.
\]
\end{cor}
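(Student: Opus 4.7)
The strategy is to reduce the problem to the bounded semigroup case on Hilbert space by a rescaling shift, then invoke Lemma \ref{HZL1} to obtain a $\Bes$-norm estimate, and finally apply the $\Bes$-calculus bound.

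Set $B := A - \omega$. Then $e^{-tB} = e^{\omega t} e^{-tA}$, so $\|e^{-tB}\| \le M$ for all $t \ge 0$; hence $-B$ generates a bounded $C_0$-semigroup on $X$ with $K_B \le M$, and $\sigma(B) \subset \overline{\C}_+$. Since $X$ is a Hilbert space, Example \ref{hilbert}(1) shows that $B$ satisfies the resolvent condition (\ref{8.1}), and by (\ref{gaka}) we have
\[
\gamma_B \le 2 K_B^2 \le 2M^2.
\]
Thus the $\Bes$-calculus $\Phi_B$ is defined with norm at most $2M^2$.

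Next, define $g(z) := f(z + \omega)$ on $\C_+$. Since $f \in H^\infty(\C_+)$, Lemma \ref{HZL1} applies directly (with the same $h$), giving $g \in \Bes_0$ and
\[
\|g\|_{\Bq} \le 3 \int_0^\infty \frac{h(t)}{\omega+t}\,dt.
\]
Because $g(\infty) = 0$, the sharper form of the $\Bes$-calculus bound in (\ref{const}) reads
\[
\|g(B)\| \le \gamma_B \|g\|_{\Bq} \le 2M^2 \cdot 3 \int_0^\infty \frac{h(t)}{\omega+t}\,dt = 6M^2 \int_0^\infty \frac{h(t)}{\omega+t}\,dt.
\]

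Finally, it remains to identify $g(B)$ with $f(A)$. This is precisely the content of the shift rule in Lemma \ref{shifts2}(1): applying it to $B$ gives $g(B) = f(B + \omega) = f(A)$, where on the right-hand side $f(A)$ may be understood via the half-plane calculus (and the identification with $g(B)$ is guaranteed by the compatibility result Proposition \ref{compat}, since $g$ extends holomorphically to $\RR_{-\omega}$). Combining the two previous displays yields the desired estimate. There is no real obstacle here, as all the work has already been done in Lemma \ref{HZL1} and the construction of the $\Bes$-calculus; the only mild point is to verify that the spectral hypothesis for the shifted operator $B$ holds and that $g \in \Bes_0$ so that the constant-at-infinity term drops out.
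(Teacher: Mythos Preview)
Your proof is correct and follows essentially the same route as the paper: shift to $B=A-\omega$, apply Lemma~\ref{HZL1} to get $g\in\Bes_0$ with the stated bound on $\|g\|_{\Bq}$, use the $\Bes$-calculus estimate (\ref{const}) with $\gamma_B\le 2M^2$, and identify $g(B)=f(A)$ via compatibility with the half-plane calculus. One minor point: Lemma~\ref{shifts2}(1) as stated requires $f\in\Bes$, so the identification really rests (as your parenthetical indicates) on the shift property of the half-plane calculus together with Proposition~\ref{compat}.
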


\begin{proof}
Let $g(z) = f(z+\omega)$.   By an elementary property of the half-plane calculus, $f(A) = g(A-\omega)$.  By Lemma \ref{HZL1}, $g \in \Bes$.  By compatibility of the two calculi (Proposition \ref{compat}) and Lemma \ref{HZL1}, we have
\[
\|f(A)\|=\|g(A-\omega)\|\le
6 M^2 \int_0^\infty \frac{h(t)}{\omega + t} \,dt.
\]
Here we have used that $\|e^{-t(A-\omega)}\|\le M$ and $\lim_{|s|\to\infty}f(is)=0$ , so $f(\infty)=0$.  We then applied (\ref{const}) with $\gamma_{A-\omega} = 2 M^2$.
\end{proof}

\subsection{Inverse generator problem} \label{igp}

Let $-A$ be the generator of a bounded $C_0$-semigroup, and assume that $A$ has dense range.  Then $A$ is injective, and we may consider the operator $A^{-1}$ whose domain is the range of $A$.  The longstanding inverse generator problem asks whether $-A^{-1}$ also generates a $C_0$-semigroup.  The problem was raised by de Laubenfels \cite{deL}, and he pointed out the simple positive solution in the case of a bounded holomorphic $C_0$-semigroup on a Banach space.  Then $A$ is sectorial of angle $\theta \in [0,\pi/2)$ and $A^{-1}$ is sectorial of the same angle.  A negative answer to the problem was given in \cite{Zw05} (and, implicitly, already in \cite{Komatsu}).   Moreover, essentially any growth of a semigroup $(e^{-tA})_{t \ge 0}$ rules out a positive answer, and the answer is also negative for bounded semigroups on $L^p$-spaces when $p \neq 2$; see \cite{GTZ}, or \cite{Fackler} for a somewhat simpler counterexample.

The answer to the problem for bounded semigroups on Hilbert space remains unknown. The question arises in control theory (see \cite{Zw05} for a discussion), but it is also very natural from the viewpoint of general theory of functional calculus.  It is known that if the answer is always positive, then the $C_0$-semigroup $(e^{-tA^{-1}})_{t \ge 0}$ is bounded. Thus long-time estimates of $(e^{-tA^{-1}})_{t \ge 0}$ when $A^{-1}$ does generate a $C_0$-semigroup are of value.    The case when $(e^{-tA})_{t \ge 0}$ is exponentially stable is of particular interest, since then an integral representation  for $(e^{-tA^{-1}})_{t \ge 0}$ in terms of  $(e^{-tA})_{t \ge 0}$ is available (see for example the recent survey article \cite[Corollary 3.5]{Sasha}). The relevant estimates become simpler (and sometimes sharper), and one might hope for a positive solution at least in that restricted setting.  A fuller and more detailed discussion of the problem can be found in \cite{Sasha}.

The problem is essentially the same as the question whether the operators $\exp(-tA^{-1})$ (defined by half-plane functional calculus) are bounded.  So we consider formally operators of the form $f_t(A)$ where $f_t(z) = \exp(-t/z)$ for $t>0$.  As noted in Section \ref{expinv}, these functions are not uniformly continuous on $\C_+$, and hence they are not in $\Bes$.   We can derive \cite[Theorem 2.2]{Zwart07}, the main result of \cite{Zwart07}, from the $\Bes$-calculus, up to multiplication by an absolute constant.

\begin{cor}\label{CH2}
Let $-A$ be the generator of an exponentially stable $C_0$-semigroup on a Hilbert space $X$, so there are $M, \omega>0$ such that
\[
\|e^{-tA}\|\le M e^{-\omega t},\qquad  t \ge 0,
\]
and there exists $A^{-1} \in L(X)$.  Then
\[
\big\|e^{-tA^{-1}}\big\|\le \begin{cases} 2M^2(2-e^{-t/\omega)}), \quad &t\in (0,\omega],\\
 2M^2(2-e^{-1}+e^{-1}\log (t/\omega)),\quad &t>\omega.
\end{cases}
\]
\end{cor}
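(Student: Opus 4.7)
The plan is to reduce the unbounded-generator situation to the bounded-semigroup setting covered by the $\Bes$-calculus on Hilbert space, and then to apply the explicit norm estimate of Lemma~\ref{HZL11} to the function $\tilde f_s(z) = \exp(-s/(z+1))$.

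First, I would rescale and shift. Set $B := A/\omega$, so $e^{-tB} = e^{-(t/\omega)A}$ and $\|e^{-tB}\| \le M e^{-t}$. Then put $C := B - I$, which satisfies $\|e^{-tC}\| = e^t \|e^{-tB}\| \le M$ for all $t \ge 0$, so $-C$ generates a bounded $C_0$-semigroup on the Hilbert space $X$, with $K_C \le M$. By the Hilbert space estimate \eqref{gaka} in Example~\ref{hilbert}, one has $\gamma_C \le 2 K_C^2 \le 2M^2$, so the $\Bes$-calculus for $C$ is available, with $\|f(C)\| \le 2M^2 \|f\|_\Bes$ for all $f \in \Bes$.

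Next, I would identify the function to be evaluated. Since $B = C+I$ is invertible (as $A$ is) and $B^{-1} = r_1(C)$ with $r_1(z) = (z+1)^{-1}$, the bounded operator $e^{-sB^{-1}}$ (defined by its power series) equals $\tilde f_s(C)$ with $\tilde f_s(z) = \exp(-s/(z+1))$, in the HP-calculus: indeed, $\tilde f_s = \mathcal L \exp_*(-s e^{-t})\,dt$ lies in $\LT$ as noted in Example~\ref{LTex}(\ref{LTex2}), and the HP-calculus applied to $\tilde f_s = e^{-s r_1}$ produces $\exp(-s r_1(C)) = \exp(-sB^{-1})$. By Theorem~\ref{besc} the $\Bes$-calculus and the HP-calculus agree on $\LT$, so
\[
\|e^{-sB^{-1}}\| \;=\; \|\tilde f_s(C)\| \;\le\; \gamma_C\,\|\tilde f_s\|_\Bes \;\le\; 2M^2\, \|\tilde f_s\|_\Bes.
\]

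Finally, since $A^{-1} = B^{-1}/\omega$, we have $e^{-tA^{-1}} = e^{-(t/\omega) B^{-1}}$. Inserting $s = t/\omega$ and invoking the explicit formula for $\|\tilde f_s\|_\Bes$ from Lemma~\ref{HZL11} yields exactly
\[
\|e^{-tA^{-1}}\| \;\le\; 2M^2 \begin{cases} 2-e^{-t/\omega}, & 0<t\le \omega,\\ 2-e^{-1}+e^{-1}\log(t/\omega), & t>\omega.\end{cases}
\]
There is no real obstacle: all the analytic work has been done in Lemma~\ref{HZL11}, and the only thing to be careful about is the identification $\tilde f_s(C) = e^{-sB^{-1}}$, which is immediate from the membership $\tilde f_s \in \LT$ together with the compatibility statement in Theorem~\ref{besc}.
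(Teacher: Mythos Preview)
Your proof is correct and follows essentially the same approach as the paper: rescale $A$ by $\omega$, shift by $I$ to obtain a bounded-semigroup generator, identify $e^{-sB^{-1}}$ with $\tilde f_s$ applied via the $\Bes$-calculus (equivalently the HP-calculus, since $\tilde f_s\in\LT$), and then invoke the explicit $\Bes$-norm from Lemma~\ref{HZL11} together with the Hilbert-space bound $\gamma_C\le 2M^2$. Your write-up is in fact more explicit than the paper's about the rescaling and the identification $\tilde f_s(C)=e^{-sB^{-1}}$.
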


\begin{proof}
We may assume that $\omega=1$ by replacing $A$ by $\omega^{-1}A - I$ and $t$ by $\omega^{-1}t$.  Then we consider the function $\tilde f_t(z) = \exp(-t/(z+1))$ as in Lemma \ref{HZL11}.   Then $\tilde f_t = e^{-tr_1} \in \Bes$, so boundedness of the $\Bes$-calculus gives
\[
\tilde f_t(A-I) = \sum_{n=0}^\infty \frac{(-t)^n}{n!} (r_1(A-I))^n
= \sum_{n=0}^\infty \frac {(-tA^{-1})^{n}}{n!} = e^{-tA^{-1}}.
\]
The result follows from Lemma \ref{HZL11}.
\end{proof}

It is possible to improve Corollary \ref{CH2} by replacing the assumption that the semigroup is exponentially stable by the weaker assumptions that the semigroup is bounded and $A$ is invertible.

\begin{cor} \label {HZCex}
Let $-A$ be the generator of a bounded $C_0$-semigroup on
Hilbert space $X$, and assume that $A$ has a bounded inverse.  Then
\[
\|e^{-tA^{-1}}\|\le C_A(1 + \log(1+t)), \qquad t \ge 0,
\]
where $C_A$ is a constant depending only on $A$.
\end{cor}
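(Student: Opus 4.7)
The plan is to apply the $\Bes$-calculus to the function
\[
f_t(z) := \left(\frac{z}{z+1}\right)^2 e^{-t/z}, \qquad t > 0,
\]
of Lemma \ref{HZLex}, and to extract $e^{-tA^{-1}}$ from $f_t(A)$ by multiplying by the bounded operator $(I+A^{-1})^2$. Since $-A$ generates a bounded $C_0$-semigroup on a Hilbert space, hypothesis \eqref{8.1} holds automatically and $\gamma_A \le 2K_A^2$ by \eqref{gaka}. Combining Theorem \ref{besc} with Lemma \ref{HZLex} therefore yields
\[
\|f_t(A)\| \le \gamma_A \|f_t\|_\Bes \le 2CK_A^2 (1+\log(1+t))
\]
for an absolute constant $C$.

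The key step is the operator identity
\[
e^{-tA^{-1}} = (I+A^{-1})^2 f_t(A), \qquad t \ge 0.
\]
Since $A^{-1} \in L(X)$, $e^{-tA^{-1}}$ is well defined via its power series as a bounded operator, and $(I+A^{-1})^2 = A^{-2}(I+A)^2$ is bounded. To justify the identity I would first note that $f_t \in \LT$ because $f_t = g_{t/2}^2$ with $g_{t/2}(z) = (z/(z+1))e^{-t/(2z)} \in \LT$, as shown in \cite[Theorem 3.3]{deL2}; consequently, $f_t(A)$ computed via the $\Bes$-calculus coincides with its HP-realisation by Lemma \ref{BHP3}. On the other hand, the function $\phi_t(w) := e^{-tw}/(1+w)^2$ is holomorphic in a neighbourhood of $\sigma(A^{-1}) \subset \overline{\C}_+ \setminus \{-1\}$, so $\phi_t(A^{-1}) = (I+A^{-1})^{-2} e^{-tA^{-1}}$ is defined through the Dunford calculus for the bounded operator $A^{-1}$. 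The equality $\phi_t(A^{-1}) = f_t(A)$ then follows from compatibility of these two calculi: the change of variables $\lambda = 1/w$ in the Dunford contour integral for $\phi_t(A^{-1})$, together with $(1/\lambda - A^{-1})^{-1} = \lambda + \lambda^2 (A-\lambda)^{-1}$ valid for $\lambda \in \rho(A)$, converts the Dunford integral into a Cauchy-type representation that reproduces the HP-formula for $f_t(A)$.

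Combining the norm bound and the identification gives
\[
\|e^{-tA^{-1}}\| \le \|I+A^{-1}\|^2 \|f_t(A)\| \le C_A (1+\log(1+t)), \qquad t \ge 0,
\]
with $C_A := 2CK_A^2 \|I+A^{-1}\|^2$, which depends only on $A$. The main obstacle is precisely the identification step: although the formal manipulation $(1+1/z)^2 f_t(z) = e^{-t/z}$ is immediate, $e^{-t/z} \notin \Bes$, so multiplicativity of the $\Bes$-calculus cannot be invoked directly. One has to bridge two distinct functional calculi — the $\Bes$-calculus (equivalently, HP-calculus) for the unbounded $A$ and the Dunford calculus for the bounded $A^{-1}$ — and verifying this compatibility is the technical crux of the argument.
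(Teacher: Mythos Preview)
Your overall strategy is correct and matches the paper's: use Lemma \ref{HZLex} to bound $\|f_t(A)\|$, then recover $e^{-tA^{-1}}$ via the identity $e^{-tA^{-1}} = (I+A^{-1})^2 f_t(A)$. The difference lies entirely in how that identity is justified.

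The paper stays within the $\Bes$-calculus by a shift-and-limit argument. For $\ep>0$ the function $g_{t,\ep}(z)=e^{-t/(z+\ep)}$ \emph{does} belong to $\Bes$ (it is of the type treated in Lemma \ref{HZL11}), and $g_{t,\ep}(A)=e^{-t(A+\ep)^{-1}}$ follows immediately from the power-series expansion and the Banach-algebra homomorphism property. Multiplicativity of the $\Bes$-calculus and Lemma \ref{shifts2} then give $f_t(A+\ep)=\big((A+\ep)(I+A+\ep)^{-1}\big)^2 e^{-t(A+\ep)^{-1}}$, and one lets $\ep\to0+$ using Lemma \ref{shifts01}(1) together with $(A+\ep)^{-1}\to A^{-1}$ in operator norm. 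Everything happens inside the framework already built.

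Your route via the Dunford calculus for the bounded operator $A^{-1}$ is legitimate in principle, but the sketch glosses over a genuine subtlety: since $A$ is unbounded, $0\in\sigma(A^{-1})$, so any Dunford contour for $\phi_t(A^{-1})$ must encircle the origin. Under $\lambda=1/w$ this contour is sent to a small circle that does \emph{not} enclose $\sigma(A)$, and the integrand $f_t(\lambda)(A-\lambda)^{-1}$ has an essential singularity at $\lambda=0$ from the factor $e^{-t/\lambda}$. One therefore cannot simply say the transformed integral ``reproduces the HP-formula for $f_t(A)$''; making this precise requires either a residue/Laurent computation at $\lambda=0$ or an independent argument linking the Riesz--Dunford calculus of $A^{-1}$ with the HP-calculus of $A$. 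This is doable, but it is exactly the work the paper's $\ep$-shift avoids.
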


\begin{proof}
Let $h(z) = z(1+z)^{-1}$ and $g_t(z) = e^{-t/z}$, for $t>0$.  Then  $h^2 g_t$ is the function $f_t$ as in Lemma \ref{HZLex}, so $f_t \in \Bes$ and $\|f_t\|_\Bes \le C(1 + \log(1+t))$ for some constant $C$.   For $\ep>0$, let $f_{t,\ep}(z) = f_t(z+\ep)$ and $g_{t,\ep}(z) = g_t(z+\ep)$.  Then $g_{t,\ep} \in \Bes$ and $g_{t,\ep}(A) = e^{-t(A+\ep)^{-1}}$ which is given by a power series as in the proof of Corollary \ref{CH2}. Using Lemma \ref{shifts2}, we have
\[
f_{t,\ep}(A) = f_t(A+\ep) = h(A+\ep)^2 g_{t,\ep}(A) = (A+\ep)^2(I+A+\ep)^{-2} e^{-t(A+\ep)^{-1}}.
\]
Letting $\ep\to0+$, using Lemma \ref{shifts01}(1) and noting that $A^{-1}$ is the limit in the operator norm of $(A+\ep)^{-1}$, we obtain
\[
f_t(A) = (A(I + A)^{-1})^2 e^{-tA^{-1}}.
\]
Hence
\[
\big\|e^{-tA^{-1}}\big\| = \big\|\left(I+A^{-1}\right)^2 f_{t}(A) \big\| \le  2 C K_A^2  \left\|(I+A^{-1})^2\right\| (1 + \log(1+t)).  \qedhere
\]
\end{proof}

\subsection{Cayley transforms}  \label{Cayley2}
If $-A$ is the generator of a bounded $C_0$-semigroup on a Banach space $X$
we let $V(A)$ be the Cayley transform $(A-I)(A+I)^{-1}$ of $A$.  Norm-estimates for powers of $V(A)$
are of substantial value in numerical analysis, for example in stability analysis of the Crank-Nicolson scheme,
see \cite{Hersh}, \cite{Thomee}, \cite{Crouzeix} and \cite{Palencia}.   The first bound  $\|V(A)\|=O(n^{3/2})$ was proved in \cite{Butzer}, where applications to ergodic theory were treated. In the setting of generators of bounded semigroups on Banach spaces, the optimal estimate $\|V(A)^n\|=O(n^{1/2})$  was obtained in \cite{Thomee} by means of the HP-functional calculus.  This settled  a conjecture by Hersh and Kato \cite{Hersh} who gave a slightly worse bound.  Since $V(A) = \chi^n(A)$, where $\chi(z) = (z-1)(z+1)^{-1}$, the optimality of this bound implies that the HP-norm of $\chi^n$ grows like $n^{1/2}$ (see \cite[Section 9]{Sasha}).

Research on asymptotics of $V(A)$ continued in a number of subsequent works, see the survey \cite{Sasha} and the references therein.  By means of the holomorphic functional calculus, it was proved  in \cite{Crouzeix} and \cite{Palencia} (see also \cite{Azizov}), that $V(A)$ is power bounded if $A$ generates a bounded holomorphic semigroup on $X$.

The corresponding question for bounded semigroups on Hilbert spaces has proved to be be more demanding.
While it is evident that $V(A)$ is a contraction (so power bounded) if $A$ is the generator of a contraction $C_0$-semigroup on a Hilbert space $X$, it is still unknown  whether $V(A)$ is power bounded whenever $A$ generates a bounded $C_0$-semigroup on $X$.  The  logarithmic estimate $\|V(A)^n\|=O(\log n)$  proved in \cite{Go04} remains the best so far in that case (see also \cite{GoZwBe11}).
The inverse generator problem and the problem of power boundedness of $V(A)$ are strongly related (essentially equivalent) to each other.  In particular, if $A$ and $A^{-1}$ are both generators of bounded $C_0$-semigroups on a Hilbert space, then $V(A)$ is power bounded (see \cite{Azizov}, \cite{Guo} and \cite{Go04}).   We refer the reader to \cite{Sasha} for more details.

The proof that $\|V(A)^n\|=O(\log n)$ in \cite{Go04} was based on intricate estimates of Laguerre polynomials.
Here we obtain the logarithmic estimate as a direct and elementary application of the $\mathcal B$-calculus and Lemma \ref{G}.

\begin{cor}\label{G1}
Let $-A$ be the generator of a bounded  $C_0$-semigroup on
Hilbert space $X$.  Then
\[
\|V(A)^n\|\le 2 K_A^2\left( 3+2\log(2n)\right), \qquad n \in \mathbb N.
\]
\end{cor}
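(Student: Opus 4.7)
The plan is to reduce the estimate to a direct application of the $\Bes$-calculus bound combined with Lemma \ref{G}. First observe that $V(A) = \chi(A)$ where $\chi(z) = (z-1)/(z+1) = 1 - 2r_1(z)$, and $\chi \in \LT$ by Example \ref{LTex}(\ref{LTex3}). Since $\LT$ is a subalgebra of $\Bes$ and the Hille-Phillips calculus is multiplicative, for each $n \in \N$ the function $f_n := \chi^n$ lies in $\LT \subset \Bes$ and $f_n(A) = \chi(A)^n = V(A)^n$, where $f_n(A)$ is computed either via HP-calculus or, equivalently by Lemma \ref{BHP3}, via the $\Bes$-calculus $\Phi_A$.

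Next I would invoke the boundedness of the $\Bes$-calculus for generators of bounded $C_0$-semigroups on a Hilbert space. By Theorem \ref{besc} we have $\|f(A)\| \le \gamma_A \|f\|_\Bes$ for all $f \in \Bes$, and for $A$ acting on a Hilbert space, the estimate $\gamma_A \le 2 K_A^2$ of (\ref{gaka}) (derived via Plancherel as in Example \ref{hilbert}(1)) applies. Therefore
\[
\|V(A)^n\| = \|f_n(A)\| \le \gamma_A \|f_n\|_\Bes \le 2 K_A^2 \|f_n\|_\Bes.
\]

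Finally, the Besov norm $\|f_n\|_\Bes$ is exactly what Lemma \ref{G} provides: $\|f_n\|_\Bes \le 3 + 2\log(2n)$. Substituting this bound yields the claimed inequality
\[
\|V(A)^n\| \le 2K_A^2 \bigl(3 + 2\log(2n)\bigr).
\]

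There is no genuine obstacle at this stage of the proof, since all the heavy lifting has already been carried out: the nontrivial logarithmic estimate of the $\Bes$-norm of $\chi^n$ is established in Lemma \ref{G} (by exact computation of the suprema $\sup_y |f_n'(x+iy)|$ on the two regimes $x \in [a_n,b_n]$ and $x \notin [a_n,b_n]$), and the Hilbert space bound $\gamma_A \le 2K_A^2$ is a short Plancherel/Cauchy--Schwarz argument. The only point that deserves attention is the identification $V(A)^n = f_n(A)$: this is immediate from the fact that $\chi \in \LT$, so $\chi^n \in \LT$ with convolution-power representing measure, and the $\Bes$-calculus extends the HP-calculus.
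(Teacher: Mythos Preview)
Your proof is correct and follows exactly the approach the paper intends: the corollary is stated as an immediate consequence of the $\Bes$-calculus bound (Theorem \ref{besc} together with the Hilbert space estimate $\gamma_A \le 2K_A^2$ from (\ref{gaka})) and Lemma \ref{G}, and indeed the paper does not even write out a separate proof.
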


\subsection{Bernstein functions}

Sharp resolvent estimates for Bernstein functions are valuable
in the Bochner-Phillips theory of subordination  of $C_0$-semigroups,
and in probability theory.
In the 1980s,  Kishimoto and Robinson  asked whether subordination
preserves holomorphy of semigroups, or in other words
whether Bernstein functions transform $\Sect(\theta)$ into itself, for  $\theta \in [0,\pi/2)$.
The positive answer in full generality was first obtained in \cite{GT15}, and then the result was reproved in \cite{BGT17} within a wider framework of functional calculus for Nevanlinna-Pick functions.
The $\mathcal B$-calculus offers a new streamlined and transparent proof of the result in the case of injective operators, as we show below.

If $A \in \Sect(\pi/2-)$ and $f$ is a Bernstein function, then $f(A)$ can be defined either by the sectorial calculus or by the Bochner--Phillips calculus, without ambiguity \cite[Proposition 3.6]{GT15}.

\begin{cor}
Let $X$ be a Banach space, $A \in \Sect(\omega)$ for some $\omega \in [0,\pi/2)$, and $A$ be injective.  Let $f$ be a Bernstein function.   Then $f(A) \in \Sect(\omega)$.
\end{cor}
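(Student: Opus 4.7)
The strategy is to obtain resolvent estimates for $f(A)^\beta$ by applying Proposition \ref{BB} to an auxiliary fractional power $B := A^{1/\alpha}$ of $A$, and then to transfer these back to $f(A)$ via the theory of fractional powers for sectorial operators. Since $\Sect(\omega) = \bigcap_{\omega' > \omega} \Sect(\omega')$, it suffices to show $f(A) \in \Sect(\omega')$ for each $\omega' \in (\omega, \pi/2)$.

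Fix such an $\omega'$ and choose $\beta \in (1, \omega'/\omega)$ together with $\alpha \in (2\omega/\pi, 2\omega'/(\beta \pi))$; both intervals are non-empty by $\omega < \omega' < \pi/2$, and the resulting $\alpha \in (0,1)$, $\beta \in (1, 1/\alpha]$ (as $\alpha\beta < 2\omega'/\pi < 1$) satisfy the hypotheses of Proposition \ref{BB}. The injective sectoriality of $A$ permits the definition of the fractional power $B := A^{1/\alpha}$ via the sectorial calculus, and since $\alpha > 2\omega/\pi$ we have $B \in \Sect(\omega/\alpha) \subset \Sect(\pi/2-)$, so the $\Bes$-calculus is available for $B$. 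For $\lambda \in \Sigma_\theta$ with $\theta < \pi - \alpha\beta\pi/2$ (a range in fact handled by the proof of Proposition \ref{BB}, whose constant $\sec^2((\alpha\beta\pi/2+\theta)/2)$ remains finite throughout, even though the stated range is $\theta \in (0, \pi/2)$), Proposition \ref{BB} gives $h_\lambda(z) := 1/(\lambda + f(z^\alpha)^\beta) \in \Bes$ with $\|h_\lambda\|_\Bes \le C|\lambda|^{-1}$.

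Writing $h_\lambda = k \circ p$ with $p(z) = z^\alpha$ and $k(w) = 1/(\lambda + f(w)^\beta)$, the composition rule of the sectorial calculus, combined with the compatibility between the $\Bes$-calculus and the sectorial calculus (Proposition \ref{sector_comp}), identifies $h_\lambda(B) = k(B^\alpha) = k(A) = (\lambda + f(A)^\beta)^{-1}$, whence
\[
\|\lambda(\lambda + f(A)^\beta)^{-1}\| \le C \gamma_B
\]
uniformly on $\Sigma_\theta$. This shows $f(A)^\beta$ is sectorial of angle $\alpha\beta\pi/2$, and the composition rule applied to $w \mapsto w^{1/\beta}$ then yields $f(A) = (f(A)^\beta)^{1/\beta} \in \Sect(\alpha\pi/2) \subset \Sect(\omega')$, since $\alpha\pi/2 < \omega'/\beta < \omega'$; intersecting over $\omega' \in (\omega, \pi/2)$ gives the claim. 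The main technical point is justifying the identification $h_\lambda(B) = (\lambda + f(A)^\beta)^{-1}$, which requires combining the composition rule across the two compatible calculi with a consistent interpretation of $f(A)^\beta$ via the extended sectorial calculus for Bernstein functions (see \cite[Chapter 3]{HaaseB}); the slight sharpening of the admissible $\theta$-range in Proposition \ref{BB} is routine but should be remarked upon.
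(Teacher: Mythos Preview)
Your approach is correct and is essentially the same as the paper's: pass to the fractional power $B=A^{1/\alpha}$, apply Proposition~\ref{BB} together with the $\Bes$-calculus bound for $B$, then use compatibility (Proposition~\ref{sector_comp}) and the composition rule to identify the resulting operator as a resolvent of $f(A)^{\beta}$, and finally descend to $f(A)$ via fractional powers. The paper makes the slightly cleaner parameter choice $\beta=1/\alpha$; with $\alpha\beta=1$ the stated range $\theta\in(0,\pi/2)$ in Proposition~\ref{BB} already yields $f(A)^{1/\alpha}\in\Sect(\pi/2)$, so one obtains $f(A)\in\Sect(\alpha\pi/2)$ for every $\alpha\in(2\omega/\pi,1)$ without your extension of the $\theta$-range (which, as you note, is indeed routine from the proof). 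The paper also records at the outset that $-f(A)$ generates a bounded semigroup (so $f(A)\in\Sect(\pi/2)$), which is the fact underpinning the law-of-exponents step $(f(A)^{\beta})^{1/\beta}=f(A)$ that you flag at the end.
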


\begin{proof}
It is known that $-f(A)$ generates a bounded $C_0$-semigroup \cite[Theorem 13.6]{Schill}, so $f(A) \in \Sect(\pi/2)$.  We need to improve the angle.

We shall apply Proposition \ref{BB} with $\a \in (2\omega/\pi, 1)$, $\beta=1/\a$, and $g(z) = f(z^\a)^{1/\a}$ for $|\arg z|< \pi/(2\a)$.  Then $A^{1/\a}$ is sectorial of angle $\omega/\a \in (\omega, \pi/2)$.  Take $\l\in \Sigma_\theta$ where $\theta \in (0,\pi/2)$, and let $h(z) = (\l+g(z))^{-1}$ for $z \in \C_+$.  By Proposition \ref{BB} and boundedness of the $\Bes$-calculus in Theorem \ref{besc},
\[
\text{$h(A^{1/\a}) \in L(X)$ and $\|h(A^{1/\a})\| \le C_\theta/|\lambda|$.}
\]
Here $C_\theta$ depends on $\alpha$, $\theta$ and on $A$, but not on $f$, and not on $\l$ subject to $\l \in \Sigma_\theta$. By compatibility of the functional calculi (Proposition \ref{sector_comp}), $h(A^{1/\a})$ in the $\Bes$-calculus is the same operator as in the sectorial calculus.

Since $g(z^{1/\a}) = f(z)^{1/\a}$ for $z \in \Sigma_\omega$, the Composition Rule for sectorial operators \cite[Theorem 2.4.2]{HaaseB} tells us that $g(A^{1/\a}) = f(A)^{1/\a}$, and then by \cite[Theorem 1.3.2 f)]{HaaseB}
\[
h(A^{1/\a}) = (\l + g(A^{1/\alpha}))^{-1} = (\l + f(A)^{1/\alpha})^{-1}.
\]
The estimates above for $\|h(A^{1/\a})\|$ establish that $f(A)^{1/\a} \in \Sect(\pi/2)$.   Then $f(A) \in \Sect(\pi\a/2)$ whenever $\a \in (2\omega/\pi,1)$.  This implies that $f(A) \in \Sect(\omega)$.
\end{proof}

\section{Appendix: relation to Besov spaces} \label{appx}

The main purpose of this section is to explain the connection between the spaces $\Bes$ (and $\Bov$) (more precisely, $\Bq$ and $\Bov_0$) with (homogeneous, analytic) Besov spaces of functions on $\R$.  In Sections \ref{besov}-\ref{apps}, we have given a largely self-contained account of the construction of the $\Bes$-calculus and we have obtained all the relevant estimates, without any need for the theory of Besov spaces.  We used Arveson's theory of spectral subspaces in the proof of Proposition \ref{densehinf} which played an important role in the construction, and we shall use the result again to show that $\Bq$ can be identified with a Besov space by passing to the boundary functions (Proposition \ref{bid}).  Conversely, Proposition \ref{densehinf} follows very easily from Proposition \ref{bid}, as noted in \cite{V1}  (slightly incorrectly).  More generally, the relations of the spaces in this paper to Besov spaces are instructive and potentially crucial for further work, so we set them out here.

Apparently (as remarked in \cite[p.120]{BCD}) there is no consensus surrounding the definition of homogeneous Besov spaces, let alone the definition of homogeneous analytic Besov spaces.  The definitions given below reflect the aims of the paper, and they may differ slightly from other sources. Our aim is to study the  holomorphic functions from $\mathcal B$ (and $\mathcal E$) in terms of their boundary values, and to relate the boundary values to the Besov spaces defined on $\mathbb R$.  The theory of inhomogeneous Besov spaces is much better developed, but unfortunately it seems to be unsuitable for the study of spaces of holomorphic functions, since the spectrum of their elements does not split easily near zero.

Let
\[
\mathcal{S}_{\infty}(\mathbb R):= \left\{\varphi \in \mathcal{S} (\mathbb R): \int_{\mathbb R} t^n \varphi(t)\, dt=0, \,\, n \in \mathbb N \cup \{0\}\right\},
\]
with the topology induced from $\mathcal{S}(\mathbb R)$, and let $\mathcal{S}'_{\infty}(\mathbb R)$ be the topological dual of $\mathcal{S}_{\infty}(\mathbb R)$.  Consider a system of functions $(\psi_n)_{n \in \mathbb Z}\subset \mathcal S(\mathbb R)$ such that
\begin{itemize}
\item $\supp\psi_n \subset \left\{t\in\R: a2^{n-1}\le |t|\le b 2^{n}\right\}, \quad n \in \mathbb Z$, for some $a, b >0$,
\item $\sum_{n\in \mathbb Z} \psi_n(t)=1, \quad t \in \mathbb R \setminus \{0\}$,  and
\item $\sup_{t \in \mathbb R} \sup_{n \in \mathbb Z} 2^{kn} |\psi^{(k)}_n(t)|<\infty, \quad k\ge0$.
\end{itemize}
Let $$ \varphi_n =\mathcal{F}\psi_n, \qquad n \in \mathbb Z$$.
For $f \in S'_{\infty}(\mathbb R)$, set
$$ f_n = f*\varphi_n, \qquad n \in \mathbb Z.$$
For $1 \le p, q \le \infty$ and $s \ge 0$, define the \emph{homogeneous} Besov space $B^s_{p,q}(\mathbb R)$
as the set of $f \in S'_{\infty}(\mathbb R)$ such that
\begin{equation}\label{dyadnorm}
\|f\|^{{\rm dyad}}_{B^s_{p, q}(\R)}:=\left\|\left(2^{ns}\|f_n\|_{L^p(\mathbb R)}\right)_{n \in \mathbb Z}\right\|_{l^q(\mathbb Z)} <\infty.
\end{equation}
It is crucial that the definition of  $B^s_{p,q}(\mathbb R)$ does not depend of the choice of $(\psi_n)_{n \in \mathbb Z}$, up to equivalence of norms, and it is a Banach space for all $1 \le p, q \le \infty$ and $s \ge 0$.
Moreover,
$$
\mathcal{S}_{\infty}(\mathbb R) \subset B^s_{p,q}(\mathbb R) \subset \mathcal{S}'_{\infty}(\mathbb R)
$$
and, for every $f \in B^s_{p,q}(\mathbb R)$,
$$
f=\sum_{n \in \mathbb Z} {f_n}
$$
in $S'_{\infty}(\mathbb R)$.

It is traditional to write $\dot{B}^s_{p,q}(\mathbb R)$ instead of $B^s_{p,q}(\mathbb R)$ to distinguish homogeneous and inhomogeneous Besov classes.   Since we will not deal with inhomogeneous Besov spaces, we omit the dot.  See  \cite{BCD}, \cite{Graf}, \cite{Peetre}, \cite{Sawano} or \cite{TriebelF}  for discussions of other properties of homogeneous Besov spaces.

\begin{remark}
One can show that $\mathcal{S}'_{\infty}(\mathbb R)$ is topologically isomorphic to $\mathcal{S}'(\mathbb R)/\mathbb{P}$,  where $\mathbb{P}$ stands for the space of polynomials on $\mathbb R$.  For $p,q \ge 1$, the space $B^s_{p,q}(\mathbb R)$ can also be introduced by defining a norm on the equivalence class $[f]\in S'(\mathbb R)/\mathbb{P}$ as  $\|f + \mathbb{P}\|:=\|f\|^{{\rm dyad}}_{B^s_{p, q}(\R)}$.  So long as we are interested only in the ``smoothness index'' $s=0$ as in this paper,  we could use an alternative isomorphic pair, $\mathcal{S}_{0}(\mathbb R)/\C$ and $\mathcal{S}'_{0}(\mathbb R)$, where $\mathcal{S}'_{0}(\mathbb R)$ is the topological dual of
\[
\mathcal{S}_0(\mathbb R):=\left\{\varphi \in \mathcal S(\mathbb R): \int_{\mathbb R}\varphi (t)\, dt=0\right\}.
\]
See \cite[Proposition 8]{Bourdaud} for details of that.   However, we prefer to avoid dealing with quotient classes.
\end{remark}

A particularly convenient way to choose the functions $\psi_n$ is as follows.   Let $\psi \in C^{\infty}(\mathbb R)$ be such that
\begin{equation*}
\psi \ge 0, \qquad {\rm supp} \, \psi \subset [1/2, 2], \qquad \psi (x)=1-\psi(x/2), \,\, x \in [1,2].
\end{equation*}
Define
\[
 \psi_n(x) = \psi(2^{-n}|x|), \quad  \psi_n^+(x) = \psi(2^{-n}x), \quad \varphi_n^+ = \mathcal{F}\psi_n^+.
\]
Then $(\psi_n)_{n\in\Z}$ satisfy the required properties.  From now onwards, we assume that $\psi$ and $(\psi_n)_{n\in\Z}$ have been chosen in this way.

Define the (homogeneous) \emph{analytic Besov space} as
\[
B^{s+}_{p,q}(\mathbb R) = \left\{f \in B^{s}_{p,q}(\mathbb R) :\supp \fti f \subset \R_+\right\},
\]
and note that if $f \in B^{s+}_{p,q}(\mathbb R)$ then $f*\varphi_n=f*\varphi^+_n, \, n \in \mathbb Z$.  Since $B^{s+}_{p,q}(\mathbb R)$ is a closed subspace of $B^{s}_{p,q}(\mathbb R)$, it is a Banach space.
See also  \cite[Section 2]{APeller} and \cite[Section 3.2]{Ricci} for  related constructions.
Note that the Riesz projection, given as a Fourier multiplier with the symbol $1_{[0,\infty)}$, is bounded on ${B}^s_{p,q}(\mathbb R)$ for all $1\le p,q \le \infty$ and $s\ge0$; see \cite[Corollary 6.7.2]{Graf} or \cite{Peetre}.

Now we consider the analytic Besov space $\mathcal B_{\rm dyad}:=B^{0+}_{\infty, 1}(\mathbb R)$ equipped with the corresponding dyadic norm
$$
\|f\|_{\mathcal B_{\rm dyad}}:=\sum_{n \in \mathbb Z}\|f *\varphi^+_n\|_\infty<\infty.
$$
It is straightforward to show that $\mathcal B_{\rm dyad}\subset \operatorname{BUC} (\mathbb R)$,
and for every  $f \in \mathcal B_{\rm dyad}$,
\begin{equation*}
f=\sum_{n \in \mathbb Z} f *\varphi^+_n
\end{equation*}
in $\operatorname{BUC}(\mathbb R)$.
We will show that  $\mathcal B_{\rm dyad}$ can be identified with $\Bq$
by showing that $\mathcal B_{\rm dyad}$ arises as the space of boundary values of
functions in $\Bq$.

For $g \in \mathcal B_{\rm dyad}$ write its dyadic decomposition $g=\sum_{n \in \mathbb Z} g*\varphi^+_n$, and define the (holomorphic) extension mapping $\mathcal P$:
\begin{equation}\label{defp}
\mathcal P(g)(t+is):=(P_t *g)(s) =  \sum_{n \in \mathbb Z} P(t)(g*\varphi^+_n)(s), \qquad t \ge 0, \,\, s \in \mathbb R,
\end{equation}
where $P_t$ and $P(t)$, are the Poisson kernel and semigroup for the right half-plane $\C_+$, as in Lemma \ref{ho}.  The mapping is well-defined, and $\mathcal P (g) \in H^\infty (\mathbb C_+)$ since  $g \in H^\infty(\R)$.

\begin{prop}   \label{bid}
The linear operator $\mathcal P$ given by \eqref{defp} is an isomorphism of $\mathcal B_{{\rm dyad}}$ onto $\mathcal B_0$.   Moreover, $\mathcal B_{\rm dyad}=\{f^b: f \in \mathcal B_0\}$,
and for every $f \in \mathcal B_0$ one has
\begin{equation}\label{replaplace}
f = \mathcal{L}(\mathcal F^{-1} f^b)
\end{equation}
in the sense of distributions.
\end{prop}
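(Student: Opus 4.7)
The plan is to establish that $\mathcal{P}:\mathcal{B}_{\rm dyad}\to\mathcal{B}_0$ is a bounded linear isomorphism whose inverse is $f\mapsto f^b$. Since Proposition~\ref{besprop} gives $f^b\in\operatorname{BUC}(\R)\cap H^\infty(\R)$ for every $f\in\mathcal{B}_0$, and Lemma~\ref{ho}(\ref{ho1}) immediately yields $\mathcal{P}(f^b)=f$, the content of the proposition reduces to the two norm inequalities $\|\mathcal{P}(g)\|_\mathcal{B}\le C\|g\|_{\mathcal{B}_{\rm dyad}}$ and $\|f^b\|_{\mathcal{B}_{\rm dyad}}\le C\|f\|_{\mathcal{B}_0}$. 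The forward inequality is an immediate consequence of Lemma~\ref{L1}: each block $g_n:=g*\varphi_n^+$ has $\mathcal{F}^{-1}$-support in the dyadic annulus $[2^{n-1},2^{n+1}]$ and so, by the distributional Paley--Wiener theorem, extends to a function in $H^\infty[2^{n-1},2^{n+1}]$; Lemma~\ref{L1} applied with $\sigma/\varepsilon=4$ yields $\|g_n\|_\mathcal{B}\le(1+2\log 9)\|g_n\|_\infty$, so the series $\sum_n g_n$ converges absolutely in $\mathcal{B}_0$, and its sum equals $\mathcal{P}(g)$ by uniqueness of holomorphic extension from the boundary.

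For the reverse inequality I would let $f_n$ denote the entire extension of $f^b*\varphi_n^+$ to $\C_+$, so that $f_n(z)=\int_\R f(z-it)\varphi_n^+(t)\,dt$ for $z\in\C_+$ via the Poisson representation. The key observation is that $\psi_n^+(0)=0$ forces $\int_\R\varphi_n^+=0$, and writing $f(z-it)-f(z)=-i\int_0^t f'(z-is)\,ds$ and applying Fubini produces the integration-by-parts identity
\begin{equation*}
f_n(z) \;=\; -i\int_\R f'(z-it)\,\Phi_n(t)\,dt, \qquad \Phi_n(t) := \int_t^\infty \varphi_n^+(s)\,ds,
\end{equation*}
with $\Phi_n(t)=\Phi(2^n t)$ for a fixed Schwartz function $\Phi$. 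Thus $\|\Phi_n\|_1=2^{-n}\|\Phi\|_1$ and $\sup_y|f_n(x+iy)|\le 2^{-n}\|\Phi\|_1\,M(x)$ for all $x>0$, where $M(x):=\sup_y|f'(x+iy)|$. To promote this interior bound to the boundary norm $\|f_n\|_\infty=\sup_y|f_n(iy)|$, I would apply Bernstein's inequality to the entire function $y\mapsto f_n(iy)$ (of exponential type at most $2^{n+1}$, bounded on $\R$), yielding $\|f_n'\|_{H^\infty(\C_+)}\le 2^{n+1}\|f_n\|_\infty$; integrating $f_n'$ along a horizontal segment of length $x=2^{-n-2}$ then gives $|f_n(iy)-f_n(x+iy)|\le\|f_n\|_\infty/2$, so $\|f_n\|_\infty\le 2\sup_y|f_n(2^{-n-2}+iy)|\le 2^{1-n}\|\Phi\|_1\,M(2^{-n-2})$. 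Since $M$ is decreasing (by the maximum principle applied to $f'$), $M(2^{-n-2})\le 2^{n+3}\int_{I_n}M(x)\,dx$ with $I_n:=[2^{-n-3},2^{-n-2}]$, and since the dyadic intervals $\{I_n\}_{n\in\Z}$ partition $(0,\infty)$, summation gives $\|f^b\|_{\mathcal{B}_{\rm dyad}}=\sum_n\|f_n\|_\infty\le C\|f\|_{\mathcal{B}_0}$.

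The distributional identity $f=\mathcal{L}(\mathcal{F}^{-1}f^b)$ will then follow from the decomposition $\mathcal{F}^{-1}f^b=\sum_n\psi_n^+\,\mathcal{F}^{-1}f^b$ as a sum of distributions with compact support in $\R_+$; by the distributional Paley--Wiener theorem each Laplace transform $\mathcal{L}(\psi_n^+\,\mathcal{F}^{-1}f^b)$ coincides with the entire extension $f_n$, and summing reproduces $f=\sum_n f_n$. The hard part will be the reverse bound above: the naive convolution estimate $\|f^b*\varphi_n^+\|_\infty\le\|\varphi^+\|_1\|f^b\|_\infty$ is uniform in $n$ and hence not $n$-summable, so the argument must (i) exploit $\int\varphi_n^+=0$ via integration by parts to extract the factor $2^{-n}$ in the representation of $f_n$, and (ii) use Bernstein's inequality to convert the resulting interior pointwise bound at $x\sim 2^{-n}$ into a bound on $\|f_n\|_\infty$---both essential since $f'$ itself need not lie in $H^\infty(\C_+)$.
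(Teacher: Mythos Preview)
Your argument is correct and takes a genuinely different route from the paper's. The paper establishes both norm inequalities by invoking Triebel's characterisation of homogeneous Besov spaces via the Poisson semigroup (equations \eqref{equality}--\eqref{eqnm}), and then obtains surjectivity of $\mathcal P$ by appealing to Proposition~\ref{densehinf} (the density of $\ssp$ in $\Bq$, proved earlier via Arveson's spectral-subspace machinery). Your forward inequality via Lemma~\ref{L1} is precisely what the paper records in the Remark following Proposition~\ref{bid}, so that part coincides with an argument the authors already sketch. What is new is your direct proof of the reverse inequality: the integration-by-parts identity $f_n(z)=-i\int_\R f'(z-it)\Phi_n(t)\,dt$ extracts the scaling $\|\Phi_n\|_1=2^{-n}\|\Phi\|_1$, and the Bernstein--plus--dyadic-averaging step converts the interior bound $2^{-n}M(2^{-n-2})$ into a summable quantity $C\int_{I_n}M$. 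This gives $\|f^b\|_{\Bes_{\rm dyad}}\le C\|f\|_{\Bq}$ for \emph{every} $f\in\Bq$ without first knowing that $\ssp$ is dense, so your proof of Proposition~\ref{bid} is logically independent of Proposition~\ref{densehinf}.

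This buys something the paper explicitly notes: since Proposition~\ref{densehinf} follows easily from Proposition~\ref{bid} (trigonometric-type sums are dense in the dyadic Besov space), your argument yields an alternative, entirely elementary proof of Proposition~\ref{densehinf} that bypasses the spectral-subspace theory. The paper's route, by contrast, treats Proposition~\ref{densehinf} as the primary result (proved via Arveson) and Proposition~\ref{bid} as a consequence, outsourcing the analytic content of the norm equivalence to \cite{Triebel82}. Your approach is more self-contained; the paper's is shorter given the cited literature.
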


\begin{proof}
Note that for $g \in \mathcal B_{\rm dyad}$,
\begin{equation}\label{equality}
\|\mathcal P(g)\|_{\Bq}=\int_{0}^\infty \|(\mathcal Pg)'(t+i\cdot)\|_{\infty}\, dt =
\int_{0}^\infty \left \|\frac{d}{dt} P(t)g\right \|_{\infty}\, dt,
\end{equation}
and by \cite[Corollary 3, p.285]{Triebel82} (see also \cite[Sections 5.2.3 and 2.12.2]{TriebelF}) there exists $C >0$ such that for every  $g \in \mathcal B_{\rm dyad}$,
\begin{equation} \label{eqnm}
C^{-1}\|g\|_{\mathcal B_{\text{dyad}}} \le \int_{0}^\infty \left \|\frac{d}{dt} (P_t*g)\right \|_{\infty}\, dt \le C \|g\|_{\mathcal B_{\text{dyad}}}.
\end{equation}
Thus $\mathcal  P$ yields an isomorphic embedding of $\mathcal B_{\rm dyad}$ into $\mathcal B_0$.

If $f \in \ssp$, then $f^b * \varphi_n^+ = 0$ for all except finitely many $n$, so $f_b \in \Bes_{\rm dyad}$ and $f = \mathcal{P}f^b$.  Since $\ssp$ is dense in $\mathcal B_0$ by Proposition \ref{densehinf},
we infer that $\mathcal{P}$ maps $\Bes_{\rm dyad}$ onto $\Bq$.   The inverse map is given by $f \mapsto f^b$, by well-known properties of the Poisson kernels (see Lemma \ref{ho}).

The property \eqref{replaplace} is standard, and it follows from the representation theory of holomorphic functions of slow growth as distributional Laplace transforms; see \cite[Section 9 and Section 12.2, Corollary 4]{Vladimirov}  or \cite[Theorem 1 and Corollary]{BeWo65}.
\end{proof}

\begin{remark}  The right-hand inequality in (\ref{eqnm}) may be deduced from Lemma \ref{L1}, since $\mathcal{P}(g*\varphi_n^+) \in H^\infty[2^{n-1},2^{n+1}]$.   Using  also the maximum principle, we obtain that, for every $g \in \mathcal B_{{\rm dyad}}$,
\[
\|\mathcal{P} (g * \varphi_n^+)\|_\Bq \le C \|g*\varphi_n^+\|_\infty,
\]
so
\[
\|\mathcal{P}g\|_\Bq = \Big\|\sum_{n\in\Z} \mathcal{P}(g*\varphi_n^+) \Big\|_\Bq \le C \sum_{n\in\Z} \|g*\varphi_n^+\|_\infty = C \|g\|_{\Bes_{\rm dyad}}.
\]
This argument may also be used for other similar spaces.
\end{remark}

\begin{remark}
The description of Besov spaces $B^s_{p,q}(\mathbb R)$ in terms of the boundary behaviour of the Poisson semigroup goes back to \cite{Taib} (for $s >0$); see also \cite{TriebelF} and \cite{Peetre}.
For the study of general Besov spaces in terms of spaces of holomorphic functions in $\mathbb C_+$,
it seems more natural to use the half-plane extension of Besov spaces by the Fourier-Laplace transform, as in \cite{Ricci} (for $s<0$). In this approach, one formally  defines the holomorphic extension $\mathcal I f$ of a Besov function $f=\sum_{n \in \mathbb Z}f*\varphi^+_n$ as
\begin{equation*}
(\mathcal I f)(z):= \mathcal L\big(\fti{ \sum_{n\in\Z} f_n}\big)(z)=
2\pi \sum_{n \in \mathbb Z}\mathcal L(\fti f \fti \varphi^+_n)(z), \quad z \in \mathbb C_+.
\end{equation*}
In particular, using the mapping $\mathcal I$, certain holomorphic Besov spaces in a quite advanced setting were identified in \cite{Ricci} with Bergman spaces of holomorphic functions in a half-plane.
We prefer to use the Poisson semigroup since it fits better to this paper, and requires minimal developments of the theory of Besov spaces.   An approach to Besov spaces on $\R$ as boundary values of spaces of harmonic functions on $\C_+$  can be found in \cite{Bui1} (where the spaces are called Lipschitz spaces).
\end{remark}

\begin{remark}
For constructions of functional calculi it is crucial to deal with Banach algebras of functions and to include the constant functions, so we have worked with the space $\mathcal B$ and the norm $\|\cdot\|_{\mathcal B}=\|\cdot\|_{\mathcal B_0}+\|\cdot\|_{\infty}$.  (Recall from Proposition \ref{besprop} that $\|\cdot\|_{\mathcal B_0}\ge \|\cdot\|_{\infty}$ on $\mathcal B_0$.)  The Banach algebra $(\mathcal B, \|\cdot\|_{\mathcal B})$ can be considered as a modified Besov space, but it can hardly be identified with the conventional homogeneous Besov spaces.  Thus the standard facts from the theory of Besov spaces (duality, isomorphisms, etc) cannot a priori be applied to $(\mathcal B, \|\cdot\|_{\mathcal B})$.  The algebra $\Bes$ has appeared, for example, in \cite{V1}, and similar algebras were considered in \cite{Moussai}.
\end{remark}

Now we briefly discuss the homogeneous class $\mathcal E_{{\rm dyad}}:=B^{0+}_{1,\infty}(\mathbb R)$
with the corresponding norm
$$
\|f\|_{\mathcal E_{\rm dyad}}=\sup_{n \in \mathbb Z} \|f *\varphi^+_n\|_1, \qquad f \in \mathcal E_{{\rm dyad}},
$$
and its connections to the space $\Bov$ of holomorphic functions $g$  on $\mathbb C_+$ satisfying (\ref{defe0}).

Recall from Section \ref{dual} that
$\mathcal E$ is a Banach space with the norm in (\ref{defe})
and the subspace  $\mathcal E_0$ of functions $g \in\mathcal E$ with $g(\infty)=0$ is also a Banach space.
The spaces $\Bov$ and $\Bes$ are related via the (partial) duality given in (\ref{dualdef}).
Consequently, as discussed in Section \ref{nondual}, $\mathcal E$  and $\mathcal B$ are contractively embedded into $\mathcal B_0^*$ and $\mathcal E_0^*$, respectively, but in each case the range is not norm-dense (Propositions \ref{nondens1} and \ref{nondens2}).  Nevertheless, it is natural to consider further the nature of $\mathcal E$ and any  relations to $\mathcal B$, in particular by looking at Besov classes that can be associated to $\mathcal E$.

Parts of the proof of Proposition \ref{bid} can be applied in the context of $\Bov$, taking account of the estimate in Proposition \ref{space_e}(3) and using the same results or arguments from \cite{Triebel82}, \cite{Vladimirov} and \cite{BeWo65}.   In this way one can see that $\mathcal E_{\rm dyad}$ is isomorphically embedded into $\mathcal E$ as a closed subspace,
every $f$ in the range of the embedding there exists a distributional boundary value $f^b$,
and $f$ is the Fourier-Laplace transform of $f^b$.  However, the shift semigroup $(T_{\mathcal E}(a))_{a \ge 0}$ is not strongly continuous on $\mathcal E$ (Lemma 2.18), so a result for $\Bov$ similar to Proposition \ref{densehinf} cannot be expected, and consequently it is not clear that the embedding is surjective.

One may also consider the subspace $\mathcal E_{0, {\rm dyad}}$ of $\mathcal E_{{\rm dyad}}$
defined by
\begin{equation*}
\lim_{|n| \to \infty} \|f * \varphi^+_n\|_1=0, \qquad f \in \mathcal E_{{\rm dyad}},
\end{equation*}
and the subspace $\tilde {\mathcal E}$ of $\mathcal E$ given by
\begin{equation*}
\lim_{x\to0+} x \int_\R |g'(x+iy)|\, dy = \lim_{x\to\infty} x \int_\R |g'(x+iy)|\, dy =0.
\end{equation*}
Then $\mathcal E_{0, {\rm dyad}}$ is embedded isomorphically in $\tilde {\mathcal E}$, as shown in statement (2) of \cite[p.266]{V1}.  It seems plausible that $\tilde {\mathcal E}$ is isomorphic to  $\mathcal E_{0, {\rm dyad}}$, and $\mathcal E_0$ is isomorphic to $\mathcal E_{{\rm dyad}}$ in the sense of Proposition \ref{bid}, and also that $\mathcal E_{0, {\rm dyad}}^*$ can be identified in a natural way with $\mathcal B_{\rm dyad}$ (see \cite[Theorem 10.7]{Lenz}, or \cite[p.335-337]{Ricci} where several arguments are true for a larger class of Besov spaces than claimed).  None of these identifications is needed for the results of this paper, and
we have not worked out proofs (or corresponding counterexamples).

\begin{remark}
A different approach to Proposition \ref{bid} (and also to the very definition of holomorphic Besov spaces)
 was proposed in \cite{V1}, but some of the arguments given in the Appendix of that paper (e.g., p.266, lines 16-18) appear to be incomplete.  It is also stated there (p.265, last line) that $\mathcal E_{0, {\rm dyad}}^*$ coincides with $\mathcal B_{\rm dyad}$, but no explanation is given.
\end{remark}

\end{document}